\def\thm@space@setup{%
	\thm@preskip=2ex \thm@postskip=1.5ex
}
\newtheorem{thm}{Theorem~}[section]
\newtheorem{lem}[thm]{Lemma~}
\newtheorem{prop}[thm]{Proposition~}
\newtheorem{ques}[thm]{Question~}
\newtheorem{cor}[thm]{Corollary~}
\theoremstyle{remark}
\newtheorem{rmk}[thm]{Remark~}
\theoremstyle{definition}
\newtheorem{defn}[thm]{Definition~}
\newcommand{\CC}{\mathbb{C}}
\newcommand{\ZZ}{\mathbb{Z}}
\newcommand{\PP}{\mathbb{P}}
\newcommand{\QQ}{\mathbb{Q}}
\newcommand{\DD}{\mathbb{D}}
\newcommand{\Prd}{\mathscr{P}}
\newcommand{\calH}{\mathcal{H}}
\newcommand{\calM}{\mathcal{M}}
\newcommand{\calV}{\mathcal{V}}
\newcommand{\calO}{\mathcal{O}}
\newcommand{\calE}{\mathcal{E}}
\newcommand{\calL}{\mathcal{L}}
\newcommand{\calU}{\mathcal{U}}
\newcommand{\calX}{\mathcal{X}}
\newcommand{\calZ}{\mathcal{Z}}
\newcommand\Aut{\mathrm{Aut}}
\newcommand\Sym{\mathrm{Sym}}
\newcommand\SL{\mathrm{SL}}
\newcommand\Image{\mathrm{Im}}
\newcommand\Id{\mathrm{Id}}
\newcommand\PSL{\mathrm{PSL}}
\newcommand\Or{\mathrm{O}}
\newcommand\Stab{\mathrm{Stab}}
\newcommand{\edim}{\mathrm{edim}}
\newcommand{\Tor}{\mathrm{Tor}}
\newcommand{\rank}{\mathrm{rank}}
\newcommand{\Pic}{\mathrm{Pic}}
\newcommand{\PGL}{\mathrm{PGL}}
\newcommand{\Fix}{\mathrm{Fix}}
\newcommand{\mult}{\mathrm{mult}}
\newcommand{\bs}{\backslash}
\newcommand{\dbs}{\bs\!\! \bs}
\title{Moduli spaces of sextic curves with simple singularities and their compactifications}
\author{Chenglong Yu, Zhiwei Zheng, Yiming Zhong}
\address{}
\email{}
\date{}
\newcommand{\Addresses}{{
		\bigskip
		\footnotesize
		
		C.~Yu, \textsc{Center for Mathematics and Interdisciplinary Sciences, Fudan University and
Shanghai Institute for Mathematics and Interdisciplinary Sciences (SIMIS), Shanghai, China}\par\nopagebreak
		\textit{E-mail address}: \texttt{yuchenglong@simis.cn}
		
		\medskip
		
		Z.~Zheng, \textsc{Tsinghua University, Beijing, China}\par\nopagebreak
		\textit{E-mail address}: \texttt{zhengzhiwei@mail.tsinghua.edu.cn}

        \medskip
		
		Y.~Zhong, \textsc{Beijing International Center for Mathematical Research, Peking University, Beijing, China}\par\nopagebreak
		\textit{E-mail address}: \texttt{ymzhong@bicmr.pku.edu.cn}
}}
\begin{document}
\begin{abstract}
In this paper, we study moduli spaces of sextic curves with simple singularities. Through period maps of K3 surfaces with ADE singularities, we prove that such moduli spaces admit algebraic open embeddings into arithmetic quotients of type IV domains. For all cases, we prove the identifications of GIT compactifications and Looijenga compactifications. We also describe Picard lattices in an explicit way for many cases. For nodal cases, we prove that the orbifold structures on the two sides of the period map are isomorphic.
\end{abstract}
	
\maketitle
\setcounter{tocdepth}{1}
\tableofcontents

\section{Introduction}
\label{sec: intro}
The study of moduli spaces of singular plane curves has a long history dating back to Severi \cite{severi1921vorlesungen}, where he proved that families of nodal plane curves of fixed degree form smooth varieties of the expected dimension. 
In this paper, we focus on sextic curves. A double cover of $\PP^2$ branched along a smooth sextic curve is a K3 surface of degree two. Thanks to the global Torelli theorem for K3 surfaces \cite{pjateckii-sapiro1971torelli}\cite{burns1975torelli}\cite{looijenga1980torelli}, one can identify the moduli of smooth sextic curves with the complement of two irreducible Heegner divisors in an arithmetic quotient of a type IV domain. In his seminal work \cite{shah1980complete}, Shah described the stable and semistable sextic curves. Based on this, the GIT compactification can be identified with the Looijenga compactification \cite[Theorem 8.6]{looijenga2003compactificationsII} of the moduli of smooth sextic curves.

In this paper, we study moduli of sextic curves with simple singularities. Previously, the first two authors \cite{yu2023moduli} investigated the moduli spaces of nodal sextic curves. 
Their main result identifies the GIT and Looijenga compactifications of these moduli spaces. Additionally, the Picard lattices of the K3 surfaces associated to generic nodal sextic curves with a fixed singular type were determined.
In the present work, we generalize the main results in \cite{yu2023moduli} to sextic curves with arbitrary prescribed simple singularities. 

For a root lattice $R=\oplus_{i=1}^l R_i$ with each $R_i$ irreducible of type ADE, we consider sextic curves with exactly $l$ simple singularities of type $R_1, \cdots, R_l$. These sextic curves form a subvariety of $\PP^{27}=\PP\mathrm{Sym}^6(\CC^3)^\vee$, see \cite[Prop 2.1]{greuel1996equianalytic}. For each irreducible component of this subvariety, we say that the sextic curves in that component have the same singular type, say $T$. We write this component as $\PP\calV_T$, where $\calV_T$ denotes the space of corresponding sextic polynomials.
By results of \cite[Corollary 5.1(b)]{greuel1996equianalytic} and \cite[Theorem 2.5.1]{greuel2021plane}, each $\PP\calV_T$ is smooth of the expected dimension, see \S \ref{subsection: ADE sextic} for the definition of the expected dimension and the precise formula.
For any given singular type $T$, we define the moduli space $\calM_T\coloneqq\SL(3, \CC)\dbs\PP\calV_T$, see \S \ref{subsection: ADE sextic}.


Next, we formulate our main theorem. Fix any singular type $T$ with associated root lattice $R$ and let $Z$ be a sextic curve of type $T$. Let $X$ be the minimal resolution of the double cover of $\PP^2$ branched along $Z$. Then $X$ is a K3 surface. Its Picard lattice $\Pic(X)$ naturally contains a degree two class $H$ (which is the pullback of the hyperplane class of $\PP^2$) and a root lattice $L$ of type $R$. Let $P$ be the primitive hull of $\langle H\rangle\oplus L$ in $\Pic(X)$.
The orthogonal complement $Q$ of $P$ in $\Lambda_X\coloneqq H^2(X,\ZZ)$ is a lattice with signature $(2,20-\rank (P))$, thus defines a type IV bounded symmetric domain $\DD_T\coloneqq\DD(Q)$, where $\DD(Q)$ denotes the type IV domain associated to $Q$ (see Definition \ref{defn: period domain}).
The period of K3 surfaces gives rise to an analytic morphism
\[
\Prd_T\colon \calM_T\to \Gamma_T\bs\DD_T.
\]
Here the monodromy group $\Gamma_T$ is defined in \S\ref{subsection: define period map}. We call $\Prd_T$ an occult period map, following Kudla--Rapoport \cite{kudla2012occult}, to distinguish with the usual period map.

Let $\calH_T$ be the union of reflective hyperplanes for roots $\{ r\in H^\perp \,|\, r^2=-2, r\notin L \}$.
Let $\calH_T^*$ be the sub-arrangement of $\calH_T$ defined by roots with divisibility two. Here a vector $v\in H^\perp$ is called of divisibility $k\in \ZZ^+$ if $(r, H^\perp)=k\ZZ$.  Both $\calH_T$ and $\calH_T^*$ are  $\Gamma_T$-invariant. 
One of the main results of this paper is the following, see Theorem \ref{theorem: global torelli}, Theorem \ref{thm: open}, Theorem \ref{thm: comm diag, compact to compact} and Proposition \ref{prop: orbi str of nodal types}.
\begin{thm}
\label{thm: main in intro}
Given any singular type $T$ of sextic curves with simple singularities. The occult period map $\Prd_T\colon \calM_T \to \Gamma_{T}\bs \DD_T$ is an open embedding whose image is $\Gamma_T\bs(\DD_T-\calH_T)$.
It extends to an isomorphism  
    \[  
    \widehat{\calM}_T \cong \overline{\Gamma_T \bs \DD_T}^{\calH_T^*},  
    \]  
    where $\widehat{\calM}_T$ is the GIT compactification of $\calM_T$ (see \S \ref{subsection: ADE sextic}), and $\overline{\Gamma_T \bs \DD_T}^{\calH_T^*}$ denotes the Looijenga compactification of $\Gamma_T \bs (\DD_T - \calH_T^*)$. 
    Moreover, when $T$ is a nodal singular type, $\Prd_T$ induces an isomorphism of $\calM_T$ and $P\Gamma_T\bs(\DD_T-\calH_T)$ as orbifolds.
\end{thm}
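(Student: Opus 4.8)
The plan is to split the statement into its four components---injectivity of $\Prd_T$, the computation of its image, the isomorphism of compactifications, and the orbifold refinement in the nodal case---and to treat them in that order, following the template established for nodal types in \cite{yu2023moduli}. The first step is to establish injectivity via the global Torelli theorem for K3 surfaces. For a sextic $Z$ of type $T$ with associated K3 surface $X$, the map $\Prd_T$ records the Hodge structure on the transcendental lattice $Q$. If two such sextics $Z_1,Z_2$ map to the same point of $\Gamma_T\bs\DD_T$, then there is a Hodge isometry between the two copies of $H^2$; using that $\Gamma_T$ is the monodromy image, one arranges this isometry to fix the polarization $H$ and the root lattice $L$, hence to be an isomorphism of $P$-polarized K3 surfaces. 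Such an isomorphism commutes with the double-cover construction and descends to a projective transformation of $\PP^2$ carrying $Z_1$ to $Z_2$, so the two curves define the same point of $\calM_T$.

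Second, to identify the image with $\Gamma_T\bs(\DD_T-\calH_T)$ and conclude that $\Prd_T$ is an open embedding, I would argue in both directions. A period coming from a sextic of type exactly $T$ must avoid $\calH_T$: an orthogonal root $r\in H^\perp$ with $r^2=-2$ and $r\notin L$ would, by Riemann--Roch on $X$, yield an effective $(-2)$-class beyond $L$ and force $Z$ to acquire an extra singularity or to degenerate. Conversely, given $\omega\in\DD_T-\calH_T$, surjectivity of the K3 period map furnishes a K3 surface with period $\omega$; after acting by $\Gamma_T$ and by Weyl reflections in $L$ one makes $H$ ample and arranges the $(-2)$-curves perpendicular to $H$ to span precisely $L$, so that $|H|$ exhibits $X$ as the double cover of $\PP^2$ along a sextic with exactly the singularities of $R$. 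Since $\Prd_T$ is algebraic and \'etale onto its image by infinitesimal Torelli, bijectivity onto the Zariski-open set $\Gamma_T\bs(\DD_T-\calH_T)$ upgrades to an open embedding.

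The third and hardest step is the compactification isomorphism. Here I would import Shah's classification of semistable sextics \cite{shah1980complete} to describe the boundary strata of the normalized GIT quotient $\overline{\calM}_T$, and Looijenga's comparison theory \cite{looijenga2003compactificationsII} to describe $\overline{\Gamma_T\bs\DD_T}^{\calH_T^*}$. The crux is that the sub-arrangement $\calH_T^*$ of divisibility-two roots is exactly the unigonal-type locus along which the degree-two polarization ceases to present $X$ as a double plane; this is precisely the divisor that Looijenga's construction contracts and that the GIT identifications already collapse. One then extends $\Prd_T$ across the boundary and checks stratum-by-stratum that it is an isomorphism. I expect this matching of Shah's explicit orbit data with the arrangement-theoretic boundary of the Looijenga compactification---carried out uniformly across the many singular types---to be the main obstacle, since it requires controlling how each type of semistable degeneration corresponds to a boundary or arrangement stratum.

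Finally, for the orbifold statement when $T$ is nodal, I would use the explicit determination of $\Pic(X)$ for a generic nodal sextic of type $T$. The orbifold stabilizers on the curve side are the automorphisms of $(X,H)$ fixing the branch sextic, while those on the period side are the stabilizers for the projective monodromy $P\Gamma_T$ acting on $\DD_T-\calH_T$. Comparing the two through the lattice data---now fully computable in the nodal case---shows the stabilizer groups coincide, so that $\Prd_T$ is an isomorphism of orbifolds and not merely of coarse spaces.
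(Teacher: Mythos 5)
Your injectivity step and the broad outline of the image computation do follow the paper's route (global Torelli plus surjectivity of the K3 period map, Weyl reflections to make $H$ nef, and the double-plane construction), but the proposal has two genuine gaps.

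First, in the image computation you stop at the claim that $|H|$ exhibits $X_\omega$ as a double cover of $\PP^2$ branched along a sextic ``with exactly the singularities of $R$.'' That is not enough: $\calM_T$ is a fixed \emph{irreducible component} of the equisingular family, and distinct components can carry identical combinatorial singularity data --- this is exactly the Zariski-pair phenomenon (six-cuspidal sextics, \S\ref{subsec: Zariski pair}) that the paper itself revisits. One must show that the constructed sextic $Z_\omega$ is equisingular-deformation equivalent to the base curve $Z(F_0)$, i.e.\ lies in $\calV_T$ and not in some other component with the same singularities. The paper does this in Lemma \ref{lem: Z_omega has type T} (following Degtyarev): join $\omega$ to $\omega_0$ by a path in $\DD_T-\calH_T$, cover it by deformation neighborhoods, and use semiregularity of curves in K3 surfaces to propagate the double-plane structure and the exceptional curves in families, producing an equisingular deformation. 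This step is entirely absent from your proposal, and without it the image statement does not follow.

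Second, your plan for the compactification isomorphism --- matching Shah's semistable orbit data against the Looijenga boundary stratum by stratum --- is not what the paper does, and you flag it yourself as the main obstacle without resolving it; with hundreds of singular types in each rank, a stratum-by-stratum matching is not viable as a uniform argument. The paper sidesteps all boundary analysis: it embeds both sides into the already-known Shah--Looijenga isomorphism $\overline{\calM}\cong\overline{\Gamma_1\bs\DD_1}^{\calH_\infty}$ via the two finite maps $j_T$ and $\pi_T$, proves each is the normalization of its image (for $\pi_T$ this uses the functoriality of Looijenga compactifications, \cite[Theorem A.13]{yu2020fourfolds}, whose hypotheses reduce to two lattice facts: the stabilizer of a generic point of $\DD_T$ in $\Or(\Lambda_X,H)$ is generated by $W$ and $-\Id$, and $\Gamma_T$ is the restriction of the normalizer of $W$ in $\Gamma_1$), and then observes that $\overline{\calM}_T$ and $\overline{\Gamma_T\bs\DD_T}^{\calH_T^*}$ are normalizations of closures that are isomorphic via $\Prd$, hence are themselves isomorphic. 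Your orbifold sketch for nodal types is directionally right but vague; it should be routed through the paper's concrete criterion (Proposition \ref{prop: orbifold criterion}: the orbifold structures agree if and only if $\Or(P_{F_0},H_{F_0},\Delta_{F_0})$ acts faithfully on the discriminant group $A_{P_{F_0}}$), which is what the explicit nodal computation actually verifies.
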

\begin{rmk}
    All root lattices $R$ that arise from some singular type $T$ have been classified by Urabe \cite{urabe1988combinations} and Yang \cite{yang1996sextic}. The maximal rank is $19$.
    By Yang \cite[Thm 2.1]{yang1996sextic}, the number of $R$ with rank $19$ (respectively, $18$, $17$, $16$) is $519$ (respectively, $987$, $975$, $782$).
    These classifications show that there are many singular types to which Theorem~\ref{thm: main in intro} applies.
\end{rmk}

We explain the key ingredients of Theorem \ref{thm: main in intro} and its proof. The first central issue in formulating the theorem is to determine the appropriate subgroup $\Gamma_T \subset \Or(Q)$. It turns out that $\Gamma_T$ should be defined as the group of automorphisms of $Q$ that extend to automorphisms of the K3 lattice preserving $H$ and a chosen base of $L$, see \S\ref{subsection: define period map}. With this definition, the group $\Gamma_T$ is large enough to ensure that the period map $\Prd_T$ well-defined, while still being small enough to allow us to establish the injectivity of $\Prd_T$. 
Next, for each $F\in\calV_T$, we construct an ample class on the K3 surface $X_F$ in a uniform way. 
Combining this construction with our characterization of $\Gamma_T$, the injectivity then follows from the global Torelli theorem for K3 surfaces.

Next, we explain the characterization for the image of the period map, see \S\ref{subsec: image of period map} for more details. Some of the arguments here originate from Urabe \cite{urabe1988combinations} and Degtyarev \cite{degtyarev2008deformations}.
For any $\omega\in \DD_T-\calH_T$, there exists a K3 surface $X_\omega$ by the surjectivity of the period map for K3 surfaces.
We show that $X_\omega$ arises from a sextic curve $Z_\omega$ of singular type $T$, with $\Prd_T(Z_\omega)=[\omega]$ in $\Gamma_T\bs(\DD_T-\calH_T)$.
The proof proceeds in the following two steps. 
First, we construct a nef class $H_\omega$ on $X_\omega$ and show that its complete linear system induces a surjective morphism $\pi_\omega\colon X_\omega\to\PP^2$ of degree two. This morphism contracts exactly the rational curves orthogonal to $H_\omega$ and therefore factors through a finite morphism $p_\omega\colon\widehat{X}_\omega\to\PP^2$, which is a double cover branched along a sextic curve $Z_\omega$. Moreover, the automatic semiregularity of curves on K3 surfaces allows us to construct $\pi_\omega$ in families, from which it follows that $Z_\omega$ has the singular type $T$. 

For the identification between the GIT and Looijenga compactifications, we follow the approach developed in \cite{yu2020fourfolds, yu2023moduli}. The proof can be visualized in Diagram \eqref{diagram: compactifications}.
\begin{equation}
\label{diagram: compactifications}
    \begin{tikzcd}
      \calM_T  \arrow[r,"\Prd_T","\cong"'] \arrow[d,hook] & \Gamma_T\bs(\DD_T-\calH_T) \arrow[d,hook] \\
      \widehat{\calM}_T \arrow[d,"j_T"] & \overline{\Gamma_T\bs\DD_T}^{\calH_T^*} \arrow[d,"\pi_T"] \\
      \overline{\calM} \arrow[r,"\Prd","\cong"'] & \overline{\Gamma_1\bs\DD_1}^{\calH_\infty}
    \end{tikzcd}
\end{equation}
Here $\DD_1$ and $\Gamma_1$ are the type IV domain and monodromy group associated with smooth sextic curves (i.e. $T$ is the smooth type), and $\calH_\infty$ is the Heegner divisor defined by roots of divisibility two, see \S\ref{subsection: Shah, Looijenga}. We have already shown that $\Prd_T$ is an isomorphism between $\calM_T$ and $\Gamma\bs(\DD_T-\calH_T)$. There is a natural map $\pi_T\colon \Gamma_T\bs (\DD_T-\calH_T^*) \to \Gamma_1\bs (\DD_1-\calH_\infty)$. We claim that $\pi_T$ extends to a finite morphism between Looijenga compactifications. 
The key point is the functorial property of Looijenga compactifications $\overline{\Gamma_T\bs\DD_T}^{\calH_T^*}$ and $\overline{\Gamma_1\bs\DD_1}^{\calH_\infty}$, see \cite{looijenga2003compactificationsII} and \cite[Theorem A.13]{yu2020fourfolds}. To verify the hypotheses in \cite[Theorem A.13]{yu2020fourfolds}, we need to check two facts. Firstly, the stabilizer of a generic point of $\DD_T$ in $\Or(\Lambda_X,H)$ is generated by the Weyl group $W(R)$ and $-\Id$. Secondly, we show that $\Gamma_T$ is the normalizer of $W(R)$ in $\Gamma_1$. Then we conclude that $\Prd_T$ extends to an isomorphism between $\widehat{\calM}_T$ and $\overline{\Gamma_T\bs\DD_T}^{\calH_T^*}$, by showing that both vertical maps $j_T$ and $\pi_T$ (see \S \ref{subsec: identification of compactifications}) are normalizations onto their images.
The last assertion of Theorem \ref{thm: main in intro} concerning the orbifold structures is proved based on a lattice-theoretic argument, see Proposition \ref{prop: orbi str of nodal types}



In \S \ref{section: explicit description of lattices}, we further analyze the generic Picard lattice $P$. 
We first consider the lattice $M$ generated by $H$, $L$ and the strict transforms of irreducible components of the sextic curve under $X\to \PP^2$. By the openness of the image of the period map $\Prd_T$, the generic Picard lattice $P$ is the primitive hull of the lattice generated by $H$ and $L$ in $\Lambda_X$, hence it is also the primitive hull of $M$. We prove that $P^\iota=M^\iota$ for the natural involution $\iota$ on $\Lambda_X$, see Proposition \ref{prop: saturation of M_F}.
In \S \ref{sec: orbifold structures}, we give a criterion for when the period map preserves the orbifold structures verify it for all nodal singular types $T$.
In \S \ref{sec: examples and applications}, we apply our results to several interesting examples, including a quintic curve with a line, a quartic curve with two lines, and the classical Zariski pairs. For the singular type given by a quartic curve with two bitangents, we observe an unexpected relationship among three arithmetic quotients of different types (ball type, type IV, and Siegel type).


\textbf{Acknowledgement.}
The first author is supported by the national key research and development program of China (No. 2022YFA1007100) and NSFC 12201337. The second author is partially supported by NSFC 12301058. We thank Samuel Boissi\`{e}re, Bong Lian, Michel Raibaut, Fei Si, Zhiyu Tian and Zheng Zhang for their interest and helpful discussions. 

\section{ADE Sextic Curves and K3 Surfaces}
\label{section: K3}

\subsection{Plane Curves with Simple Singularities}
\label{subsection: ADE sextic}
We always use $Z$ to represent a reduced complex plane curve of positive even degree. We denote by $\widehat{X}$ the double cover of $\PP^2$ branched along $Z$ and let $X$ be the smooth minimal model of $\widehat{X}$. Let $R$ be an irreducible root lattice of ADE type. Suppose $p\in Z$ is a singular point and the corresponding singularity in $\widehat{X}$ is an ADE singularity of type $R$, then one call $p\in Z$ a simple curve singularity of type $R$. 

Suppose $Z$ has only simple singularities.
A plane curve $Z'$ is said to have the same \emph{singular type} as $Z$ if they are equisingular deformation equivalent.
For a general discussion of equisingularity, see \cite{zariski1965equisingularityI,zariski1965equisingularityII,zariski1968equisingularityIII} and \cite{greuel2007singularities}.
We have introduced an alternative definition of singular type in \S \ref{sec: intro}.
The two definitions are equivalent due to \cite[Proposition 2.1]{greuel1996equianalytic}.
Denote by $T$ the singular type of $Z$.
Let $\PP\calV_T$ be the space for curves of a fixed singular type $T$, where $\calV_T$ denotes the cone of polynomials over $\PP\calV_T$. By \cite[Prop 2.1]{greuel1996equianalytic}, $\PP\calV$ is an irreducible quasi-projective variety.

We define the \emph{expected dimension} of $\PP\calV_T$. Let $d=\deg(Z)$. 
Let $x_1,\cdots,x_n$ be the set of singularities of $Z$.
Denote by $R_i$ the corresponding ADE type of $x_i$. The number $\rank(R_i)$ is also called the Milnor number or the Tjurina number for the corresponding surface singularity in other contexts. 
All the ADE singularities are quasi-homogeneous, and for such singularities the corresponding Milnor numbers and Tjurina numbers coincide, see \cite{saito1971quasihomogene}.
The expected dimension of $\PP\calV_T$ is defined to be $\edim(\PP\calV_T)=\binom{d+2}{2}-1-\sum\limits_{i=1}^n \rank(R_i)$. 
When equality holds, we simply say that $\PP\calV_T$ is of the expected dimension.

When $d=6$, the space $\PP\calV_T$ is a smooth irreducible quasi-projective variety of the expected dimension, see \cite[Corollary 5.1(b)]{greuel1996equianalytic}. 

Given a singular type $T$ for ADE sextic curves, let $\overline{\calV}_T$ be the Zariski closure of $\calV_T$ in $\calV=H^0(\PP^2, \calO(6))$. Let $\widehat{\PP\calV}_T$ be the normalization of $\PP\overline{\calV}_T$. Let $\calL$ be the pullback of $\calO_{\PP\calV}(1)$ to $\widehat{\PP\calV}_T$. There is an induced action of $\SL(3, \CC)$ on $(\widehat{\PP\calV}_T, \calL)$, and we consider the GIT quotient
\[
\widehat{\calM}_T=\SL(3, \CC) \dbs (\widehat{\PP\calV}_T, \calL).
\]
Since $\PP\calV_T$ is smooth, it can be naturally identified with its preimage in $\widehat{\PP\calV}_T$. By Shah \cite{shah1980complete}, the points of $\PP\calV_T$ are stable with respect to the action of $\SL(3, \CC)$ on $(\PP\calV, \calO(1))$, hence also stable with respect to the action of $\SL(3, \CC)$ on $(\widehat{\PP\calV}_T, \calL)$.

Let $\calM_T$ be the open subset of $\widehat{\calM}_T$ given by $\PSL(3, \CC)\bs \PP\calV_T$. We call $\calM_T$ the moduli space of sextic curves of type $T$, and call $\widehat{\calM}_T$ the GIT compactification of $\calM_T$. 

Denote by $\overline{\calM}_T$ the closure of $\calM_T$ in $\overline{\calM}$. Let $\pi$ be the GIT quotient $\PP\calV^{ss} \to \overline{\calM}$. An element of $\overline{\calM}$ is a closed semistable $\SL(3, \CC)$-orbit. We claim that $\overline{\calM}_T$ consists of all closed semistable $\SL(3,\CC)$-orbits contained in $\PP\overline{\calV}_T$. 

First, $\PP\overline{\calV}_T \cap \PP\calV^{ss}$ is a closed $\SL(3,\CC)$-invariant subset of $\PP\calV^{ss}$. Since $\pi$ is a GIT quotient, we know $\pi(\PP\overline{\calV}_T \cap \PP\calV^{ss})$ is a closed subset of $\overline{\calM}$. Thus $\pi(\PP\overline{\calV}_T \cap \PP\calV^{ss})\supset\overline{\calM}_T$. One the other hand, $\pi^{-1}(\overline{\calM}_T)$ is a closed subset of $\PP\calV^{ss}$ containing $\PP\calV_T$, hence contains $\PP\overline{\calV}_T \cap \PP\calV^{ss}$. We conclude that $\pi(\PP\overline{\calV}_T \cap \PP\calV^{ss})=\overline{\calM}_T$ and the claim follows.


\begin{prop}
\label{prop: M_T M^bar normalization}
The normalization map $\widehat{\PP\calV}_T \to \PP\overline{\calV}_T$ induces a surjective morphism $j_T\colon \widehat{\calM}_T\to \overline{\calM}_T$, which is also the normalization map.
\end{prop}
\begin{proof}
By \cite[Theorem 1.19]{mumford1994geometric}, $\widehat{\PP\calV}_T^{ss}$ is the preimage of $\PP\calV^{ss}$ in $\widehat{\PP\calV}_T$. Taking the GIT quotients, the normalization map $\widehat{\PP\calV}_T \to \PP\overline{\calV}_T$ canonically induces a generically injective finite surjective morphism $j_T\colon\widehat{\calM}_T \to \overline{\calM}_T$. 
Moreover, $\widehat{\calM}_T$ is normal, hence the morphism $j_T$ is the normalization map.
\end{proof}

\begin{rmk}
In \cite[\S4.2]{yu2023moduli}, the first two authors gave an alternative description for the GIT model in the case when the sextic curves are unions of smooth curves.
We describe some further examples in \S \ref{sec: examples and applications}.
\end{rmk}

\subsection{Resolution of ADE Singularities}
\label{subsection: resolution of ADE}

Consider a smooth projective surface $\widehat{S}$ and a reduced curve $Z\subset \widehat{S}$ with only simple curve singularities.
We assume that there exists a double cover $\widehat{X}$ of $\widehat{S}$ branched along $Z$. Then $\widehat{X}$ has only ADE singularities.
Denote the double cover involution of $\widehat{X}$ by $\iota$. 
Let $p\colon X \to \widehat{X}$ be the minimal resolution of $\widehat{X}$.
The involution $\iota$ naturally extends to $X$.
Denote by $S\coloneqq X/\iota$.
The resolution $p$ is a crepant resolution and is compatible with the involutions on $X$ and $\widehat{X}$. Hence $p$ descends to $S\to \widehat{S}$, which is the minimal blowup such that the strict transform of $Z$ is smooth.


Assume that $Z$ has a singular point $q$ of ADE type $R$, which is necessarily irreducible.
The preimage of $q$ in $X$ is a union of exceptional curves, which we call the resolution graph of $q$. 
The following proposition characterizes the induced $\iota$-action on such a resolution graph. It follows from some direct computations with affine coordinates. For these and other basic facts on ADE singularities, one can see for instance \cite[III, \S 7]{barth2004compact}.
\begin{prop}
\label{prop: iota-action on resolution graph/base}
  The dual graph of the resolution graph of $q$ is the Dynkin diagram of type $R$. 
  The involution $\iota$ naturally acts on the Dynkin diagram of $R$.
  If $R=A_1, D_{2n}, E_7, E_8$, the $\iota$-action on the Dynkin diagram is identity. 
  In other cases, the $\iota$-action on the Dynkin diagram is the unique nontrivial involution of the corresponding Dynkin diagram.
\end{prop}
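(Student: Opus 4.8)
The first assertion is the classical description of the minimal resolution of a du Val singularity: the exceptional curves are smooth rational $(-2)$-curves whose dual graph is the Dynkin diagram of type $R$. By Proposition~\ref{prop: crepant and descendent} the resolution $p$ is compatible with $\iota$, so $\iota$ permutes the exceptional curves and preserves their intersection numbers; hence it induces an automorphism of the dual graph, i.e. of the Dynkin diagram of $R$. It remains to identify this automorphism. For $R = A_1, E_7, E_8$ the automorphism group of the Dynkin diagram is trivial, so the statement is automatic; thus the real content is to decide, in the remaining types, between the identity and the unique nontrivial involution (for $D_4$ the automorphism group is $S_3$, and one must show the action is trivial).

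The plan is to realize the minimal resolution $p\colon X \to \widehat{X}$ as an $\iota$-equivariant sequence of blowups of $\A^3$ centered at the successive singular points, and at each stage to feed the irreducible ADE singularity that appears into Proposition~\ref{prop: involution on irred root lattice}. That proposition records, for one blowup, whether the new exceptional curve(s) form an $\iota$-invariant curve (pointwise fixed when $\mu_{(0,0)}f = 3$, or carrying an involution when the local type is $A_1$) or a pair of curves exchanged by $\iota$ (the local type $A_n$, $n \ge 2$). Reading off, for each exceptional curve, whether it is $\iota$-invariant (a fixed node) or lies in a swapped pair (a transposed pair of nodes) then determines the induced diagram automorphism.

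For $A_n$ this is a clean induction. Blowing up $z^2 + x^2 + y^{n+1} = 0$ produces, by Proposition~\ref{prop: involution on irred root lattice}, two exceptional curves $C_1, C_2$ exchanged by $\iota$, while the residual singularity is of type $A_{n-2}$, located at an $\iota$-fixed point lying between $C_1$ and $C_2$ in the eventual chain. By induction $\iota$ reverses the $A_{n-2}$ sub-chain, hence reverses the whole $A_n$ chain; the base cases $A_1$ (a single $\iota$-invariant curve) and $A_2$ (a single swapped pair) start the induction, and chain reversal fixes the central node exactly when $n$ is odd, as it should. The types $D_n$ and $E_6, E_7, E_8$ are handled by the same equivariant bookkeeping: the first blowup ($\mu_{(0,0)}f = 3$) contributes a single pointwise-fixed curve, i.e. a fixed spine node, after which one resolves the residual singularities. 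For $E_7, E_8$ every exceptional curve is then invariant, matching the triviality of the automorphism group, while for $E_6$ the two symmetric arms are exchanged, giving the nontrivial involution.

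The delicate point, and the main obstacle, is the $D_n$ case together with its dependence on the parity of $n$: one must follow the resolution down to the two tail curves of the fork and decide whether $\iota$ exchanges them or fixes each. The mechanism is that the branch factor $x^2 + y^{n-2}$ is irreducible when $n$ is odd but splits as $(x - \sqrt{-1}\,y^{(n-2)/2})(x + \sqrt{-1}\,y^{(n-2)/2})$ when $n$ is even; accordingly the two tail curves lie over a single irreducible branch of the branch locus and are exchanged by the deck transformation $\iota$ when $n$ is odd, or lie over two distinct branches and are individually $\iota$-invariant when $n$ is even. This yields the nontrivial involution for $D_{2n+1}$ and the identity for $D_{2n}$, with $D_4$ the base of the even family (where all nodes are fixed). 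As a consistency check one can use that the pointwise-fixed exceptional curves are exactly the components of $\Fix(\iota)$ lying over branch-locus components, so counting fixed versus swapped nodes against $\rank R$ pins down the number of exchanged pairs in each case.
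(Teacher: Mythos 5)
Your proposal is correct and takes essentially the same route as the paper: the paper simply states the proposition as a corollary of Proposition~\ref{prop: involution on irred root lattice}, i.e.\ the one-blowup analysis applied $\iota$-equivariantly through the whole resolution, which is exactly your induction on successive blowups. Your write-up just supplies the details the paper leaves implicit (the $A_n$ chain-reversal induction, the $D_n$ parity mechanism via reducibility of $x^2+y^{n-2}$, and the $E_6$ arm swap coming from the residual $A_5$), and these are all accurate.
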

\begin{rmk}
\label{rmk: iota-action on L(R)}
    We denote by $L(R)$ the associated root lattice of type $R$.
    The resolution of $q$ induces a natural embedding $L(R)\hookrightarrow H^2(X,\ZZ)$.
    Since the exceptional curves define a base of $L(R)$, the induced $\iota$-action on $L(R)$ is given by Propsition \ref{prop: iota-action on resolution graph/base}.
\end{rmk}

\subsection{K3 Surfaces Associated with ADE Sextic Curves}
\label{subsection: ADE K3}
Let $F$ be a homogeneous sextic polynomial of three variables, and $Z(F)$ be the associated sextic curve in $\PP^2$.
We assume that $Z(F)$ has only simple singularities.
Let $\widehat{X}_F$ be the double cover of $\PP^2$ branched along $Z(F)$. Then $K_{\widehat{X}_F}$ is trivial and we call $\widehat{X}_F$ an ADE K3 surface.
The minimal resolution of $\widehat{X}_F$, denoted by $X_F$, is a smooth K3 surface. The quotient surface $S_F\coloneqq X_F/\iota$ admits a canonical birational morphism onto $\PP^2$. The branched locus of $X_F\to S_F$ is the disjoint union of the strict transform of $Z(F)$ and possibly certain exceptional curves.

Fix a $F\in \calV_{T}$ for a given singular type $T$. 
Denote by $p=p_F\colon X_F\to S_F$ the branched double covering.
Let $\iota=\iota_F$ be the involution on $X_F$ induced by the double covering.
Denote by $\Lambda_F$ the lattice $H^2(X_F, \ZZ)$. 
Let $H_F\in H^2(X_F, \ZZ)$ be the pullback of the hyperplane class in $\PP^2$.
Let $l_{T}(R)$ be the number of singularities of type $R$ that appear in the singular K3 surface associated with an element in $\calV_{T}$.
Denote by $L_F=\oplus_R L(R)^{l_{T}(R)}$, where $R$ runs through all irreducible root lattices of ADE type. Let $P_F$ be the primitive hull of $L_F\oplus \langle H_F\rangle$ in $H^2(X_F,\ZZ)$. 
Notice that $P_F$ is a primitive sublattice of the Picard lattice $\Pic(X_F)$. The induced action of $\iota$ on $\Lambda_F$ is still denoted by $\iota$. This action naturally preserves $\Pic(X_F)$, $L_F$ and $P_F$.
For a lattice $M$ equipped with an involution $\iota$, we write $M^{\iota}$ for the $\iota$-invariant sublattice.
By Proposition \ref{prop: iota-action on resolution graph/base} and Remark \ref{rmk: iota-action on L(R)}, we conclude the following:
\begin{prop}
\label{prop: finite index H^2(S) to H oplus L^iota}
    The pullback $p^*\colon H^2(S_F,\ZZ)\to H^2(X_F,\ZZ)$ induces a finite-index extension $H^2(S_F,\ZZ)\hookrightarrow \langle H_F \rangle\oplus L_F^{\iota}$. 
\end{prop}
\begin{cor}
\label{cor: Lambda_F^iota = P_F^iota}
    We have $\Lambda_F^\iota = P_F^\iota$.
    In particular, $\iota$ acts as $-1$ on $P_F^\perp$, where $P_F^\perp$ denotes the orthogonal complement of $P_F$ in $\Lambda_F$. 
\end{cor}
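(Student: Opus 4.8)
The plan is to derive the corollary from the preceding Proposition together with two standard facts: that $p^*$ identifies the rational cohomology of $S=X_F/\iota$ with the $\iota$-invariant part of $\Lambda_F\otimes\QQ$, and that both $P_F$ and $\Lambda_F^\iota$ are primitive (saturated) sublattices of $\Lambda_F$.

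First I would record the eigenspace picture. Since $\iota$ is an order-two isometry of $\Lambda_F=H^2(X_F,\ZZ)$, over $\QQ$ there is an orthogonal decomposition $\Lambda_F\otimes\QQ = V_+\oplus V_-$ into the $(+1)$- and $(-1)$-eigenspaces, with $V_+=\Lambda_F^\iota\otimes\QQ$; orthogonality holds because $\langle v,w\rangle=\langle \iota v,\iota w\rangle=-\langle v,w\rangle$ whenever $\iota v=v$ and $\iota w=-w$, and $V_+^\perp=V_-$ follows from the nondegeneracy of the unimodular form. For the finite quotient $p\colon X_F\to S$ the transfer argument gives $p^*\colon H^2(S,\QQ)\xrightarrow{\ \sim\ } V_+$, so the rational span of $p^*H^2(S,\ZZ)$ is exactly $V_+=\Lambda_F^\iota\otimes\QQ$.

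To prove $\Lambda_F^\iota=P_F^\iota$, the inclusion $P_F^\iota\subseteq\Lambda_F^\iota$ is automatic from $P_F\subseteq\Lambda_F$. For the reverse, I would invoke the preceding Proposition, which gives $p^*H^2(S,\ZZ)\subseteq \langle H_F\rangle\oplus L_F^\iota$; since $\langle H_F\rangle\oplus L_F^\iota\subseteq \langle H_F\rangle\oplus L_F\subseteq P_F$ by the definition of the primitive hull $P_F$, the rational span $\Lambda_F^\iota\otimes\QQ$ of $p^*H^2(S,\ZZ)$ lies in $P_F\otimes\QQ$. Now take $x\in\Lambda_F^\iota$. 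Then $x\in P_F\otimes\QQ$, so $nx\in P_F$ for some positive integer $n$; as $P_F$ is primitive in $\Lambda_F$, this forces $x\in P_F$, and since $x$ is $\iota$-invariant, $x\in P_F^\iota$. Hence $\Lambda_F^\iota\subseteq P_F^\iota$ and equality follows.

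For the final clause, I note that $\iota$ preserves $P_F$ and hence $P_F^\perp$. From $\Lambda_F^\iota=P_F^\iota\subseteq P_F$ we get $V_+\subseteq P_F\otimes\QQ$, and taking orthogonal complements in $\Lambda_F\otimes\QQ$ yields $P_F^\perp\otimes\QQ=(P_F\otimes\QQ)^\perp\subseteq V_+^\perp=V_-$. Thus every element of $P_F^\perp$ lies in the $(-1)$-eigenspace, i.e.\ $\iota$ acts as $-\Id$ on $P_F^\perp$. The only genuinely non-formal input is the preceding Proposition; once that is available the corollary is a short lattice argument, and the single step requiring care is the passage from $\Lambda_F^\iota\otimes\QQ\subseteq P_F\otimes\QQ$ to $\Lambda_F^\iota\subseteq P_F$, which relies essentially on the primitivity of $P_F$ in $\Lambda_F$.
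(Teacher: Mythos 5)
Your proof is correct and follows essentially the same route as the paper's: both arguments combine the preceding Proposition (giving $p^*H^2(S,\ZZ)\subseteq \langle H_F\rangle\oplus L_F^\iota\subseteq P_F$) with the fact that $p^*H^2(S,\ZZ)$ rationally spans $\Lambda_F^\iota\otimes\QQ$, and then finish by a primitivity (saturation) argument in $\Lambda_F$. The only variation is that where you invoke the rational transfer isomorphism $H^2(S,\QQ)\cong H^2(X_F,\QQ)^\iota$, the paper cites its integral Lemma \ref{lemma: double cover manifold} (the saturation $H^2(S,\ZZ)(2)\hookrightarrow\Lambda_F^\iota$), of which your transfer statement is precisely the rational shadow --- and, as you note, only the equality of rational spans is actually needed.
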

\begin{proof}
    By Lemma \ref{lemma: double cover manifold} (a general topological result, whose proof does not depend on the rest of the paper), $p^*$ induces a finite-index extension $H^2(S,\ZZ)\hookrightarrow \Lambda_F^\iota$.
    Since $\Lambda_F^\iota$ is primitive in $\Lambda_F$, by Proposition \ref{prop: finite index H^2(S) to H oplus L^iota}, $\langle H_F \rangle\oplus L_F^{\iota}$ is a finite-index sublattice in $\Lambda_F^\iota$.
    Since $P_F$ is the primitive hull of $\langle H_F \rangle\oplus L_F$ in $\Lambda_F$, we have $\Lambda_F^\iota\subset P_F$.
    Thus $\Lambda_F^\iota = P_F^\iota$.
\end{proof}

\begin{rmk}
For a nontrivial singular type $T$, the associated K3 surfaces $X_F$ always admit natural elliptic fibration structures.
Explicitly, consider the pencil of lines on $\PP^2$ through any singularity of $Z(F)$. The preimage of a generic such line in $X_F$ is an elliptic curve. Therefore, the K3 surface $X_F$ admits an elliptic fibration. Note that this elliptic fibration may not admit any section (but always admit a multi-section).
\end{rmk}

\section{Occult Period Map and Global Torelli}
\label{section: period map}
In this section, we study the Hodge structures of the associated K3 surfaces $X_F$ for $F\in\calV_T$, and construct the corresponding period map.

\subsection{Period Domain, Monodromy Group and Occult Period Map}
\label{subsection: define period map}
We first define the period domain and monodromy group for a fixed singular type $T$.


Denote by $Q_F=P_F^\perp$ the orthogonal complement of $P_F$ in $\Lambda_F$. Then $P_F$ and $Q_F$ are sublattices of $H^2(X_F,\ZZ)$ with signatures $(1,\sum_R l_{T}(R)\rank(R))$ and $(2,19-\sum_R l_{T}(R)\rank(R))$ respectively. 

\begin{defn}[Period Domain]
\label{defn: period domain}
The space $\PP\{x\in (Q_F)_\CC \big{|} x\cdot x=0, x\cdot \overline{x}>0 \}$ has two connected components that are interchanged by complex conjugation. Denote by $\DD(Q_F)$ the connected component that contains $H^{2,0}(X_F)$. This is the period domain for surfaces $X_F$.
\end{defn}

Let $\Delta_{F}$ be the set of exceptional curves arising from the resolution of singularities, which is a base of $L_F$, see \S \ref{subsection: ADE K3}.
Denote by $\Or(\Lambda_F, \Delta_F, H_F)$ the group of automorphisms $f\in \Or(\Lambda_F)$ such that $f(\Delta_F)=\Delta_F$ and $f(H_F)=H_F$.
An automorphism of $\Lambda_{F}$ which preserves $\Delta_{F}$ and $H_{F}$ necessarily preserves $P_{F}$ and, consequently, induces an automorphism of $Q_{F}$. 
Define $\widetilde\Gamma_{F}$ to be the image of the homomorphism 
\begin{equation*}
    \Or(\Lambda_{F}, \Delta_{F},H_{F})\to \Or(Q_{F}).
\end{equation*}
Let $\Gamma_F$ be the subgroup of $\widetilde\Gamma_{F}$ that preserves $\DD(Q_F)$.
Then $[\widetilde{\Gamma}_F : \Gamma_F] \leq 2$, with equality holds if and only if there exists an element in $\widetilde{\Gamma}_F$ interchanging the two components.
By Corollary \ref{cor: Lambda_F^iota = P_F^iota}, the $\iota_F$ acts as $-1$ on $Q_F$. Thus $-1\in\Gamma_F$ holds for all singular types. 

\begin{prop}
Any element $g\in\Or(Q_{F})$ that acts trivially on the discriminant group $A_{Q_{F}}$ belongs to $\widetilde{\Gamma}_{F}$. In particular, $\Gamma_{F}$ is of finite index in $\Or(Q_{F})$, hence an arithmetic group acting on $\DD(Q_{F})$.
\end{prop}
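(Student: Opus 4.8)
The plan is to realize $g$ as the restriction of an isometry of the whole K3 lattice $\Lambda_F$ that fixes $P_F$ pointwise, using Nikulin's gluing theory of discriminant forms. Since $\Lambda_F=H^2(X_F,\ZZ)$ is even and unimodular and $P_F$ is a primitive sublattice with orthogonal complement $Q_F$, there is a canonical anti-isometry $\gamma\colon A_{P_F}\xrightarrow{\sim} A_{Q_F}$ between the discriminant groups, and $\Lambda_F$ is recovered as the overlattice of $P_F\oplus Q_F$ determined by the graph of $\gamma$. I would take this structure as the starting point.

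For the first assertion, I would consider the isometry $\id_{P_F}\oplus g$ of $P_F\oplus Q_F$ and apply Nikulin's extension criterion: a product isometry $\phi\oplus\psi$ extends to the overlattice $\Lambda_F$ precisely when the induced maps on discriminant groups satisfy $\overline{\psi}\circ\gamma=\gamma\circ\overline{\phi}$. Here $\overline{\id_{P_F}}=\id$, and by hypothesis $\overline{g}=\id$ on $A_{Q_F}$, so the compatibility condition is automatic. This produces an isometry $\widetilde{g}\in\Or(\Lambda_F)$ with $\widetilde{g}|_{P_F}=\id$ and $\widetilde{g}|_{Q_F}=g$. Because $H_F\in P_F$ and $\Delta_F\subset L_F\subset P_F$, the extension $\widetilde{g}$ fixes $H_F$ and fixes each vector of $\Delta_F$; in particular $\widetilde{g}\in\Or(\Lambda_F,\Delta_F,H_F)$ and maps to $g$ under the restriction homomorphism to $\Or(Q_F)$. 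Hence $g\in\widetilde{\Gamma}_F$.

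For the finite-index statement, I note that the isometries acting trivially on $A_{Q_F}$ form the stable orthogonal group $\widetilde{\Or}(Q_F)=\Ker\!\big(\Or(Q_F)\to\Or(A_{Q_F})\big)$. As $Q_F$ is nondegenerate, $A_{Q_F}$ is finite, so $\Or(A_{Q_F})$ is finite and $\widetilde{\Or}(Q_F)$ has finite index in $\Or(Q_F)$. The first part gives $\widetilde{\Or}(Q_F)\subseteq\widetilde{\Gamma}_F$, so $\widetilde{\Gamma}_F$ has finite index; since $[\widetilde{\Gamma}_F:\Gamma_F]\le 2$, the group $\Gamma_F$ also has finite index in $\Or(Q_F)$. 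Arithmeticity and the properly discontinuous action on $\DD(Q_F)$ then follow from the fact that $\Or(Q_F)$ is an arithmetic subgroup of the orthogonal group of the rational quadratic space $Q_F\otimes\QQ$, of which any finite-index subgroup is again arithmetic.

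The only nontrivial ingredient is Nikulin's extension machinery; once it is in place the argument is formal. The single point that deserves care is that the extension must be arranged to restrict to the identity on all of $P_F$, rather than merely to preserve $P_F$ as a set — this is precisely what guarantees membership in $\Or(\Lambda_F,\Delta_F,H_F)$, and it is exactly what the product form $\id_{P_F}\oplus g$ delivers through the trivial compatibility condition.
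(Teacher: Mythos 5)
Your proof is correct, and it fills in exactly what the paper leaves implicit: the paper gives no written argument, asserting only that the proposition "is direct from lattice theory," and the Nikulin gluing argument you supply (extending $\id_{P_F}\oplus g$ across the unimodular overlattice via the triviality of $\overline{g}$ on $A_{Q_F}$, then deducing finite index from finiteness of $\Or(A_{Q_F})$ together with $[\widetilde{\Gamma}_F:\Gamma_F]\le 2$) is precisely the standard argument being invoked.
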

\begin{proof}
    Let $g\in\Or(Q_{F})$ be an element as in the proposition. By \cite[Corollary 1.5.2]{nikulin1979integer}, there exists an element $\widetilde{g}\in \Or(\Lambda_F)$ such that $\widetilde{g}|_{P_F}=\Id_{P_F}$ and $\widetilde{g}|_{Q_F}=g$. Hence $g$ belongs to $\widetilde{\Gamma}_F$ by definition.
\end{proof}


From now on, we choose a base point $F_0\in \calV_T$, and denote by $\Gamma_T=\Gamma_{F_0}$, $\widetilde{\Gamma}_T=\widetilde{\Gamma}_{F_0}$ and $\DD_T=\DD(Q_{F_0})$. 

For $F\in \calV_T$, choose a path $\gamma$ connecting $F$ and $F_0$. 
The path $\gamma$ induces a diffeomorphism from $X_F$ to $X_{F_0}$ together with an isomorphism 
\[
\gamma^*\colon (\Lambda_F, \Delta_F, H_F, P_F, Q_F)\cong (\Lambda_{F_0}, \Delta_{F_0}, H_{F_0}, P_{F_0}, Q_{F_0}).
\]
The line $\gamma^*(H^{2,0}(X_F))$ represents a point in $\DD_T=\DD(Q_{F_0})$.

Next we define the period map for $X_F$. Choose any two paths $\gamma$, $\gamma'$ in $\calV_T$ connecting $F$ and $F_0$.
Then $\gamma'^*\circ (\gamma^*)^{-1}$ is an automorphism of $(\Lambda_{F_0}, \Delta_{F_0}, H_{F_0}, P_{F_0}, Q_{F_0})$, which induces an element in $\Gamma_T$. Therefore, we have an analytic map
\[
\Prd_T\colon \calV_T\to \Gamma_T\bs \DD_T,
\]
which descends to 
\[
\Prd_T\colon \calM_T\to \Gamma_T\bs \DD_T.
\]
The analytic map $\Prd_T$ is acturally algebraic. This can be deduced from its extension to certain compactifications on both sides, see Theorem \ref{thm: comm diag, compact to compact}. An alternative argument follows the proof of Proposition 2.2.2 in \cite{hassett2000special} using Baily–Borel compactification and the Borel extension theorem.
Following \cite{kudla2012occult}, we call $\Prd_T$ the \emph{occult period map} for sextic curves of type $T$.

\subsection{Global Torelli}
We continue with the notation in \S\ref{subsection: define period map}. Based on the global Torelli theorem for K3 surfaces \cite{burns1975torelli}, we prove the following global Torelli theorem for occult period maps of singular sextic curves.
\begin{thm}
\label{theorem: global torelli}
For any type $T$, the period map $\Prd_T\colon \calM_T \to \Gamma_{T}\bs \DD_T$ is injective.
\end{thm}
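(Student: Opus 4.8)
The plan is to deduce the statement from the strong global Torelli theorem for projective K3 surfaces \cite{burns1975torelli}, using the precise definition of $\Gamma_T$ to upgrade an equality of periods into a genuine Hodge isometry that automatically respects the geometry. Suppose $\Prd_T(F)=\Prd_T(F')$ for $F,F'\in\calV_T$. First I would translate this into lattice data. By definition of the period map, after fixing markings $\gamma^*$, $\gamma'^*$ to the base point, the periods of $F$ and $F'$ differ by an element $\bar g\in\Gamma_T$. Since $\Gamma_T$ is the image of $\Or(\Lambda_{F_0},\Delta_{F_0},H_{F_0})$ in $\Or(Q_{F_0})$, I can lift $\bar g$ to an isometry $g\in\Or(\Lambda_{F_0},\Delta_{F_0},H_{F_0})$ and form
\[
\phi\coloneqq (\gamma^*)^{-1}\circ g\circ\gamma'^*\colon \Lambda_{F'}\xrightarrow{\ \sim\ }\Lambda_F.
\]
By construction $\phi(H_{F'})=H_F$ and $\phi(\Delta_{F'})=\Delta_F$, so $\phi$ preserves $P$ and $Q$; moreover $\phi$ carries $H^{2,0}(X_{F'})$ to $H^{2,0}(X_F)$ because $\bar g$ matches the two periods, while it preserves $P_F\subset H^{1,1}$. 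Hence $\phi$ is an \emph{integral Hodge isometry} $H^2(X_{F'},\ZZ)\to H^2(X_F,\ZZ)$ preserving $H$ and the resolution base $\Delta$.

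The heart of the argument is to show that $\phi$ is \emph{effective}, i.e.\ that it maps the ample cone of $X_{F'}$ onto the ample cone of $X_F$. For this I would first record the uniform geometric input: the nef class $H_F$ defines the degree two map $X_F\to\PP^2$, and I claim the irreducible $(-2)$-curves orthogonal to $H_F$ are \emph{exactly} the exceptional curves of the resolution, namely the members of $\Delta_F$. Indeed, any curve $C$ with $H_F\cdot C=0$ is contracted by $X_F\to\PP^2$; since the double cover $\widehat X_F\to\PP^2$ is finite, $C$ must already be contracted by $X_F\to\widehat X_F$, hence is one of the resolution curves. A Riemann--Roch argument then shows $H_F^\perp\cap\Pic(X_F)$ contains no $(-2)$-class outside $L_F$, so $\Delta_F$ is a base for the finite root system $\Phi=\{\delta\in H_F^\perp\cap\Pic(X_F):\delta^2=-2\}$. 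Since the ample cone is the chamber of the positive cone on which all effective $(-2)$-curves are positive, $\mathrm{Amp}(X_F)$ is precisely the chamber adjacent to $H_F$ singled out by positivity against $\Delta_F$, and likewise for $X_{F'}$ and $\Delta_{F'}$. Because $\phi$ preserves the positive cone (as $\phi(H_{F'})=H_F$) and satisfies $\phi(\Delta_{F'})=\Delta_F$, it carries the chamber cut out by $\Delta_{F'}$ to the chamber cut out by $\Delta_F$; that is, $\phi(\mathrm{Amp}(X_{F'}))=\mathrm{Amp}(X_F)$, so no Weyl-group correction is needed.

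With effectivity in hand, the strong global Torelli theorem \cite{burns1975torelli} produces a unique isomorphism $f\colon X_F\to X_{F'}$ with $f^*=\phi$, so in particular $f^*H_{F'}=H_F$. Finally I would descend this to the plane: an isomorphism $f$ with $f^*H_{F'}=H_F$ identifies the complete linear systems $|H_F|$ and $|H_{F'}|$, hence intertwines the two degree two maps to $\PP^2$ up to a projective transformation $\bar f\in\PGL(3)$, and $\bar f$ carries the branch sextic $Z(F)$ to $Z(F')$. Therefore $F$ and $F'$ define the same point of $\calM_T=\PGL(3)\dbs\calV_T$, which gives injectivity.

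I expect the main obstacle to be the effectivity step, specifically the clean identification of $\Delta_F$ with the irreducible $(-2)$-curves orthogonal to $H_F$ together with the chamber-theoretic conclusion $\phi(\mathrm{Amp}(X_{F'}))=\mathrm{Amp}(X_F)$. This is exactly where the precise choice of $\Gamma_T$ (automorphisms preserving the base $\Delta$, not merely $Q$) is essential: a larger group would allow $\phi$ to permute the walls of the ample cone and destroy effectivity, while the present choice guarantees that $\phi$ lands in the correct chamber. The remaining inputs---the lift to $\Or(\Lambda,\Delta,H)$ and the descent to $\PGL(3)$---are formal once effectivity is established.
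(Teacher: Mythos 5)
Your proposal is correct and follows essentially the same route as the paper's proof: lift the group element to $\Or(\Lambda_{F_0},\Delta_{F_0},H_{F_0})$ using the definition of $\Gamma_T$, observe that the $(-2)$-curves orthogonal to $H_F$ are exactly the resolution curves in $\Delta_F$ so that preservation of $(H,\Delta)$ forces the Hodge isometry to be effective, apply the strong global Torelli theorem, and descend to $\PP^2$. The only differences are cosmetic: the paper certifies effectivity by exhibiting the explicit ample classes $\alpha_F=\beta_F+cH_F$ and $\alpha_{F'}=\phi(\beta_F)+cH_{F'}$ via Nakai--Moishezon rather than your chamber-theoretic argument, and it descends to the plane via compatibility with the deck involutions (using faithfulness of the $\Aut$-action on $H^2$) rather than your direct identification of the complete linear systems $|H_F|$ and $|H_{F'}|$ and their branch loci.
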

\begin{proof}
 Taking $F$, $F'\in \calV_T$ such that $\Prd_T(F)=\Prd_T(F')$, we aim to show that $Z(F)$ is linearly isomorphic to $Z(F')$. We first show that the K3 surfaces $X_F$ and $X_{F'}$ are isomorphic.
Choose two paths $\gamma_F$ and $\gamma_{F'}$ connecting $F$ and $F'$ to $F_0$.
There exists an element $g\in \Gamma_{T}$ such that $g\cdot\gamma_F^*(H^{2,0}(X_F))=\gamma_{F'}^*(H^{2,0}(X_{F'}))$.
By the definition of $\Gamma_{T}$, there exists $\widetilde{g}\in \Or(\Lambda_{F_0}, \Delta_{F_0},H_{F_0})$ whose restriction to $Q_{F_0}$ is $g$.
Then the isomorphism 
\[
\phi=(\gamma_{F'}^*)^{-1}\circ \widetilde{g}\circ \gamma_F^* \colon (\Lambda_F, \Delta_F, H_F)\cong (\Lambda_{F'}, \Delta_{F'}, H_{F'})
\]  
maps $H^{2,0}(X_F)$ to $H^{2,0}(X_{F'})$.

Choose an element $\beta_F$ in the Weyl chamber of $L_F$ relative to $\Delta_F$ and let $\beta_{F'}=\phi(\beta_F)$. For a smooth rational curve $D$ in $X_F$, we have either $[D]\cdot H_F$ a positive integer or $[D]\in \Delta_F$. The latter case implies $[D]\cdot \beta_F>0$. Thus there exists a positive integer $c$ such that $\alpha_F=\beta_F+cH_F$ has a positive intersection with any such $[D]$. By Nakai--Moishezon criterion for K3 surfaces, $\alpha_F$ is ample. If $c$ is large enough, we may ask $\alpha_{F'}=\beta_{F'}+cH_{F'}$ also ample. 
Then the isomorphism $\phi\colon \Lambda_F\to \Lambda_{F'}$ is a Hodge isometry that identifies the two ample classes $\alpha_F$ and $\alpha_{F'}$. By the global Torelli theorem for K3 surfaces \cite{burns1975torelli}, there exists a unique isomorphism $\Phi\colon X_F\to X_{F'}$ induces $\phi$. 

We finally aim to construct an isomorphism of $Z(F)$ to $Z(F')$.
Notice that $\phi$ is compatible with the deck transformations $\iota_F$ and $\iota_{F'}$. By the faithfulness of the induced action of automorphisms of K3 surfaces on the middle cohomology \cite{looijenga1980torelli}, $\Phi$ is compatible with $\iota_F$ and $\iota_{F'}$.
Hence $\Phi$ induces isomorphisms $\Fix(\iota_F)\cong \Fix(\iota_{F'})$ and $S_F\cong S_{F'}$. Furthermore, $\Phi$ induces isomorphism between linear systems of $H_F$ and $H_{F'}$. Therefore, $\Phi$ descends to a linear isomorphism $(\PP^2, Z(F))\cong (\PP^2, Z(F'))$. The injectivity of $\Prd_T\colon\calM_T\to \Gamma_T\bs \DD_T$ follows.
\end{proof}

\subsection{Image of the Period Map $\Prd_T$}
\label{subsec: image of period map}
Next we characterize the image of the period map $\Prd_T$. Many of the results in this section are essentially a reinterpretation of \cite[Theorem 1.16]{urabe1988combinations} and \cite[Theorem 3.4.1]{degtyarev2008deformations}. Our description focuses on the level of moduli spaces and arithmetic quotients.


Since we fix the base point $F_0$, we omit the subscript $F_0$ for $\Lambda_{F_0},H_{F_0},\Delta_{F_0},L_{F_0},P_{F_0},Q_{F_0}$ throughout this section.
Let $\calH_T$ be the union of $r^{\perp}\subset \DD_T$ with $r$ running through all roots $r$ such that $r\perp H$ and $r\notin L$.


\begin{thm}
\label{thm: open}
For any type $T$, the period map $\Prd_T\colon\calM_T\to\Gamma_T\bs\DD_T$ induces an isomorphism $\calM_T \cong \Gamma_T\bs (\DD_T-\calH_T)$.
\end{thm}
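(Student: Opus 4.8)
The plan is to upgrade the injectivity of Theorem~\ref{theorem: global torelli} to an open embedding by a dimension count, and then to identify the image with $\Gamma_T\bs(\DD_T-\calH_T)$ by proving the two inclusions separately; the surjectivity of the image will be the substantial part.

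\emph{Open embedding.} First I would note that the two sides are equidimensional. Since $\PP\calV_T$ is smooth of the expected dimension,
\[
\dim\calM_T=\dim\PP\calV_T-\dim\PGL(3)=\Big(27-\textstyle\sum_R l_T(R)\rank(R)\Big)-8=19-\textstyle\sum_R l_T(R)\rank(R),
\]
while $Q$ has signature $(2,19-\sum_R l_T(R)\rank(R))$, so the type IV domain $\DD_T$ has exactly this complex dimension. By the local Torelli theorem for K3 surfaces the period map on $P$-polarized deformations is a local isomorphism onto $\DD_T$, and the forgetful map from equisingular deformations of $Z(F)$ to $P$-polarized deformations of $X_F$ is injective on tangent spaces; equidimensionality then forces $\Prd_T$ to be a local isomorphism. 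Combined with injectivity, $\Prd_T$ is an open embedding. Since $\calH_T$ is a locally finite union of hyperplanes, $\Gamma_T\bs(\DD_T-\calH_T)$ is open in $\Gamma_T\bs\DD_T$, so it remains only to identify the image set-theoretically with this open subset.

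\emph{The image avoids $\calH_T$.} Fix $F\in\calV_T$ and a marking $\phi\colon H^2(X_F,\ZZ)\cong\Lambda$ carrying $H_F,\Delta_F,L_F$ to $H,\Delta,L$, so that $\omega_F=\Prd_T(F)$ is represented by $\phi(H^{2,0}(X_F))$. Suppose $\omega_F\cdot r=0$ for some root $r\perp H$ with $r\notin L$. Then $\phi^{-1}(r)$ is integral and orthogonal to $H^{2,0}(X_F)$, hence of type $(1,1)$, so $\phi^{-1}(r)\in\Pic(X_F)$ by the Lefschetz theorem. Now $H_F$ is nef and big (the pullback of the hyperplane class under $X_F\to\PP^2$), and the associated contraction $X_F\to\widehat X_F$ contracts exactly the curves orthogonal to $H_F$; the irreducible such curves are precisely the exceptional divisors $\Delta_F$, which form a base of $L_F$. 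Therefore $\Delta_F$ is a set of simple roots for $\{s\in\Pic(X_F)\colon s^2=-2,\ s\cdot H_F=0\}$, so every root of this system lies in $L_F$. In particular $\phi^{-1}(r)\in L_F$, i.e. $r\in L$, contradicting the hypothesis. Hence $\omega_F\notin\calH_T$ and the image lies in $\Gamma_T\bs(\DD_T-\calH_T)$.

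\emph{Surjectivity onto $\DD_T-\calH_T$.} Fix $\omega\in\DD_T-\calH_T$. By the surjectivity of the period map for K3 surfaces there is a marked K3 surface $(X_\omega,\phi)$ with period $\omega$; as $\omega\in Q=P^\perp$ we have $P\subseteq\Pic(X_\omega)=\omega^\perp\cap\Lambda$, so $X_\omega$ is $P$-polarized. Applying a suitable element of the Weyl group of $\Pic(X_\omega)$ and possibly $-\Id$, I would arrange $H_\omega:=\phi^{-1}(H)$ to be nef in the positive cone. The condition $\omega\notin\calH_T$ says that every $(-2)$-class of $\Pic(X_\omega)$ orthogonal to $H_\omega$ lies in $L$, so the root system orthogonal to $H_\omega$ is exactly $R$. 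I would then verify that $|H_\omega|$ is base-point-free and defines a degree two morphism $\pi_\omega\colon X_\omega\to\PP^2$ contracting precisely the $(-2)$-curves orthogonal to $H_\omega$, using the numerical criteria of Saint-Donat for $H_\omega^2=2$ (the unigonal/hyperelliptic alternative being excluded by $\omega\notin\calH_T$). Its Stein factorization yields a double cover $\widehat X_\omega\to\PP^2$ with ADE singularities of root system $R$, branched along a sextic $Z_\omega$.

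\emph{Main obstacle.} The delicate step is to promote ``$Z_\omega$ has root system $R$'' to ``$Z_\omega$ has singular type $T$'', since distinct singular types may share the same $R$. For this I would perform the construction above in families over $\DD_T-\calH_T$: the automatic semiregularity of curves on K3 surfaces makes the resulting family of branch sextics equisingular, so the singular type is locally constant. Because $\DD_T-\calH_T$ is connected (a type IV domain minus a locally finite hyperplane arrangement) and contains the period of the base point $F_0$, every $Z_\omega$ acquires the singular type of $Z(F_0)$, giving $Z_\omega\in\calV_T$ with $\Prd_T(Z_\omega)=[\omega]$. Together with the open embedding and the inclusion proved above, this identifies the image with the open subset $\Gamma_T\bs(\DD_T-\calH_T)$ and yields the asserted isomorphism $\calM_T\cong\Gamma_T\bs(\DD_T-\calH_T)$.
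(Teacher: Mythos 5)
Your route is the same as the paper's (surjectivity of the K3 period map, Weyl-group adjustment to make $H_\omega$ nef, Saint-Donat/Urabe to produce the degree-two map, semiregularity in families plus connectedness of $\DD_T-\calH_T$ to pin down the singular type), but there is a genuine gap at the step where you dispose of the Saint-Donat alternative. Your parenthetical claim that ``the unigonal/hyperelliptic alternative is excluded by $\omega\notin\calH_T$'' is not correct as stated. If $E\in\Pic(X_\omega)$ satisfies $E^2=0$ and $E\cdot H_\omega=1$, then $r=2E-H_\omega$ is a root orthogonal to both $H_\omega$ and $\omega$, and the hypothesis $\omega\notin\calH_T$ only forces $r\in L$; this yields a contradiction only if one knows in addition that no root of $L$ has divisibility two in $H^\perp$, i.e.\ that $\tfrac{1}{2}(H+r)\notin\Lambda$ for every root $r\in L$. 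That extra fact is exactly the content of the paper's Lemma \ref{lem: no calH'_T}: there the hypothetical class $E$ is shown to lie in $P$, hence in $\Pic(X_{F_0})$ for the base sextic $F_0$, contradicting via \cite[Proposition 1.7]{urabe1988combinations} the fact that $|H_{F_0}|$ already defines a degree-two morphism to $\PP^2$. So the exclusion requires either the existence of an actual sextic of type $T$ (the base point) or a lattice computation as in Lemma \ref{lem: div(root in L)=1}; it does not follow from $\omega\notin\calH_T$ alone, and without it your construction of $\pi_\omega$ does not get off the ground.

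A second, smaller omission: your final assertion $\Prd_T(Z_\omega)=[\omega]$ is not automatic. The occult period map is computed with a marking induced by a path in $\PP\calV_T$ from $Z_\omega$ to the base point, whereas $X_\omega$ carries the marking from your construction; one must check that the discrepancy between the two lies in $\Gamma_T$, i.e.\ extends to an element of $\Or(\Lambda,\Delta,H)$ preserving $\DD_T$. Since $\Gamma_T$ is deliberately smaller than $\Or(Q)$, this is where the occultness must be confronted: you need to arrange, by composing with a further element of $W(L)$ (which acts trivially on $Q$ and so does not move $\omega$), that the effective base of contracted curves maps to $\Delta$ itself and not merely to some base of $L$. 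This bookkeeping is the content of the paper's Lemma \ref{lem: Prd_T coincides with global period map} and should appear explicitly in your argument.
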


The orthogonal complement $H^\perp$ of $H$ in $\Lambda$ has signature $(2,19)$ and defines a $19$-dimensional type IV domain $\DD(H^\perp)$.
There is a natural inclusion $\DD_T \subset \DD(H^\perp)$.

\begin{lem}
\label{lem: no calH'_T}
    For any $\omega\in \DD_T-\calH_T$, there does not exist $u\in \Lambda$ such that $u\cdot\omega=0$, $u^2=0$ and $u\cdot H=1$.
\end{lem}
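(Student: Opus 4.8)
The plan is to argue by contradiction and convert a hypothetical isotropic class into a root orthogonal to both $H$ and $\omega$. Suppose $u \in \omega^\perp_\Lambda$ satisfies $u^2 = 0$ and $u \cdot H = 1$. First I would form the auxiliary class $v := 2u - H$. Since $H^2 = 2$, the identities $v^2 = 4u^2 - 4\,u\cdot H + H^2 = -2$ and $v \cdot H = 2\,u\cdot H - H^2 = 0$ show that $v$ is a root orthogonal to $H$. Moreover $\omega \in \DD_T \subset Q_\CC = P^\perp_\CC$ gives $H \cdot \omega = 0$, and together with $u \cdot \omega = 0$ this yields $v \cdot \omega = 0$. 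Thus the whole question reduces to deciding whether or not $v$ lies in $L$.

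Next I would split into two cases according to whether $u \in P$. Because $P$ is primitive (saturated) in $\Lambda$ and $2u = v + H$ with $H \in P$, one has $v \in L \Rightarrow v + H \in P \Rightarrow u \in P$, and conversely $u \in P \Rightarrow v = 2u - H \in P$. Hence if $u \notin P$, then $v \notin P$, so in particular $v \notin L$ and the orthogonal projection of $v$ to $Q$ is nonzero; the hyperplane $v^\perp \cap \DD_T$ is then a genuine member of $\calH_T$ passing through $\omega$, contradicting $\omega \in \DD_T - \calH_T$. This disposes of the case $u \notin P$.

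The remaining case $u \in P$ (equivalently $v \in L$) is the crux, and here the arrangement argument is useless: any $u \in P$ is automatically orthogonal to every period $\omega \in \DD_T$, so such a class, if it existed, would violate the statement for all $\omega$ simultaneously. I would therefore exclude its existence outright using the geometry of the base point. Since $P = P_{F_0}$ is a primitive sublattice of $\Pic(X_{F_0})$ and $H = H_{F_0}$ is the pullback of $\calO_{\PP^2}(1)$ along the degree-two morphism $X_{F_0} \to \PP^2$ of \S\ref{subsection: ADE K3}, the complete linear system $|H|$ realizes this double cover and is in particular base-point free. By Saint-Donat's analysis of degree-two K3 surfaces, the existence of a class with $u^2 = 0$ and $u \cdot H = 1$ is exactly the obstruction to base-point-freeness (the unigonal case), so no such class exists in $\Pic(X_{F_0}) \supseteq P$. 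Concretely, after replacing $u$ by $-u$ if necessary (Riemann--Roch gives $\chi(u) = 2$, while $u \cdot H = 1 > 0$ with $H$ nef forces $u$ effective), such a $u$ would map a genus-one curve isomorphically onto a line, which is absurd. This contradiction finishes the proof.

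I expect the case $u \in P$ to be the main obstacle. It is the only point where lattice theory alone does not suffice and one must feed in that the source is a genuine sextic double cover; one must also be slightly careful in passing from the numerical class $u$ to an actual curve, which is precisely what Saint-Donat's (or Mayer's) projective-model results for K3 surfaces supply.
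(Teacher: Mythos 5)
Your proof is correct and follows essentially the same route as the paper: form the root $v=2u-H$, use $\omega\notin\calH_T$ (your case split on $u\in P$ is just the contrapositive packaging of this step) to force $u\in P\subset\Pic(X_{F_0})$, and then contradict the existence of the degree-two morphism $X_{F_0}\to\PP^2$ via the Saint-Donat/Urabe criterion — the paper cites Urabe's Proposition 1.7 for exactly this. The only cosmetic differences are that the paper runs the argument as a linear chain rather than a dichotomy, and leaves the base-point-free/unigonal analysis entirely to the citation instead of sketching the Riemann--Roch effectivity argument.
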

\begin{proof}
Suppose there exists an element $u$ satisfying the requirement.
We observe that $(2u-H)^2=-2$ and $(2u-H)\cdot H=0$, thus $2u-H$ is a root in $\langle H,\omega \rangle^\perp_{\Lambda}$.
    By the choice of $\omega$, we have $2u-H\in L$, then $u\in P \subset \Pic(X_{F_0})$.
Hence $\Pic(X_{F_0})$ contains $u$ such that $u^2=0$ and $u\cdot H=1$.
    By \cite[Proposition 1.7]{urabe1988combinations}, this contradicts the fact that a line bundle on $X_{F_0}$ representing $H$ defines a surjective morphism $X_{F_0}\to \PP^2$ of degree two.
\end{proof}

Denote by $\omega_0\coloneqq H^{2,0}(X_{F_0})$.
Take any $\omega\in\DD_T-\calH_T$.
By surjectivity of the period map for marked complex K3 surfaces, there exists a K3 surface $X_\omega$ with a marking $\phi_\omega\colon H^2(X_\omega,\ZZ)\to \Lambda$ such that $\phi_\omega(H^{2,0}(X_\omega))=\omega$.
From now on, we fix $\omega$ throughout the proof.
The following lemma is essentially proved by Urabe \cite[Theorem 1.16]{urabe1988combinations}:
\begin{lem}
\label{lem: X_omega determines Z(F)}
    The K3 surface $X_\omega$ is the minimal resolution of the double cover of $\PP^2$ branched along a singular sextic curve $Z_\omega$.
    The singular sets of $Z_\omega$ is determined by $\Delta$.
\end{lem}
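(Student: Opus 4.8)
The plan is to recover the double-plane structure on $X_\omega$ from the complete linear system of the class corresponding to $H$, using Lemma \ref{lem: no calH'_T} to guarantee good behavior of this system and the hypothesis $\omega\notin\calH_T$ to control the singularities. Throughout I identify $\Pic(X_\omega)$ with $\omega^\perp_\Lambda$ via the marking $\phi_\omega$, so that $P\subset\Pic(X_\omega)$ and in particular $h:=\phi_\omega^{-1}(H)$ is a class of square $2$ in $\Pic(X_\omega)$.

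First I would produce a nef polarization. After replacing $h$ by $-h$ if necessary so that it lies in the positive cone, there is a unique element $w$ of the Weyl group $W(X_\omega)$ generated by reflections in the $(-2)$-classes of $\Pic(X_\omega)$ such that $H_\omega:=w(h)$ is nef; it still satisfies $H_\omega^2=2$. Next I would invoke Saint-Donat's analysis of linear systems on K3 surfaces: a nef class of square $2$ has $h^0=3$, and its linear system is base-point-free unless there is an isotropic class $E\in\Pic(X_\omega)$ with $E\cdot H_\omega=1$. Such an $E$ would produce, after transporting through $w$ and the sign, an isotropic $u\in\omega^\perp_\Lambda$ with $u\cdot H=1$, contradicting Lemma \ref{lem: no calH'_T}. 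Hence $|H_\omega|$ is base-point-free and defines a morphism $\pi_\omega\colon X_\omega\to\PP^2$ which, since $H_\omega$ is big with $H_\omega^2=2$ and the image is all of $\PP^2$, is generically finite of degree two.

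The morphism $\pi_\omega$ contracts exactly the irreducible curves $C$ with $C\cdot H_\omega=0$; by the Hodge index theorem and adjunction these are smooth rational $(-2)$-curves, and the Stein factorization $X_\omega\to\widehat{X}_\omega\xrightarrow{p_\omega}\PP^2$ realizes $\widehat{X}_\omega$ as the normal surface obtained by contracting them, together with a finite double cover $p_\omega$. Because this contraction is crepant ($X_\omega$ being a K3 surface), $\widehat{X}_\omega$ has trivial canonical class and only rational double points; the canonical bundle formula for the double cover then forces the branch locus $Z_\omega$ to be a reduced sextic, and the local dictionary of \S\ref{subsection: resolution of ADE} identifies its singularities with the rational double points of $\widehat{X}_\omega$, hence as simple curve singularities.

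Finally I would pin down the singular type. The contracted curves span the root system of $H_\omega^\perp\cap\Pic(X_\omega)$, which through $w$ and $\phi_\omega$ is isometric to the root system of $H^\perp\cap\omega^\perp_\Lambda$. Here the hypothesis $\omega\notin\calH_T$ is decisive: any root $r$ with $r\perp H$ and $r\perp\omega$ must lie in $L$, since otherwise $\omega\in r^\perp\subset\calH_T$. Thus the root system orthogonal to $H_\omega$ has exactly the ADE type of $L=\langle\Delta\rangle$, so the connected components of the contracted configuration---equivalently the rational double points of $\widehat{X}_\omega$, and hence the singular points of $Z_\omega$---realize precisely the irreducible ADE summands recorded by $\Delta$. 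I expect the main obstacle to be the base-point-freeness step: one must correctly invoke Saint-Donat's classification to exclude the degenerate (fixed-component) behavior of $|H_\omega|$, and verify that the lattice obstruction it produces is exactly the isotropic vector forbidden by Lemma \ref{lem: no calH'_T} after passing through the Weyl element and the sign change.
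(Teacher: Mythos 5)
Your proposal is correct and takes essentially the same approach as the paper: reflect $\phi_\omega^{-1}(H)$ into the nef cone by Weyl reflections in $(-2)$-classes, use Lemma \ref{lem: no calH'_T} (via Saint-Donat's analysis, where the paper cites Urabe's Proposition 1.7 to the same effect) to obtain the base-point-free degree-two morphism to $\PP^2$, and use $\omega\notin\calH_T$ to identify the contracted root configuration with $L$, hence the singular set of $Z_\omega$ with $\Delta$ --- the paper merely packages the reflections into a modified marking $\phi'_\omega$ and cites BHPV III.7.2 for the sextic branch locus where you use the canonical bundle formula. One harmless inaccuracy: the Weyl element $w$ is not unique in this situation (the nef representative $H_\omega$ is unique, but it lies on walls of the chamber structure precisely because it is orthogonal to $(-2)$-curves); only existence is needed, so the argument is unaffected.
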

\begin{proof}
    By \cite[Chapter 8, \S 2, Corollary 2.9]{huybrechts2016lectures}, there exist rational curves $C_1,\cdots,C_n$ on $X_\omega$ such that 
    \begin{equation*}
        s_{[C_1]}\circ\cdots\circ s_{[C_n]}(\phi_\omega^{-1}H)
    \end{equation*}
    is a nef class with self-intersection number $2$, where $[C_i]$ is a root in $\Pic(X_\omega)$ and $s_{[C_i]}$ denotes the refleciton with respect to the root $[C_i]$.
    Since $s_{[C_i]}$ acts trivially on $A_{\Pic(X_\omega)}$, it naturally extends to an isometry $\widetilde{s_{[C_i]}}$ of $H^2(X_\omega,\ZZ)$ that acts trivally on $\Pic(X_\omega)^\perp$. 
    Denote by 
    \begin{equation*}
        \phi'_\omega\coloneqq \phi_\omega\circ \widetilde{s_{[C_n]}}\circ\cdots\circ \widetilde{s_{[C_1]}}.
    \end{equation*}
    The new marking $\phi'_\omega$ of $X_\omega$ satisfies that $\phi'^{-1}_\omega H$ is nef and $\phi'_\omega(H^{2,0}(X_\omega))=\omega$.
    By \cite[Proposition 1.7]{urabe1988combinations} and Lemma \ref{lem: no calH'_T}, the complete linear system of $\phi'^{-1}_\omega H$ defines a surjective morphism $\pi_\omega\colon X_\omega\to \PP^2$ of degree $2$.

    By the choice of $\omega$, the root lattice of $\{x\in H^\perp \big{|} x \perp \omega \}$ equals to $L$.  
    (Note that $\omega$ not only determines the root lattice $L$ up to isomorphism, but also determines an embedding of $L$ into $H^\perp\subset\Lambda$.)
    We first show that all rational curves contracted by $\pi_\omega$ form a base of $\phi'^{-1}_\omega L$.
    Let $C$ be a connected curve in $X$ that is contracted by $\pi_\omega$.
    This is equivalent to say that $C\cdot \phi'^{-1}_\omega H = 0$.
    By Hodge index theorem, $C^2<0$, hence $C^2=-2$. This implies $[C]\in \phi'^{-1}_\omega L$.
    Moreover, $C$ is a connected $\PP^1$-graph whose dual graph is a Dynkin diagram of certain ADE type.
    Conversely, any rational curve in $X$ that defines a root in $L$ is contracted by $\pi_\omega$, since it is orthogonal to $\phi'^{-1}_\omega H$.
    The cohomology classes of all these rational curves generate the root lattice $\phi'^{-1}_\omega L$.
    Hence they form a base of $\phi'^{-1}_\omega L$.

    This concludes that $\pi_\omega$ factor through a birational morphism $X_\omega \to \widehat{X}_\omega$ that contracts all rational curves orthogonal to $\phi'^{-1}_\omega H$, where these rational curves form the unique effective base of $\phi'^{-1}_\omega L$. Thus the base is exactly $\phi'^{-1}_\omega \Delta$.
    The normal surface $\widehat{X}_\omega$ is a singular K3 surface with ADE singularities, where the singularities are the image of rational curves in $\phi'^{-1}_\omega \Delta$.
    Since $\pi_\omega$ is surjective, the induced map $p_\omega\colon \widehat{X}_\omega \to \PP^2$ is also surjective.
    Hence $p_\omega$ is a branched double covering.
    By \cite[III, \S 7, Theorem 7.2]{barth2004compact}, the branched locus of $p_\omega$ is a sextic curve $Z_\omega$ on $\PP^2$.
    Therefore, the map $\pi_\omega$ is a double cover branched along a sextic curve composed of the contraction of $(-2)$-curves on $X_\omega$, and the singularities of the sextic curve $Z_\omega$ are the images of $(-2)$-curves in $\phi'^{-1}_\omega \Delta$.
\end{proof}

The following lemma is essentially proved by Degtyarev \cite[Theorem 3.4.1]{degtyarev2008deformations}. We provide an alternative proof for the reader's convenience.
\begin{lem}
\label{lem: Z_omega has type T}
    The singular sextic curve $Z_\omega$ is equisingular deformation equivalent to $Z(F_0)$, namely, $Z_\omega\in \PP\calV_T$.
\end{lem}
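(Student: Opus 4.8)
Lemma \ref{lem: X_omega determines Z(F)} already shows that $Z_\omega$ carries the prescribed collection of ADE singularities; what remains, and is the whole content of the present lemma, is to upgrade this to membership in the specific irreducible component $\calV_T$, i.e.\ to exhibit an equisingular deformation from $Z_\omega$ to $Z(F_0)$ rather than a mere coincidence of abstract singular types. The plan is to produce such a deformation by connecting the period points $\omega$ and $\omega_0 = H^{2,0}(X_{F_0})$ inside $\DD_T - \calH_T$ and realizing the connecting path by a family of sextics of constant singular type. Since $\calH_T$ is a locally finite union of complex hyperplanes in the connected complex manifold $\DD_T$, the complement $\DD_T - \calH_T$ is path-connected, so I may fix a path $\gamma$ from $\omega_0$ to $\omega$ inside it.

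Over $\gamma$ I would build a family of marked K3 surfaces with periods along $\gamma$, using the surjectivity of the period map together with local Torelli and the Kuranishi family; because the interval is contractible there is no monodromy obstruction to doing this globally along $\gamma$, and each fiber $\calX_t$ comes with a marking identifying $H^2(\calX_t,\ZZ)$ with $\Lambda$ preserving the data $(H, \Delta, L, P, Q)$. The crucial observation is that along $\gamma$ the roots orthogonal to both $H$ and the period are exactly the roots of $L$: the condition $\gamma(t) \notin \calH_T$ excludes every root $r \perp H$ with $r \notin L$. Hence the $(-2)$-curves that $H$ contracts form a configuration with constant dual graph equal to the Dynkin diagram of $R$.

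I would then run the construction of Lemma \ref{lem: X_omega determines Z(F)} in families. Covering $\gamma$ by finitely many charts $B$ on each of which a fixed Weyl reflection makes $\phi^{-1}H$ relatively nef, Lemma \ref{lem: no calH'_T} together with \cite[Proposition 1.7]{urabe1988combinations} shows that $|\phi^{-1}H|$ induces a relative degree-two morphism onto $\PP^2$ contracting precisely these $(-2)$-curves. This produces a relative singular K3 surface $\widehat{\calX}$, a relative double cover branched along a relative sextic $\calZ \to B$ whose fibers carry ADE singularities of the types dictated by $\Delta$; as the contracted configuration is rigid, $\calZ \to B$ is equisingular. The automatic semiregularity of curves on K3 surfaces is the device that makes this construction work in families: it ensures that the branch sextic deforms flatly with the K3 surface and that the equisingular family has the expected dimension, so that the charts glue to an equisingular family over all of $\gamma$. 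Transitivity of equisingular deformation equivalence then identifies $Z_\omega = \calZ_\omega$ with $Z(F_0) = \calZ_{\omega_0}$, giving $Z_\omega \in \PP\calV_T$.

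The main obstacle is the coherent construction of the relative morphism over the entire path: one must guarantee that $\phi^{-1}H$ stays relatively nef and base-point free as $t$ varies, i.e.\ that no wall of the nef cone orthogonal to $H$ is crossed — this is exactly what the hypothesis $\gamma(t) \notin \calH_T$ controls — and one must check that the resulting branch sextics form a flat family of constant equisingular type. The semiregularity of the branch curve on each fiber is the technical input that settles the deformation-theoretic half of this obstacle.
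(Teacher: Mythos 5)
Your proposal is correct and follows essentially the same route as the paper's proof: choose a path from $\omega_0$ to $\omega$ inside $\DD_T-\calH_T$, cover it by finitely many contractible charts, run the degree-two-cover construction of Lemma \ref{lem: X_omega determines Z(F)} relatively over each chart using the automatic semiregularity of curves on K3 surfaces (via Bloch and Artin) to deform the relevant curve classes in the family, and conclude by transitivity of equisingular deformation equivalence. The only cosmetic difference is that the paper applies semiregularity to a smooth genus-two member of $|\phi_{\omega'}^{-1}H|$ together with the exceptional curves representing $\Delta$, and then builds the relative morphism to $\PP^2$ from the relative linear system, whereas you phrase the semiregularity input in terms of the branch curve; the substance is the same.
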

\begin{proof}
    We choose arbitrarily a path $\gamma_\omega$ in $\DD_T-\calH_T$ connecting $\omega$ and $\omega_0$.
    For any $\omega'\in\gamma_\omega$, by the proof of Lemma \ref{lem: X_omega determines Z(F)}, there exists a marking $\phi_{\omega'}$ of $X_{\omega'}$ such that $\phi_{\omega'}^{-1} H$ is nef.
    By \cite[Proposition 1.7]{urabe1988combinations} and \cite[Chapter 2, \S 3, Corollary 3.14]{huybrechts2016lectures}, any line bundle $\calO(C_{\omega'})$ representing the cohomology class $\phi_{\omega'}^{-1} H$ is base point free, where $C_{\omega'}$ is a nef divisor in $X_{\omega'}$.
    By Bertini's theorem, a generic member in the complete linear system $|C_{\omega'}|$ is a smooth genus two curve.
    We may assume that $C_{\omega'}$ is a smooth genus two curve.
    Denote by $\calE_{\omega'}\coloneqq\{E_r \,|\, r\in \phi_{\omega'}^{-1} \Delta\}$ be the set of exceptional curves in $X_{\omega'}$ representing $\phi_{\omega'}^{-1} \Delta$.

    The universal deformation of $X_{\omega'}$ in the moduli space of complex K3 surfaces induces a family of K3 surfaces over a contractible complex neighborhood $D$ of $\omega'$ in $\DD_T-\calH_T$, by the local Torelli theorem.
    The marking $\phi_{\omega'}$ of $X_{\omega'}$ canonically induces markings for all fibers $H^2(X_\eta,\ZZ)\cong H^2(X_{\omega'},\ZZ)\cong\Lambda$, $\eta\in D$.
    Since $\eta^\perp_\Lambda$ contains $H$ and $\Delta$, $\phi_{\omega'}^{-1} H$ and $\phi_{\omega'}^{-1} \Delta$ lifts to algebraic classes over $D$.
    Due to \cite[Page 363]{nishinou2024deformation}, any curve on a K3 surface is semiregular.
    By \cite[Theorem 7.1]{bloch1972semi-regularity} and \cite[Theorem 1.5(i)]{artin1968solutions}, there exists a complex neighborhood $D_{\omega'}\subset D$ of $\omega'$ in $\DD_T-\calH_T$ such that for any $\eta\in D_{\omega'}$, $C_{\omega'}$ and $\calE_{\omega'}$ deform to smooth algebraic curves $C_{\eta}$ and $\calE_{\eta}$ on $X_\eta$, where $[C_{\eta}]=\phi_\eta^{-1} H$.

    Denote by $\calX$ the family of K3 surfaces over $D_{\omega'}$. 
    For any $\eta\in D_{\omega'}$, the complete linear system of $\calO(C_\eta)$ defines a morphism $X_\eta\to \PP H^0(\calO(C_\eta))^\vee\cong\PP^2$ of degree two.
    There exists a line bundle $\calL$ on $\calX$ such that $\calL_\eta=\calO(C_\eta)$ for any $\eta\in D_{\omega'}$.
    Denote by $\lambda\colon \calL\to\calX$ the natural projection.
    We have a relative projective morphism $\calX\to \PP R^0\lambda_*\calL^\vee$ over $D_{\omega'}$ defined by the relative linear system $|\calL|$.
    Note that $\PP R^0\lambda_*\calL^\vee$ is a trivial family of $\PP^2$ over $D_{\omega'}$.
    We can go through the process of Lemma \ref{lem: X_omega determines Z(F)} relatively over $D_{\omega'}$, then obtain a family of singular sextic curves $\calZ\subset \PP R^0\lambda_*\calL^\vee$ sharing the same type of singularities.
    Then any two sextic curves in this family are equisingular deformaiton equivalent.

    The path $\gamma_\omega$ is covered by finite complex neighborhoods in $\DD_T-\calH_T$, and this concludes that $(Z_\omega,\PP^2)$ is equisingular deformation equivalent to $(Z(F_0),\PP^2)$.
\end{proof}
Then the sextic curve $Z_\omega$ can be written as $Z(F)$ for some sextic polynomial $F\in\calV_T$.
The associated K3 surface $X_\omega$ is the same as $X_F$, which we construct in \S \ref{subsection: ADE K3}.

\begin{lem}
\label{lem: Prd_T coincides with global period map}
    The occult period map $\Prd_T$ maps $Z(F)$ to $[\omega]$ in $\Gamma_T\bs (\DD_T-\calH_T)$.
\end{lem}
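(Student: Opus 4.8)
The plan is to compare the two markings of the K3 surface $X_F = X_\omega$ that appear in the construction: the marking $\phi'_\omega$ produced in the proof of Lemma \ref{lem: X_omega determines Z(F)}, and a marking $\gamma^*$ coming from a path in $\calV_T$ used to define $\Prd_T$. I will show that these two markings differ by an element of $\Gamma_T$, from which the claim is immediate.

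First I would pin down the identification $X_F = X_\omega$ at the level of the marking data $(\Lambda, \Delta, H)$. Since $X_F$ is by definition the minimal resolution of the double cover of $\PP^2$ branched along $Z(F) = Z_\omega$, and $\pi_\omega$ is precisely this degree-two morphism, the pullback hyperplane class $H_F$ coincides with the nef class $\phi'^{-1}_\omega H$ that defines $\pi_\omega$. Likewise, the exceptional curves $\Delta_F$ of the resolution are exactly the rational curves contracted by $\pi_\omega$, which form the effective base $\phi'^{-1}_\omega \Delta$ identified in Lemma \ref{lem: X_omega determines Z(F)}. Hence $\phi'_\omega$ carries $(\Lambda_F, \Delta_F, H_F)$ isomorphically onto $(\Lambda, \Delta, H)$, and by construction $\phi'_\omega(H^{2,0}(X_F)) = \omega$.

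Next, both $\gamma^*$ and $\phi'_\omega$ are isomorphisms $(\Lambda_F, \Delta_F, H_F) \cong (\Lambda, \Delta, H)$, so the composite $g := \gamma^* \circ (\phi'_\omega)^{-1}$ lies in $\Or(\Lambda, \Delta, H)$; by definition its restriction to $Q$ lies in $\widetilde{\Gamma}_T$. Computing the period point, $\gamma^*(H^{2,0}(X_F)) = g(\phi'_\omega(H^{2,0}(X_F))) = g(\omega)$, so a representative of $\Prd_T(Z(F))$ in $\DD_T$ is $g(\omega)$. Since $\gamma^*$ sends $H^{2,0}(X_F)$ into $\DD_T$ and $\omega \in \DD_T$, the element $g$ maps a point of $\DD_T$ into $\DD_T$, hence preserves $\DD_T$; as $[\widetilde{\Gamma}_T : \Gamma_T] \le 2$ with the nontrivial coset swapping the two components, this forces $g \in \Gamma_T$. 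Therefore $\Prd_T(Z(F)) = [g(\omega)] = [\omega]$ in $\Gamma_T \bs \DD_T$, and since $\calH_T$ is $\Gamma_T$-invariant and $\omega \in \DD_T - \calH_T$, the equality holds in $\Gamma_T \bs (\DD_T - \calH_T)$.

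The main obstacle is the bookkeeping in the first step: one must verify that the intrinsic marking data $(\Delta_F, H_F)$ of the resolution agrees with $(\phi'^{-1}_\omega \Delta, \phi'^{-1}_\omega H)$ as a genuine subset and class in $\Lambda_F$, not merely up to the monodromy action. This hinges on the fact that $\phi'^{-1}_\omega H$ is the nef class whose linear system recovers the degree-two morphism cutting out $Z(F)$, and that its orthogonal $(-2)$-curves form the \emph{unique} effective base of $\phi'^{-1}_\omega L$ — both established in Lemma \ref{lem: X_omega determines Z(F)}. Once this identification is in place, the remaining steps are formal.
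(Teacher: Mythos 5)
Your proof is correct and follows essentially the same route as the paper: both compare the path-induced marking $\gamma_F^*$ with the marking $\phi'_\omega$ produced in the proof of Lemma \ref{lem: X_omega determines Z(F)}, observe that $g=\gamma_F^*\circ(\phi'_\omega)^{-1}$ preserves $H$ and $\Delta$ so that $g|_Q\in\widetilde{\Gamma}_T$, and then use that $g$ sends $\omega\in\DD_T$ to another point of $\DD_T$ to conclude $g|_Q\in\Gamma_T$ and hence $\Prd_T(Z(F))=[g\cdot\omega]=[\omega]$. The extra bookkeeping you carry out in your first step (identifying $H_F$ with $\phi'^{-1}_\omega H$ and $\Delta_F$ with $\phi'^{-1}_\omega\Delta$) is precisely what the paper compresses into its citation of the proof of Lemma \ref{lem: X_omega determines Z(F)}.
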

\begin{proof}
    We choose arbitrarily a path $\gamma_F$ connecting $Z(F)$ and $Z(F_0)$ in $\PP\calV_T$. 
    The path $\gamma_F$ canonically induces a marking $\gamma_F^*\colon H^2(X_F,\ZZ)\to\Lambda$.
    The induced cohomology class $H_F\coloneqq (\gamma_F^*)^{-1} H\in H^2(X_F,\ZZ)$ is independent of the choice of the path $\gamma_F$, which is equal to the pullback of the hyperplane class in $\PP^2$.
    By the proof of Lemma \ref{lem: X_omega determines Z(F)}, there exists a marking $\phi_\omega$ of $X_\omega=X_F$ such that $\phi_\omega^{-1} H = H_F$ and $\phi_\omega^{-1} \Delta = \Delta_F$.
    Consider $g\coloneqq \gamma_F^*\circ \phi_\omega^{-1}\in \Or(\Lambda)$, it preserves $H$ and $\Delta$. Thus the restriction $g|_Q$ belongs to $\widetilde{\Gamma}_T$.
    It is direct $g\cdot\omega = \gamma_F^* H^{2,0}(X_F)$. 
    Both $\omega$ and $\gamma_F^* H^{2,0}(X_F)$ lies in $\DD_T$, hence $g$ sends $\DD_T$ to $\DD_T$. It follows that $g|_Q\in \Gamma_T$. We conclude that $\Prd_T$ maps $Z(F)$ to $[g\cdot\omega]=[\omega]$ in $\Gamma_T\bs (\DD_T-\calH_T)$.
\end{proof}

\begin{proof}[Proof of Theorem \ref{thm: open}] 
    For any $F\in\calV_T$, the natural degree two morphism $X_F\to\PP^2$ is induced by the complete linear system of $H_F$.
    The set of curves contracted by $|H_F|$ is exactly $\Delta_F$.
    Thus the root lattice of $\langle H_F,H^{2,0}(X_F) \rangle^\perp_{\Lambda_F}$ is generated by $\Delta_F$, hence equals $L_F$.
    This implies that $\Prd(\calM_T)$ lies in $\Gamma_T\bs(\DD_T-\calH_T)$.
    The opposite direction follows directly from Lemma \ref{lem: X_omega determines Z(F)}, Lemma \ref{lem: Z_omega has type T} and Lemma \ref{lem: Prd_T coincides with global period map}.
\end{proof}


\begin{cor}
\label{cor: generic Picard}
For any generic $F\in\calV_T$ we have $\Pic(X_F)=P_F$.
\end{cor}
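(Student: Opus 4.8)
The plan is to combine the Lefschetz $(1,1)$-description of the Picard group of a K3 surface with the genericity built into Theorem \ref{thm: open}. Recall that for $X_F$ with period $\omega_F$ spanning $H^{2,0}(X_F)$ one has $\Pic(X_F)=\{x\in\Lambda_F \mid x\cdot\omega_F=0\}$. Since $\omega_F$ lies in $(Q_F)_\CC$ by construction, it is orthogonal to $P_F$, which recovers the inclusion $P_F\subseteq\Pic(X_F)$ already noted in \S\ref{subsection: ADE K3}. The first step is to reduce the desired equality to a statement about $\omega_F$ avoiding hyperplanes inside $Q_F$. Writing any $x\in\Pic(X_F)$ rationally as $x=x_P+x_Q$ with $x_P\in(P_F)_\QQ$ and $x_Q\in(Q_F)_\QQ$, the relation $\omega_F\perp P_F$ gives $x\cdot\omega_F=x_Q\cdot\omega_F$, so membership in $\Pic(X_F)$ forces $x_Q\cdot\omega_F=0$. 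Hence $\Pic(X_F)=P_F$ holds precisely when no nonzero $v\in Q_F$ satisfies $v\cdot\omega_F=0$: if $x_Q\neq 0$ some integer multiple would be such a $v$, and once $x_Q=0$ the primitivity of $P_F$ in $\Lambda_F$ promotes $x=x_P$ from a rational to an integral class of $P_F$.

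Next I would argue the genericity. By Theorem \ref{thm: open} the period map identifies $\calM_T$ with $\Gamma_T\bs(\DD_T-\calH_T)$, so it suffices to show that the locus of $\omega\in\DD_T$ admitting some nonzero $v\in Q_F$ with $v\cdot\omega=0$ is thin. This locus is the countable union $\bigcup_{v\in Q_F\setminus\{0\}}(v^\perp\cap\DD_T)$ of Noether--Lefschetz type subvarieties. Each hyperplane $v^\perp\subset\PP((Q_F)_\CC)$ is proper, and $\DD_T$ is an open subset of the smooth quadric cut out by $x\cdot x=0$; as this quadric is nondegenerate it is contained in no hyperplane, so every $v^\perp\cap\DD_T$ is a proper closed analytic subset of the irreducible domain $\DD_T$. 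A countable union of such proper subvarieties has dense complement, and this complement is $\Gamma_T$-stable and descends to the complement of countably many Heegner divisors in the quotient, which is exactly the generic locus of $\calM_T$. Therefore the generic $F$ has $\Pic(X_F)=P_F$.

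Finally I would dispatch the degenerate case where $Q_F$ has rank two (signature $(2,0)$), occurring for the maximal-rank types, in which $\DD_T$ is a single point and the genericity argument is vacuous. Here the conclusion is obtained directly: for real $v\in Q_F$ the equation $v\cdot\omega_F=0$ also yields $v\cdot\overline{\omega_F}=0$, so $v$ is orthogonal to the full complexification $(Q_F)_\CC=\langle\omega_F,\overline{\omega_F}\rangle$ and hence $v=0$ by nondegeneracy, giving $\Pic(X_F)=P_F$ on the nose. The only subtlety I anticipate is bookkeeping the meaning of \emph{generic} correctly---confirming that the exceptional set is a genuinely thin countable union of proper analytic subvarieties rather than something Zariski dense---together with verifying that every nonzero $v\in Q_F$, including isotropic ones, cuts $\DD_T$ in a proper subset; both points are handled by the nondegeneracy of the quadric defining $\DD_T$.
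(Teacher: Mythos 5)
Your proof is correct and follows essentially the same route as the paper: the paper's own argument invokes Theorem \ref{thm: open} and then simply asserts that for generic $F$ the period is a general point of $\DD_T$, whence $\Pic(X_F)=P_F$. Your write-up supplies the details the paper leaves implicit---the Lefschetz $(1,1)$ reduction, the countable union of Noether--Lefschetz hyperplanes $v^\perp\cap\DD_T$ being proper closed analytic subsets, and the zero-dimensional $\DD_T$ edge case---all of which are handled correctly.
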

\begin{proof}
By Theorem \ref{thm: open}, the period map $\Prd_T \colon\calM_T\to \Gamma_T\bs \DD_T$ is open. For a generic $F\in \calV_T$, the period of $F$ is a general point in $\DD_T$. Thus the Picard lattice of $X_F$ is $P_F$.
\end{proof}

\begin{rmk}
    The hyperplane arrangement $\calH_T$ usually has infinite irreducible components, but the quotient $\Gamma_T\bs\calH_T$ is, in fact, a divisor of $\Gamma_T\bs\DD_T$.
    This can be concluded from \cite{baily1966compactification}, \cite{borel1972metric} and Theorem \ref{thm: open}.
\end{rmk}

\begin{rmk}
Many moduli spaces of sextic curves with specified singularities and group actions are closely related to Deligne--Mostow theory \cite{deligne1986monodromy}. For example, Dolgachev, van Geemen and Kond\=o \cite{dolgachev2005complex} studied moduli space of pairs consisting of a smooth cubic surface and a line on it. Such pairs are naturally related to pairs consisting of a smooth quintic curve and a line that admits a $\mu_3$-symmetry. The moduli space corresponds to
the case $({1\over 6}, {1\over 6}, {1\over 3}, {1\over 3}, {1\over 3}, {1\over 3}, {1\over 3})$ in \cite{mostow1988discontinuous}. In general, the cases of nodal sextic curves with symmetries have been studied in \cite[\S 5]{yu2023moduli}. In \cite{zheng2024complex} the last two authors studied irreducible sextic curves with a $D_4$-singularity and $\mu_3$-symmetry and the moduli is related to case $({1\over 6}, {1\over 6}, {1\over 6}, {1\over 6}, {1\over 6}, {1\over 6}, {1\over 3}, {1\over 3}, {1\over 3})$ in \cite{mostow1988discontinuous}. 
\end{rmk}


\section{GIT Compactifications and Arithmetic Compactifications}
\label{section: compactification}

\subsection{K3 Surfaces of Degree $2$}
\label{subsection: Shah, Looijenga}
We first review the classical results of moduli space of sextic curves via Hodge theory and compactifications. 
See \cite{shah1980complete} and Looijenga.

Let $V$ be a $3$-dimensional complex vector space.
A (plane) sextic curve is the zero locus of an element of $\Sym^6 V^\vee$ in $\PP V$.
By definition, $\PP\Sym^6 V^\vee$ is the space of sextic curves. 
Let $\PP(\Sym^6 V^\vee)^{\mathrm{sm}}$ be the subset of $\PP\Sym^6 V^\vee$ consisting of smooth sextic curves.
We define $\calM$ as the GIT quotient $\SL(V)\dbs\PP(\Sym^6 V^\vee)^{\mathrm{sm}}$.
Denote by $\overline{\calM}$ the GIT compactification of $\calM$, and $\calM^{\mathrm{ADE}}\subset \overline{\calM}$ the subspace consisting of sextic curves with at worst simple singularities.

We briefly recall the definition of period domain and arithmetic group of the moduli space of polarized K3 surfaces of degree $2d$.
Let $\Lambda\coloneqq U^3\oplus E_8^2$ be the K3 lattice.
Let $l\coloneqq e_1+df_1$, where $e_1,f_1$ is the standard basis of the first copy of $U$, and $l^2=2d$.
Denote by $\Lambda_d$ for $l^{\perp}$ in $\Lambda_{K3}$.
The subset of $\PP(\Lambda_{d,\CC})$ defined by $z\cdot z=0$ and $z\cdot\bar{z}>0$ has two connected components that are interchanged by complex conjugation. Denote by $\DD_d$ and $\overline{\DD}_d$ the two components.
Define a subgroup of $\Or(\Lambda)$
\begin{equation*}
    \widetilde\Gamma_d\coloneqq \{ g|_{\Lambda_d} \,\big|\, g\in \Or(\Lambda),\, g(e_1+df_1)=e_1+df_1 \}.
\end{equation*}
Equivalently, $\widetilde\Gamma_d$ is isomorphic to the subgroup of $\Or(\Lambda_d)$ which consists of elements that act trivially on $A_{\Lambda_d}$.
Moreover, $\widetilde\Gamma_d$ is an arithmetic subgroup of $\Or(\Lambda_d)$.
Let $\Gamma_d$ denote the index two subgroup of $\widetilde\Gamma_d$ that preserves $\DD_d$.
By Baily-Borel, the arithmetic quotient $\Gamma_d\bs\DD_d$ is a normal quasi-projective variety.


For a $Z\in \PP(\Sym^6 V^\vee)^{\mathrm{sm}}$, the double cover of $\PP V$ branched along $Z$ is a smooth K3 surface $X$.
Denote by $H\in H^2(X,\ZZ)$ the pullback of the hyperplane class of $\PP V$, which is a degree $2$ polarization of $X$.
Then the period map for polarized K3 surfaces of degree $2$ (for example, see \cite[Chapter 6, \S 4.1]{huybrechts2016lectures}) induces a period map $\Prd\colon \PP(\Sym^6 V^\vee)^{\mathrm{sm}}\to \Gamma_1\bs\DD_1$.
The period map $\Prd$ is $\SL(V)$-equivariant, hence descends to 
\begin{equation*}
    \Prd\colon \calM\to \Gamma_1\bs\DD_1.
\end{equation*}

We have two $\Gamma_1$-invariant hyperplane arrangements $\calH_\Delta$ and $\calH_\infty$ in $\DD_1$, which correspond to two $\Gamma_1$-orbits of roots in $\Lambda_1$.
The roots defining $\calH_\Delta$ (respectively, $\calH_\infty$) has divisibility $1$ (respectively, $2$).
We have the Looijenga compactification of $\Gamma_1\bs(\DD_1-\calH_\infty)$, denoted by $\overline{\Gamma_1\bs\DD_1}^{\calH_\infty}$, see \cite{looijenga2003compactificationsII}.
From \cite{shah1980complete} and \cite{looijenga2003compactificationsII}, we have
\begin{thm}[Shah, Looijenga]
\label{thm: Shah--Looijenga}
    The period map $\Prd$ is an algebraic open embedding whose image is $\Gamma_1\bs(\DD_1-\calH_\Delta-\calH_\infty)$.
    Moreover, $\Prd$ extends to an isomorphism $\calM^{\mathrm{ADE}}\cong\Gamma_1\bs(\DD_1-\calH_\infty)$, and further to $\overline{\calM}\cong\overline{\Gamma_1\bs\DD_1}^{\calH_\infty}$.
\end{thm}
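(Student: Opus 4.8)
The plan is to establish the three assertions in order, using surjectivity of the period map together with the global Torelli theorem for K3 surfaces as the analytic input, and importing Shah's GIT classification and Looijenga's compactification machinery for the boundary statements. Throughout, the bridge between sextics and lattice data is the construction of \S\ref{subsection: ADE K3}: a sextic $Z$ with at worst simple singularities yields the minimal resolution $X$ of its double cover, carrying a nef degree-two class $H$, and conversely $\Gamma_1\bs\DD_1$ is the moduli of degree-two quasi-polarized K3 surfaces.

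For the open-embedding statement I would first invoke the analysis of the linear system $|H|$ on a degree-two K3 surface (Saint-Donat): either $|H|$ is base-point free and $\phi_{|H|}$ realizes $X$ as a double cover of $\PP^2$, or there is an isotropic class $u$ with $u\cdot H=1$ and $X$ is unigonal. The unigonal case is exactly the condition that the period lies on $\calH_\infty$, since then $2u-H$ is a root orthogonal to $H$ of divisibility two in $H^\perp$ (the pairing $(2u-H)\cdot w=2(u\cdot w)$ for $w\in H^\perp$ is the computation of Lemma \ref{lem: no calH'_T}). In the base-point-free case the branch sextic is smooth precisely when no $(-2)$-curve is orthogonal to $H$, i.e. when the period avoids the divisibility-one arrangement $\calH_\Delta$, and otherwise the sextic acquires simple singularities indexed by the roots orthogonal to the period. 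Surjectivity of the degree-two period map produces, for every point of $\DD_1-\calH_\Delta-\calH_\infty$, a smooth sextic, and global Torelli gives injectivity; the one point beyond the bare K3 statement is that an isometry identifying the two polarized Hodge structures is compatible with the canonical deck involutions, hence descends to a projective-linear isomorphism of branch sextics, so the fibers of $\Prd$ are exactly $\SL(V)$-orbits. Since the period map of degree-two K3 surfaces is a local isomorphism by local Torelli, this bijection upgrades to an isomorphism of quasi-projective varieties $\calM\cong\Gamma_1\bs(\DD_1-\calH_\Delta-\calH_\infty)$.

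The extension across $\calH_\Delta$ to $\calM^{\mathrm{ADE}}\cong\Gamma_1\bs(\DD_1-\calH_\infty)$ runs the same machine while allowing $H$ to be merely nef: over $\DD_1-\calH_\infty$ the class $H$ stays base-point free, $\phi_{|H|}$ is a double cover of $\PP^2$ whose branch sextic has only simple singularities governed by the root sublattice of $H^\perp$ orthogonal to the period, and global Torelli again gives bijectivity. This is essentially the content of Lemmas \ref{lem: X_omega determines Z(F)}--\ref{lem: Prd_T coincides with global period map}, specialized to the full degree-two lattice (the base type, where $L=0$). One also uses Shah's result that sextics with at worst simple singularities are GIT-stable, so that $\calM^{\mathrm{ADE}}$ is an open subvariety of $\overline\calM$ and a geometric quotient, matching the target.

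The hardest step, which I expect to be the main obstacle, is the final isomorphism of compactifications $\overline\calM\cong\overline{\Gamma_1\bs\DD_1}^{\calH_\infty}$. Both sides are normal projective varieties containing $\calM^{\mathrm{ADE}}$ as a common dense open set, so the task is to extend the isomorphism already built on $\calM^{\mathrm{ADE}}$ across the two boundaries and check biregularity. On the GIT side one uses Shah's explicit list of strictly semistable and of non-simple stable sextics, together with the stratification of $\overline\calM\setminus\calM^{\mathrm{ADE}}$; on the arithmetic side one uses Looijenga's semitoric construction of $\overline{\Gamma_1\bs\DD_1}^{\calH_\infty}$, whose boundary strata are indexed by the cusps of $\Gamma_1\bs\DD_1$ and by the combinatorics of $\calH_\infty$. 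The core difficulty is to match these two stratifications cell by cell --- for instance, to see that the most degenerate semistable sextic (the triple conic singled out by Shah) corresponds to the correct boundary stratum attached to $\calH_\infty$ --- and to promote the resulting bijective correspondence to an actual morphism of compactifications. Once a proper birational bijection between the two normal compactifications is in hand, Zariski's main theorem forces it to be an isomorphism. Reconciling Shah's degeneration analysis with Looijenga's abstract arrangement compactification is precisely where the real work lies.
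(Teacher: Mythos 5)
The first thing to note is that the paper does not prove this statement at all: it is imported verbatim as a known result, attributed to Shah \cite{shah1980complete} and Looijenga \cite{looijenga2003compactificationsII} (``From \cite{shah1980complete} and \cite{looijenga2003compactificationsII}, we have\dots''). So the relevant comparison is with the proofs in that literature, not with anything in this paper. Measured against that, your outline of the first two assertions is correct and is the standard argument: Saint-Donat's dichotomy for $|H|$ on a degree-two K3 (base-point free versus unigonal), the computation identifying the unigonal locus with $\calH_\Delta$-free, divisibility-two roots (i.e.\ $\calH_\infty$), surjectivity of the K3 period map, and global Torelli upgraded by compatibility with the deck involution --- which here is automatic because $\iota$ acts as $+1$ on $\langle H\rangle$ and $-1$ on $H^\perp$. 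This is also exactly the template the paper generalizes in \S\ref{section: period map} for arbitrary singular types, so that part of your reconstruction is sound.

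For the third assertion your proposal has two weaknesses. First, it is only a plan: you correctly identify that matching the GIT boundary with the Looijenga boundary is ``where the real work lies,'' but you do not carry it out, and that step is precisely the content of the cited papers, so as a standalone proof the proposal is incomplete. Second, the route you propose (match Shah's strata with Looijenga's strata cell by cell, obtain a proper birational bijection, then invoke Zariski's main theorem) is not how the identification is actually established, and it is the harder road. Looijenga's argument is algebraic rather than stratification-theoretic: one shows that $\calM^{\mathrm{ADE}}$ sits inside both normal projective compactifications with boundary of codimension at least two (on the arithmetic side this uses that the unigonal divisor is contracted to a point and the Baily--Borel boundary is small; on the GIT side it uses Shah's stability analysis), so each compactification is recovered as $\Proj$ of the same graded ring of sections of the natural ample bundle over the common open set, and the isomorphism follows with no boundary matching at all. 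This is also the philosophy the present paper adopts for its own Theorem \ref{thm: comm diag, compact to compact}, via closures, normalizations, and the functoriality result \cite[Theorem A.13]{yu2020fourfolds}. If you want to complete your proposal, replacing the cell-by-cell matching by this codimension-two/$\Proj$ argument is the efficient fix; the stratification comparison is then a consequence of the isomorphism rather than an input to it.
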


\subsection{Identification of GIT Compactification and Looijenga Compactification}
\label{subsec: identification of compactifications}

In this section, we describe the relationship between GIT compactifications and Looijenga compactifications.
We fix a singular type $T$.



\subsubsection{$\Gamma_T\bs\DD_T$ and $\Gamma_1\bs\DD_1$}
We next describe the relation between $\Gamma_T\bs\DD_T$ and $\Gamma_1\bs\DD_1$.

Let $X$ be a K3 surface associated with a sextic curve of type $T$. 
There is the tuple $(H^2(X,\ZZ),H,\Delta)$ associated with $X$.
Denote by $\Lambda_X\coloneqq H^2(X,\ZZ)$.
We fix a marking $\phi\colon \Lambda_X\to\Lambda$ such that $\phi(H)=l\coloneqq e_1+f_1$.
Then the type IV domain $\DD_H$ associated with $H^\perp\subset \Lambda_X$ is identified with $\DD_1$ by $\phi$.

Let $W$ be the Weyl group of the root lattice $L=\oplus_{i=1}^k R_i$. Then $W$ acts trivially on $A_L$.
Thus the action of $W$ on $L$ extends to $\Lambda_X$ such that the action on $L^\perp$ is identity.
It defines a natural inclusion $W\subset\Or(\Lambda_X,H)$.
The $W$-action on $\Lambda_X$ induces an action on $\Lambda$ by the conjugation of $\phi$, which is by definition $\phi$-equivariant.
The group $\Or(\Lambda_X,H)$ is also identified with $\widetilde\Gamma_1$ via $\Or(\Lambda_X,H) = \phi^{-1}\widetilde\Gamma_1\phi$.


The period domain $\DD_T$ is defined to be the type IV domain associated with $P^\perp$, where $P^\perp$ is the $W$-invariant part of $H^{\perp}$.
Namely, $\DD_T$ is the type IV domain associated with the character subspace of $H^\perp\otimes\CC$ with respect to the trivial character of $W$.

Define $\widetilde\Gamma_{P^\perp}\coloneqq\{ g\in\Or(\Lambda_X,H) \,\big|\, g(P^\perp)=P^\perp \}$ and $\Gamma_{P^\perp}\coloneqq\{ g\in\Or(\Lambda_X,H) \,\big|\, g(\DD_T)=\DD_T \}$.
Note that $[\widetilde{\Gamma}_T:\Gamma_T]\le 2$.
Let $\widetilde{W}\subset\Or(\Lambda_X,H)$ be the group generated by $W$ and $-\Id$.
\begin{lem}
\label{lem: Stab(x) = W}
    The group $\widetilde{W}$ is the stabilizer of a generic point of $\DD_T$ in $\Or(\Lambda_X,H)$.
\end{lem}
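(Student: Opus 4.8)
The plan is to identify the pointwise kernel of the action of $\Or(\Lambda_X,H)$ on $\DD_T$ and then pass to a generic point. The inclusion $\widetilde W\subseteq\Stab(\omega)$ is immediate for every $\omega$: since $P^\perp$ is by construction the $W$-invariant sublattice $(H^\perp)^W$, the group $W$ fixes $\DD_T\subset\PP(P^\perp_\CC)$ pointwise, while $-\Id\in\Or(\Lambda_X,H)$ restricts to $-1$ on $H^\perp\supset P^\perp$ and hence fixes $\DD_T$ projectively. For the reverse inclusion, let $K$ denote the kernel of the homomorphism $\Or(\Lambda_X,H)\to\Aut(\DD_T)$. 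For any $g\notin K$ the locus $\{\omega\in\DD_T \mid g\omega\in\CC\omega\}$ is a proper closed analytic subset of the connected domain $\DD_T$, hence nowhere dense; as $\Or(\Lambda_X,H)$ is countable, a generic $\omega$ lies outside the union of all these loci, and for such $\omega$ one has $\Stab(\omega)=K$. It therefore suffices to prove $K=\widetilde W$.

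To compute $K$, take $g\in K$. Because $g$ fixes every point of the open subset $\DD_T$ of the quadric in $\PP(P^\perp_\CC)$, whose affine cone spans $P^\perp_\CC$, the integral isometry $g$ preserves $P^\perp$ and acts there as a scalar; being an isometry, this scalar is $\pm1$. After multiplying by $-\Id\in\widetilde W$ we may assume $g|_{P^\perp}=\Id$. Then $g$ fixes $\langle H\rangle\oplus P^\perp$ pointwise, so it preserves the orthogonal complement $\bar L\coloneqq H^\perp\cap P$, which is the saturation of $L$ in $\Lambda_X$, and since $\Lambda_X$ is unimodular it acts trivially on the discriminant group $A_{\bar L}$, anti-isometric to $A_{\langle H\rangle\oplus P^\perp}$. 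Now $\bar L$ is negative definite with root system $R$, the reflections in its roots generate $W=W(R)$, and the quotient $\Or(\bar L)/W$ embeds into the automorphism group of the Dynkin diagram of $R$ (allowing permutations of isomorphic components). As these diagram automorphisms act faithfully on $A_{\bar L}$, any isometry trivial on $A_{\bar L}$ lies in $W$. Hence $g|_{\bar L}\in W$, so $g\in W\subset\widetilde W$, giving $K\subseteq\widetilde W$ and thus $K=\widetilde W$.

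The main obstacle is the identification $\{h\in\Or(\bar L)\mid h|_{A_{\bar L}}=\Id\}=W(R)$, which has two delicate inputs. First one must know that the saturation $\bar L$ of the root lattice $L$ introduces no new $(-2)$-classes, so that its root system is still $R$ and its reflection subgroup is exactly $W$; I expect to verify this geometrically, using that $H$ is nef and that $|H|$ realizes the degree-two map contracting precisely the ADE configuration $\Delta$, so that any effective root orthogonal to $H$ is supported on $\Delta$ and therefore lies in $L$. Second, one needs the faithfulness of the diagram-automorphism action on $A_{\bar L}$; this can be checked type by type and is precisely parallel to the description of the $\iota$-action in Proposition \ref{prop: iota-action on resolution graph/base}, or deduced from the general theory of discriminant forms. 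The remaining ingredients, namely the scalar argument on $P^\perp_\CC$ and the countability reduction from a generic point to the kernel $K$, are routine once $K=\widetilde W$ has been established.
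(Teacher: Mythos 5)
Your proposal takes a genuinely different, and in fact safer, route through the first half of the argument. The paper works at a single Picard-generic point $x$ (one with $x^\perp\cap\Lambda_X=P$) and claims that $g\cdot v_x=\lambda v_x$ forces $g|_{P^\perp}=\lambda^{-1}\Id$ with $\lambda=\pm1$, via the pairing identity $\langle v,v_x\rangle=\langle \lambda g\cdot v,v_x\rangle$. But that identity is automatic from $g^{-1}v_x=\lambda^{-1}v_x$ and carries no extra information: Picard-genericity only makes $v\mapsto\langle v,v_x\rangle$ injective on $P^\perp$, which conjugates $g|_{P^\perp\otimes\QQ}$ into multiplication by $\lambda^{-1}$ on a $\QQ$-subspace of $\CC$ and thus constrains the characteristic polynomial, not the map itself (multiplication by $i$ on $\QQ(i)\subset\CC$ is $\QQ$-linear with eigenvalues $\pm i$, not $i$ alone). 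Your reduction --- pass to very general points using countability of $\Or(\Lambda_X,H)$, identify the generic stabilizer with the pointwise kernel $K$ of the $\DD_T$-action, and then use that the cone over $\DD_T$ spans $P^\perp_\CC$ to force $g|_{P^\perp}=\pm\Id$ --- is complete and avoids this issue entirely; the trade-off is that you prove the statement only for very general points rather than all Picard-generic ones, which is all that is used downstream. (One shared slip with the paper's own statement: $-\Id$ is literally not in $\Or(\Lambda_X,H)$ since it negates $H$; the intended element acts as $+1$ on $H$ and $-1$ on $P^\perp$, e.g.\ the class induced by the covering involution, cf.\ Corollary \ref{cor: Lambda_F^iota = P_F^iota}.)

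From $K$ onward your argument coincides with the paper's, and your two flagged ``delicate inputs'' are exactly where both texts are thin. The first --- that the saturation $H^\perp\cap P$ acquires no new roots --- is true and can be closed without any deformation theory: a root of $H^\perp\cap P$ not lying in $L$ would be orthogonal to every $\omega\in\DD_T$, so $\DD_T\subseteq\calH_T$, contradicting the nonemptiness of $\calM_T\cong\Gamma_T\bs(\DD_T-\calH_T)$ from Theorem \ref{thm: open}. The second --- faithfulness of the diagram automorphisms on the discriminant group --- is a genuine gap, and it is the same gap as in the paper, whose appeal to \cite{bourbaki2002lie4-6} covers only $\Aut(R)/W(R)$ for \emph{irreducible} $R$ and silently ignores permutations of isomorphic components. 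Concretely, if the root system of $T$ contains two copies of $E_8$ (such configurations are realizable by sextics, cf.\ \cite{yang1996sextic,urabe1988combinations}; here $P=\langle H\rangle\oplus E_8^{\oplus2}$ admits no overlattice), the isometry swapping the two $E_8$ summands acts trivially on $A_L$, $A_{\bar L}$, $A_P$ and $A_{P^\perp}$, hence glues with the identity on $\langle H\rangle\oplus P^\perp$ to an element of $\Or(\Lambda_X,H)$ that lies in $K$ but not in $\widetilde W$. So this step does not merely await a ``type-by-type check'': it fails as stated for such $T$, in your write-up and in the paper's alike, and indeed the lemma itself appears to fail for those types unless $\widetilde W$ is enlarged (e.g.\ by permutations of isomorphic $E_8$-components) or such $T$ are treated separately. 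Your version is nonetheless no worse than the paper's on this point, and has the virtue of isolating exactly where the difficulty sits.
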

\begin{proof}
    Let $x\in\DD_T$ be a point such that $x^\perp\cap\Lambda_X=P$.
    The stabilizer of $x$ in $\Or(\Lambda_X,H)$, denoted by $\Stab(x)$, is contained in $\Gamma_{P^\perp}$.
    Consider the map $\Gamma_{P^\perp}\to\Or(P^\perp)$ defined by restriction.
    Denote by $K$ its kernel, and let $\widetilde{K}\subset\Gamma_{P^\perp}$ be the group generated by $K$ and $-\Id$.
    We claim that $\Stab(x)=\widetilde{K}$. 
    It is clear that $\widetilde{K}\subset\Stab(x)$.
    For any $g\in\Or(\Lambda_X,H)$ such that $g v_x=\lambda v_x$ for some $\lambda\in\CC^*$, where $v_x\in P^\perp\otimes\CC$ satisfies $[v_x]=x\in\DD_T$.
    For any $v\in P^\perp$, we have
    \begin{equation*}
        v\cdot v_x = g v\cdot g v_x = g v\cdot \lambda v_x = \lambda g v\cdot v_x,
    \end{equation*}
    then by $v_x^\perp\cap\Lambda_X=P$, $g v=\lambda^{-1}v$.
    Since $\lambda=\pm1$, we have $\Stab(x)=\widetilde{K}$.
    
    The Weyl group $W$ extends on $P^\perp$ by trivial action, hence $W\subset K$.
    For any $g\in K$, $g$ acts on $A_P$ trivially.
    Thus $g$ acts on $A_L$ trivially.
    From \cite[PLATE I--X]{bourbaki2002lie4-6}, $\Aut(R)/W(R)$ acts faithfully on $A_R$ for any irreducible finitely generated positive definite root lattice $R$.
    Hence $g|_L\in\Or(L)$ lies in $W$.
    Thus $K\subset W$. It follows that $\widetilde{K}=\widetilde{W}$.
\end{proof}

Let $\widetilde\Gamma_W$ (respectively, $\Gamma_W$) be the normalizer of $W$ in $\Or(\Lambda_X,H)$ (respectively, $\phi^{-1}\Gamma_1\phi$). 
Notice that $\widetilde\Gamma_W$ (respectively, $\Gamma_W$) is also the normalizer of $\widetilde{W}$ in $\Or(\Lambda_X,H)$ (respectively, $\phi^{-1}\Gamma_1\phi$). 
Note that $[\widetilde{\Gamma}_W:\Gamma_W]\le 2$.
By definition, $\widetilde\Gamma_W$ preserves the $W$-invariant part of $\Lambda_X$, which is $P^\perp$.

\begin{lem}
\label{lem: Gamma_P^perp = Gamma_W}
    We have $\Gamma_{P^\perp}=\Gamma_W$.
\end{lem}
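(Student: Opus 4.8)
The plan is to first prove the statement for the larger ambient groups, namely $\widetilde\Gamma_{P^\perp}=\widetilde\Gamma_W$, and then cut both sides down by the subgroup of $\Or(\Lambda_X,H)$ preserving $\DD_H$, which by the identification $\Or(\Lambda_X,H)=\phi^{-1}\widetilde\Gamma_1\phi$ is exactly $\phi^{-1}\Gamma_1\phi=\{g\in\Or(\Lambda_X,H)\mid g(\DD_H)=\DD_H\}$. The only nonformal ingredient is already contained in the proof of Lemma~\ref{lem: Stab(x) = W}: the kernel of the restriction homomorphism $\widetilde\Gamma_{P^\perp}\to\Or(P^\perp)$ is $W$. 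Indeed, any $g\in\Or(\Lambda_X,H)$ with $g|_{P^\perp}=\Id$ automatically preserves $\DD_T$, so it lies in the group called $K$ there, and the faithfulness of $\Aut(R)/W(R)$ on $A_R$ forces $g\in W$; conversely $W$ acts trivially on $L^\perp\supseteq P^\perp$. Thus throughout I may use the identity $W=\{g\in\Or(\Lambda_X,H)\mid g|_{P^\perp}=\Id\}$.

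For the inclusion $\widetilde\Gamma_{P^\perp}\subseteq\widetilde\Gamma_W$ I observe that $W$, being the kernel of the restriction map $\widetilde\Gamma_{P^\perp}\to\Or(P^\perp)$, is a normal subgroup of $\widetilde\Gamma_{P^\perp}$. Hence every element of $\widetilde\Gamma_{P^\perp}$ normalizes $W$, i.e. lies in $\widetilde\Gamma_W$. For the reverse inclusion I note that $P^\perp$ is precisely the $W$-fixed sublattice of $H^\perp$, since $H^\perp\cap L^\perp=P^\perp$ while $W$ fixes $L^\perp$ pointwise and has no nonzero fixed vector in $L\otimes\QQ$. If $g$ normalizes $W$ and fixes $H$, then for $v\in P^\perp$ and any $w\in W$ we have $w(gv)=g\big((g^{-1}wg)v\big)=gv$, so $gv\in H^\perp$ is again $W$-fixed, whence $gv\in P^\perp$. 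Thus $g$ preserves $P^\perp$, giving $\widetilde\Gamma_W\subseteq\widetilde\Gamma_{P^\perp}$ and hence $\widetilde\Gamma_{P^\perp}=\widetilde\Gamma_W$.

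It then remains to intersect both sides with $\phi^{-1}\Gamma_1\phi$. On the Weyl side this is immediate from the definition: $\Gamma_W=\widetilde\Gamma_W\cap\phi^{-1}\Gamma_1\phi$. On the other side I would use $\DD_T=\DD_H\cap\PP(P^\perp\otimes\CC)$ (for the chosen components). Since $\DD_T$ spans $P^\perp\otimes\CC$, any $g$ with $g(\DD_T)=\DD_T$ preserves $P^\perp$, so $g\in\widetilde\Gamma_{P^\perp}$; moreover $g(\DD_T)=\DD_T\subseteq\DD_H$ forces $g(\DD_H)=\DD_H$, because otherwise $g(\DD_H)=\overline{\DD_H}$ would place the nonempty set $\DD_T$ inside $\overline{\DD_H}$, which is disjoint from $\DD_H$. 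Conversely, if $g\in\widetilde\Gamma_{P^\perp}$ preserves $\DD_H$, then $g(\DD_T)=g\big(\DD_H\cap\PP(P^\perp\otimes\CC)\big)=\DD_H\cap\PP(P^\perp\otimes\CC)=\DD_T$. Hence $\Gamma_{P^\perp}=\widetilde\Gamma_{P^\perp}\cap\phi^{-1}\Gamma_1\phi$, and combining this with $\widetilde\Gamma_{P^\perp}=\widetilde\Gamma_W$ yields $\Gamma_{P^\perp}=\Gamma_W$.

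I expect no serious obstacle beyond bookkeeping: the substantive content is the kernel computation $W=\{g\mid g|_{P^\perp}=\Id\}$, which is supplied by Lemma~\ref{lem: Stab(x) = W}. The only point requiring genuine care is the clean separation of the condition ``$g(\DD_T)=\DD_T$'' into ``$g$ preserves $P^\perp$'' and ``$g$ preserves $\DD_H$'', which is where the component bookkeeping for the two type IV domains enters; everything else is formal group theory.
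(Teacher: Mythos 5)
Your argument is correct given Lemma~\ref{lem: Stab(x) = W}, but it takes a genuinely different route from the paper's own proof. The paper establishes the inclusion $\widetilde\Gamma_{P^\perp}\subseteq\widetilde\Gamma_W$ by direct lattice theory: if $g\in\Or(\Lambda_X,H)$ preserves $P^\perp$, then it preserves $P$ and hence $L$ (the sublattice generated by the roots of $H^\perp\cap P$), and since $W$ is normal in $\Or(L)$ one gets $g^{-1}Wg=W$; the opposite inclusion and the passage from $\widetilde\Gamma$ to $\Gamma$ are exactly your steps 2 and 3 (the paper compresses the component bookkeeping of your step 3 into one line, and your spelled-out version of it is correct). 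You instead obtain $\widetilde\Gamma_{P^\perp}\subseteq\widetilde\Gamma_W$ formally, from the identity $W=\{g\in\Or(\Lambda_X,H)\mid g|_{P^\perp}=\Id\}$, i.e.\ from the kernel computation inside the proof of Lemma~\ref{lem: Stab(x) = W}. This is essentially the derivation the paper itself alludes to in the remark following the lemma, namely that Lemma~\ref{lem: Gamma_P^perp = Gamma_W} is a corollary of Lemma~\ref{lem: Stab(x) = W} via \cite[Lemma A.4]{yu2020fourfolds}; so your route is legitimate and arguably cleaner, since it needs no facts about root sublattices at all.

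The trade-off is that your proof is only as strong as the kernel identity, and that identity is delicate in a way the paper's direct argument is not. If $L$ contains two copies of $E_8$, the isometry of $\langle H\rangle\oplus L\oplus P^\perp$ that swaps the two $E_8$ summands and is the identity on everything else extends to an element of $\Or(\Lambda_X,H)$ (it acts trivially on all discriminant groups, since $A_{E_8}=0$), restricts to the identity on $P^\perp$, and yet does not lie in $W$, which preserves each irreducible summand of $L$. For such singular types the group $\{g\in\Or(\Lambda_X,H)\mid g|_{P^\perp}=\Id\}$ is strictly larger than $W$, so your step 1 (normality of $W$ obtained by exhibiting it as a kernel) collapses together with the cited kernel claim, whereas the paper's argument for the present lemma is unaffected: the swap above does normalize $W$. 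If you want your proof to stand independently of that subtlety, replace step 1 by the paper's argument --- $g(P^\perp)=P^\perp$ and $g(H)=H$ force $g(L)=L$, and $W\trianglelefteq\Or(L)$ --- and keep your steps 2 and 3 as they are.
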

\begin{proof}
    We first show that $\widetilde\Gamma_{P^\perp}=\widetilde\Gamma_W$.
    It is clear that $\widetilde\Gamma_{P^\perp} \supset \widetilde\Gamma_W$.
    For any $g\in\Or(\Lambda_X,H)$ such that $g(P^\perp)=P^\perp$, we have $g(L)=L$, namely $g|_L\in\Or(L)$.
    Since $W$ is a normal subgroup of $\Or(L)$, we have $g^{-1}Wg=W$.
    Thus $\widetilde\Gamma_{P^\perp} \subset \widetilde\Gamma_W$.
    For any $g\in\widetilde\Gamma_W$, $\phi g\phi^{-1}$ preserves $\DD_1$ is equivalent to that $g$ preserves $\DD_T$.
    It follows $\Gamma_{P^\perp}=\Gamma_W$.
\end{proof}

\begin{rmk}
    Due to \cite[Lemma A.4]{yu2020fourfolds}, Lemma \ref{lem: Gamma_P^perp = Gamma_W} is a corollary of Lemma \ref{lem: Stab(x) = W}.
\end{rmk}



Recall that $\widetilde{\Gamma}_T$ denotes the image of $\Or(\Lambda_X,\Delta,H) \to \Or(P^\perp)$, and $\Gamma_T$ is the subgroup of $\widetilde{\Gamma}_T$ that preserves $\DD_T$ (thus $[\widetilde{\Gamma}_T:\Gamma_T]\le 2$).
The following result gives an alternative description of the arithmetic group $\Gamma_T$.
\begin{lem}
\label{lem: Gamma_W|_P^perp = Gamma_T}
    The arithmetic group $\Gamma_W|_{P^\perp}$ coincides with $\Gamma_T$.
\end{lem}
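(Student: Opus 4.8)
The plan is to prove the equality first at the level of the larger groups, showing $\widetilde\Gamma_W|_{P^\perp}=\widetilde\Gamma_T$, and then cut down by the condition of preserving $\DD_T$. The cornerstone I would establish is the intrinsic characterization
\[
\widetilde\Gamma_W=\{\,g\in\Or(\Lambda_X,H)\mid g(L)=L\,\}.
\]
For the inclusion $\supseteq$, if $g(L)=L$ then $g|_L\in\Or(L)$; since $W=W(L)$ is normal in $\Or(L)$ and $g$ preserves $L^\perp$ on which every element of $W$ acts trivially, the conjugate $gwg^{-1}$ acts as a Weyl element on $L$ and as the identity on $L^\perp$, hence lies in the extension $W\subset\Or(\Lambda_X,H)$. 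For the inclusion $\subseteq$, I would use that the reflections contained in $W$ are precisely the $s_\beta$ with $\beta$ a $(-2)$-root of $L$; normalizing $W$ permutes these reflections, and $g\,s_\beta\, g^{-1}=s_{g\beta}\in W$ forces the fixed hyperplane of $s_{g\beta}$ to coincide with that of some $s_{\beta'}$, whence $g\beta=\pm\beta'$ is again a root of $L$. Thus $g$ preserves the root system of $L$ and therefore $L=\langle\text{roots}\rangle$ itself.

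With this in hand, I would note that $\Or(\Lambda_X,\Delta,H)\subset\widetilde\Gamma_W$ (preserving the base $\Delta$ forces $g(L)=L$), and conversely that any $g\in\widetilde\Gamma_W$ sends $\Delta$ to another base $g(\Delta)$ of the root system of $L$. Since $W$ acts simply transitively on the set of bases, there is a unique $w\in W$ with $w\,g(\Delta)=\Delta$, so $wg\in\Or(\Lambda_X,\Delta,H)$ and $\widetilde\Gamma_W=W\cdot\Or(\Lambda_X,\Delta,H)$. Restricting to $P^\perp$ and using that $W$ acts trivially on $P^\perp\subset L^\perp$, I obtain
\[
\widetilde\Gamma_W|_{P^\perp}=\Or(\Lambda_X,\Delta,H)|_{P^\perp}=\widetilde\Gamma_T,
\]
the last equality being the very definition of $\widetilde\Gamma_T$.

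Finally I would descend from the tilde groups to $\Gamma_W$ and $\Gamma_T$. Each $g\in\widetilde\Gamma_W$ preserves $P^\perp$, hence either fixes or swaps the two components $\DD_T,\overline{\DD_T}$ of the $P^\perp$-period domain, and likewise for $\DD_H,\overline{\DD_H}$; because $\DD_T\subset\DD_H$ and $\overline{\DD_T}\subset\overline{\DD_H}$ with $\DD_H\cap\overline{\DD_H}=\varnothing$, the condition of preserving $\DD_H$ is equivalent to that of preserving $\DD_T$, which depends only on $g|_{P^\perp}$. Since $\Gamma_W=\widetilde\Gamma_W\cap\phi^{-1}\Gamma_1\phi$ consists exactly of the elements of $\widetilde\Gamma_W$ preserving $\DD_H$, and $\Gamma_T$ consists of the elements of $\widetilde\Gamma_T$ preserving $\DD_T$, the equality $\widetilde\Gamma_W|_{P^\perp}=\widetilde\Gamma_T$ restricts to $\Gamma_W|_{P^\perp}=\Gamma_T$. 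The step I expect to be the main obstacle is the $\subseteq$ direction of the cornerstone: one must ensure the root system of $L$ is genuinely recovered from $W$ as an abstract subgroup of $\Or(\Lambda_X)$, i.e.\ that no reflection of $W$ is the reflection in a $(-2)$-vector lying outside $L$. This is where the negative-definiteness of $L$ and the standard structure theory of finite reflection groups (reflections $\leftrightarrow$ $\pm$ pairs of roots) are essential.
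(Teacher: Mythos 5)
Your proposal is correct, and its skeleton matches the paper's proof: both reduce the statement to the equality $\widetilde\Gamma_W|_{P^\perp}=\widetilde\Gamma_T$ of the larger groups, both obtain $\widetilde\Gamma_T\subseteq\widetilde\Gamma_W|_{P^\perp}$ from the conjugation formula $g s_\alpha g^{-1}=s_{g\alpha}$ applied to $\alpha\in\Delta$, and both obtain the reverse inclusion by observing that $g(\Delta)$ is again a base of $L$ and using (simple) transitivity of $W$ on bases to replace $g$ by $\sigma g\in\Or(\Lambda_X,\Delta,H)$ with $\sigma\in W$, a correction invisible on $P^\perp$. Where you genuinely diverge is in how you prove that a normalizing element $g$ preserves $L$: the paper asserts this tersely (``since $g$ preserves $H$, $g$ also preserves $L$''), which implicitly requires knowing that every $(-2)$-vector of $H^\perp\cap P$ lies in $L$ --- a fact that is not formal, since $L$ need not be saturated in $\Lambda_X$ (the Zariski-pair type $T_2$ gives a non-saturated example) --- whereas you derive $g(L)=L$ purely group-theoretically from the normalizer condition, via the standard fact that the reflections contained in $W$ are exactly the $s_\beta$ with $\beta$ a root of $L$. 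Your intermediate characterization $\widetilde\Gamma_W=\{\,g\in\Or(\Lambda_X,H)\mid g(L)=L\,\}$ thus buys a self-contained lattice-theoretic argument at the cost of invoking the structure theory of finite reflection groups. You also spell out the descent from the tilde groups to $\Gamma_W$ and $\Gamma_T$, using $\DD_T\subset\DD_H$ and $\DD_H\cap\overline{\DD_H}=\varnothing$; the paper leaves this step implicit in the present lemma, though the same equivalence appears in its proof of Lemma \ref{lem: Gamma_P^perp = Gamma_W}. Both treatments are sound; yours is longer but closes the two gaps the paper glosses over.
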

\begin{proof}
    We only need to verify $\widetilde{\Gamma}_W|_{P^\perp}=\widetilde{\Gamma}_T$, namely
    \begin{equation*}
        \Image( \{ g\in \Or(\Lambda_X,H) \big| g^{-1}Wg=W \} \to \Or(P^\perp) ) = \Image ( \Or(\Lambda_X,\Delta,H) \to \Or(P^\perp) ).
    \end{equation*}
    We first verify $\widetilde\Gamma_W|_{P^\perp} \supset \widetilde\Gamma_T$.
    An element in $\widetilde\Gamma_T$ can be expressed as $g|_{P^\perp}$ for some $g\in \Or(\Lambda_X,\Delta,H)$.
    The Weyl group $W$ is generated by the set of reflections $\{ s_\alpha \,|\, \alpha\in \Delta \}$.
    For each $\alpha\in \Delta$, we have $g^{-1} s_\alpha g = s_{g^{-1}\alpha}$.
    By definition of $g$, $g^{-1}\alpha$ is also an element in $\Delta$, hence $g^{-1} s_\alpha g \in W$.
    Thus we have $\widetilde\Gamma_W|_{P^\perp} \supset \widetilde\Gamma_T$.

    We next verify $\widetilde\Gamma_W|_{P^\perp} \subset \widetilde\Gamma_T$.
    An element $g$ in $\Or(\Lambda_X,H)$ satisfying $g^{-1}Wg=W$ preserves $P$ and $P^\perp$.
    Since $g$ preserves $H$, $g$ also preserves $L$, namely, $g|_{L}\in \Or(L)$.
    Then $g(\Delta)$ is also a base of $L$.
    The Weyl group $W$ acts on the set of bases of $L$ transitively.
    Hence there exists a $\sigma\in W$ such that $\sigma (g(\Delta)) = \Delta$, i.e. $(\sigma g) (\Delta) = \Delta$.
    By definition, $\sigma g\in \Or(\Lambda_X,\Delta,H)$. 
    Since $W$ acts trivially on $P^\perp$, we have $g|_{P^\perp} = (\sigma g)|_{P^\perp}$.
    This shows $\widetilde\Gamma_W|_{P^\perp} \subset \widetilde\Gamma_T$.
\end{proof}

By \cite[Proposition A.5]{yu2020fourfolds}, we have:
\begin{prop}
    The natural map $\pi_T\colon \Gamma_T\bs \DD_T \to \Gamma_1\bs \DD_1$ is a normalization of its image.
\end{prop}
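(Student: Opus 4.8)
The plan is to deduce the proposition directly from the abstract comparison result \cite[Proposition A.5]{yu2020fourfolds}, whose hypotheses are precisely what the preceding lemmas have been assembling. In the notation of that appendix, the ambient datum is the arithmetic group $\Or(\Lambda_X,H)\cong\widetilde\Gamma_1$, together with its index-two subgroup $\phi^{-1}\Gamma_1\phi$ preserving $\DD_1=\DD_H$, acting on the nineteen-dimensional type IV domain $\DD_1$. The distinguished finite subgroup is $\widetilde{W}$, generated by the Weyl group $W$ and $-\Id$, and the relevant subdomain is $\DD_T$, which is exactly the fixed-point locus of $\widetilde{W}$ in $\DD_H$: indeed $W$ acts trivially on $P^\perp$ by construction, and $-\Id$ acts trivially after projectivization.

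First I would record that Proposition A.5 requires the finite group to be the generic stabilizer of the subdomain, and this is precisely Lemma \ref{lem: Stab(x) = W}: for a point $x\in\DD_T$ with $x^\perp\cap\Lambda_X=P$ the stabilizer of $x$ in $\Or(\Lambda_X,H)$ equals $\widetilde{W}$, and such $x$ are dense in $\DD_T$. Next, the proposition identifies the source of the comparison map as the restriction to $\DD_T$ of the normalizer of this finite group inside the arithmetic group. By Lemma \ref{lem: Gamma_P^perp = Gamma_W} that normalizer is $\Gamma_W=\Gamma_{P^\perp}$, and by Lemma \ref{lem: Gamma_W|_P^perp = Gamma_T} its restriction $\Gamma_W|_{P^\perp}$ coincides with $\Gamma_T$; since $\Gamma_W\subseteq\phi^{-1}\Gamma_1\phi$, this also shows that $\pi_T$ is well defined on the quotients. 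Feeding these identifications into \cite[Proposition A.5]{yu2020fourfolds} yields that $\pi_T\colon\Gamma_T\bs\DD_T\to\Gamma_1\bs\DD_1$ is finite and generically injective onto its image.

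The normalization assertion then follows because $\Gamma_T\bs\DD_T$ is a normal quasi-projective variety by Baily--Borel, and a finite, generically injective morphism from a normal variety onto its reduced image is the normalization of that image. Conceptually the generic injectivity reduces to the elementary observation that any $g\in\Gamma_1$ carrying one generic point $x$ of $\DD_T$ to another conjugates the common stabilizer: $g\,\Stab(x)\,g^{-1}=\Stab(gx)$, and for generic points both stabilizers equal $\widetilde{W}$, so $g$ normalizes $\widetilde{W}$, hence normalizes $W$ and lies in $\Gamma_W$; as $W$ acts trivially on $P^\perp$, $x$ and $gx$ are already $\Gamma_T$-equivalent.

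The step I expect to require the most care is matching the domain conventions of \cite[Appendix A]{yu2020fourfolds}, in particular keeping track of the two index-two passages, from $\widetilde\Gamma_1$ to $\Gamma_1$ and from $\widetilde\Gamma_W$ to $\Gamma_W$, and verifying that $-\Id$, although it acts trivially on $\DD_T$, is genuinely needed to realize $\widetilde{W}$ as the \emph{full} stabilizer of a generic point and hence to make the conjugation argument above produce an element of $\Gamma_W$ rather than merely of $\widetilde\Gamma_W$. Once these conventions are aligned, the proposition is immediate from \cite[Proposition A.5]{yu2020fourfolds}.
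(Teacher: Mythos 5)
Your proposal is correct and follows essentially the same route as the paper: the paper deduces this proposition directly from \cite[Proposition A.5]{yu2020fourfolds}, with the required hypotheses supplied by Lemma \ref{lem: Stab(x) = W} (the generic stabilizer is $\widetilde{W}$) and Lemmas \ref{lem: Gamma_P^perp = Gamma_W}, \ref{lem: Gamma_W|_P^perp = Gamma_T} (identifying $\Gamma_T$ with $\Gamma_W|_{P^\perp}$), exactly as you assemble them. Your extra elaboration of the generic-injectivity and normality arguments is consistent with, and effectively unpacks, what that cited proposition provides.
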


Define $\calH_T^*\coloneqq\calH_\infty\cap\DD_T$.
We will show that $\calH_T^*$ is a $\Gamma_T$-invariant hyperplane arrangement in $\DD_T$, then describe the Looijenga compactification of $\Gamma_T\bs(\DD_T-\calH_T^*)$.
\begin{prop}
\label{prop: calH_* subset calH_T}
    We have $\calH_T^*\subset\calH_T$.
    In particular, $\DD_T\not\subset\calH_\infty$.
\end{prop}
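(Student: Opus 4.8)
The plan is to deduce the inclusion directly from Lemma~\ref{lem: no calH'_T}, by converting a divisibility-two root orthogonal to a period into precisely the kind of isotropic class that lemma forbids. Recall that $\Lambda_X$ is the unimodular K3 lattice and $H^2=2$. The first step is the lattice-theoretic observation that a root $r\in H^\perp$ has divisibility two (in $H^\perp$) if and only if $r\equiv H \pmod{2\Lambda_X}$: the condition $\langle r, H^\perp\rangle\subseteq 2\ZZ$ says exactly that the reduction $\bar r$ is orthogonal to $\overline{H^\perp}$ in the nondegenerate $\FF_2$-space $\Lambda_X/2\Lambda_X$, and since $H^\perp$ is primitive of corank one, the orthogonal of $\overline{H^\perp}$ is the line $\FF_2\bar H$; as $r$ is primitive this forces $\bar r=\bar H$. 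Consequently, for such a root the class $u_r\coloneqq \tfrac{1}{2}(H-r)$ is integral, and a direct computation gives $u_r^2=0$ and $u_r\cdot H=1$.

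With this in hand the inclusion is immediate. Take any $\omega\in\calH_T^*=\calH_\infty\cap\DD_T$; by definition there is a divisibility-two root $r\in H^\perp$ with $\omega\perp r$. Since $\omega\in\DD_T\subset (P^\perp)_\CC\subset (H^\perp)_\CC$ we also have $\omega\perp H$, hence $\omega\cdot u_r=\tfrac12(\omega\cdot H-\omega\cdot r)=0$, so $u_r\in\omega^\perp$. Now $u_r$ is isotropic with $u_r\cdot H=1$, so Lemma~\ref{lem: no calH'_T} rules out $\omega\in\DD_T-\calH_T$; as $\omega\in\DD_T$, this forces $\omega\in\calH_T$. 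Thus $\calH_T^*\subseteq\calH_T$. I would then obtain the final assertion formally: by Theorem~\ref{thm: open} the quotient $\Gamma_T\bs(\DD_T-\calH_T)\cong\calM_T$ is nonempty, so $\calH_T\neq\DD_T$; combined with $\calH_\infty\cap\DD_T=\calH_T^*\subseteq\calH_T$ this yields $\DD_T\not\subset\calH_\infty$.

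The only non-formal ingredient, and the step I expect to require the most care, is the congruence $r\equiv H\pmod{2\Lambda_X}$ characterizing divisibility-two roots; this is exactly where unimodularity of $\Lambda_X$ together with $H^2=2$ enters, and it is what guarantees the integrality of $u_r=\tfrac12(H-r)$. Everything else is a one-line intersection computation plus an appeal to the already-established Lemma~\ref{lem: no calH'_T}. A pleasant feature of this route is that it never requires analyzing which roots lie in $L$: the argument yields $\omega\in\calH_T$ regardless of whether the chosen root $r$ happens to lie in $L$, so I avoid tracking the divisibility of roots of the ADE lattice $L$ inside $H^\perp$ case by case (a genuinely delicate issue, since, e.g., the generator of an $A_1$ summand already has divisibility two \emph{within} $L$).
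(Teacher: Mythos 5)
Your proof is correct, but it takes a genuinely different route from the paper's. The paper first proves a standalone lemma (Lemma~\ref{lem: div(root in L)=1}) asserting that every root of $L$ has divisibility one in $H^\perp$; its proof uses the classification of roots of $H^\perp$ into two $\Or(H^\perp)$-orbits by divisibility, transitivity within the divisibility-two orbit, and the fact that $A_{H^\perp}\cong\ZZ/2$ forces such isometries to extend to $\Or(\Lambda_X,H)$, so that $\tfrac12(H+r)$ is integral and lands in $P\subset\Pic(X_{F_0})$, contradicting Urabe's result that $H$ induces a degree-two map to $\PP^2$. Given that lemma, the inclusion $\calH_T^*\subset\calH_T$ follows purely from the definition of $\calH_T$ (hyperplanes of roots \emph{not} in $L$). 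You instead prove the integrality of $\tfrac12(H-r)$ for \emph{any} divisibility-two root $r$ by an elementary mod-$2$ argument (nondegeneracy of the pairing on $\Lambda_X/2\Lambda_X$ from unimodularity, plus primitivity of $H^\perp$ and of $r$), and then feed the resulting isotropic class $u_r$ with $u_r\cdot H=1$, $u_r\perp\omega$ into Lemma~\ref{lem: no calH'_T}, whose contrapositive immediately gives $\omega\in\calH_T$. Both arguments bottom out in the same geometric obstruction (Urabe's Proposition 1.7, which is the engine inside Lemma~\ref{lem: no calH'_T}), but your packaging buys two things: it replaces the orbit/discriminant-extension argument by a one-line reduction mod $2$, and, as you note, it never requires deciding the divisibility of roots of $L$ inside $H^\perp$ --- whereas the paper's approach buys the independent statement that roots of $L$ have divisibility one, which is what makes the relation between the two arrangements transparent. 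Your deduction of the final clause $\DD_T\not\subset\calH_\infty$ from the nonemptiness of $\DD_T-\calH_T$ (Theorem~\ref{thm: open}) is legitimate and non-circular, since the proposition occurs after that theorem; the paper leaves this step implicit. One small quibble: the integrality of $u_r$ needs only unimodularity and primitivity, while $H^2=2$ enters in the computations $u_r^2=0$ and $u_r\cdot H=1$, so your attribution sentence slightly conflates the two roles; this does not affect correctness.
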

\begin{lem}
\label{lem: div(root in L)=1}
    Any root in $L$ has divisibility $1$ in $H^\perp$.
\end{lem}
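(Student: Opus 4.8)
The plan is to show that a root $\delta\in L$ cannot have divisibility $2$ in $H^\perp$; since the divisibility of $\delta$ divides $\delta^2=-2$, it must then equal $1$. First I note that $\delta$ is automatically primitive in $H^\perp$ (if $\delta=k\eta$ with $k\ge 2$ then $\eta^2=-2/k^2\notin\ZZ$), so its divisibility is well defined. The heart of the matter is the lattice-theoretic criterion I would establish first: \emph{for a root $\delta\in H^\perp$ one has $\mathrm{div}_{H^\perp}(\delta)=2$ if and only if $\tfrac12(H+\delta)\in\Lambda_X$}, where $\Lambda_X=H^2(X,\ZZ)$ is the (even unimodular) K3 lattice. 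Indeed $\mathrm{div}_{H^\perp}(\delta)=2$ means exactly that $\tfrac12\delta\in(H^\perp)^\ast$ represents the nonzero class of the discriminant group $A_{H^\perp}$.

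To prove the criterion I would pin down $A_{H^\perp}$ and the gluing. Using the marking $\phi$ with $\phi(H)=l=e_1+f_1$, the complement is $H^\perp\cong\langle e_1-f_1\rangle\oplus U^2\oplus E_8^2$, so $A_{H^\perp}\cong\ZZ/2$, generated by $\tfrac12\delta_0$ with $\delta_0=e_1-f_1$ a root of divisibility $2$. Since $\Lambda_X$ is the even unimodular overlattice of $\langle H\rangle\oplus H^\perp$, it is determined by an isotropic glue subgroup of $A_{\langle H\rangle}\oplus A_{H^\perp}\cong\ZZ/2\oplus\ZZ/2$; comparing discriminant forms, $q(\tfrac12 H)=\tfrac12$ and $q(\tfrac12\delta_0)=-\tfrac12$ in $\QQ/2\ZZ$ force the glue to be the diagonal, so $\tfrac12(H+\delta_0)\in\Lambda_X$. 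Consequently, if $\mathrm{div}_{H^\perp}(\delta)=2$ then $\tfrac12\delta\equiv\tfrac12\delta_0$ in $A_{H^\perp}$, hence $\tfrac12(\delta-\delta_0)\in H^\perp\subset\Lambda_X$ and therefore $\tfrac12(H+\delta)=\tfrac12(H+\delta_0)+\tfrac12(\delta-\delta_0)\in\Lambda_X$. The converse is immediate, since $\tfrac12(H+\delta)\in\Lambda_X$ forces $\tfrac12(\delta\cdot w)\in\ZZ$ for every $w\in H^\perp$.

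Finally I would derive a contradiction for $\delta\in L$. Suppose $\mathrm{div}_{H^\perp}(\delta)=2$ and put $u\coloneqq\tfrac12(H+\delta)\in\Lambda_X$. Then $u^2=\tfrac14(H^2+2H\cdot\delta+\delta^2)=\tfrac14(2+0-2)=0$ and $u\cdot H=\tfrac12(H^2+H\cdot\delta)=1$. Since $H$ and $\delta\in L\subset\Pic(X)$ are algebraic, $u$ is an integral $(1,1)$-class, hence $u\in\Pic(X)$ is an isotropic class with $u\cdot H=1$. This contradicts \cite[Proposition 1.7]{urabe1988combinations}: because $H$ defines the degree-two morphism $X\to\PP^2$ branched along the sextic, no isotropic class $u\in\Pic(X)$ with $u\cdot H=1$ can exist (this is the same geometric input already used in Lemma \ref{lem: no calH'_T}). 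Hence no root of $L$ has divisibility $2$, proving the lemma. I expect the only delicate step to be the lattice bookkeeping behind the criterion; the rest is a one-line intersection computation together with the cited property of $H$. A pleasant feature is that the argument is completely uniform over all ADE components and never refers to the decomposition of $L$ into irreducible summands, so it handles the isolated $A_1$ roots (whose divisibility inside $L$ alone is already $2$) on exactly the same footing as the others.
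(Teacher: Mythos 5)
Your proof is correct, and it follows the same overall skeleton as the paper's — assume a root $r\in L$ has divisibility $2$ in $H^\perp$, show $u=\tfrac12(H+r)$ is an integral class, and contradict Urabe's Proposition 1.7 via $u^2=0$, $u\cdot H=1$ — but the key integrality step is argued by a genuinely different route. The paper proves $u\in\Lambda_X$ by invoking the classification of roots of $H^\perp\cong\langle-2\rangle\oplus U^{\oplus2}\oplus E_8^{\oplus2}$ into exactly two $\Or(H^\perp)$-orbits distinguished by divisibility (an Eichler-type statement), transporting the model divisibility-two root $x=\phi^{-1}(e_1-f_1)$ (for which $\tfrac12(H+x)=\phi^{-1}(e_1)$ is visibly integral) onto $r$ by an isometry $g$ of $H^\perp$, and extending $g$ to $\Or(\Lambda_X,H)$ because any isometry acts trivially on $A_{H^\perp}\cong\ZZ/2$. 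You avoid the orbit classification entirely: since $A_{H^\perp}\cong\ZZ/2$, divisibility two forces $\tfrac12 r\equiv\tfrac12\delta_0$ in $A_{H^\perp}$, hence $\tfrac12(r-\delta_0)\in H^\perp$, and the overlattice-glue computation (the unique isotropic order-two subgroup of $A_{\langle H\rangle}\oplus A_{H^\perp}$ is the diagonal, as $q(\tfrac12H)+q(\tfrac12\delta_0)=\tfrac12-\tfrac12=0$) gives $\tfrac12(H+\delta_0)\in\Lambda_X$, whence $u\in\Lambda_X$. Your route is more self-contained, replacing a nontrivial orbit theorem by elementary discriminant-form bookkeeping, at the cost of a longer setup; the paper's version is shorter if one takes the root-orbit classification as a black box. (Incidentally, your decomposition $H^\perp\cong\langle e_1-f_1\rangle\oplus U^{\oplus2}\oplus E_8^{\oplus2}$ is the correct one; the paper's ``$E_8^{\oplus3}$'' is a typo.) Both arguments ultimately rest on the same two inputs: $A_{H^\perp}\cong\ZZ/2$, and Urabe's criterion excluding isotropic classes $u\in\Pic(X)$ with $u\cdot H=1$, exactly as in Lemma \ref{lem: no calH'_T}.
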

\begin{proof}
    Assume that there exists a root $r\in L$ with divisibility $2$ in $H^\perp$.
    We will show that $u\coloneqq {1\over2}(H+r)\in P$.
    Since $u^2=0$, $u\cdot H=1$, this contradicts the fact that $(\Lambda_X,H,\Delta)$ is an abstract singular type.

    We have $H^\perp\cong A_1\oplus U^{\oplus 2}\oplus E_8^{\oplus 3}$ via $\phi$.
    Let $x\coloneqq\phi^{-1}(e_1-f_1)\in\Lambda_X$, then ${1\over2}(H+x)=\phi^{-1}(e_1)\in\Lambda_X$.
    The set of roots in $A_1\oplus U^{\oplus 2}\oplus E_8^{\oplus 3}$ forms two $\Or(H^\perp)$-orbits, which exactly corresponds to divisibility $1$ and $2$.
    Since the divisibility of $x$ is $2$, there exists $g\in\Or(H^\perp)$ such that $g x=r$.
    Since $A_{H^\perp}\cong \ZZ/2$, $g$ acts trivially on $A_{H^\perp}$.
    Thus $g$ extends to an element in $\Or(\Lambda_X,H)$, still denoted by $g$.
    Hence $u=g({1\over2}(H+x))$ lies in $\Lambda_X$, then in $P$.
\end{proof}
\begin{proof}[Proof of Proposition \ref{prop: calH_* subset calH_T}]
    For any $\omega\in\calH_T^*$, by the definition of $\calH_\infty$, there exists a root with divisibility $2$ in $\langle \omega,H \rangle^\perp$.
    By Lemma \ref{lem: div(root in L)=1}, the root does not lie in $L$.
    Hence $\calH_T^*\subset\calH_T$.
\end{proof}

By Lemma \ref{lem: Gamma_W|_P^perp = Gamma_T}, $\pi_T$ have a natural restriction $\pi_T\colon\Gamma_T\bs(\DD_T-\calH_T^*)\to\Gamma_1\bs(\DD_1-\calH_\infty)$.
By \cite[Theorem A.13]{yu2020fourfolds} and Lemma \ref{lem: Stab(x) = W}, we have 
\begin{prop}
\label{prop: Loo comp: type T to total}
    There is a natural extension of $\pi_T$ to Looijenga compactifications
    \begin{equation*}
        \pi_T\colon \overline{\Gamma_T\bs\DD_T}^{\calH_T^*}\to \overline{\Gamma_1\bs\DD_1}^{\calH_\infty},
    \end{equation*}
    which is the normalization of its image.
\end{prop}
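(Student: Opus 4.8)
The plan is to present both sides as Looijenga arrangement data and to invoke the functoriality theorem \cite[Theorem A.13]{yu2020fourfolds}; since all the genuinely new input has already been isolated in the preceding lemmas, the proof reduces to checking that their hypotheses match those of that theorem. The underlying geometry is the primitive inclusion $P^\perp\hookrightarrow H^\perp$ (primitive as an orthogonal complement), which induces the inclusion of type IV domains $\DD_T\hookrightarrow\DD_1$ recorded in \S\ref{subsection: Shah, Looijenga}. First I would record the arrangement compatibility: by definition $\calH_T^*=\calH_\infty\cap\DD_T$, so $\calH_\infty$ restricts to exactly $\calH_T^*$ along $\DD_T$, while Proposition \ref{prop: calH_* subset calH_T} gives $\DD_T\not\subset\calH_\infty$. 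Hence $\calH_T^*$ is a proper $\Gamma_T$-invariant hyperplane arrangement, both Looijenga compactifications are defined, and the morphism $\pi_T$ on the open strata $\Gamma_T\bs(\DD_T-\calH_T^*)\to\Gamma_1\bs(\DD_1-\calH_\infty)$ makes sense.

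Next I would feed in the two group-theoretic facts that \cite[Theorem A.13]{yu2020fourfolds} demands. The first is the identification of $\Gamma_T$ as the restriction to $P^\perp$ of the normalizer of $W$ (equivalently of $\widetilde W=\langle W,-\Id\rangle$) inside $\phi^{-1}\Gamma_1\phi$: this is the combination of Lemma \ref{lem: Gamma_P^perp = Gamma_W} and Lemma \ref{lem: Gamma_W|_P^perp = Gamma_T}, which together yield $\Gamma_T=\Gamma_W|_{P^\perp}$. Consequently $\pi_T$ is exactly the morphism attached to the nested data $(\Gamma_W\subset\Gamma_1,\ \DD_T\subset\DD_1)$, and the preceding proposition already certifies $\pi_T\colon\Gamma_T\bs\DD_T\to\Gamma_1\bs\DD_1$ as a normalization onto its image on the interior.

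The second and decisive fact is the stabilizer condition: one needs the pointwise stabilizer in $\Or(\Lambda_X,H)$ of a generic point of $\DD_T$ to coincide with the subgroup acting trivially on $\DD_T$. This is precisely Lemma \ref{lem: Stab(x) = W}, which computes that stabilizer to be $\widetilde W$. Since $\widetilde W$ is finite, $W$ acts trivially on $P^\perp$, and $-\Id$ stabilizes the corresponding point of $\DD_T$ projectively, the restriction $\Gamma_W\to\Gamma_T$ has the expected finite kernel, so no spurious identifications are introduced on $\DD_T$ and the interior normalization is unobstructed. With the arrangement compatibility, the normalizer description $\Gamma_T=\Gamma_W|_{P^\perp}$, and this stabilizer statement in hand, \cite[Theorem A.13]{yu2020fourfolds} directly produces the extension $\pi_T\colon\overline{\Gamma_T\bs\DD_T}^{\calH_T^*}\to\overline{\Gamma_1\bs\DD_1}^{\calH_\infty}$ and certifies it as a normalization onto its image.

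I expect the only real subtlety to lie not in the interior, which is handled cleanly by Lemma \ref{lem: Stab(x) = W} and the normalizer identification, but in the behaviour along the cusps, where Looijenga's construction glues arrangement data over the rational boundary components. The burden carried by \cite[Theorem A.13]{yu2020fourfolds} is exactly to propagate the interior stabilizer condition through these boundary strata so that the global extension stays finite and normal; invoking it as a black box is legitimate here because our two verifications supply precisely its stated inputs, and $\calH_\infty$ restricts compatibly to $\calH_T^*$ along the sub-domain $\DD_T$.
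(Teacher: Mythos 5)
Your proposal is correct and follows essentially the same route as the paper: the paper likewise deduces the extension by combining the arrangement compatibility $\calH_T^*=\calH_\infty\cap\DD_T$ (with Proposition \ref{prop: calH_* subset calH_T}), the identification $\Gamma_T=\Gamma_W|_{P^\perp}$ from Lemma \ref{lem: Gamma_P^perp = Gamma_W} and Lemma \ref{lem: Gamma_W|_P^perp = Gamma_T}, and the generic stabilizer computation of Lemma \ref{lem: Stab(x) = W}, then invoking \cite[Theorem A.13]{yu2020fourfolds} as the functoriality black box. Your write-up is simply a more explicit account of how these inputs match the hypotheses of that theorem.
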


\subsubsection{Identification between Compactifications}
\begin{prop}
\label{prop: comm diag, open to compact}
    The map $\pi_T\colon\Gamma_T\bs(\DD_T-\calH_T)\to\Gamma_1\bs(\DD_1-\calH_\infty)$ is injective.
    We have the following commutative diagram:
    \begin{equation*}
    \begin{tikzcd}
      \calM_T  \arrow[r,"\Prd_T","\cong"'] \arrow[d,hook,"j_T"] & \Gamma_T\bs(\DD_T-\calH_T) \arrow[d,hook,"\pi_T"] \\
      \overline{\calM} \arrow[r,"\Prd","\cong"'] & \overline{\Gamma_1\bs\DD_1}^{\calH_\infty},
    \end{tikzcd}
    \end{equation*}
    where the horizontal maps are both isomorphism and the vertical maps are both injective.
\end{prop}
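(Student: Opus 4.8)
The plan is to prove injectivity of $\pi_T$ by a lattice argument, and then obtain commutativity of the square by checking that both ways around it compute the same period line $H^{2,0}(X_F)$ inside $\DD_1$; the remaining assertions (the horizontals being isomorphisms, the left vertical being injective) are then read off from earlier results.

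\emph{Injectivity of $\pi_T$.} Suppose $\omega,\omega'\in\DD_T-\calH_T$ have the same image in $\Gamma_1\bs(\DD_1-\calH_\infty)$, so that there is $g\in\phi^{-1}\Gamma_1\phi\subset\Or(\Lambda_X,H)$ with $g\omega=\omega'$, where we view $\omega,\omega'$ as points of $\DD_H\cong\DD_1$. The defining property of $\calH_T$ is that for $\omega\notin\calH_T$ the roots of $H^\perp$ orthogonal to $\omega$ are exactly those lying in $L$; hence the root lattice of $\omega^\perp\cap H^\perp$ is $L$, and likewise for $\omega'$. Since $g$ fixes $H$ and is an isometry, it carries $\omega^\perp\cap H^\perp$ onto $\omega'^\perp\cap H^\perp$ and therefore $g(L)=L$, i.e. $g|_L\in\Or(L)$. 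Because $W=W(R)$ is normal in $\Or(L)$ and acts trivially on $L^\perp$ while $g$ preserves $L^\perp$, conjugation gives $gWg^{-1}=W$, so $g$ lies in the normalizer $\Gamma_W$ of $W$ in $\phi^{-1}\Gamma_1\phi$. By Lemma \ref{lem: Gamma_P^perp = Gamma_W}, $\Gamma_W=\Gamma_{P^\perp}$, so $g$ preserves $P^\perp$ and $\DD_T$; and by Lemma \ref{lem: Gamma_W|_P^perp = Gamma_T}, $g|_{P^\perp}\in\Gamma_W|_{P^\perp}=\Gamma_T$. Since $g|_{P^\perp}\omega=\omega'$, we conclude $[\omega]=[\omega']$ in $\Gamma_T\bs\DD_T$, which proves that $\pi_T$ is injective.

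\emph{Commutativity.} I would evaluate both compositions on the class of a curve $Z(F)$ with $F\in\calV_T$ and show they agree. Going right then down: $\Prd_T([F])$ is the class of $H^{2,0}(X_F)$ in $\DD_T=\DD(P^\perp)$, where $X_F$ is the minimal resolution of \S\ref{subsection: ADE K3}, and $\pi_T$ is by its construction in \S\ref{section: compactification} induced by the inclusion of period domains $\DD_T\hookrightarrow\DD(H^\perp)=\DD_1$ coming from $P^\perp\subset H^\perp$; hence $\pi_T(\Prd_T([F]))$ is the class of $H^{2,0}(X_F)$ in $\DD_1$. Going down then right: $j_T([F])$ is $[Z(F)]$ regarded as a point of $\calM^{\mathrm{ADE}}\subset\overline{\calM}$, and by the extension of $\Prd$ over ADE curves in Theorem \ref{thm: Shah--Looijenga}, $\Prd$ sends it to the period of the degree-two K3 surface attached to $Z(F)$, which is exactly $X_F$ with its nef class $H$, i.e. again the class of $H^{2,0}(X_F)$ in $\DD_1$. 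The two points coincide once one checks that the markings $\gamma^*$ used to define $\Prd_T$ and the marking $\phi$ with $\phi(H)=l$ used for $\Prd$ are compatible with $\DD_T\subset\DD_1$; but this is precisely how $\DD_T$ was realized as the $W$-invariant subdomain of $\DD_1$ in \S\ref{section: compactification}, so the square commutes. Finally the horizontal arrows are isomorphisms by Theorem \ref{thm: open} and Theorem \ref{thm: Shah--Looijenga}, the left vertical is the natural inclusion $\calM_T\subset\overline{\calM}$ of Proposition \ref{prop: calM_T to calM^bar} (hence injective), and the right vertical is injective by the previous paragraph, giving the asserted diagram.

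The step I expect to require the most care is the commutativity, not because it is deep but because it forces one to reconcile two a priori distinct period maps: the intrinsic occult period of $X_F$ and the degree-two K3 period of $X_F$. The crux is to confirm that the Shah--Looijenga extension of $\Prd$ is evaluated on the minimal resolution $X_F$ (rather than on the singular double cover) and that the marking conventions match under $\DD_T\subset\DD_1$; by contrast the injectivity of $\pi_T$ is a clean consequence of the root-lattice characterization of $\calH_T$ combined with Lemmas \ref{lem: Gamma_P^perp = Gamma_W} and \ref{lem: Gamma_W|_P^perp = Gamma_T}.
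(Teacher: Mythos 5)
Your proposal is correct and takes essentially the same route as the paper: injectivity of $\pi_T$ is deduced from Lemma \ref{lem: Gamma_P^perp = Gamma_W} and Lemma \ref{lem: Gamma_W|_P^perp = Gamma_T}, and commutativity follows by checking that both compositions compute the period of the marked K3 surface $X_F$, matching the paper's appeal to compatibility with the period map of marked K3 surfaces (Lemma \ref{lem: Prd_T coincides with global period map} and Theorem \ref{thm: Shah--Looijenga}). In fact, your root-lattice argument (that $\omega,\omega'\notin\calH_T$ forces $g(L)=L$, hence $g$ normalizes $W$ and preserves $P^\perp$) makes explicit a step the paper leaves implicit when it asserts that any $g\in\Gamma_1$ with $g\cdot\omega=\omega'$ restricts to an element of $\Gamma_T=\{g|_{P^\perp}\mid g\in\Gamma_1,\ g(P^\perp)=P^\perp\}$.
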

\begin{proof}
    By Lemma \ref{lem: Gamma_P^perp = Gamma_W} and Lemma \ref{lem: Gamma_W|_P^perp = Gamma_T}, we have $\Gamma_T = \{ g|_{P^\perp} \,|\, g\in\Gamma_1, g(P^\perp)=P^\perp \}$.
    Namely, for any $\omega,\omega'\in\DD_T\subset\DD_1$, if there exists a $g\in\Gamma_1$ such that $g\cdot\omega=\omega'$, then $g|_{P^\perp}\in\Gamma_T$.
    Hence $\pi_T\colon\Gamma_T\bs(\DD_T-\calH_T)\to\Gamma_1\bs(\DD_1-\calH_\infty)$ is injective.
    
    The period map $\Prd\colon\calM^{\mathrm{ADE}}\xrightarrow[]{\cong}\Gamma_1\bs(\DD_1-\calH_\infty)$ is compatible with the period map of marked complex K3 surfaces.
    By Lemma \ref{lem: Prd_T coincides with global period map}, $\Prd_T\colon\calM_T\xrightarrow[]{\cong}\Gamma_T\bs(\DD_T-\calH_T)$ is also compatible with the period map of marked complex K3 surfaces.
    By Theorem \ref{thm: open}, Theorem \ref{thm: Shah--Looijenga}, together with $j_T\colon \calM_T\hookrightarrow\calM^{\mathrm{ADE}}\hookrightarrow\overline{\calM}$ and $\pi_T\colon \Gamma_T\bs(\DD_T-\calH_T)\hookrightarrow\Gamma_1\bs(\DD_1-\calH_\infty)\hookrightarrow\overline{\Gamma_1\bs\DD_1}^{\calH_\infty}$, we have the claimed commutative diagram.
\end{proof}

In the end of this section, we establish an identification between the GIT compactification $\widehat{\calM}_T$ of $\calM_T$ and the corresponding Looijenga comapactification.
For the construction and properties of $\widehat{\calM}_T$, see \S \ref{subsection: ADE sextic}.
\begin{thm}
\label{thm: comm diag, compact to compact}
    The period map $\Prd_T\colon\calM_T\xrightarrow[]{\cong}\Gamma_T\bs(\DD_T-\calH_T)$ extends to an isomorphism between the GIT compactification and the Looijenga compactification $\Prd_T\colon \widehat{\calM}_T\to\overline{\Gamma_T\bs\DD_T}^{\calH_T^*}$.
    We have the following commutative diagram:
    \begin{equation*}
    \begin{tikzcd}
      \widehat{\calM}_T \arrow[r,"\Prd_T","\cong"'] \arrow[d,"j_T"] & \overline{\Gamma_T\bs\DD_T}^{\calH_T^*} \arrow[d,"\pi_T"] \\
      \overline{\calM} \arrow[r,"\Prd","\cong"'] & \overline{\Gamma_1\bs\DD_1}^{\calH_\infty},
    \end{tikzcd}
    \end{equation*}
    where the two vertical maps are normalizatons of their images.
\end{thm}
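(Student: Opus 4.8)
The plan is to deduce the theorem formally from two inputs already established: the bottom isomorphism $\Prd\colon\overline{\calM}\cong\overline{\Gamma_1\bs\DD_1}^{\calH_\infty}$ (Theorem \ref{thm: Shah--Looijenga}) and the fact that both vertical maps are normalizations onto their images (Proposition \ref{prop: calM_T to calM^bar} and Proposition \ref{prop: Loo comp: type T to total}). The mechanism is the uniqueness of normalization: once I show that, under $\Prd$, the images of $j_T$ and $\pi_T$ coincide, the two domains $\overline{\calM}_T$ and $\overline{\Gamma_T\bs\DD_T}^{\calH_T^*}$ will be normalizations of one and the same variety, forcing a canonical isomorphism between them.

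First I would set $V_1\coloneqq \Prd\circ j_T(\overline{\calM}_T)$ and $V_2\coloneqq \pi_T(\overline{\Gamma_T\bs\DD_T}^{\calH_T^*})$, regarded as closed subvarieties of $\overline{\Gamma_1\bs\DD_1}^{\calH_\infty}$; these images are closed since the source spaces are projective and both $\Prd\circ j_T$ (a finite morphism, as $\Prd$ is an isomorphism and $j_T$ is finite) and $\pi_T$ are proper. Because $\Prd$ restricts to an isomorphism $\mathrm{Im}(j_T)\to V_1$ and $j_T$ is the normalization of $\mathrm{Im}(j_T)$, the composite $\Prd\circ j_T$ is the normalization of $V_1$; likewise $\pi_T$ is the normalization of $V_2$.

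The key step is the equality $V_1=V_2$. Both are irreducible, being images of the irreducible varieties $\overline{\calM}_T$ and $\overline{\Gamma_T\bs\DD_T}^{\calH_T^*}$. By Proposition \ref{prop: comm diag, open to compact} the open square commutes and $\Prd_T$ identifies $\calM_T$ with $\Gamma_T\bs(\DD_T-\calH_T)$, so $\Prd\circ j_T(\calM_T)=\pi_T(\Gamma_T\bs(\DD_T-\calH_T))$ inside $\Gamma_1\bs(\DD_1-\calH_\infty)$. Now $\calM_T$ is dense in $\overline{\calM}_T$, and $\Gamma_T\bs(\DD_T-\calH_T)$ is dense in $\overline{\Gamma_T\bs\DD_T}^{\calH_T^*}$: by Proposition \ref{prop: calH_* subset calH_T} we have $\calH_T^*\subset\calH_T$, so $\Gamma_T\bs(\DD_T-\calH_T)$ is open and dense in $\Gamma_T\bs(\DD_T-\calH_T^*)$, which in turn is open and dense in its Looijenga compactification. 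Hence the common image above is dense in both $V_1$ and $V_2$. Two irreducible closed subvarieties sharing a dense subset coincide, so $V_1=V_2\eqqcolon V$.

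Finally, $\Prd\circ j_T$ and $\pi_T$ are both normalizations of $V$, so by uniqueness of normalization there is a unique isomorphism $\Prd_T\colon\overline{\calM}_T\to\overline{\Gamma_T\bs\DD_T}^{\calH_T^*}$ with $\pi_T\circ\Prd_T=\Prd\circ j_T$, which gives the commutativity of the square and exhibits both vertical maps as normalizations of their (common, via $\Prd$) images. To check that this extends the open-part map, I would restrict to the dense open $\calM_T$: there $\pi_T$ is injective (Proposition \ref{prop: comm diag, open to compact}), and both the newly constructed $\Prd_T$ and the original $\Prd_T$ of Theorem \ref{thm: open} become equal after composing with $\pi_T$, so they agree on $\calM_T$ and hence, by separatedness, the extension is genuine. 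The only substantive point is the equality $V_1=V_2$; everything else is a formal consequence of the normalization property, so I expect that identification of images — resting on the irreducibility of both compactifications and the density furnished by $\calH_T^*\subset\calH_T$ together with the already-established open diagram — to be the main (and essentially the sole) obstacle.
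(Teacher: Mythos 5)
Your proposal is correct and follows essentially the same route as the paper's proof: both identify the images of the open loci via Proposition \ref{prop: comm diag, open to compact}, pass to Zariski closures (your density argument via $\calH_T^*\subset\calH_T$ makes explicit what the paper leaves implicit), and then conclude by uniqueness of normalization using Propositions \ref{prop: calM_T to calM^bar} and \ref{prop: Loo comp: type T to total}. Your additional verifications — closedness of the images by properness and the agreement of the extended map with the original $\Prd_T$ via injectivity of $\pi_T$ and separatedness — are careful elaborations of steps the paper treats as immediate, not a different argument.
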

\begin{proof}
    By Proposition \ref{prop: comm diag, open to compact}, $j_T(\calM_T)$ and $\pi_T(\Gamma_T\bs(\DD_T-\calH_T))$ are isomorphic via $\Prd$.
    Then their (Zariski) closures in $\overline{\calM}$ and $\overline{\Gamma_1\bs\DD_1}^{\calH_\infty}$ are isomorphic via $\Prd$.
    Thus the normalizations of the two closures, namely $\widehat{\calM}_T$ and $\overline{\Gamma_T\bs\DD_T}^{\calH_T^*}$, are isomorphic via a unique extension of $\Prd_T\colon\calM_T\xrightarrow[]{\cong}\Gamma_T\bs(\DD_T-\calH_T)$, which is still denoted by $\Prd_T$.
    
    By Proposition \ref{prop: M_T M^bar normalization} and Proposition \ref{prop: Loo comp: type T to total}, $j_T$ and $\pi_T$ extend to the compactifications $\widehat{\calM}_T$ and $\overline{\Gamma_T\bs\DD_T}^{\calH_T}$.
    By Proposition \ref{prop: comm diag, open to compact}, the compactified diagram is also commutative.
\end{proof}

\section{More Explicit Characterization of Lattices}
\label{section: explicit description of lattices}
In this section, we consider a plane curve of even degree.
For a singular type $T$, let $F$ be an element in $\calV_T$. 
We omit the index $F$ of all relevant symbols in this section for simplicity. 
Our main goal is to give an explicit description of the lattice $P=P_F$.

\subsection{A Topological Lemma}
We begin with a general lemma for branched double covers of smooth real $4$-manifolds.
Let $p\colon X\to S$ be a double cover with branched locus given by the disjoint union of smooth submanifolds $C_1,\cdots,C_l \subset S$ of real codimension two.
Denote by $\iota$ the deck transformation of $p$. 
Let $H^2(X,\ZZ)^\iota$ be the fixed sublattice of the induced action of $\iota$ on $H^2(X,\ZZ)$.
Assume that both $H_1(X,\ZZ)$ and $H_1(S,\ZZ)$ are torsion free.
In particular, any simply connected closed $4$-manifold satisfies the condition.

\begin{lem}
\label{lemma: double cover manifold}
    The double cover $p$ induces a finite-index extension $H^2(S,\ZZ)(2)\hookrightarrow H^2(X,\ZZ)^\iota$ such that
    \begin{equation*}
        \frac{H^2(X,\ZZ)^\iota}{H^2(S,\ZZ)(2)}\cong (\ZZ/2)^{l-1}.
    \end{equation*}
\end{lem}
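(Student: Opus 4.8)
The plan is to get everything from the two transfer identities of the double cover and then pin the cokernel down by comparing integral and mod-$2$ coefficients along the branch locus. Throughout I may assume $H^2(X,\ZZ)$ and $H^2(S,\ZZ)$ are torsion-free, which is what "lattice" presupposes and is automatic in the application ($S$ rational, $X$ a K3 surface). First I would record the projection formulas $p_*p^* = 2\cdot\id$ on $H^\bullet(S,\ZZ)$ and $p^*p_* = \id + \iota^*$ on $H^\bullet(X,\ZZ)$, the latter because the cover is Galois with deck group $\{\id,\iota\}$. From $p^*p_* = \id+\iota^*$ it follows at once that $\Image(p^*)\subseteq H^2(X,\ZZ)^\iota$, and that $p^*(p_* x)=2x$ for $x\in H^2(X,\ZZ)^\iota$, so $2H^2(X,\ZZ)^\iota\subseteq\Image(p^*)$. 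Since a fixed sublattice is primitive, this already shows $H^2(X,\ZZ)^\iota$ is the saturation of $\Image(p^*)$ and that the quotient $H^2(X,\ZZ)^\iota/\Image(p^*)$ is an $\FF_2$-vector space. The twist $(2)$ is forced by $\langle p^*a,p^*b\rangle_X=\langle a,p_*p^*b\rangle_S=2\langle a,b\rangle_S$, and $p_*p^*=2$ gives injectivity of $p^*$. So the whole content is to show this $\FF_2$-vector space has dimension $l-1$.

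To access the dimension I would build an explicit map. For $x\in H^2(X,\ZZ)^\iota$ pick $\alpha\in H^2(S,\ZZ)$ with $2x=p^*\alpha$ (possible by the previous step, and $\alpha$ is well defined by injectivity of $p^*$), and set $\Psi(x)=(\alpha\cdot[C_i]\bmod 2)_{i=1}^l\in\FF_2^l$. This is a homomorphism vanishing on $\Image(p^*)$, since $x=p^*\beta$ gives $\alpha=2\beta$ and each $\alpha\cdot[C_i]$ even. Writing the branch class as $[C]=\sum_i[C_i]=2\delta$ (such $\delta$ exists because the cover exists), we get $\sum_i\alpha\cdot[C_i]=2(\alpha\cdot\delta)\equiv 0$, so $\Psi$ lands in the kernel of the summation map $\FF_2^l\to\FF_2$, which is precisely $\widetilde H^0(C,\FF_2)\cong(\ZZ/2)^{l-1}$. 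The lemma then reduces to the claim that $\Psi$ induces an isomorphism $H^2(X,\ZZ)^\iota/\Image(p^*)\xrightarrow{\sim}\widetilde H^0(C,\FF_2)$.

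This last isomorphism is the heart of the matter, and it cannot be seen from the ramification classes alone: using $2[\widetilde C_i]=p^*[C_i]$ and disjointness $[C_i]\cdot[C_j]=0$ for $i\neq j$, one finds $\Psi([\widetilde C_i])=([C_i]^2\bmod 2)\,e_i$, so whenever a self-intersection $[C_i]^2$ is even the ramification class $[\widetilde C_i]$ lies in $\Image(p^*)$ and detects nothing. The genuine input is the global topology of the branched cover, which I would extract from the sheaf sequence $0\to\ZZ_S\to p_*\ZZ_X\to j_!\calL\to 0$ on $S$, where $j\colon U=S\setminus C\hookrightarrow S$ and $\calL$ is the rank-one local system on $U$ with monodromy $-1$ about each $C_i$. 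As $\iota$ acts by $+1$ on $\ZZ_S$ and by $-1$ on $j_!\calL$, the associated long exact sequence ties the invariant cokernel to the $2$-torsion of $H^2(S,j_!\calL)$; and since the monodromy of $\calL$ is trivial modulo $2$, one has $j_!\calL\otimes\FF_2=j_!\FF_2$ with $H^\bullet(S,j_!\FF_2)=H^\bullet(S,C;\FF_2)$, so the Bockstein routes this $2$-torsion through $H^1(S,C;\FF_2)$, whose pair sequence isolates $\widetilde H^0(C,\FF_2)=(\ZZ/2)^{l-1}$. The main obstacle is the bookkeeping of the boundary maps here: one must verify that the $H^1(S,\FF_2)$ part of $H^1(S,C;\FF_2)$ does not leak into the cokernel and that no class of $\widetilde H^0(C,\FF_2)$ is absorbed. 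I expect to settle this on the local model, where a tubular neighbourhood of each $\widetilde C_i$ is a disk bundle on which $\iota$ is the fibrewise antipodal map, so the induced cover of the boundary circle bundles is fibrewise $S^1\xrightarrow{z\mapsto z^2}S^1$ and contributes exactly one $\ZZ/2$ per component, and then globalize by a Mayer--Vietoris comparison of $X$ and $S$ along the branch locus, with the single relation $[C]=2\delta$ accounting for the drop from $l$ to $l-1$.
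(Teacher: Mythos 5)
Your first paragraph is correct and complete, and is in fact a cleaner route to the saturation statement than the paper's (which cites \cite[Lemma 3.3]{yu2023moduli} for this step): the transfer identities give $\Image(p^*)\subseteq H^2(X,\ZZ)^\iota$ and $2H^2(X,\ZZ)^\iota\subseteq\Image(p^*)$, primitivity of the fixed sublattice identifies $H^2(X,\ZZ)^\iota$ with the saturation, and the projection formula produces the twist $(2)$. The dimension count, however, breaks at your reduction step, because the map $\Psi$ is identically zero for \emph{every} branched double cover. Indeed, for $x\in H^2(X,\ZZ)^\iota$ the class $\alpha$ with $2x=p^*\alpha$ is exactly $\alpha=p_*x$ (apply $p^*p_*=\id+\iota^*$ and injectivity of $p^*$), so by the projection formula
\begin{equation*}
\alpha\cdot[C_i]=\langle p_*x,[C_i]\rangle_S=\langle x,p^*[C_i]\rangle_X=2\langle x,[\widetilde{C}_i]\rangle_X\equiv 0\pmod 2 .
\end{equation*}
Your own consistency check contains the warning sign: you found $\Psi([\widetilde{C}_i])=([C_i]^2\bmod 2)\,e_i$, but $[C_i]^2=2[\widetilde{C}_i]^2$ is \emph{always} even for a branch component (pair $p^*[C_i]=2[\widetilde C_i]$ with itself), so nothing is ever detected. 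Concretely, already inside the paper's setting, take $Z$ a sextic that is the union of two smooth cubics meeting in nine nodes: then $l=2$, the branch locus in $S$ is $\widetilde{Z}_1\sqcup\widetilde{Z}_2$ with classes $3h-\sum_{j=1}^9 e_j$ ($h$, $e_j$ the hyperplane and exceptional classes of the blown-up plane), the cokernel is $\ZZ/2$, generated by $[\pi^{-1}\widetilde{Z}_1]$ (its double is $p^*(3h-\sum_j e_j)$, and $3h-\sum_j e_j$ is not $2$-divisible), and yet $\Psi\equiv0$. So the statement you reduce the lemma to --- that $\Psi$ induces an isomorphism onto $\widetilde{H}^0(C;\FF_2)$ --- is false whenever $l\ge2$, and the rest of the proposal is aimed at proving a false claim.

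The reason intersection parities cannot work is that $2$-divisibility of $p^*\alpha$ is governed by whether $\alpha\bmod 2$ lies in $\ker\bigl(p^*\colon H^2(S;\FF_2)\to H^2(X;\FF_2)\bigr)$, i.e.\ roughly in the $\FF_2$-span of the branch classes $[C_i]$ themselves, not by how $\alpha$ pairs against them. This is exactly how the paper argues: tensoring $0\to H^2(S,\ZZ)\to H^2(X,\ZZ)\to Q\to0$ with $\ZZ/2$ identifies the $2$-torsion of $Q$ with that kernel, which is then shown to be $H^1(S,C;\FF_2)\cong(\ZZ/2)^{l-1}$ using Lee's theorem and the relative cohomology sequence. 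Your sheaf sequence $0\to\ZZ_S\to p_*\ZZ_X\to j_!\calL\to0$ is actually the right tool for that correct statement: its mod-$2$ reduction yields $H^1(X;\FF_2)\to H^1(S,C;\FF_2)\to H^2(S;\FF_2)\to H^2(X;\FF_2)$, and when $H^1(X;\FF_2)=0$ (true for the K3 surfaces at hand; some such hypothesis is genuinely needed --- for the cover $E\to\PP^1$ branched at four points the quotient is $\ZZ/2$, not $(\ZZ/2)^3$, so your worry about leakage from $H^1$ is real) one reads off the kernel as $(\ZZ/2)^{l-1}$. But as written you direct this machinery at the false $\Psi$-claim and explicitly leave the boundary-map bookkeeping unsettled, so the index computation --- the actual content of the lemma --- is not established.
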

\begin{proof}
This lemma is a slight generalization of \cite[Prop $3.4$]{yu2023moduli} and the proof is similar. We provide the proof here for completeness.

The universal coefficient theorem implies that the torsion part of $H^k(S,\ZZ)$ is isomorphic to the torsion part of $H_{k-1}(S,\ZZ)$. Thus $H^2(S,\ZZ)$ is torsion free. So does $H^2(X,\ZZ)$.
Let $M$ be the lattice generated by $H^2(S,\ZZ)(2)$ and $[\pi^{-1}C_1],\cdots,[\pi^{-1}C_l]$.
    By \cite[Lemma $3.3$]{yu2023moduli}, we have $M\subset H^2(X,\ZZ)^\iota$ with the same rank, denoted by $r$.
    Note that the rank of $H^2(S,\ZZ)$ also equals $r$.
    We have $A_{H^2(S,\ZZ)(2)}\cong (\ZZ/2)^{r}$ because $H^2(S,\ZZ)$ is unimodular.
    By the consecutive inclusions $H^2(S,\ZZ)(2)\subset M\subset H^2(X,\ZZ)^\iota\subset (H^2(S,\ZZ)(2))^{\vee}$, the quotients $H^2(X,\ZZ)^\iota/H^2(S,\ZZ)(2)$ and $M/H^2(S,\ZZ)(2)$ are isomorphic to certain powers of $\ZZ/2$.

    By Poincar\'e duality, $H_1(S,\ZZ)$ being torsion free implies that $H^3(S,\ZZ)$ is as well. 
    By the universal coefficient theorem, $H^2(S,\ZZ/2)$ is isomorphic to the direct sum of $H^2(S,\ZZ)\otimes\ZZ/2$ and the $2$-torsion part of $H^3(S,\ZZ)$. 
    The same results also hold for $X$.
    Tensoring the short exact sequence 
    \begin{equation*}
        0\to H^2(S,\ZZ)\to H^2(X,\ZZ)\to H^2(X,\ZZ)/H^2(S,\ZZ)(2)\to 0
    \end{equation*}
    with $\ZZ/2$, we obtain a long exact sequence 
    \begin{align*}
        0 & \to \Tor(H^2(X,\ZZ)/H^2(S,\ZZ)(2),\ZZ/2)\to H^2(S,\ZZ/2)\to H^2(X,\ZZ/2) \\
        & \to (H^2(X,\ZZ)/H^2(S,\ZZ)(2))\otimes \ZZ/2\to 0.
    \end{align*}
    The $\ZZ/2$-rank of the kernel of $H^2(S,\ZZ/2)\to H^2(X,\ZZ/2)$ equals to the $\ZZ/2$-rank of the $2$-torsion part (in this case also the whole torsion part) $(H^2(X,\ZZ)/H^2(S,\ZZ)(2))_{tor}$ of \\$H^2(X,\ZZ)/H^2(S,\ZZ)(2)$.
    By \cite[Theorem 1]{lee1995homology}, we have the exact sequence 
    \begin{equation*}
        0\to H^1(S,\bigcup_{i=1}^{l}C_l,\ZZ/2)\to H^2(S,\ZZ/2)\to H^2(X,\ZZ/2).
    \end{equation*}
    There is a long exact sequence for the relative cohomology groups
    \begin{equation*}
        0\to H^0(S,\ZZ/2)\to H^0(\bigcup_{i=1}^{l}C_i,\ZZ/2)\to H^1(S,\bigcup_{i=1}^l C_i,\ZZ/2)\to 0.
    \end{equation*}
    It follows that the $\ZZ/2$-rank of $H^1(S,\bigcup_{i=1}^l C_i,\ZZ/2)$ is $l-1$.
    From the above exact sequences we conclude that the $\ZZ/2$-rank of $(H^2(X,\ZZ)/H^2(S,\ZZ)(2))_{tor}$ is $l-1$.
    Since $H^2(X,\ZZ)^{\iota}$ is the primitive hull of $H^2(S,\ZZ)$ in $H^2(X,\ZZ)$, we have $H^2(X,\ZZ)^{\iota}/H^2(S,\ZZ)(2)\cong (\ZZ/2)^{l-1}$.
\end{proof}


\subsection{Involution of Root Lattice}
Let $Z=Z(F)$ be a plane curve of even degree. Suppose $Z$ has only simple singularities. 
Let $\widehat{X}$ denote the double cover of $\PP^2$ branched along $Z$, and $X$ denote the minimal resolution of $\widehat{X}$.
Similarly to the sextic case, there is a regular involution $\iota$ of $X$, with quotient $S=X/\iota$. Denote by $f\colon S\to \PP^2$ the minimal blowup such that the strict transform $\widetilde{Z}\subset S$ of $Z$ is smooth. The double cover $X\to S$ is branched along the disjoint union of $\widetilde{Z}$ and possibly certain exceptional curves. We have $H^2(S,\ZZ)\cong \left\langle f^* h\right\rangle\oplus T$, where $T$ arises from the blowup $S\to \PP^2$.

For a root lattice of type $R$, we denote by $w_0(R)$ the longest element of the Weyl group $W(R)$ with respect to a given base.
Recall that the root lattice $L=L_F$ is the direct sum of all $L(R)$, where $R$ runs through all singularity types (with multiplicities) appearing in $\widehat{X}$. 
The longest element $w_0(L)\in W(L)$ is equal to the product of all $w_0(R)$. We have $-w_0(L)\in \Or(L)$. For simplicity, the induced $\iota$-action on $H^2(X,\ZZ)$ (and its restrictions on subspaces) is still denoted by $\iota$. Denote by $L^\iota$ the sublattice fixed by $\iota$. 
\begin{prop}
We have $\iota=-w_0(L)$ as automorphisms of $L$.
\end{prop}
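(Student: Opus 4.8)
The plan is to show that the two lattice automorphisms $\iota$ and $-w_0(L)$ agree on a base of $L$, from which equality on all of $L$ follows at once, since a base of a root lattice generates the lattice over $\ZZ$. First I would reduce to the irreducible case: the involution $\iota$ is the covering involution of $\widehat{X}\to\PP^2$, and each singular point of $\widehat{X}$ lies over a point of the branch curve, hence is a fixed point of $\iota$; therefore $\iota$ preserves the resolution graph over each singular point and consequently respects the orthogonal decomposition $L=\bigoplus_i L(R_i)$. Since $w_0(L)=\prod_i w_0(R_i)$ also acts componentwise, it suffices to prove $\iota=-w_0(R)$ on each irreducible root lattice $L(R)$, with $R$ one of the ADE types.

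Next I would compare the two permutations induced on the base $\Delta$ of $L(R)$, i.e. on the simple roots, which are the classes of exceptional curves. On the one hand, Proposition \ref{prop: iota-action on resolution graph/base} and Remark \ref{rmk: iota-action on L(R)} record that $\iota$ permutes the simple roots via the identity when $R=A_1, D_{2n}, E_7, E_8$, and via the unique nontrivial diagram automorphism when $R=A_n\ (n\ge 2), D_{2n+1}, E_6$. On the other hand, it is classical (see Bourbaki \cite{bourbaki2002lie4-6}) that the longest element $w_0$ sends each simple root $\alpha_i$ to $-\alpha_{\sigma(i)}$, where $\sigma$ is the opposition involution; hence $-w_0$ permutes the simple roots, preserving positivity, via $\sigma$. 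The crucial point is that both $\iota$ and $-w_0$ send simple roots to simple roots, namely to effective classes, respectively positive roots, rather than merely to roots up to sign, so it is enough to compare the underlying permutations of $\Delta$.

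Finally I would invoke the standard computation of the opposition involution: $\sigma$ is trivial precisely for $A_1, D_{2n}, E_7, E_8$, and equals the nontrivial diagram automorphism precisely for $A_n\ (n\ge 2), D_{2n+1}, E_6$. This list matches verbatim the $\iota$-action recorded above, so $\iota$ and $-w_0(R)$ induce the same permutation of $\Delta$, hence coincide on $L(R)$; summing over the components then gives $\iota=-w_0(L)$. I expect the \textbf{only genuine verification} to be this type-by-type matching of the two classifications, namely that the opposition involution of Bourbaki agrees with the geometric $\iota$-action of Proposition \ref{prop: iota-action on resolution graph/base} in each ADE type; everything else in the argument is formal.
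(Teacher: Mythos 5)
Your proof is correct and follows essentially the same route as the paper: reduce to irreducible root lattices, then match the $\iota$-action on the base $\Delta$ (Proposition \ref{prop: iota-action on resolution graph/base} and Remark \ref{rmk: iota-action on L(R)}) against the opposition involution induced by $w_0$ from Bourbaki's tables, type by type. You merely make explicit two steps the paper leaves implicit --- that $\iota$ respects the orthogonal decomposition $L=\bigoplus_i L(R_i)$, and that agreement of the two permutations on the generating set $\Delta$ forces equality on all of $L$ --- so there is nothing to correct.
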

\begin{proof}
It suffices to check this for every irreducible root lattice $L$. 
Notice that $-w_0(L)$ is an involution preserving the base $\Delta$. The graph automorphisms induced by $w_0(L)$ are listed in \cite[PLATE I--X]{bourbaki2002lie4-6}.
\end{proof}

\subsection{Comparison of Lattices}
The double cover $p\colon X\to S$ induces an injective morphism $p^*\colon H^2(S,\ZZ)(2)\to H^2(X,\ZZ)$ of lattices, which maps $f^* h$ to $H$. By Propsition \ref{prop: finite index H^2(S) to H oplus L^iota}, it induces a finite-index extension $T(2)\hookrightarrow L^{\iota}$. 
\begin{lem}
\label{lem: T_F and L_F}
We have $L^{\iota}/T(2)\cong (\ZZ/2)^m$, where $m$ is the number of exceptional curves in $S$ lying in the branch locus of $X\to S$.
\end{lem}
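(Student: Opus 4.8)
The plan is to realize the quotient $L^\iota/T(2)$ as a subgroup of the larger finite group produced by Lemma \ref{lemma: double cover manifold}, and then to pin this subgroup down by pushing forward to $H^2(S,\FF_2)$. Write $B_1,\dots,B_l$ for the connected components of the branch locus of $p\colon X\to S$, so that $\{B_i\}$ consists of the $c$ components of $\widetilde Z$ together with the $m$ exceptional branch curves $D_1,\dots,D_m$; thus $l=c+m$. By Lemma \ref{lemma: double cover manifold} the group $G\coloneqq H^2(X,\ZZ)^\iota/p^*H^2(S,\ZZ)$ is isomorphic to $(\ZZ/2)^{l-1}$ and is generated by the half-classes $\tfrac12 p^*[B_i]$, subject to the single relation coming from the $2$-divisibility of the total branch divisor. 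Since $p^*H^2(S,\ZZ)=\langle H\rangle\oplus T(2)$ with $T(2)\subseteq L^\iota$ and $L^\iota\perp H$, one checks $L^\iota\cap p^*H^2(S,\ZZ)=T(2)$, so the inclusion $L^\iota\hookrightarrow H^2(X,\ZZ)^\iota$ induces an injection $L^\iota/T(2)\hookrightarrow G$. It therefore suffices to identify the image of $L^\iota/T(2)$ in $G$ with the subgroup generated by the $m$ classes $d_j\coloneqq \tfrac12 p^*[D_j]$, each of which is the class of the ramification curve over $D_j$ and lies in $L^\iota$.

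To separate the two kinds of half-classes I would use the Gysin pushforward. Because $p_*p^*=2$, the assignment $v\mapsto p_*v \bmod 2$ kills $p^*H^2(S,\ZZ)$ and hence descends to a homomorphism $\overline{p_*}\colon G\to H^2(S,\FF_2)$ sending $\tfrac12 p^*[B_i]\mapsto [B_i]$. This map is injective: an element of its kernel is a sum $\sum_{i\in I}\tfrac12 p^*[B_i]$ with $\sum_{i\in I}B_i$ $2$-divisible in $H^2(S,\ZZ)$, and since $\dim_{\FF_2}G=l-1$ the space of such relations is one-dimensional, spanned by the total branch divisor, which vanishes in $G$. The crux is then to compute $\overline{p_*}(L^\iota/T(2))=p_*(L^\iota)\bmod 2$. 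Using the classification of the $\iota$-action on the resolution curves (Proposition \ref{prop: involution on irred root lattice} and Proposition \ref{prop: iota-action on resolution graph/base}), $L^\iota$ is generated by the $\iota$-fixed resolution curves together with the sums $\theta+\iota\theta$ over the swapped pairs. A swapped pair satisfies $p(\theta)=p(\iota\theta)$, so $p_*(\theta+\iota\theta)=2[p(\theta)]\equiv 0$; a fixed curve on which $\iota$ acts as an involution (the $A_1$ case) is a double cover of its image, so $p_*\theta=2[p(\theta)]\equiv 0$; and a pointwise-fixed curve maps isomorphically to an exceptional branch curve $D_j$, giving $p_*\theta=[D_j]$. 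Hence $p_*(L^\iota)\equiv\langle[D_1],\dots,[D_m]\rangle\pmod 2$.

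Combining these, $\overline{p_*}$ carries $L^\iota/T(2)$ and the subgroup $\langle d_1,\dots,d_m\rangle$ isomorphically onto the same subgroup $\langle[D_1],\dots,[D_m]\rangle\subseteq H^2(S,\FF_2)$, so by injectivity of $\overline{p_*}$ we conclude $L^\iota/T(2)=\langle d_1,\dots,d_m\rangle$ inside $G$. Finally $\{d_j\}$ is a proper subset of the generating half-classes (there is at least one component of $\widetilde Z$, as $Z\neq\emptyset$), so the unique relation of $G$ does not involve it and the $d_j$ are $\FF_2$-independent; thus $L^\iota/T(2)\cong(\ZZ/2)^m$. The main obstacle is the pushforward computation $p_*(L^\iota)\equiv\langle[D_j]\rangle\pmod 2$: it is exactly here that one must use the geometry of the resolution to distinguish the pointwise-fixed exceptional curves (which are the branch curves and contribute) from the involution and swapped curves (which contribute evenly). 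One should resist the temptation to identify $L^\iota$ with the saturation $H^2(X,\ZZ)^\iota\cap H^\perp$, since the latter is in general strictly larger, seeing also the $2$-divisible ``even-node'' classes, and would overcount the quotient.
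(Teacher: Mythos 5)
Your strategy is genuinely different from the paper's: the paper reduces to one irreducible root lattice at a time and verifies the count singularity by singularity (this is what Table \ref{table: m_R} records, the key observation being that $[\pi^{-1}(D)]$ lies in $L^\iota$ while twice of it lies in $T(2)$), whereas you work globally, embedding $L^\iota/T(2)$ into the group $G=H^2(X,\ZZ)^\iota/p^*H^2(S,\ZZ)$ of Lemma \ref{lemma: double cover manifold} and separating the branch exceptional curves from the rest via the mod-$2$ Gysin pushforward. Your crux computation, $p_*(L^\iota)\equiv\langle[D_1],\dots,[D_m]\rangle \pmod 2$ via the classification of the $\iota$-action in Propositions \ref{prop: involution on irred root lattice} and \ref{prop: iota-action on resolution graph/base}, is correct, and your closing caution about the saturation $H^2(X,\ZZ)^\iota\cap H^\perp$ is well taken.

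There is, however, a genuine gap: you attribute to Lemma \ref{lemma: double cover manifold} the statement that $G$ is generated by the half-classes $\tfrac12 p^*[B_i]$ subject to the single relation coming from the total branch divisor. The lemma only computes the order of $G$ (its proof counts torsion via relative cohomology); it says nothing about generators. Your generation claim is equivalent to the assertion that no proper nonempty sub-collection of branch components has $2$-divisible sum in $H^2(S,\ZZ)$ -- a nontrivial geometric fact; for the strict transforms of the components of $Z$ it is essentially the content of Lemma \ref{lem: Z/2-independence}, which the paper proves by case analysis at the singular points where components meet. You then use the unproven claim twice: for the injectivity of $\overline{p_*}$ and for the $\FF_2$-independence of the $d_j$ (your ``unique relation'' argument). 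Both steps can be repaired without it. Injectivity of $\overline{p_*}$ on all of $G$ has a direct proof: if $v\in H^2(X,\ZZ)^\iota$ and $p_*v=2u$, then $2v=p^*p_*v=2p^*u$, hence $v=p^*u$ by torsion-freeness of $H^2(X,\ZZ)$, so $v=0$ in $G$. Independence of $[D_1],\dots,[D_m]$ modulo $2$ follows from a triangularity argument: each $D_j$ is the strict transform of the exceptional curve of some blowup, so $[D_j]=e_{i_j}-\sum_{k}e_k$ with all $k$ indexing strictly later blowups centered on $D_j$, the leading indices $i_j$ are pairwise distinct, and in any nonempty sub-sum the minimal leading index appears with coefficient $1$. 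With these two repairs your argument closes; as written, it does not.
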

\begin{proof}
It suffices to check this for every irreducible root lattice $L$. Suppose $D$ is a branch exceptional curve in $S$, then the Chern class of $\pi^{-1}(D)$ lies in $L^\iota$, while twice of it lies in $T(2)$. This implies the required isomorphism.
\end{proof}

Table \ref{table: m_R} provides the values of $m$ for root lattices of ADE types.

\begin{table}[htbp]  
  \centering  
  \caption{}  
  \begin{tabular}{cccccc}
  \toprule
  Root Type $R$ & $A_n$ & $D_n$ & $E_6$ & $E_7$ & $E_8$ \\  
  \midrule
  $m_R$ & $0$ & $[\frac{n-2}{2}]$ & $1$ & $3$ & $4$ \\  
  \bottomrule
  \end{tabular}
  \label{table: m_R}  
\end{table}

\begin{cor}
\label{cor: if only A_1,D_2n,E_7,E_8}
    If the plane sextic curve $Z$ has only singularities of type $A_1,D_{2n}(n\ge 2),E_7,E_8$, then $L^\iota=L$. If only type $A_1$ appears, then $T(2)\cong L$ as lattices.
\end{cor}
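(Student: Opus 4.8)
The plan is to treat the two assertions separately, reducing each to the behavior on a single irreducible root lattice. This reduction is legitimate because $L=\bigoplus_R L(R)^{l_T(R)}$ is an orthogonal direct sum on which $\iota$ acts factor by factor: indeed $\iota = -w_0(L)$ (just established) and $w_0(L)=\prod_R w_0(R)$, with each $w_0(R)$ acting only on its own summand $L(R)$.

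For the first assertion, I would observe that for each of the types $A_1, D_{2n}, E_7, E_8$ the longest Weyl element satisfies $w_0(R)=-\mathrm{id}$ on $L(R)$, so that $\iota|_{L(R)} = -w_0(R) = \mathrm{id}$. Equivalently, this is exactly the content of Proposition \ref{prop: iota-action on resolution graph/base}, which states that the $\iota$-action on the Dynkin diagram is trivial for precisely these types; triviality on the diagram forces $\iota$ to fix every simple root, hence to act as the identity on $L(R)$. Since by hypothesis every singularity of $Z$ is of one of these four types, $\iota=\mathrm{id}$ on all of $L$, and therefore $L^\iota=L$.

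For the second assertion, I would combine the first part with the index computation of Lemma \ref{lem: T_F and L_F}. When only $A_1$ singularities occur, Table \ref{table: m_R} gives $m_{A_1}=0$, and since the global count $m$ is the sum of the per-component contributions $m_R$, the total $m$ vanishes. Lemma \ref{lem: T_F and L_F} then yields $L^\iota/T(2)\cong(\ZZ/2)^0=0$, so the saturation $T(2)\hookrightarrow L^\iota$ is in fact an equality of lattices. Combining this with $L^\iota=L$ from the first part gives $T(2)=L$, proving $T(2)\cong L$.

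This corollary is essentially bookkeeping built on the two preceding results, so I expect no genuine obstacle. The only points requiring care are: (i) confirming that $w_0=-\mathrm{id}$ holds for exactly the listed types — a standard fact read off from \cite[PLATE I--X]{bourbaki2002lie4-6} and already encoded in Proposition \ref{prop: iota-action on resolution graph/base}; and (ii) noting that vanishing of the index group $(\ZZ/2)^m$ upgrades the embedding $T(2)\hookrightarrow L^\iota$ from a finite-index inclusion to a genuine isomorphism of lattices, so that the identification $T(2)\cong L$ respects the quadratic forms, with the form on $T$ scaled by $2$.
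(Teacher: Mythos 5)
Your proof is correct and follows exactly the route the paper intends: the paper states this corollary without a separate proof, as an immediate consequence of the identification $\iota=-w_0(L)$ (equivalently Proposition \ref{prop: iota-action on resolution graph/base}, using that $w_0=-\mathrm{id}$ precisely for $A_1$, $D_{2n}$, $E_7$, $E_8$) together with Lemma \ref{lem: T_F and L_F} and the values $m_{A_1}=0$ in Table \ref{table: m_R}. Your two points of care --- reading off $w_0=-\mathrm{id}$ from the Bourbaki plates and upgrading the finite-index inclusion $T(2)\hookrightarrow L^\iota$ to an equality when the quotient $(\ZZ/2)^m$ vanishes --- are exactly the content being invoked, so there is nothing to add.
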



For a root lattice $R$ of ADE type, we give a description of $R^{w_0(R)}$, which is the anti-invariant part of the $-w_0(R)$-action.
Denote by $r_R^-$ the rank of $R^{w_0(R)}$.
\begin{lem}
\label{lem: anti-inv}
    The discriminant group of $R^{w_0(R)}$ for each ADE-type $R$ is listed in Table \ref{tab: r_R^-}.
\begin{table}[htbp]  
  \centering  
  \caption{}  
  \begin{tabular}{ccccc}
  \toprule
  Root Type $R$ & $A_{2n-1}$ & $A_{2n}$ & $D_{n}$ (odd) & $E_6$  \\  
  \midrule
  $r_R^-$ & $n-1$ & $n$ & $1$ & $2$ \\ 
  $A_{R^{w_0(R)}}$ & $(\ZZ/2)^{n-2}\times \ZZ/2n$ & $(\ZZ/2)^n\times \ZZ/(2n+1)$ & $\ZZ/4$ & $\ZZ/3\times(\ZZ/2)^2$ \\  
  \bottomrule
  \end{tabular}
  \label{tab: r_R^-}  
\end{table}
For the type $A_1$, $D_n$ ($n$ even), $E_7$ and $E_8$, $R^{w_0(R)}$ is trivial.
\end{lem}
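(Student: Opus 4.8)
The plan is to verify the table one irreducible ADE type at a time, since it suffices to handle each irreducible $R$. Write $\sigma\coloneqq-w_0(R)$, so that by definition $R^{w_0(R)}$ is the $(-1)$-eigenlattice $\{x\in R\mid\sigma x=-x\}$, equivalently the fixed lattice $\{x\in R\mid w_0(R)x=x\}$. By Proposition \ref{prop: iota-action on resolution graph/base}, together with the equality $\iota=-w_0(R)$ established above, the map $\sigma$ is either trivial or the unique order-two automorphism of the Dynkin diagram. For $R=A_1,D_{2k},E_7,E_8$ one has $w_0(R)=-\Id$, hence $R^{w_0(R)}=\ker(w_0(R)-\Id)=0$, which disposes of the ``trivial'' cases immediately.

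In the remaining cases $\sigma$ permutes the simple roots, and I would exploit this directly: expanding $x=\sum_i c_i\alpha_i$ and imposing $\sigma x=-x$ forces $c_i=0$ on each $\sigma$-fixed node and $c_{\sigma(i)}=-c_i$ on each transposed pair. Hence $R^{w_0(R)}$ is free with $\ZZ$-basis $\{\alpha_i-\sigma(\alpha_i)\}$, one vector per $2$-cycle of $\sigma$, which already reads off the rank $r_R^-$ recorded in the table. I would then write down Gram matrices. For $E_6$, with $\sigma$ swapping $\alpha_1\leftrightarrow\alpha_6$ and $\alpha_3\leftrightarrow\alpha_5$, the basis $u=\alpha_1-\alpha_6$, $v=\alpha_3-\alpha_5$ has Gram matrix $\left(\begin{smallmatrix}4&-2\\-2&4\end{smallmatrix}\right)$. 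For $D_n$ with $n$ odd, $\sigma$ swaps the two fork nodes and the single basis vector $\alpha_{n-1}-\alpha_n$ has norm $4$, so $R^{w_0(R)}\cong\langle 4\rangle$. For the $A$-types it is cleanest to use the coordinate model $A_m=\{x\in\ZZ^{m+1}\mid\sum x_i=0\}$ with $w_0$ the reversal $e_j\mapsto e_{m+2-j}$: a short computation identifies $R^{w_0(A_{2n-1})}\cong A_{n-1}(2)$ (symmetric vectors, form scaled by $2$), while $R^{w_0(A_{2n})}$ is the rank-$n$ lattice with Gram matrix $2I_n+4J_n$, where $J_n$ is the all-ones matrix.

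It then remains to extract discriminant groups by Smith normal form. The mechanical fact I would isolate is that rescaling a form by $2$ doubles its Smith normal form: if a Gram matrix $G$ has Smith form $\diag(d_1,\dots,d_r)$, then $2G$ has Smith form $\diag(2d_1,\dots,2d_r)$, still in Smith form because the chain $d_1\mid\cdots\mid d_r$ is preserved. For $A_{n-1}(2)$ the Cartan matrix of $A_{n-1}$ has Smith form $\diag(1,\dots,1,n)$, so the scaled form has Smith form $\diag(2,\dots,2,2n)$, giving $(\ZZ/2)^{n-2}\times\ZZ/2n$. For $E_6$ the matrix $\left(\begin{smallmatrix}4&-2\\-2&4\end{smallmatrix}\right)$ reduces to $\diag(2,6)$, i.e.\ $(\ZZ/2)^2\times\ZZ/3$, and $\langle 4\rangle$ gives $\ZZ/4$. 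For $A_{2n}$ I would first reduce $I_n+2J_n$ to $\diag(1,\dots,1,2n+1)$ by explicit integral operations (subtract the last row from each other row, then clear the last column and the last row), so that $2I_n+4J_n$ has Smith form $\diag(2,\dots,2,2(2n+1))$, yielding $(\ZZ/2)^n\times\ZZ/(2n+1)$ since $2n+1$ is odd.

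The only genuinely nonroutine points are the two infinite families: one must verify that the integral matrices $C_{A_{n-1}}$ and $I_n+2J_n$ have \emph{cyclic} cokernels, of orders $n$ and $2n+1$ respectively. Once these Smith forms for the unscaled matrices are pinned down, the rescaling trick makes the final step immediate, and the splittings $\ZZ/2n\cong\ZZ/2\times\ZZ/n$ are absorbed into the $(\ZZ/2)$-powers while the odd factor $\ZZ/(2n+1)$ survives, reproducing exactly the group structures in the table. I expect the explicit reduction of $I_n+2J_n$ to be the main obstacle, but it is handled cleanly by the row and column operations indicated above.
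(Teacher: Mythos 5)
Your proposal is correct, and it supplies a complete argument for a lemma whose proof the paper simply omits (the paper treats it as a standard case-by-case check, citing the Bourbaki plates for related data elsewhere). Your route is the natural one and is carried out cleanly: the reduction to the fixed lattice of $w_0$, the observation that $w_0=-\Id$ kills the types $A_1$, $D_{2k}$, $E_7$, $E_8$, the basis $\{\alpha_i-\sigma(\alpha_i)\}$ indexed by the $2$-cycles of the diagram involution (which gives the rank row of the table), and then explicit Gram matrices. The identifications $R^{w_0(A_{2n-1})}\cong A_{n-1}(2)$ via the coordinate model $\{x\in\ZZ^{2n}\mid\sum x_i=0\}$ and $R^{w_0(A_{2n})}\cong$ the lattice with Gram matrix $2I_n+4J_n$ are correct (the longest element of $S_{m+1}$ is indeed the coordinate reversal), and the Smith-form computations check out: $\diag(2,\dots,2,2n)$ for $A_{2n-1}$, $\diag(2,\dots,2,2(2n+1))$ for $A_{2n}$, $\diag(2,6)$ for $E_6$, and $\langle 4\rangle$ for $D_n$ odd, matching the table in all cases; the cyclicity of the cokernel of $I_n+2J_n$ follows exactly by the row reduction you indicate (in the quotient all $e_i$ become equal and $(2n+1)e_1=0$). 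Two cosmetic points: the paper's lattices are negative definite (exceptional curves have square $-2$), so your Gram matrices differ by a global sign, which does not affect the discriminant groups being tabulated; and the phrase ``$\ZZ/2n\cong\ZZ/2\times\ZZ/n$'' is literally false for even $n$ — what your computation actually uses, and what is true, is $\ZZ/(2(2n+1))\cong\ZZ/2\times\ZZ/(2n+1)$ in the $A_{2n}$ case, while for $A_{2n-1}$ the Smith form $(\ZZ/2)^{n-2}\times\ZZ/2n$ already is the table entry with no splitting needed.
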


Denote by $D_1,\cdots,D_{l'}$ the irreducible components of $Z\subset \PP^2$.
Let $\widetilde{D}_1,\cdots,\widetilde{D}_{l'}$ be the connected components of $\widetilde{Z}\subset S$.
Denote by $E_1,\cdots,E_{m}$ the exceptional curves of $S\to \PP^2$ lying in the branched locus of $\pi\colon X\to S$.
The double cover $\pi$ is branched along the disjoint union of $\widetilde{D}_1,\cdots,\widetilde{D}_{l'}$ and $E_1,\cdots,E_{m}$.
Let $l\coloneqq l'+m$.

Recall $P=P_F$ is the primitive hull of $\left\langle H\right\rangle \oplus L$ in $H^2(X,\ZZ)$.
Let $M$ be the sublattice of $H^2(X,\ZZ)$ generated by $H$, $L$ and Chern classes of curves $\pi^{-1}(\widetilde{D}_1),\cdots, \pi^{-1}(\widetilde{D}_{l'}) \subset X$.
Denote by $\beta_i$ the Chern class of $\pi^{-1}(\widetilde{D}_i)\subset X$.


\begin{lem}
\label{lem: saturation of inv part of P}
    We have
    \begin{equation*}
        \frac{P^{\iota}}{(\left\langle H \right\rangle\oplus L)^{\iota}} \cong (\ZZ/2)^{l'-1}.
    \end{equation*}
\end{lem}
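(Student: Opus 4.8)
The plan is to deduce the statement purely by lattice bookkeeping, chaining the topological Lemma~\ref{lemma: double cover manifold} with the comparison Lemma~\ref{lem: T_F and L_F}. The crucial observation is that all the relevant quotients are elementary abelian $2$-groups, i.e. $\FF_2$-vector spaces, so the whole problem reduces to a dimension count; the actual content sits entirely in the two lemmas I will invoke.

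First I will set up three nested lattices. Since $\iota$ fixes $H$ and acts on $L$ with invariant part $L^\iota$, we have $(\langle H\rangle\oplus L)^\iota=\langle H\rangle\oplus L^\iota$. Under the identification $H^2(S,\ZZ)=\langle f^*h\rangle\oplus T$ with $p^*(f^*h)=H$, the image $p^*(H^2(S,\ZZ)(2))$ is $\langle H\rangle\oplus T(2)$, and the saturation $T(2)\hookrightarrow L^\iota$ recorded in the Comparison of Lattices subsection yields
\[
\langle H\rangle\oplus T(2)\ \subseteq\ \langle H\rangle\oplus L^\iota\ \subseteq\ P^\iota .
\]
I will also record that $P^\iota=H^2(X,\ZZ)^\iota$: this is Corollary~\ref{cor: Lambda_F^iota = P_F^iota} (whose argument is topological and uses only that fixed sublattices are always primitive, so it applies verbatim to a plane curve of even degree); directly, $H^2(X,\ZZ)^\iota$ is the saturation of $\langle H\rangle\oplus T(2)$, which lies in the primitive lattice $P$, forcing $H^2(X,\ZZ)^\iota\subseteq P^\iota$, with the reverse inclusion clear.

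Next I will feed in the two quotient computations. Applying Lemma~\ref{lemma: double cover manifold} to $\pi\colon X\to S$, whose branch locus is the disjoint union of the $l'+m$ smooth curves $\widetilde{D}_1,\dots,\widetilde{D}_{l'},E_1,\dots,E_m$, gives
\[
\frac{P^\iota}{\langle H\rangle\oplus T(2)}=\frac{H^2(X,\ZZ)^\iota}{H^2(S,\ZZ)(2)}\cong(\ZZ/2)^{l'+m-1},
\]
while Lemma~\ref{lem: T_F and L_F} supplies the intermediate quotient
\[
\frac{\langle H\rangle\oplus L^\iota}{\langle H\rangle\oplus T(2)}=\frac{L^\iota}{T(2)}\cong(\ZZ/2)^m .
\]

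Finally, the third isomorphism theorem applied to the nesting above yields
\[
\frac{P^\iota}{(\langle H\rangle\oplus L)^\iota}\cong\frac{(\ZZ/2)^{l'+m-1}}{(\ZZ/2)^m}.
\]
Since every subquotient of an $\FF_2$-vector space is again an $\FF_2$-vector space, this group is automatically elementary abelian, and comparing $\FF_2$-dimensions gives $(\ZZ/2)^{l'-1}$, as claimed. The only point needing care — and the closest thing to an obstacle — is confirming that the middle term genuinely embeds as a rank-$m$ subspace of the rank-$(l'+m-1)$ space, so that the count returns exactly $l'-1$ rather than merely a group of order $2^{l'-1}$; this is immediate from the nesting, so I expect no serious difficulty beyond this routine bookkeeping.
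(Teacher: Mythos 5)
Your proposal is correct and follows essentially the same route as the paper's own proof: both identify $P^{\iota}$ with $H^2(X,\ZZ)^{\iota}$ via the inclusion $H^2(S,\ZZ)(2)\subset \langle H\rangle\oplus L\subset P$, then compute $P^{\iota}/H^2(S,\ZZ)(2)\cong(\ZZ/2)^{l'+m-1}$ from Lemma~\ref{lemma: double cover manifold} and $(\langle H\rangle\oplus L)^{\iota}/H^2(S,\ZZ)(2)\cong(\ZZ/2)^{m}$ from Lemma~\ref{lem: T_F and L_F}, and conclude by comparing the nested quotients. Your extra care in justifying $P^{\iota}=H^2(X,\ZZ)^{\iota}$ for curves of arbitrary even degree and in spelling out the $\FF_2$-dimension count matches what the paper does implicitly.
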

\begin{proof}
By definition $\beta_i\in H^2(X, \ZZ)^\iota$ for $1\le i\le l'$. 
Denote by $N$ the lattice $H^2(S,\ZZ)(2)=(\left\langle f^*h \right\rangle \oplus T)(2)$, which is a sublattice of $H^2(X, \ZZ)$.
By Lemma \ref{lemma: double cover manifold}, $H^2(X,\ZZ)^{\iota}$ is the primitive hull of $N$ in $H^2(X, \ZZ)$.
Since $N\subset \left\langle H\right\rangle \oplus L$, we have $H^2(X,\ZZ)^{\iota}\subset P$. It follows $H^2(X,\ZZ)^{\iota}=P^\iota$.
Consequently, we have $M\subset P$.
Then there are successive finite-index extensions
\begin{equation}
\label{eqn: iota-invariant successive saturations}
    N \hookrightarrow (\left\langle H \right\rangle\oplus L)^{\iota} \hookrightarrow M^{\iota} \hookrightarrow P^{\iota}.
\end{equation}
By Lemma \ref{lemma: double cover manifold}, we have $P^{\iota} / N\cong (\ZZ/2)^{l-1}$.
By Lemma \ref{lem: T_F and L_F}, we have $(\left\langle H \right\rangle\oplus L)^{\iota}/N \cong (\ZZ/2)^{m}$. 
Thus the lemma follows.
\end{proof}

\begin{lem}
\label{lem: Z/2-independence}
    Any $l'-1$ elements among $\beta_1, \cdots, \beta_{l'}$ are $\ZZ/2$-independent in $\frac{M^{\iota}}{(\left\langle H \right\rangle\oplus L)^{\iota}}$.
\end{lem}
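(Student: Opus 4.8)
The plan is to compute the group $M^\iota/(\langle H\rangle\oplus L)^\iota$ by viewing it inside $P^\iota/(\langle H\rangle\oplus L)^\iota$, tracking the images of all branch classes, and isolating the single relation that binds them. Recall from the proof of Lemma \ref{lem: saturation of inv part of P} that $H^2(X,\ZZ)^\iota=P^\iota$, and set $N:=H^2(S,\ZZ)(2)$. The double cover $\pi\colon X\to S$ is branched along the disjoint curves $\widetilde D_1,\dots,\widetilde D_{l'},E_1,\dots,E_m$, a total of $l=l'+m$ components; write $\beta_i:=[\pi^{-1}(\widetilde D_i)]$ and $\epsilon_j:=[\pi^{-1}(E_j)]$, all of which lie in $P^\iota$. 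By Lemma \ref{lemma: double cover manifold}, these $l$ classes generate $P^\iota/N\cong(\ZZ/2)^{l-1}$, so the $\ZZ/2$-module of relations among them is exactly one-dimensional.

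Next I would identify that single relation explicitly. The total branch divisor $B:=\sum_i\widetilde D_i+\sum_j E_j$ is $2$-divisible on $S$ (this is precisely the hypothesis producing the double cover, propagated through the blowups by Lemma \ref{lem: 2-divisible}), say $\calO_S(B)=\calL^{\otimes 2}$. Since $\pi^*B=2\pi^{-1}(B)$ as divisors, one obtains
\[
\sum_{i=1}^{l'}\beta_i+\sum_{j=1}^{m}\epsilon_j=[\pi^{-1}(B)]=\tfrac12\pi^*[B]=\pi^*[\calL]\in N.
\]
Since $\pi^*H^2(S,\ZZ)=N$, this shows the all-ones vector lies in the relation lattice; being one-dimensional, the relation lattice is spanned by it, so the unique nontrivial relation in $P^\iota/N$ is $\sum_i\beta_i+\sum_j\epsilon_j\equiv 0$.

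Then I would quotient out the exceptional contributions. By Lemma \ref{lem: T_F and L_F} the subgroup $(\langle H\rangle\oplus L)^\iota/N=L^\iota/T(2)\cong(\ZZ/2)^m$ is freely spanned over $\ZZ/2$ by the $\epsilon_j$, so passing to $P^\iota/(\langle H\rangle\oplus L)^\iota\cong(\ZZ/2)^{l'-1}$ (Lemma \ref{lem: saturation of inv part of P}) kills each $\epsilon_j$. A relation $\sum_{i\in \calS}\bar\beta_i=0$ among the images $\bar\beta_i$ therefore lifts to an element $\sum_{i\in \calS}\beta_i+\sum_{j\in \calS'}\epsilon_j\in N$ for some index set $\calS'$, i.e. to a relation in $P^\iota/N$; by the previous step this vector is either $0$ or the all-ones vector, forcing $\calS=\varnothing$ or $\calS=\{1,\dots,l'\}$. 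Thus the \emph{only} relation among $\bar\beta_1,\dots,\bar\beta_{l'}$ is $\sum_i\bar\beta_i=0$, whose support is full, so any $l'-1$ of them are $\ZZ/2$-independent. Finally, since each $\beta_i\in M^\iota$ and the natural inclusion $M^\iota/(\langle H\rangle\oplus L)^\iota\hookrightarrow P^\iota/(\langle H\rangle\oplus L)^\iota$ sends $\beta_i\mapsto\bar\beta_i$, the same independence holds in $M^\iota/(\langle H\rangle\oplus L)^\iota$, which is the assertion.

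I expect the main obstacle to be pinning down that there is exactly one $\ZZ/2$-relation among the branch classes and that it has full support. This rests on combining the rank count of Lemma \ref{lemma: double cover manifold} (which forces the relation space to be one-dimensional) with the explicit $2$-divisibility identity above (which pins that relation to the all-ones vector). The remaining bookkeeping—transporting the relation through the quotient by $\langle\epsilon_1,\dots,\epsilon_m\rangle$ and lifting the conclusion back to $M^\iota$—is then routine.
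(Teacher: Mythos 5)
Your argument has a genuine gap at its first, and crucial, step: the claim that ``by Lemma \ref{lemma: double cover manifold}, these $l$ classes generate $P^\iota/N\cong(\ZZ/2)^{l-1}$.'' Lemma \ref{lemma: double cover manifold} only computes the isomorphism type of the quotient $H^2(X,\ZZ)^\iota/H^2(S,\ZZ)(2)$; neither its statement nor its proof asserts that the ramification classes generate it (in that proof, the sublattice generated by $N$ and the branch classes is an auxiliary lattice shown only to have full rank, not to be all of $H^2(X,\ZZ)^\iota$). Granting generation, your relation space is indeed one-dimensional, the all-ones relation is correctly pinned down by $2$-divisibility, and the descent through $(\langle H\rangle\oplus L)^\iota$ is fine. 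But generation is precisely the hard point: given the all-ones relation and Lemma \ref{lem: saturation of inv part of P}, the statement ``the branch classes generate $P^\iota/N$'' is logically equivalent to Lemma \ref{lem: Z/2-independence} itself --- it is essentially Proposition \ref{prop: saturation of M_F} ($M^\iota=P^\iota$), which the paper deduces \emph{from} this lemma, not the other way around. So the proposal assumes what it sets out to prove.

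Moreover, no purely topological argument can close this gap, because generation fails for general branched double covers satisfying the hypotheses of Lemma \ref{lemma: double cover manifold}. For instance, let $S$ be a rational elliptic surface and let the branch locus be four disjoint smooth fibers $F_1,\dots,F_4$: the branch divisor $F_1+\dots+F_4\sim 4F$ is $2$-divisible, and the same exact sequence of Lee used in the proof of Lemma \ref{lemma: double cover manifold} gives $H^2(X,\ZZ)^\iota/\pi^*H^2(S,\ZZ)\cong(\ZZ/2)^{3}$, yet the four ramification classes are mutually homologous (they are fibers of the induced elliptic fibration on $X$), so they span only a $(\ZZ/2)^{1}$ inside this quotient. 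This is why the paper's proof is forced to use the specific geometry of plane curves: by B\'ezout, any proper union $D_{i_1}\cup\dots\cup D_{i_k}$ must meet the remaining components; the singularities at such intersection points have type $A_{2n-1}$, $D_n$ or $E_7$; and the orthogonal projection of $\Sigma=\beta_{i_1}+\dots+\beta_{i_k}$ onto the corresponding root lattice is an explicit fundamental weight with non-integral coordinates, whence $\Sigma\notin\langle H\rangle\oplus L$. Some geometric input of this kind --- components forced to intersect, plus the local weight computation --- is indispensable, and it is exactly what your proposal is missing.
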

\begin{proof}
The nodal case is proved in \cite[Proposition 3.4]{yu2023moduli}. 
Without loss of generality, we consider the first $l'-1$ elements $\beta_1,\cdots,\beta_{l'-1}$.
Consider the set $A=(\mathop\bigcup\limits_{i=1}^{l'-1}D_i)\cap D_{l'}$. The singularities in $A$ must be of type $A_{2n-1}, D_n, E_7$. It follows from the condition that each singularity in $A$ has at least two local components.

It suffices to show that any nontrivial linear combination of $\beta_1, \cdots, \beta_{l'-1}$ with coefficients $0$ or $1$ does not belong to $\langle H\rangle \oplus L$. 
It is enough to consider $\Sigma=\beta_1+\cdots+\beta_k$, where $k$ is an integer satisfying $1\le k\le l'-1$. Suppose there is a singularity of type $A_{2n-1}$ on $(\mathop\bigcup\limits_{i=1}^{k} D_i)\cap D_{l'}$. 
The singularity lies on exactly two irreducible components of the divisor, which we denote by $D_i$ and $D_{l'}$.
The graph of exceptional curves and strict transforms of $D_i$, $D_{l'}$ is drawn in Figure \ref{figure: A_2n-1} (also see \cite[Chapter III, \S 7, Table 1]{barth2004compact}). 
\begin{figure}[htp]
  \centering
  \includegraphics[width=13cm]{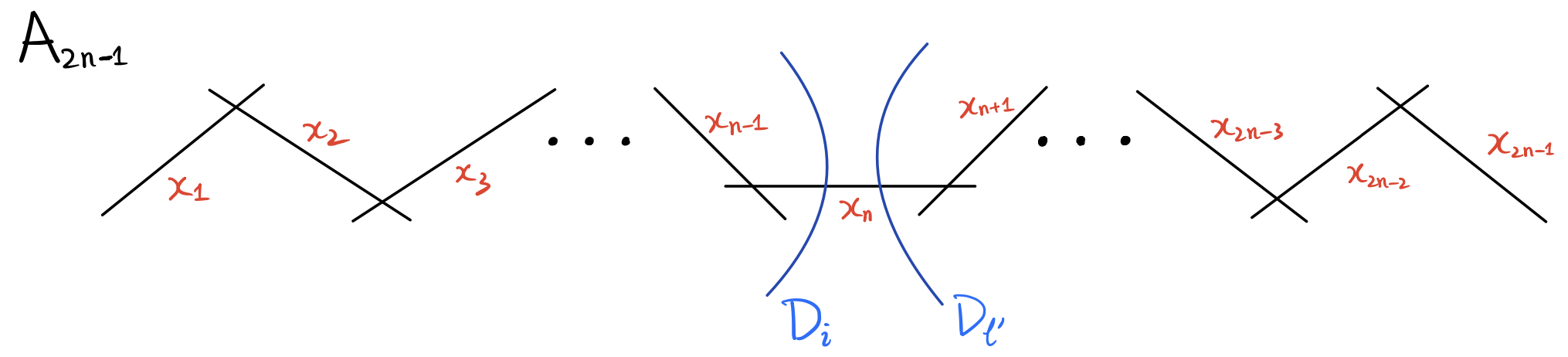}
  \caption{Resolution graph of $A_{2n-1}$-type}
  \label{figure: A_2n-1}
 \end{figure}
Both $\beta_i$ and $\beta_{l'}$ intersect only the middle curve.
Hence the projection of $\beta_i$ (also $\beta_{l'}$) to the root lattice of type $A_{2n-1}$ is the fundamental weight $\lambda_n$. The weight $\lambda_n$ can be expressed as
\begin{equation*}
    \lambda_n = -{1\over 2}x_1 - {2\over 2}x_2 - {3\over 2}x_3 - \cdots - {n\over 2}x_n - {n-1\over 2}x_{n+1} - \cdots - {1\over 2}x_{2n-1},
\end{equation*}
where the coefficients of both $x_i$ and $x_{2n-i}$ $(1\le i\le n)$ are $-{i\over 2}$.
(A general calculation of fundamental weights can be found in \cite[PLATE I--X]{bourbaki2002lie4-6}.)
This is exactly the projection of $\Sigma$ to the associated $A_{2n-1}$-root lattice. Thus $\Sigma\notin \langle H\rangle\oplus L$.

The cases of types $D_n$ and $E_7$ are similar, but a bit more involved.
We call the exceptional curve corresponding to $m$-node the $m$-line.
See Figure \ref{figure: D_2n-1}, \ref{figure: D_2n}, \ref{figure: E_7} for the numbering of each node.
\begin{figure}[htp]
  \centering
  \includegraphics[width=8cm]{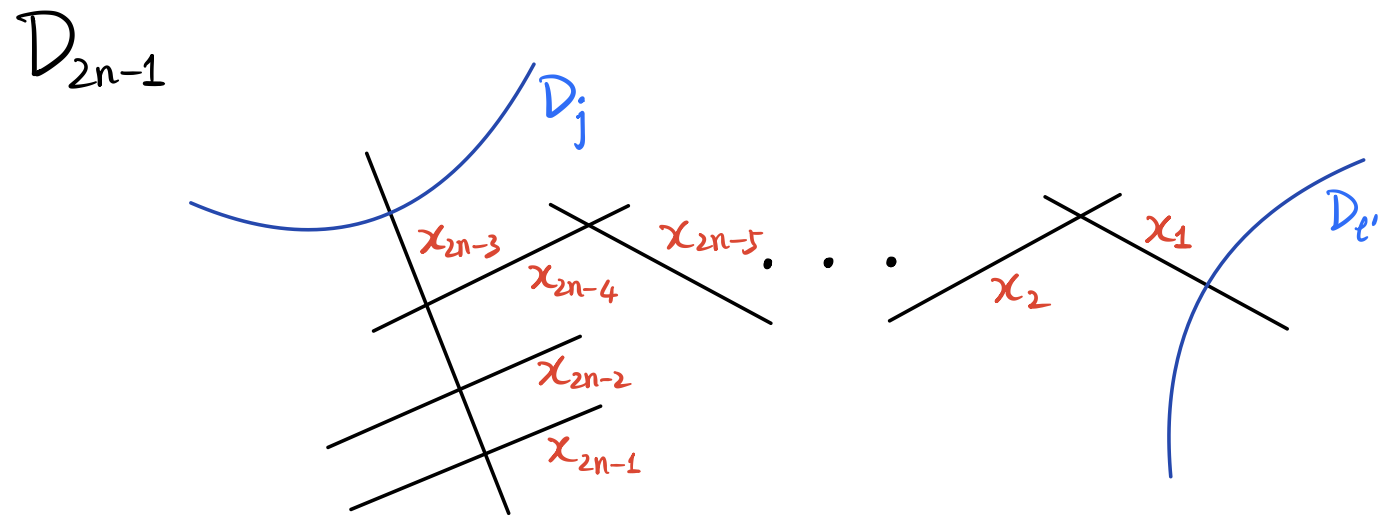}
  \caption{Resolution graph of $D_{2n-1}$-type}
  \label{figure: D_2n-1}
\end{figure}
\begin{figure}[htp]
  \centering
  \includegraphics[width=9cm]{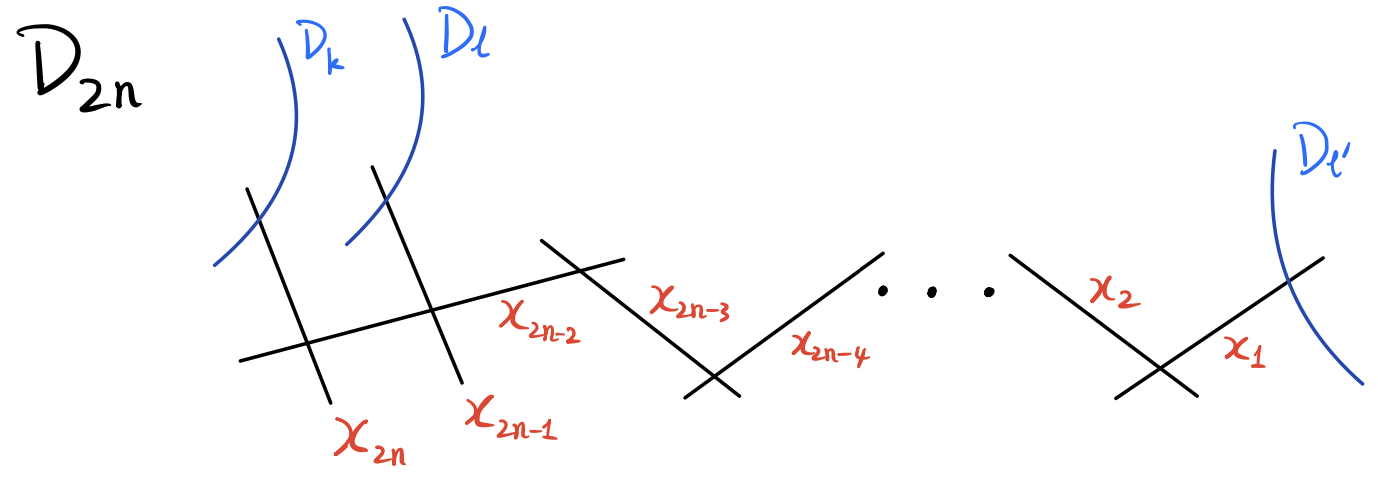}
  \caption{Resolution graph of $D_{2n}$-type}
  \label{figure: D_2n}
\end{figure} 
\begin{figure}[htp]
  \centering
  \includegraphics[width=7cm]{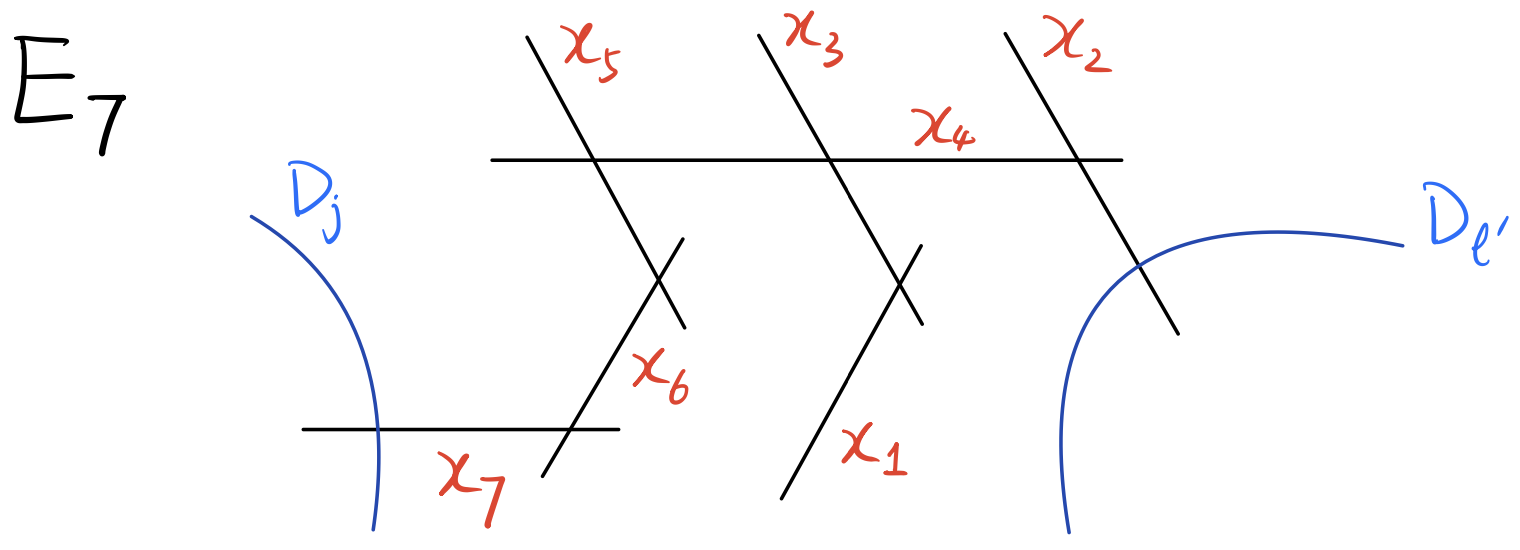}
  \caption{Resolution graph of $E_7$-type}
  \label{figure: E_7}
\end{figure}
 
A singularity of type $D_{2n-1}$ or $E_7$ lies on exactly two irreducible components, which we denote by $D_j$ and $D_{l'}$.
In $D_{2n-1}$ case, the strict transform of the locally cuspidal component intersects only the $(2n-3)$-line. 
The strict transform of the locally smooth component intersects only the $1$-line. 
The projection of $\beta_i$ to the root lattice of type $D_{2n-1}$ is the fundamental weight $\lambda_1$ or $\lambda_{2n-3}$. The weights $\lambda_1$ and $\lambda_{2n-3}$ have the following expressions
\begin{align*}
    \lambda_{2n-3} &= -x_1-2x_2-3x_3-\cdots-(2n-3)x_{2n-3}-{2n-3\over 2}(x_{2n-2}+x_{2n-1}), \\
    \lambda_{1} &= -(x_1+x_2+\cdots+x_{2n-3})-{1\over 2}(x_{2n-2}+x_{2n-1}).
\end{align*}
In $E_7$ case, the strict transform of the locally cuspidal component intersects only the $2$-line. 
The strict transform of the locally smooth component intersects only the $7$-line. 
The weights $\lambda_2$ and $\lambda_7$ have the following expressions 
\begin{align*}
    \lambda_{2} &= -{1\over 2}(4x_1+7x_2+8x_3+12x_4+9x_5+8x_6+3x_7), \\
    \lambda_{7} &= -{1\over 2}(2x_1+3x_2+4x_3+6x_4+5x_5+4x_6+3x_7).
\end{align*}
These are exactly the projections of $\Sigma$ to the associated $D_{2n-1}$ and $E_7$. Thus $\Sigma\notin \langle H\rangle\oplus L$.

A singularity of type $D_{2n}$ lies on exactly three irreducible components, which we denote by $D_k$, $D_l$ and $D_{l'}$.
The strict transform of two components intersect only with the $2n$-line and the $(2n-1)$-line respectively. 
The strict transform of the rest component intersects only the $1$-line.
The corresponding fundamental weights can be expressed as
\begin{align*}
    \lambda_1 &= -(x_1+x_2+\cdots+x_{2n-2})-{1\over 2}(x_{2n-1}+x_{2n}), \\
    \lambda_{2n-1} &= -{1\over 2}x_1 - {2\over 2}x_2 - {3\over 2}x_3 - \cdots - {2n-2\over 2}x_{2n-2} - {n\over 2}x_{2n-1} - {n-1\over 2}x_{2n}, \\
    \lambda_{2n} &= -{1\over 2}x_1 - {2\over 2}x_2 - {3\over 2}x_3 - \cdots - {2n-2\over 2}x_{2n-2} - {n-1\over 2}x_{2n-1} - {n\over 2}x_{2n}. \\
\end{align*}
The strict transform of $D_{l'}$ may give one or two components in the resolution graph of the singularity.
Every linear combination of the rest components with coefficients $0$ or $1$ has non-integer coefficients.
These are exactly the projections of $\Sigma$ to the associated $D_{2n}$. Thus $\Sigma\notin \langle H\rangle\oplus L$.



    By the above calculation, any nontrivial $\ZZ/2$-combination of $\beta_1,\cdots\beta_{l'-1}$ does not lie in $\langle H\rangle\oplus L$.
    Hence $\beta_1,\cdots\beta_{l'-1}$ is $\ZZ/2$-independent in $\frac{M^{\iota}}{(\left\langle H \right\rangle\oplus L)^{\iota}}$.
\end{proof}
\begin{rmk}
    The above calculation in the case of sextic curves can also be found in \cite[\S 3, Tabel 1]{yang1996sextic}.
\end{rmk}

\begin{prop}
\label{prop: saturation of M_F}  
We have 
    \begin{equation*}
        \frac{M}{\left\langle H\right\rangle \oplus L}\cong (\ZZ/2)^{l'-1}.
    \end{equation*}
    This finite group is generated by equivalence classes of $\beta_1,\cdots, \beta_{l'-1}$.
    Moreover, $M^{\iota} = P^{\iota}$.
\end{prop}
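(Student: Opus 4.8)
The plan is to deduce the whole proposition from the already-established invariant-lattice computations (Lemma \ref{lem: saturation of inv part of P} and Lemma \ref{lem: Z/2-independence}) by comparing $M/(\langle H\rangle\oplus L)$ with its $\iota$-invariant counterpart $M^\iota/(\langle H\rangle\oplus L)^\iota$. The point is that the generators $\beta_1,\dots,\beta_{l'}$ are themselves $\iota$-invariant, so the passage to invariants loses no information, and on the invariant side everything is controlled by the two cited lemmas.

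First I would record the two elementary facts about the $\beta_i$. Each $\widetilde{D}_i$ lies in the branch locus of $p\colon X\to S$, so $\beta_i\in M^\iota$; moreover $p^*[\widetilde{D}_i]=2\beta_i$ lies in the image $p^*H^2(S,\ZZ)\subset\langle H\rangle\oplus L$, so each $\beta_i$ has order dividing $2$ modulo $\langle H\rangle\oplus L$. In particular $M/(\langle H\rangle\oplus L)$ is an elementary abelian $2$-group spanned by the classes $\bar\beta_1,\dots,\bar\beta_{l'}$.

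Next comes the key structural step: the inclusions $(\langle H\rangle\oplus L)^\iota\hookrightarrow M^\iota$ and $\langle H\rangle\oplus L\hookrightarrow M$ induce a natural homomorphism
\[
M^\iota/(\langle H\rangle\oplus L)^\iota \longrightarrow M/(\langle H\rangle\oplus L),
\]
which I claim is an isomorphism carrying $\bar\beta_i$ to $\bar\beta_i$. Surjectivity holds because the target is generated by the classes of the $\iota$-invariant elements $\beta_i$; injectivity holds because any $x\in M^\iota$ lying in $\langle H\rangle\oplus L$ is an $\iota$-invariant element of $\langle H\rangle\oplus L$, hence lies in $(\langle H\rangle\oplus L)^\iota$ by definition of the fixed sublattice. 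This reduces the proposition to computing the invariant quotient.

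Finally I would run a squeeze on the invariant side. Since $M\subset P$ (established in the proof of Lemma \ref{lem: saturation of inv part of P}) we obtain the chain $(\langle H\rangle\oplus L)^\iota\subseteq M^\iota\subseteq P^\iota$, hence an inclusion
\[
M^\iota/(\langle H\rangle\oplus L)^\iota \subseteq P^\iota/(\langle H\rangle\oplus L)^\iota\cong (\ZZ/2)^{l'-1},
\]
the last isomorphism being Lemma \ref{lem: saturation of inv part of P}. On the other hand Lemma \ref{lem: Z/2-independence} shows that $\bar\beta_1,\dots,\bar\beta_{l'-1}$ are $\ZZ/2$-independent in $M^\iota/(\langle H\rangle\oplus L)^\iota$, so they already span a subgroup of order $2^{l'-1}$. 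Comparing orders forces all three groups to coincide, so $M^\iota=P^\iota$ (the last assertion) and $M^\iota/(\langle H\rangle\oplus L)^\iota\cong(\ZZ/2)^{l'-1}$ with generators $\bar\beta_1,\dots,\bar\beta_{l'-1}$. Transporting through the isomorphism of the previous paragraph yields $M/(\langle H\rangle\oplus L)\cong(\ZZ/2)^{l'-1}$, generated by $\bar\beta_1,\dots,\bar\beta_{l'-1}$.

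The genuinely hard input is not in this proposition itself but in Lemma \ref{lem: Z/2-independence}, whose case-by-case fundamental-weight computations supply the lower bound on the order. The only delicate bookkeeping here is to verify that the $\beta_i$ really land in $M^\iota$ and that $(\langle H\rangle\oplus L)^\iota$ is exactly $(\langle H\rangle\oplus L)\cap H^2(X,\ZZ)^\iota$, which makes injectivity of the comparison map automatic; once that isomorphism is in place, the order count against the two lemmas forces every conclusion, including $M^\iota=P^\iota$.
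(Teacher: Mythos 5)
Your proof is correct and follows essentially the same route as the paper's: both arguments squeeze $M^{\iota}/(\langle H\rangle\oplus L)^{\iota}$ between the subgroup generated by the $\bar\beta_i$ (Lemma \ref{lem: Z/2-independence}) and $P^{\iota}/(\langle H\rangle\oplus L)^{\iota}\cong(\ZZ/2)^{l'-1}$ (Lemma \ref{lem: saturation of inv part of P}) to get $M^{\iota}=P^{\iota}$, and then identify $M^{\iota}/(\langle H\rangle\oplus L)^{\iota}$ with $M/(\langle H\rangle\oplus L)$ using that the generators $\beta_i$ are $\iota$-invariant. The only difference is cosmetic: you spell out the injectivity of the comparison map and the $2$-elementary structure (via $2\beta_i=p^*[\widetilde{D}_i]$) a bit more explicitly than the paper does.
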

\begin{proof}
Since $\beta_i \in M^{\iota}$ for $1\le i\le l'$, the finite-index extensions in (\ref{eqn: iota-invariant successive saturations}) induces injective maps of the following finite abelian groups
\begin{equation*}
    \frac{M^{\iota}}{(\left\langle H \right\rangle\oplus L)^{\iota}} \hookrightarrow \frac{P^{\iota}}{(\left\langle H \right\rangle\oplus L)^{\iota}} \hookrightarrow \frac{P}{\left\langle H \right\rangle\oplus L}.
\end{equation*}
By Lemma \ref{lem: saturation of inv part of P}, we have 
\begin{equation*}
    \frac{M^{\iota}}{(\left\langle H \right\rangle\oplus L)^{\iota}} = \frac{P^{\iota}}{(\left\langle H \right\rangle\oplus L)^{\iota}} \cong (\ZZ/2)^{l'-1}.
\end{equation*}
By Lemma \ref{lem: Z/2-independence}, there is only one nontrivial relation among $\beta_1, \cdots, \beta_{l'}$. 
This isomorphism implies that $M^{\iota} = P^{\iota}$.
Moreover, the inclusion 
\begin{equation*}
    \frac{M^{\iota}}{(\left\langle H \right\rangle\oplus L)^{\iota}} \hookrightarrow \frac{M}{\left\langle H \right\rangle\oplus L}
\end{equation*}
is surjective (hence an isomorphism) because $\beta_1, \cdots, \beta_{l'}$ lies in $M^\iota$. We have done.
\end{proof}

\begin{lem}
\label{lem: saturation induced by involution, inequality and ADE case}
    Let $L$ be an even lattice and $\iota\in \Or(L)$ be an involution.
    Then $\frac{L}{L^\iota \oplus L^{-\iota}}$ is $2$-elementary, and its cardinality satisfies
    \begin{equation*}
        \bigg|\frac{L}{L^\iota \oplus L^{-\iota}}\bigg| \le 2^{min\{\rank (L^\iota), \rank (L^{-\iota})\}}.
    \end{equation*}
    In particular, for every root lattice $L$ of ADE type, the equality is achieved, i.e. 
    \begin{equation*}
        \frac{L}{L^{\iota} \oplus L^{-\iota}} \cong (\ZZ/2)^{\rank (L^{-\iota})}.
    \end{equation*}
\end{lem}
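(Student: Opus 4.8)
The plan is to separate the general lattice-theoretic bound from the sharp ADE computation. For the inequality and the $2$-elementary claim, set $Q\coloneqq L/(L^\iota\oplus L^{-\iota})$ and observe that for every $x\in L$ one has $2x=(x+\iota x)+(x-\iota x)$ with $x+\iota x\in L^\iota$ and $x-\iota x\in L^{-\iota}$; hence $2L\subseteq L^\iota\oplus L^{-\iota}$, so $Q$ is annihilated by $2$. To bound its size I would introduce the homomorphism $\bar f\colon Q\to L^\iota/2L^\iota$ sending $[x]$ to $[x+\iota x]$. It is well defined because $L^\iota$ maps into $2L^\iota$ (there $x+\iota x=2x$) and $L^{-\iota}$ maps to $0$, and it is injective: if $x+\iota x=2y$ with $y\in L^\iota$, then $x-y\in L^{-\iota}$, so $x\in L^\iota\oplus L^{-\iota}$. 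Thus $|Q|\le 2^{\rank L^\iota}$, and the symmetric map $[x]\mapsto[x-\iota x]$ into $L^{-\iota}/2L^{-\iota}$ gives $|Q|\le 2^{\rank L^{-\iota}}$, which is the asserted inequality.

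For the ADE statement I would invoke the proposition proved just above, which identifies the relevant involution with $\iota=-w_0(L)$. Since $-w_0$ of a direct sum is the product of the $-w_0$ of the summands and each factor preserves its own irreducible root lattice, $Q$, $L^\iota$ and $L^{-\iota}$ all decompose over the irreducible components; hence it suffices to treat an irreducible $R$. By the classification recorded earlier, $-w_0(R)$ is either $\id$ (exactly for $A_1$, $D_{2k}$, $E_7$, $E_8$, where $w_0=-1$) or the unique nontrivial involution $\sigma$ of the Dynkin diagram (for $A_n$ with $n\ge2$, $D_n$ with $n$ odd, and $E_6$). In the first case $R^{-\iota}=0$, so $Q=0=(\ZZ/2)^0$ and there is nothing to prove.

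In the diagram-automorphism case I would argue directly on a base $\Delta$ of simple roots. Writing the $\sigma$-orbits on $\Delta$ as fixed roots $\beta_1,\dots,\beta_s$ together with genuine $2$-cycles $\{\gamma_t,\sigma\gamma_t\}$ for $t=1,\dots,p$, an element $\sum c_\alpha\alpha$ is $\sigma$-invariant iff its coefficients are constant on orbits, and anti-invariant iff they vanish on fixed roots and are opposite on each $2$-cycle. Therefore $R^\iota=\bigoplus_k\ZZ\beta_k\oplus\bigoplus_t\ZZ(\gamma_t+\sigma\gamma_t)$ and $R^{-\iota}=\bigoplus_t\ZZ(\gamma_t-\sigma\gamma_t)$, so $\rank R^{-\iota}=p$. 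Since $R=\bigoplus_k\ZZ\beta_k\oplus\bigoplus_t(\ZZ\gamma_t\oplus\ZZ\sigma\gamma_t)$ and the fixed roots lie in $R^\iota$, the quotient splits as a direct sum over the $2$-cycles of $(\ZZ\gamma_t\oplus\ZZ\sigma\gamma_t)/\langle\gamma_t+\sigma\gamma_t,\gamma_t-\sigma\gamma_t\rangle$, each of which is $\ZZ/2$ because the change-of-basis determinant is $-2$. This gives $Q\cong(\ZZ/2)^p=(\ZZ/2)^{\rank R^{-\iota}}$; as $p\le s+p=\rank R^\iota$, the minimum in the bound equals $\rank L^{-\iota}$ and the inequality is sharp.

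The one point that must be handled with care, rather than any computation, is that the isomorphism type $(\ZZ/2)^{\rank L^{-\iota}}$ is genuinely special to $\iota=-w_0$: for an arbitrary involution the minimum in the bound need not be $\rank L^{-\iota}$ (for instance $-\Id$ on $A_1$ gives $Q=0$ while $\rank L^{-\iota}=1$), so the formula can fail. The decisive step is thus the reduction showing that on each irreducible summand $-w_0$ is either trivial or a diagram involution, which forces $\rank L^{-\iota}\le\rank L^\iota$; once this is in place the remaining orbit bookkeeping is routine.
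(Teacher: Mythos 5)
Your proof is correct, and it takes a genuinely different route from the paper's, most notably for the ADE equality. For the general bound the two arguments are cousins: the paper shows that any family of more than $\min\{\rank (L^\iota),\rank (L^{-\iota})\}$ classes $\tfrac{v_i+w_i}{2}$ satisfies a nontrivial relation, by picking integers $a_i$ with $\sum_i a_iw_i=0$ and using that $\sum_i a_i\tfrac{v_i}{2}$ then lies in $L^\iota$ (saturation of the fixed sublattice); your two injections $[x]\mapsto[x+\iota x]\in L^\iota/2L^\iota$ and $[x]\mapsto[x-\iota x]\in L^{-\iota}/2L^{-\iota}$ package the same dimension count more structurally, with the saturation hidden in the injectivity check $x+\iota x=2y\Rightarrow x-y\in L^{-\iota}$. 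The real divergence is the ADE case: the paper does a case-by-case verification via the index formula $|L/(L^\iota\oplus L^{-\iota})|^2=|A_{L^\iota}\oplus A_{L^{-\iota}}|/|A_L|$ together with the tabulated discriminant groups of the invariant and anti-invariant sublattices (Tables \ref{tab: r_R^-} and \ref{tab: r_R^+}), whereas you reduce to irreducible summands, note that $\iota=-w_0$ permutes the base (trivially for $A_1,D_{2k},E_7,E_8$, by the diagram involution $\sigma$ otherwise), and split everything over $\sigma$-orbits of simple roots, each $2$-cycle contributing exactly one $\ZZ/2$ because $\langle\gamma+\sigma\gamma,\gamma-\sigma\gamma\rangle$ has index $2$ in $\ZZ\gamma\oplus\ZZ\sigma\gamma$. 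Your argument is uniform, needs neither discriminant forms nor tables, and makes transparent why the exponent is the number of $2$-cycles, i.e.\ $\rank (L^{-\iota})$; what the paper's computation buys in exchange is the explicit groups $A_{R^{w_0(R)}}$ and $A_{R^{-w_0(R)}}$ themselves, which it reuses later (e.g.\ in the proof of Proposition \ref{prop: M_F = Z_2-saturation}). You are also right to insist that the ADE clause only makes sense for the specific involution $\iota=-w_0(L)$ supplied by the geometric context (the unlabelled proposition identifying the deck involution with $-w_0(L)$), not for an arbitrary involution; your counterexample $-\Id$ on $A_1$ shows the lemma as literally stated would fail, so making that reduction explicit tightens a point the paper leaves implicit in its case-by-case check.
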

\begin{proof}
    For any $v\in L$, we have $2v = (v+\iota(v))+(v-\iota(v))$, hence $2v\in L^{\iota} \oplus L^{-\iota}$.
    Thus $\frac{L}{L^\iota \oplus L^{-\iota}}$ is $2$-elementary.
    Denote by $r\coloneqq min\{\rank (L^\iota), \rank (L^{-\iota})\}$.
    For any $n>r$, we assume that there exist $\frac{v_1+w_1}{2},\cdots,\frac{v_n+w_n}{2}$ in $L\bs(L^{\iota} \oplus L^{-\iota})$.
    Without loss of generality, we assume that $\rank (L^\iota) \ge \rank (L^{-\iota})$.
    Since $n > r=\rank (L^{-\iota})$, there exist $a_1,\cdots,a_n$ ($\exists a_i\neq 0$) such that $\sum\limits_{i=1}^n a_i w_i = 0$.
    Then $\sum\limits_{i=1}^n a_i \frac{v_i+w_i}{2} = \sum\limits_{i=1}^n a_i \frac{v_i}{2}$ lies in $L^\iota$.
    Hence $\sum\limits_{i=1}^n a_i \frac{v_i+w_i}{2}$ vanishes in $\frac{L}{L^{\iota} \oplus L^{-\iota}}$, namely, there exists a nontrivial relation among $\frac{v_1+w_1}{2},\cdots,\frac{v_n+w_n}{2}$.
    Thus $\dim_{\ZZ/2}(\frac{L}{L^\iota \oplus L^{-\iota}}) \le \rank (L^{-\iota})$ (view $\frac{L}{L^\iota \oplus L^{-\iota}}$ as a $\ZZ/2$-vector space).

    For root lattices of ADE types, the equality arises from a case-by-case check.
    Notice that 
    \begin{equation*}
        \bigg|\frac{L}{L^\iota \oplus L^{-\iota}}\bigg|^2 = \frac{|A_{L^\iota} \oplus A_{L^{-\iota}}|}{|A_L|}.
    \end{equation*}
    For each ADE type, $A_{L^{-\iota}}$ is listed in Table \ref{tab: r_R^-}.
    We list $A_{L^\iota}$ in Table \ref{tab: r_R^+}.
    \begin{table}[htbp]  
    \centering  
    \caption{}  
    \begin{tabular}{ccccc}
    \toprule
    Root Type $R$ & $A_{2n-1}$ & $A_{2n}$ & $D_{n}$ (odd) & $E_6$  \\  
    \midrule
    $r_R^+$ & $n$ & $n$ & $n-1$ & $4$ \\ 
    $A_{R^{-w_0(R)}}$ & $(\ZZ/2)^{n}$ & $(\ZZ/2)^n$ & $(\ZZ/2)^2$ & $(\ZZ/2)^2$ \\  
    \bottomrule
    \end{tabular}
    \label{tab: r_R^+}  
    \end{table}
\end{proof}

\begin{prop}
\label{prop: M_F = Z_2-saturation}
    The lattice $M$ is the biggest finite-index extension of $P^\iota \oplus L^{-\iota}$ in $H^2(X,\ZZ)$ such that $M\over{P^{\iota}\oplus L^{-\iota}}$ is $2$-elementary. Moreover, $\frac{M}{P^{\iota}\oplus L^{-\iota}}\cong (\ZZ/2)^{r^-}$.
\end{prop}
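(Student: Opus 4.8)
The plan is to set $N\coloneqq P^{\iota}\oplus L^{-\iota}$ and establish two things: that $N\subseteq M$ with $M/N\cong(\ZZ/2)^{r^-}$, and that $M$ exhausts $\widetilde N\coloneqq\{v\in H^2(X,\ZZ)\mid 2v\in N\}$, which is precisely the biggest $\ZZ/2$-saturation of $N$. First I would record the structural input driving everything: since $\iota$ fixes $H$ and fixes each ramification class $\beta_i$ (Lemma \ref{lem: saturation of inv part of P}), any $m=aH+\ell+\sum_i b_i\beta_i\in M$ satisfies $m-\iota m=\ell-\iota\ell\in L^{-\iota}$, while $m+\iota m\in H^2(X,\ZZ)^{\iota}=P^{\iota}$ again by Lemma \ref{lem: saturation of inv part of P}. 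Writing $2m=(m+\iota m)+(m-\iota m)$ then shows $2M\subseteq N$; in particular $N\subseteq M$ (using $P^{\iota}=M^{\iota}$ and $L^{-\iota}\subseteq L$ from Proposition \ref{prop: saturation of M_F}) and $M/N$ is $2$-elementary.

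For the order of $M/N$ I would interpose the lattice $N_0\coloneqq\langle H\rangle\oplus L^{\iota}\oplus L^{-\iota}$, which sits inside $\langle H\rangle\oplus L$, inside $M$, and inside $N$, all of the common rank $1+\rank L$. Three indices are then available from earlier results: $[\langle H\rangle\oplus L:N_0]=[L:L^{\iota}\oplus L^{-\iota}]=2^{r^-}$ by the ADE equality case of Lemma \ref{lem: saturation induced by involution, inequality and ADE case}; $[M:\langle H\rangle\oplus L]=2^{l'-1}$ by Proposition \ref{prop: saturation of M_F}; and $[N:N_0]=[P^{\iota}:\langle H\rangle\oplus L^{\iota}]=2^{l'-1}$ by Lemma \ref{lem: saturation of inv part of P}. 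Multiplying gives $[M:N]=[M:N_0]/[N:N_0]=2^{l'-1+r^-}/2^{l'-1}=2^{r^-}$, and together with the $2$-elementary property this yields $M/N\cong(\ZZ/2)^{r^-}$.

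For the saturation statement I would bound $\widetilde N$ from above. Since $N\otimes\QQ\subseteq P\otimes\QQ$ and $P$ is primitive in $H^2(X,\ZZ)$, every $v\in\widetilde N$ already lies in $P$; and for such $v$ one has $v-\iota v\in L^{-\iota}$, obtained as the anti-invariant part of a decomposition $2v=p+q$ with $p\in P^{\iota}$, $q\in L^{-\iota}$, which gives $v-\iota v=q$. The assignment $\psi\colon\widetilde N\to\tfrac12 L^{-\iota}/L^{-\iota}\cong(\ZZ/2)^{r^-}$, $v\mapsto\tfrac12(v-\iota v)\bmod L^{-\iota}$, is then a homomorphism whose kernel is exactly $N$: if $\tfrac12(v-\iota v)\in L^{-\iota}$ then the invariant part $v^{+}=v-\tfrac12(v-\iota v)$ lies in $P\cap H^2(X,\ZZ)^{\iota}=P^{\iota}$, so $v=v^{+}+\tfrac12(v-\iota v)\in P^{\iota}\oplus L^{-\iota}=N$, while the converse inclusion is immediate. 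Hence $\widetilde N/N$ injects into $(\ZZ/2)^{r^-}$, so $|\widetilde N/N|\le 2^{r^-}=|M/N|$; as $M\subseteq\widetilde N$, this forces $M=\widetilde N$, i.e. $M$ is the biggest $\ZZ/2$-saturation of $N$.

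The routine part is the index bookkeeping, which is essentially assembled from the lemmas already in place. The main obstacle I anticipate is the upper bound in the last paragraph: showing that no element of $P\setminus M$ can have its double in $N$. The homomorphism $\psi$ is the device that makes this quantitative, and the delicate point is verifying that its kernel is exactly $N$ and not something larger — this is where the primitivity of $P$ and the identification $P^{\iota}=M^{\iota}=H^2(X,\ZZ)^{\iota}$ are both essential.
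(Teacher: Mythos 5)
Your proof is correct, and its middle step coincides with the paper's own argument: the paper computes $[M:P^\iota\oplus L^{-\iota}]$ by exactly the same index chase through $\langle H\rangle\oplus L^\iota\oplus L^{-\iota}$, citing Lemma \ref{lem: saturation induced by involution, inequality and ADE case} for the factor $2^{r^-}$ and Lemma \ref{lem: saturation of inv part of P} together with Proposition \ref{prop: saturation of M_F} for the two factors $2^{l'-1}$. Where you genuinely diverge is the maximality step. The paper argues through discriminant groups: since $P^\iota$ is primitive in $P$, the quotient of any saturation by $P^\iota\oplus L^{-\iota}$ embeds into $A_{L^{-\iota}}$, and the largest $2$-elementary subgroup of $A_{L^{-\iota}}$ is read off case by case from the ADE tables (Lemma \ref{lem: anti-inv}, Table \ref{tab: r_R^-}) to have rank $r^-$. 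You instead produce the averaging homomorphism $\psi(v)=\tfrac12(v-\iota v)\bmod L^{-\iota}$ on the full $\ZZ/2$-saturation $\widetilde N$ and show $\ker\psi=N$, which gives the same bound $[\widetilde N:N]\le 2^{r^-}$ uniformly in the root system, with no recourse to the tables or to discriminant forms. Your route also makes explicit two points the paper leaves implicit: that $M/N$ is $2$-elementary (via $2m=(m+\iota m)+(m-\iota m)$, which is needed to upgrade the index count to the group isomorphism), and that any $\ZZ/2$-saturation inside $H^2(X,\ZZ)$ automatically lands in $P$ by primitivity of $P$ combined with $H^2(X,\ZZ)^\iota=P^\iota$. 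What the paper's route buys is that it runs on the same discriminant-group machinery used throughout \S\ref{sec: examples and applications}, where the data of Table \ref{tab: r_R^-} is needed anyway; what yours buys is a shorter, classification-free argument that identifies the biggest saturation concretely as $\{v\in H^2(X,\ZZ)\mid 2v\in P^\iota\oplus L^{-\iota}\}$. Both are valid.
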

\begin{proof}
    Since $P^{\iota}$ is a finite-index extension of $\left\langle H\right\rangle\oplus L^{\iota}$, $L^{-\iota}$ is orthogonal to $P^{\iota}$ in $P$.
    Consider the finite-index extension
    \begin{equation*}
        P^{\iota}\oplus L^{-\iota}\hookrightarrow P.
    \end{equation*}
    By Proposition \ref{prop: saturation of M_F}, we have $M^{\iota} = P^{\iota}$.
    Thus there is a natural finite-index extension 
    \begin{equation*}
        P^{\iota}\oplus L^{-\iota} \hookrightarrow M.
    \end{equation*}
    We have the following successive finite-index extensions
    \begin{equation*}
        \left\langle H \right\rangle \oplus L^{\iota} \oplus L^{-\iota} \hookrightarrow \left\langle H \right\rangle \oplus L \hookrightarrow M.
    \end{equation*}
    By Lemma \ref{lem: saturation induced by involution, inequality and ADE case}, we have $\frac{L}{L^{\iota} \oplus L^{-\iota}} \cong (\ZZ/2)^{r^-}$.
    By Proposition \ref{prop: saturation of M_F}, we have $\frac{M}{\left\langle H\right\rangle \oplus L}\cong (\ZZ/2)^{l'-1}$.
    Moreover, we have another successive finite-index extensions 
    \begin{equation*}
        \left\langle H \right\rangle \oplus L^{\iota} \oplus L^{-\iota} \hookrightarrow P^{\iota}\oplus L^{-\iota} \hookrightarrow M.
    \end{equation*}
    By Lemma \ref{lem: saturation of inv part of P} and Proposition \ref{prop: saturation of M_F}, we have $\frac{P^{\iota}}{\left\langle H \right\rangle \oplus L^{\iota}} = \frac{M}{\left\langle H \right\rangle \oplus L} \cong (\ZZ/2)^{l'-1}$.
    It is direct to verify that $(\left\langle H \right\rangle \oplus L)\cap (P^{\iota}\oplus L^{-\iota}) = \left\langle H \right\rangle \oplus L^\iota \oplus L^{-\iota}$.
    Thus $\frac{M}{\left\langle H \right\rangle \oplus L^\iota \oplus L^{-\iota}} \cong (\ZZ/2)^{l'+r^--1}$.
    Therefore we conclude
    \begin{equation*}
    \label{eqn: saturation of M over P^iota oplus L^-iota}
        \frac{M}{P^{\iota}\oplus L^{-\iota}}\cong (\ZZ/2)^{r^-},
    \end{equation*}
    where $r^-$ is the sum of all $r_R^-$ with $R$ that appear in $L$ (with multiplicities).

    Since $P^{\iota}$ is primitive in $P$, by lattice theory, the quotient $\frac{P}{P^{\iota}\oplus L^{-\iota}}$ is isomorphic to certain subgroup of $A_{L^{-\iota}}$. 
    By Lemma \ref{lem: anti-inv}, the biggest subgroup of each $A_{R^{w_0(R)}}$ of the form $(\ZZ/2)^k$ is isomorphic to $(\ZZ/2)^{r_R^-}$.
    Notice that $(\ZZ/2)^{r^-}$ can be naturally decomposed into the product $\prod\limits_{R}(\ZZ/2)^{l_T(R)r_R^-}$.
    Therefore $M$ is the biggest finite-index extension of $P^{\iota}\oplus L^{-\iota}$ such that $M\over{P^{\iota}\oplus L^{-\iota}}$ is $2$-elementary.
\end{proof}

\begin{rmk}
   Since $H$ is primitive in $H^2(X,\ZZ)$, $\frac{P}{\left\langle H \right\rangle \oplus L}$ is a subgroup of $A_L$. 
\end{rmk}

\begin{rmk}
    There exist examples such that $M$ is a proper sublattice of $P$, for example, see \S \ref{subsec: Zariski pair} and Propostion \ref{proposition: picard zariski pair}.
    By \cite[Proposition 3.4]{yu2023moduli}, $M=P$ holds for all nodal singular types.
    We will give more examples with $M=P$ in \S \ref{sec: examples and applications}.
\end{rmk}






\section{Orbifold Structures}
\label{sec: orbifold structures}
Given a singular type $T$, both the moduli space $\calM_T$ and the associated arithmetic quotient $\Gamma_T\bs(\DD_T-\calH_T)$ admit natural orbifold structures induced by their nontrivial automorphism groups. It is natural to study the relationship of these two orbifold structures. See \cite{kudla2012occult} and \cite{zheng2021orbifold} for some well-known cases. In this section, we compare the orbifold structures of the two sides of the period map $\Prd_T\colon \calM_T\to \Gamma_T\bs(\DD_T-\calH_T)$.
We follow the setup in \S \ref{section: period map}.
For a general introduction to orbifolds, one can see for instance \cite[Chapter 13]{thurston2022geometry}.

\subsection{A Lattice-theoretic Criterion}
For $Z(F)\in \PP\calV_T$, let $(X_F,H_F,\iota_F)$ be defined as in \S \ref{subsection: ADE K3}, and $\Delta_F$ be defined as in \S \ref{subsection: define period map}.

    

\begin{prop}
\label{prop: describe Stab_Z(F)}
    We have a natural isomorphism
    \begin{align*}
        \Stab_{Z(F)} &\cong \Aut(X_F,H_F)/\{\Id,\iota_F\} \\
        &\cong \Or(\Lambda_F,H_F,\Delta_F,H^{2,0}(X_F))/\{\Id,\iota_F^*\},
    \end{align*}
    where $\Stab_{Z(F)}$ is the stabilizer of $Z(F)$ in $\PGL(3)$. 
\end{prop}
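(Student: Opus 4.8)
The plan is to prove the two displayed isomorphisms separately and check that the distinguished involutions correspond. Concretely, I would introduce two natural homomorphisms out of $\Aut(X_F,H_F)$: a \emph{geometric} one, $\Aut(X_F,H_F)\to\Stab_{Z(F)}$, coming from the degree-two map defined by $|H_F|$, and a \emph{cohomological} one, $\phi\mapsto\phi^*$, landing in $\Or(\Lambda_F,H_F,\Delta_F,H^{2,0}(X_F))$. Since $(\iota_F)^*=\iota_F^*$, the deck involution $\iota_F$ and the lattice involution $\iota_F^*$ match under the second map, so after verifying each map is an isomorphism (resp. has kernel $\{\Id,\iota_F\}$) the claimed chain of isomorphisms follows by passing to quotients.

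\textbf{First isomorphism (geometric descent).} Any $\phi\in\Aut(X_F,H_F)$ preserves the complete linear system $|H_F|$, hence acts linearly on $H^0(X_F,\calO(H_F))^\vee\cong\CC^3$ and is compatible with the degree-two morphism $X_F\to\PP^2$ that $|H_F|$ defines; this morphism factors through $\widehat{X}_F$ and is branched along $Z(F)$. Thus $\phi$ descends to $\bar\phi\in\PGL(3)$ with $\bar\phi(Z(F))=Z(F)$, giving a homomorphism $\Aut(X_F,H_F)\to\Stab_{Z(F)}$. Its kernel is the set of automorphisms inducing the identity on $\PP^2$, namely the deck group $\{\Id,\iota_F\}$. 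For surjectivity, an element $g\in\Stab_{Z(F)}$ satisfies $g^*F=\lambda F$; as $\calO_{\PP^2}(3)$ is the unique square root of $\calO(Z(F))=\calO(6)$, a choice of $\sqrt{\lambda}$ lifts $g$ to an automorphism of $\widehat{X}_F$, which in turn lifts uniquely to the minimal resolution $X_F$ and fixes $H_F$. The two sign choices differ by $\iota_F$, so the map is surjective, yielding $\Stab_{Z(F)}\cong\Aut(X_F,H_F)/\{\Id,\iota_F\}$.

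\textbf{Second isomorphism (strong Torelli).} The assignment $\phi\mapsto\phi^*$ is well defined into $\Or(\Lambda_F,H_F,\Delta_F,H^{2,0}(X_F))$: $\phi^*$ is a Hodge isometry fixing $H_F$, and since $\Delta_F$ is precisely the set of irreducible $(-2)$-curves orthogonal to $H_F$ (those contracted by $|H_F|$), the automorphism $\phi$ permutes it, so $\phi^*(\Delta_F)=\Delta_F$. Injectivity is the faithfulness of the action of automorphisms on $H^2$ \cite{looijenga1980torelli}. For surjectivity I would invoke the strong Torelli theorem \cite{burns1975torelli}: a given $\psi$ is induced by an automorphism as soon as it sends an ample class to an ample class. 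To verify this I reuse the ample class from the proof of Theorem \ref{theorem: global torelli}: choose $\beta_F$ in the fundamental Weyl chamber relative to $\Delta_F$ and set $\alpha_F=\beta_F+cH_F$ for large $c$. Because $\psi$ fixes $H_F$ and permutes $\Delta_F$, the class $\psi(\beta_F)$ again lies in that Weyl chamber, and the same Nakai--Moishezon estimate shows $\psi(\alpha_F)=\psi(\beta_F)+cH_F$ is ample once $c$ is large enough; since $\Or(\Lambda_F,H_F,\Delta_F,H^{2,0}(X_F))$ fixes $H_F$ and hence acts on the negative-definite lattice $H_F^\perp\cap\Pic(X_F)$, it is finite, so $c$ can be taken uniformly. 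Then $\psi$ preserves an ample class, so $\psi=\phi^*$ for some $\phi\in\Aut(X_F)$, and $\psi(H_F)=H_F$ forces $\phi\in\Aut(X_F,H_F)$.

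\textbf{Main obstacle.} The geometric descent is essentially routine; the substance lies in the surjectivity of $\phi\mapsto\phi^*$. The delicate point is ruling out that $\psi$ differs from a genuine automorphism by a nontrivial reflection in the Weyl group of $(-2)$-curves, i.e. showing $\psi$ already preserves the ample cone. This is exactly what the $\beta_F+cH_F$ construction together with the fact that $\psi$ permutes $\Delta_F$ (for $\iota_F$ this is Proposition \ref{prop: iota-action on resolution graph/base}) is designed to handle. Once both maps are isomorphisms and $(\iota_F)^*=\iota_F^*$ identifies the order-two generators, passing to quotients completes the chain $\Stab_{Z(F)}\cong\Aut(X_F,H_F)/\{\Id,\iota_F\}\cong\Or(\Lambda_F,H_F,\Delta_F,H^{2,0}(X_F))/\{\Id,\iota_F^*\}$.
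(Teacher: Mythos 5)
Your proof is correct, and its overall skeleton matches the paper's: you split the statement into the same two isomorphisms, prove the cohomological one by faithfulness of $\Aut(X_F)\hookrightarrow\Or(\Lambda_F)$ plus strong Torelli applied to the ample class $\alpha_F=\beta_F+cH_F$ recycled from Theorem \ref{theorem: global torelli}, and prove the geometric one by identifying the kernel with the deck group and lifting elements of $\Stab_{Z(F)}$ through the branched double cover and the minimal resolution. The one genuine divergence is the descent step: the paper first proves a lattice-theoretic lemma (Lemma \ref{lem: Aut(Lambda_F,H_F,Delta_F) commutes with iota_F}) showing every element of $\Or(\Lambda_F,\Delta_F,H_F)$ commutes with $\iota_F$, deduces via faithfulness that $f\in\Aut(X_F,H_F)$ commutes with $\iota_F$, and then descends $f$ through the quotient $S_F=X_F/\iota_F$ to $\PP^2$; you instead descend directly from the $\phi$-equivariance of the morphism defined by $|H_F|$ (using $\phi^*\calO(H_F)\cong\calO(H_F)$ and the induced projective-linear action on $H^0(\calO(H_F))^\vee$), with preservation of $Z(F)$ coming from invariance of the critical values. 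Your route bypasses the commutation lemma entirely and is slightly more self-contained (commutation with $\iota_F$ falls out a posteriori, since $\phi\iota_F\phi^{-1}$ is again a deck transformation over the identity); the paper's route isolates the commutation statement as a standalone lattice fact, in the same spirit as its other arguments in \S\ref{section: explicit description of lattices}. Your handling of surjectivity of $\phi\mapsto\phi^*$ is also marginally more careful than the paper's: you only need $\psi(\beta_F)$ to lie again in the Weyl chamber (so $\psi$ maps an ample class to an ample class), whereas the paper's inclusion $\Aut(X_F,H_F,\Delta_F)\subset\Aut(X_F,\alpha_F)$ implicitly requires $\alpha_F$ itself to be fixed, i.e.\ a $\Delta_F$-canonical choice of $\beta_F$ such as the Weyl vector.
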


\begin{lem}
\label{lem: describe Aut(X_F,H_F,Delta_F)}
    We have
    \begin{align*}
        \Aut(X_F,H_F,\Delta_F) &= \Aut(X_F,H_F) \\
        &\cong \Or(\Lambda_F,H_F,\Delta_F,H^{2,0}(X_F)).
    \end{align*}
\end{lem}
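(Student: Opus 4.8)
The plan is to prove the two assertions separately: the set-theoretic equality $\Aut(X_F,H_F,\Delta_F)=\Aut(X_F,H_F)$, and the isomorphism of this group with $\Or(\Lambda_F,H_F,\Delta_F,H^{2,0}(X_F))$ via the representation $g\mapsto g^*$ on $H^2(X_F,\ZZ)$. Injectivity of $g\mapsto g^*$ is immediate from the faithfulness of the action of automorphisms on the middle cohomology \cite{looijenga1980torelli}, and the image is automatically contained in $\Or(\Lambda_F,H_F,\Delta_F,H^{2,0}(X_F))$ once the first assertion is known, since each $g^*$ is a Hodge isometry fixing $H_F$. Thus the whole content reduces to (i) the claim about $\Delta_F$ and (ii) surjectivity of $g\mapsto g^*$ onto $\Or(\Lambda_F,H_F,\Delta_F,H^{2,0}(X_F))$.

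For (i), the key observation is an intrinsic characterization of $\Delta_F$: it is exactly the set of classes of irreducible $(-2)$-curves on $X_F$ orthogonal to $H_F$. Indeed, the morphism $\pi_F\colon X_F\to\PP^2$ defined by $|H_F|$ contracts precisely the curves $C$ with $H_F\cdot C=0$, and these are exactly the exceptional curves of the resolution, which by construction form $\Delta_F$; conversely any irreducible $(-2)$-curve orthogonal to $H_F$ is contracted by $\pi_F$, hence lies in $\Delta_F$. Given $g\in\Aut(X_F,H_F)$, the projection formula gives $H_F\cdot g(C)=g^*H_F\cdot C=H_F\cdot C$ for every curve $C$, so $g$ permutes the irreducible $(-2)$-curves orthogonal to $H_F$; that is, $g$ preserves $\Delta_F$. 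This yields $\Aut(X_F,H_F)\subset\Aut(X_F,H_F,\Delta_F)$, and the reverse inclusion is trivial.

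For (ii), I would realize an abstract isometry $\psi\in\Or(\Lambda_F,H_F,\Delta_F,H^{2,0}(X_F))$ as an automorphism by reusing the ample class constructed in the proof of Theorem \ref{theorem: global torelli}. Choose $\beta$ in the open Weyl chamber of $L_F$ relative to $\Delta_F$, so that $\beta\cdot\delta>0$ for all $\delta\in\Delta_F$; then for $c\gg 0$ the class $\alpha\coloneqq\beta+cH_F$ is ample by the Nakai--Moishezon criterion, exactly as in that proof. Since $\psi$ fixes $H_F$ and permutes $\Delta_F$, the class $\psi(\beta)$ again satisfies $\psi(\beta)\cdot\delta=\beta\cdot\psi^{-1}(\delta)>0$ for all $\delta\in\Delta_F$, so $\psi(\beta)$ lies in the open Weyl chamber as well; enlarging $c$ if necessary, $\psi(\alpha)=\psi(\beta)+cH_F$ is ample by the same estimate. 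Thus the Hodge isometry $\psi$ carries the ample class $\alpha$ to the ample class $\psi(\alpha)$, and the global Torelli theorem \cite{burns1975torelli} produces a unique automorphism $g$ with $g^*=\psi$; since $g^*H_F=\psi(H_F)=H_F$, we have $g\in\Aut(X_F,H_F)$, proving surjectivity.

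The main obstacle is step (ii): one must guarantee that $\psi$ sends an ample class to an ample class so that the effective form of the Torelli theorem applies. The delicate point is that $\psi$ need not fix any ample class, and a priori could permute the chambers of the positive cone nontrivially; the resolution is precisely the observation that fixing $H_F$ together with preserving the simple roots $\Delta_F$ pins down the ample cone among the chambers adjacent to the nef class $H_F$ (equivalently, that $\psi(\beta)$ remains dominant), which is what makes $\psi(\alpha)$ ample for a common large $c$.
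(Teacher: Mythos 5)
Your proof is correct and takes essentially the same approach as the paper's: both show that any automorphism fixing $H_F$ automatically permutes $\Delta_F$ via an intrinsic characterization (you use the geometric one — irreducible $(-2)$-curves contracted by $|H_F|$ — while the paper uses preservation of $L_F$ together with preservation of effectivity), and both obtain the isomorphism with $\Or(\Lambda_F,H_F,\Delta_F,H^{2,0}(X_F))$ from the ample class $\beta_F+cH_F$ of Theorem~\ref{theorem: global torelli} combined with the global Torelli theorem. A minor point in your favor: by only requiring $\psi(\alpha)$ to be ample rather than $\psi(\alpha)=\alpha$, you sidestep the implicit requirement in the paper's inclusion $\Aut(X_F,H_F,\Delta_F)\subset\Aut(X_F,\alpha_F)$ that $\beta_F$ be chosen invariant under the diagram automorphisms of $\Delta_F$.
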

\begin{proof}
    Let $f\in \Aut(X_F)$ satisfy $f^*H_F=H_F$. Then $f^*L_F=L_F$, as $L_F$ is the root lattice of $\langle H_F,H^{2,0}(X_F)\rangle^\perp_{\Lambda_F}$.
    Since $f^*$ maps effective classes to effective classes, it follows that $f^*\Delta_F=\Delta_F$.
    Consequently, we conclude $\Aut(X_F,H_F,\Delta_F) = \Aut(X_F,H_F)$.

    Moreover, by the proof of Theorem \ref{theorem: global torelli}, we can construct an ample class $\alpha_F$ of $X_F$ lying in $\langle H_F\rangle\oplus L_F$.
    Since $\Aut(X_F,\alpha_F)\cong\Or(\Lambda_F,\alpha_F,H^{2,0}(X_F))$ (given by $f\mapsto f^*$) and $\Aut(X_F,H_F,\Delta_F) \subset \Aut(X_F,\alpha_F)$, it follows that $f\mapsto f^*$ gives $\Aut(X_F,H_F,\Delta_F) \cong \\ \Or(\Lambda_F,H_F,\Delta_F,H^{2,0}(X_F))$.
\end{proof}

\begin{lem}
\label{lem: Aut(Lambda_F,H_F,Delta_F) commutes with iota_F}
Each element $f\in \Or(\Lambda_{F}, \Delta_{F},H_{F})$ commutes with the involution $\iota_{F}$.
\end{lem}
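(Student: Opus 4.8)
The plan is to exploit the orthogonal decomposition of $\Lambda_F$ into a part where $\iota_F$ is completely understood and a part where $f$ and $\iota_F$ both act as diagram automorphisms. First I would record that any $f\in\Or(\Lambda_F,\Delta_F,H_F)$ automatically preserves all the relevant sublattices: since $f$ fixes $H_F$ and permutes the base $\Delta_F$, it preserves $L_F=\langle\Delta_F\rangle$, hence the sublattice $\langle H_F\rangle\oplus L_F$, hence its primitive hull $P_F$, and therefore also $Q_F=P_F^{\perp}$. The involution $\iota_F$ preserves the same sublattices (recorded in \S\ref{subsection: ADE K3}). Because $P_F\oplus Q_F$ has finite index in $\Lambda_F$, it suffices to prove $f\iota_F=\iota_F f$ separately on $P_F$ and on $Q_F$ and then extend the identity of isometries by $\QQ$-linearity.

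On $Q_F$ the claim is immediate: by Corollary \ref{cor: Lambda_F^iota = P_F^iota}, $\iota_F$ acts as $-\Id$ on $Q_F$, which is central in $\Or(Q_F)$, so it commutes with $f|_{Q_F}$.

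The heart of the argument — and the step I expect to be the main obstacle — is commutativity on $P_F$. Both $f$ and $\iota_F$ fix $H_F$, so on the finite-index sublattice $\langle H_F\rangle\oplus L_F$ the only issue is the $L_F$ factor, on which both act as diagram automorphisms of the Dynkin diagram of $R$ (for $f$ because it permutes $\Delta_F$; for $\iota_F$ by Proposition \ref{prop: iota-action on resolution graph/base} and Remark \ref{rmk: iota-action on L(R)}). Decomposing $L_F$ into irreducible components, $\iota_F$ acts componentwise by the canonical involution $-w_0(R_i)$, while $f$ may both permute isomorphic components and apply a diagram automorphism within each. These commute because $-w_0$ is natural with respect to base-preserving isomorphisms, so it is intertwined by the component-permuting part of $f$, and because within each irreducible component the diagram automorphism group is abelian in every type where $\iota_F$ acts nontrivially, namely $A_n$ ($n\ge 2$), odd $D_n$ and $E_6$, where it is $\ZZ/2$; the one non-abelian case $D_4$ (whose automorphism group is $S_3$) causes no trouble, since $\iota_F$ acts trivially on every $D_{2n}$. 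Hence $f\iota_F=\iota_F f$ on $\langle H_F\rangle\oplus L_F$, and as this lattice has finite index in $P_F$ the identity extends by $\QQ$-linearity to all of $P_F$; combined with the $Q_F$ case this gives commutativity on $\Lambda_F$. The genuinely substantive point is precisely this case analysis verifying that $\iota_F$ lies in the center of the diagram-automorphism group of each component, the non-abelian $D_4$ being the place where one must invoke that $\iota_F$ happens to be trivial; everything else is routine bookkeeping with saturations.
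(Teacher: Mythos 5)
Your proof is correct, but it is organized differently from the paper's. The paper never decomposes $\Lambda_F$ into $P_F\oplus Q_F$: it shows instead that $f$ preserves the invariant sublattice $\Lambda_F^{\iota_F}$ --- using Proposition \ref{prop: iota-action on resolution graph/base} and Remark \ref{rmk: iota-action on L(R)} to match the $\iota_F$-invariant generators of $L(R)$ with those of $f(L(R))$, together with $f(H_F)=H_F$ --- and then concludes commutativity from the (implicit) linear-algebra fact that an isometry preserving the fixed lattice of an involution also preserves its orthogonal complement, on which the involution acts as $-\Id$. That route needs neither Corollary \ref{cor: Lambda_F^iota = P_F^iota} nor any case analysis of diagram automorphism groups: on a $D_4$ component the $\iota_F$-action is trivial, so its invariant part is the whole component and is preserved for free. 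Your route verifies commutativity directly on the finite-index sublattice $\langle H_F\rangle\oplus L_F\oplus Q_F$, invoking Corollary \ref{cor: Lambda_F^iota = P_F^iota} for the $Q_F$ factor and centrality of the componentwise canonical involution $-w_0$ inside the diagram automorphism group for $L_F$, which is exactly why you must single out the non-abelian $D_4$ case. The trade-off: your argument makes the mechanism completely explicit --- one sees precisely where commutativity could fail (triality on $D_4$) and why it does not --- while the paper's is shorter because preservation of invariant sublattices is insensitive to whether the diagram automorphism group is abelian. Both arguments ultimately rest on the same naturality of the $\iota_F$-action supplied by Proposition \ref{prop: iota-action on resolution graph/base}.
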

\begin{proof}
Consider a singularity of type $R$ on $Z(F)$. The lattices $L(R)$ and $f(L(R))$, both of type $R$, satisfy $f(L(R)\cap \Delta_{F}) = f(L(R))\cap \Delta_{F}$. 
By Proposition \ref{prop: iota-action on resolution graph/base} and Remark \ref{rmk: iota-action on L(R)}, we can identify the generators of $L(R)^{\iota_{F}}$ and $f(L(R))^{\iota_{F}}$. 
This implies $f(L(R)^{\iota_{F}}) = f(L(R))^{\iota_{F}}$. 
Since $f(H_{F})=H_{F}$, we conclude $f(\Lambda_{F}^{\iota_{F}})=\Lambda_{F}^{\iota_{F}}$. 
Consequently, $f$ commutes with $\iota_{F}$.
\end{proof}

\begin{lem}
\label{lem: Aut(X_F,H_F) to Stab_Z(F)}
    Every automorphism $f\in \Aut(X_F,H_F)$ naturally induces an automorphism of $Z(F)\subset\PP^2$.
    The only automorphisms of $X_F$ inducing the identity map on $Z(F)$ are the involution $\iota_F$ and the identity map.
\end{lem}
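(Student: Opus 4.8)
The plan is to realize each $f\in\Aut(X_F,H_F)$ as descending along the degree-two morphism attached to $H_F$ and to track the branch curve. First I would record that, by Lemma \ref{lem: describe Aut(X_F,H_F,Delta_F)}, any such $f$ preserves $\Delta_F$, so that $f^*\in\Or(\Lambda_F,\Delta_F,H_F)$. Lemma \ref{lem: Aut(Lambda_F,H_F,Delta_F) commutes with iota_F} then gives $f^*\circ\iota_F^*=\iota_F^*\circ f^*$ on $\Lambda_F$, i.e. $(\iota_F\circ f)^*=(f\circ\iota_F)^*$. Since the representation $\Aut(X_F)\to\Or(\Lambda_F)$ is faithful \cite{looijenga1980torelli}, this upgrades to the geometric identity $f\circ\iota_F=\iota_F\circ f$. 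In particular $f$ descends to an automorphism $\bar f$ of $S_F=X_F/\iota_F$.

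Next I would use that the complete linear system $|H_F|$ is base-point free and defines the degree-two morphism $\pi\colon X_F\to\PP^2$ with branch curve $Z(F)$, as established in \S\ref{subsection: ADE K3} together with the construction in Lemma \ref{lem: X_omega determines Z(F)} (here $\PP^2\cong\PP H^0(X_F,\calO(H_F))^\vee$). Because $f^*H_F=H_F$, the map $f$ acts on $H^0(X_F,\calO(H_F))$ and hence induces a linear automorphism $g\in\PGL(3)$ of $\PP^2$ satisfying $\pi\circ f=g\circ\pi$. Since $f$ commutes with the covering involution $\iota_F$, the transformation $g$ carries the branch locus of $\pi$ to itself; as this branch locus is $Z(F)$, we obtain $g\in\Aut(\PP^2,Z(F))$, which is the asserted induced automorphism of $Z(F)\subset\PP^2$.

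For the second assertion, suppose the induced $g$ restricts to the identity on $Z(F)$. As a reduced sextic, the support of $Z(F)$ cannot lie on a single line (a line has degree one), so it spans $\PP^2$ and contains four points in general position; a projective transformation fixing these four points is the identity, whence $g=\id_{\PP^2}$. Then $\pi\circ f=\pi$, so $f$ is an automorphism of $X_F$ over $\PP^2$. Acting on the generic (two-point) fiber of $\pi$, such an $f$ induces an element of the Galois group $\mathrm{Gal}(\CC(X_F)/\pi^*\CC(\PP^2))\cong\{\id,\iota_F^*\}$, and by faithfulness of the action on the function field we conclude $f\in\{\id,\iota_F\}$.

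The main obstacle is the passage from the cohomological commutativity with $\iota_F$ (Lemma \ref{lem: Aut(Lambda_F,H_F,Delta_F) commutes with iota_F}) to the geometric commutativity, and the verification that the induced $g$ preserves the branch curve; both rest on the faithfulness of the cohomology action and on the canonical, $\iota_F$-compatible nature of the morphism defined by $|H_F|$. The remaining ingredients—the spanning of $Z(F)$ and the deck-transformation description of automorphisms over $\PP^2$—are elementary.
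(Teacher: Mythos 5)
Your proposal is correct and follows essentially the same route as the paper: both use Lemma \ref{lem: describe Aut(X_F,H_F,Delta_F)} and Lemma \ref{lem: Aut(Lambda_F,H_F,Delta_F) commutes with iota_F} together with the faithfulness of $\Aut(X_F)\hookrightarrow\Or(\Lambda_F)$ to upgrade cohomological commutativity with $\iota_F$ to geometric commutativity, and then descend $f$ along the degree-two map given by $|H_F|$ to get $\overline{f}\in\PGL(3)$ preserving $Z(F)$. The only (harmless) divergence is in the final elementary step, where you identify $f$ over $\PP^2$ via the Galois group of the function-field extension, while the paper lifts the identity through $S_F$ and uses the deck-transformation description of the finite double cover $X_F\to S_F$.
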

\begin{proof}
    By Lemma \ref{lem: describe Aut(X_F,H_F,Delta_F)}, every $f\in \Aut(X_F,H_F)$ induces a $f^*\in\Or(\Lambda_F)$ that preserves $H_F$ and $\Delta_F$. 
    Lemma \ref{lem: Aut(Lambda_F,H_F,Delta_F) commutes with iota_F} ensures that $f^*$ commutes with $\iota_F^*$ on $\Lambda_F$. 
    By the injectivity of $\Aut(X_F)\hookrightarrow\Or(\Lambda_F)$, it follows that $f$ commutes with $\iota_F$.
    Hence $f$ induces an automorphism of $S_F$ that preserves the strict transform of $Z(F)$, which lies in the branched locus of $p_F\colon X_F\to S_F$.
    Moreover, since $f^*H_F=H_F$, $f$ descends to an automorphism $\overline{f}$ of $\PP^2$ that preserves $Z(F)$.

    Let $f\in\Aut(X_F)$ be an automorphism such that the induced map $\overline{f}$ restricts to the identity map on $Z(F)$.
    Then $\overline{f}$ is the identity map on $\PP^2$.
    The identity map lifts trivially to $S_F$, and thus $f$ must coincide with either the identity map on $X_F$ or the involution $\iota_F$.
\end{proof}
\begin{proof}[Proof of Proposition \ref{prop: describe Stab_Z(F)}]
    There exists a natural map 
    \begin{equation*}
        \Aut(X_F,H_F) \to \Stab_{Z(F)}, \quad f\mapsto\overline{f}.
    \end{equation*} 
    By Lemma \ref{lem: Aut(X_F,H_F) to Stab_Z(F)}, the kernel of this map is $\{\Id,\iota_F\}$. Moreover, every automorphism of $\PP^2$ preserving $Z(F)$ lifts to an automorphism of the double cover branched along $Z(F)$, and subsequently to an automorphism of $X_F$. Thus we obtain $\Stab_{Z(F)} \cong \Aut(X_F,H_F)/\{\Id,\iota_F\}$.

    By Lemma \ref{lem: describe Aut(X_F,H_F,Delta_F)}, there is a natural isomorphism
    \begin{equation*}
        \Aut(X_F,H_F)/\{\Id,\iota_F\} \cong \Or(\Lambda_F,H_F,\Delta_F,H^{2,0}(X_F))/\{\Id,\iota_F^*\}.
    \end{equation*}
    Combining these results, the proposition follows.
\end{proof}

Let $Z(F_0)\in\PP\calV_T$ be a base point.
By Corollary \ref{cor: Lambda_F^iota = P_F^iota}, the involution $\iota_{F_0}$ induces an action on $\Lambda_{F_0}$ preserving $H_{F_0}$ and $\Delta_{F_0}$, and acts as $-\Id$ on $Q_{F_0}$.
We denote by $P\Gamma_T$ the quotient group $\Gamma_T/\{\pm \Id\}$. The natural map $\Gamma_T \to \Aut(\DD_T)$ descends to $P\Gamma_T \hookrightarrow \Aut(\DD_T)$.  
We consider the orbifold $P\Gamma_T\bs(\DD_T-\calH_T)$ instead of $\Gamma_T\bs(\DD_T-\calH_T)$.
For $\omega\in \DD_T-\calH_T$, the stabilizer of $\omega$ in $P\Gamma_T$ is given by
\begin{equation*}
    \Stab_\omega = \{ g|_{Q_{F_0}} \,\big|\, g\in \Or(\Lambda_{F_0},H_{F_0},\Delta_{F_0}),\, g\cdot\omega=\omega \}/\{\pm \Id\},
\end{equation*}
where $\omega$ represents a positive complex line in $Q_{F_0}\otimes\CC$.
There exists a $Z(F)\in \PP\calV_T$ satisfying $\Prd_T(Z(F))=[\omega]$.
By definition of $\Prd_T$, there exists a path $\gamma_F\subset\PP\calV_T$ connecting $Z(F)$ and $Z(F_0)$ such that $\gamma_F^*H^{2,0}(X_F)=\omega$.
Let $\varphi$ denote $\gamma_F^*\colon H^2(X_F,\ZZ)\to H^2(X_{F_0},\ZZ)$.
The homomorphism $\varphi$ satisfies $\varphi(H_F)=H_{F_0}$, $\varphi(\Delta_F)=\Delta_{F_0}$.
Then we can define a group homomorphism
\begin{equation*}
    \Aut(X_F,H_F) \to \{ g|_{Q_{F_0}} \,\big|\, g\in \Or(\Lambda_{F_0},H_{F_0},\Delta_{F_0},\omega) \},\quad f \mapsto (\varphi\circ f^*\circ\varphi^{-1})|_{Q_{F_0}},
\end{equation*}
which is surjective.
It induces a surjective homomorphism $\Stab_{Z(F)}\to\Stab_\omega$.
\begin{prop}
\label{prop: orbifold criterion}
    The following statements are equivalent to each other.
    \begin{enumerate}[(i)]
        \item The period map $\Prd_T\colon \calM_T\to P\Gamma_T\bs(\DD_T-\calH_T)$ preserves the orbifold structures of the GIT quotient and the arithmetic quotient.
        \item 
        The natural restriction map $\Or(\Lambda_{F_0},H_{F_0},\Delta_{F_0})\to \Gamma_T$ is an isomorphism.
        \item The group $\Or(P_{F_0},H_{F_0},\Delta_{F_0})$ acts faithfully on $A_{P_{F_0}}$.
    \end{enumerate}
\end{prop}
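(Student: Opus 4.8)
The plan is to reduce all three conditions to the vanishing of a single obstruction, namely the triviality of the kernel $K$ of the restriction homomorphism
\[
\rho\colon \Or(\Lambda_{F_0},H_{F_0},\Delta_{F_0})\to \Or(Q_{F_0}),\qquad g\mapsto g|_{Q_{F_0}}.
\]
By the definition of $\widetilde{\Gamma}_T$ the image of $\rho$ is exactly $\widetilde{\Gamma}_T$, and any $g\in K$ preserves $H_{F_0}$ and $\Delta_{F_0}$, hence preserves $L_{F_0}$ and its primitive hull $P_{F_0}$, while restricting to the identity on $Q_{F_0}=P_{F_0}^{\perp}$; such a $g$ is therefore determined by $g|_{P_{F_0}}\in\Or(P_{F_0},H_{F_0},\Delta_{F_0})$. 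I will prove that each of (i), (ii), (iii) is equivalent to $K=\{\Id\}$.

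The equivalence of (ii) and (iii) is pure lattice theory. Since $P_{F_0}$ and $Q_{F_0}$ are mutually orthogonal primitive sublattices of the unimodular lattice $\Lambda_{F_0}$, Nikulin's theory identifies $A_{P_{F_0}}$ with $A_{Q_{F_0}}$ through an anti-isometry $\gamma$, and a pair $(g_P,g_Q)\in\Or(P_{F_0})\times\Or(Q_{F_0})$ extends to $\Lambda_{F_0}$ precisely when $\gamma\circ\bar g_P=\bar g_Q\circ\gamma$ on discriminant groups. Setting $g_Q=\Id$ forces $\bar g_P=\Id$ on $A_{P_{F_0}}$; conversely any $g_P\in\Or(P_{F_0},H_{F_0},\Delta_{F_0})$ acting trivially on $A_{P_{F_0}}$ glues with $\Id_{Q_{F_0}}$ to an element of $K$. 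Hence
\[
K\cong\ker\!\big(\Or(P_{F_0},H_{F_0},\Delta_{F_0})\to\Or(A_{P_{F_0}})\big),
\]
so $K=\{\Id\}$ is literally the faithfulness asserted in (iii). On the other hand $\rho$ is surjective onto $\widetilde{\Gamma}_T$ by construction, so it is an isomorphism onto its image exactly when $K=\{\Id\}$, which gives (ii); the distinction between $\widetilde{\Gamma}_T$ and $\Gamma_T$ is governed by the same component analysis behind $[\widetilde{\Gamma}_T:\Gamma_T]\le 2$ and does not affect the injectivity statement.

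For (i) I would use the surjection $\Stab_{Z(F)}\twoheadrightarrow\Stab_\omega$ constructed just before the proposition, which is induced by $\rho$ followed by the passage to $P\Gamma_T=\Gamma_T/\{\pm\Id\}$. By Proposition \ref{prop: describe Stab_Z(F)} we may write, after transport along $\varphi$, $\Stab_{Z(F)}\cong\Or(\Lambda_{F_0},H_{F_0},\Delta_{F_0},\omega)/\{\Id,\iota_{F_0}^*\}$, and $\Prd_T$ preserves the orbifold structures iff this surjection has trivial kernel for every $F$. An element lies in the kernel iff $g|_{Q_{F_0}}=\pm\Id$, a condition which already forces $g$ to fix the line $\omega$, so the kernel is independent of $\omega$. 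Because $\iota_{F_0}^*$ acts as $-\Id$ on $Q_{F_0}$ and is central in $\Or(\Lambda_{F_0},H_{F_0},\Delta_{F_0})$ by Lemma \ref{lem: Aut(Lambda_F,H_F,Delta_F) commutes with iota_F}, the set $\{g:g|_{Q_{F_0}}=\pm\Id\}$ equals $K\sqcup\iota_{F_0}^*K$, whose image in the quotient by $\{\Id,\iota_{F_0}^*\}$ is isomorphic to $K$. Thus the orbifold structures are preserved iff $K=\{\Id\}$, which establishes all three equivalences.

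The genuinely delicate points are, first, checking that gluing $(g_P,\Id_{Q_{F_0}})$ yields an isometry of $\Lambda_{F_0}$ preserving the \emph{base} $\Delta_{F_0}$ and not merely the root lattice $L_{F_0}$, and second, the $\pm\Id$-bookkeeping in the orbifold comparison, which becomes formal only after invoking the centrality of $\iota_{F_0}^*$; everything else is standard Nikulin gluing. In applications I expect (iii) to be the workable criterion, since faithfulness of $\Or(P_{F_0},H_{F_0},\Delta_{F_0})$ on $A_{P_{F_0}}$ can be checked case by case, e.g. via the tables in \cite{bourbaki2002lie4-6}, and this is how the nodal case of Proposition \ref{prop: orbi str of nodal types} should follow.
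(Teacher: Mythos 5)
Your proof is correct and takes essentially the same route as the paper's: both reduce all three conditions to the triviality of the kernel of the restriction homomorphism $\Or(\Lambda_{F_0},H_{F_0},\Delta_{F_0})\to\Or(Q_{F_0})$, handle (i) via the stabilizer surjection together with the fact that $\iota_{F_0}^*$ acts as $-\Id$ on $Q_{F_0}$, and identify that kernel with $\ker\bigl(\Or(P_{F_0},H_{F_0},\Delta_{F_0})\to\Or(A_{P_{F_0}})\bigr)$. Your write-up only makes explicit what the paper leaves implicit (the Nikulin gluing argument and the $\pm\Id$ bookkeeping), and you flag the same $\widetilde{\Gamma}_T$ versus $\Gamma_T$ ambiguity in (ii) that the paper itself glosses over.
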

\begin{proof}
The period map $\Prd_T$ preserves the orbifold sturctures if and only if for any $\omega\in \DD_T-\calH_T$ and an associated $Z(F)\in \PP\calV_T$, $\Stab_{Z(F)} \cong \Stab_\omega$.
By Corollary \ref{cor: Lambda_F^iota = P_F^iota}, $\iota_{F_0}$ acts as $-1$ on $Q_{F_0}$, this is equivalent to say that the map
\begin{equation*}
    \Aut(X_F,H_F)\cong \Or(\Lambda_F,H_F,\Delta_F,H^{2,0}(X_F)) \to \{ g|_{Q_{F_0}} \,\big|\, g\in \Or(\Lambda_{F_0},H_{F_0},\Delta_{F_0},\omega) \}
\end{equation*}
is an isomorphism.
Denote by $G_{Z(F)}\coloneqq \Or(\Lambda_F,H_F,\Delta_F,H^{2,0}(X_F))$ and $G_\omega\coloneqq \{ g|_{Q_{F_0}} \,\big|\, g\in \Or(\Lambda_{F_0},H_{F_0},\Delta_{F_0},\omega) \}$.

By definition, there is a surjective group homomorphism $p\colon \Or(\Lambda_{F_0},H_{F_0},\Delta_{F_0})\to \Gamma_T$ defined by restriction.
The stabilizer $G_{Z(F)}$ can be naturally embedded into $\Or(\Lambda_{F_0},H_{F_0},\Delta_{F_0})$ by the conjugation of $\varphi$.
Via this embedding, $G_{Z(F)}$ is isomorphic to $p^{-1}(G_\omega)$.
Hence $G_{Z(F)} \cong G_\omega$ is equivalent to $\Or(\Lambda_{F_0},H_{F_0},\Delta_{F_0}) \cong \Gamma_T$.
The isomorphism $\Or(\Lambda_{F_0},H_{F_0},\Delta_{F_0}) \cong \Gamma_T$ is equivalent to the condition that the identity element (hence every element) in $\Gamma_T$ admits a unique lift in $\Or(\Lambda_{F_0},H_{F_0},\Delta_{F_0})$, which means that $\Or(P_{F_0},H_{F_0},\Delta_{F_0})$ acts faithfully on $A_{P_{F_0}}$.
\end{proof}

\subsection{Nodal Singular Types}
As an application, we study the nodal singular types, i.e. the singular types with only nodes.
\begin{prop}
\label{prop: orbi str of nodal types}
    The orbifold structures are preserved in all nodal singular types.
\end{prop}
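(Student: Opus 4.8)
The plan is to verify condition (iii) of Proposition \ref{prop: orbifold criterion}: for a nodal type $T$ the group $\Or(P,H,\Delta)$ acts faithfully on the discriminant group $A_P$ (we suppress the base point $F_0$ throughout). Since $T$ is nodal, $R=A_1^{\oplus k}$, so $L=\bigoplus_{i=1}^k\langle e_i\rangle$ with $e_i^2=-2$ and $\Delta=\{e_1,\dots,e_k\}$ a set of mutually orthogonal roots, one for each node. Any $g\in\Or(P,H,\Delta)$ fixes $H$ and permutes $\Delta$, hence is determined by a permutation $\sigma_g\in S_k$; moreover an element fixing $H$ and every $e_i$ fixes $\langle H\rangle\oplus L$ pointwise and, being $\QQ$-linear, fixes its primitive hull $P$. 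Thus $g\mapsto\sigma_g$ is injective, and it suffices to show that a nontrivial $\sigma_g$ acts nontrivially on $A_P$.

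First I would make the discriminant side explicit. Write $A_0\coloneqq A_{\langle H\rangle\oplus L}=\FF_2\bar H\oplus\bigoplus_i\FF_2\bar e_i$; with respect to the discriminant bilinear form the classes $\bar H,\bar e_1,\dots,\bar e_k$ are orthonormal over $\FF_2$ (each self-pairing is $\tfrac12\in\QQ/\ZZ$, all cross terms vanish), so $b$ is the standard nondegenerate form on $\FF_2^{k+1}$. By Proposition \ref{prop: saturation of M_F} the overlattice $P$ corresponds to the isotropic subgroup $I\coloneqq P/(\langle H\rangle\oplus L)$, generated by the component classes $\bar\beta_1,\dots,\bar\beta_{l'}$, and a direct computation from $2\beta_a=d_aH-\sum_p m_{p,a}e_p$ gives
\[
\bar\beta_a=(\deg D_a \bmod 2)\,\bar H+\sum_{p\in N_a}\bar e_p,
\]
where $N_a$ is the set of \emph{intersection} nodes on $D_a$ (self-nodes have even multiplicity and drop out). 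Then $A_P\cong I^\perp/I$, and $g$ acts on it through $\sigma_g$, which fixes $\bar H$, permutes the $\bar e_i$, and preserves $I$.

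The argument then splits. For a self-node $p$, the class $\bar e_p$ appears in no $\bar\beta_a$, so $\bar e_p\in I^\perp$ but $\bar e_p\notin I$; if $\sigma_g$ acts trivially on $A_P$ then $\bar e_{\sigma_g(p)}+\bar e_p\in I$, which is impossible unless $\sigma_g(p)=p$, since $I$ is spanned by classes not involving self-node basis vectors. Hence $\sigma_g$ fixes every self-node. For the intersection nodes I would use the incidence multigraph $G$ whose vertices are the components and whose edges are the intersection nodes; by B\'ezout any two components $D_a,D_b$ meet in exactly $d_ad_b$ transverse nodes, so $G$ is rigid. Every even subgraph (element of the $\FF_2$-cycle space $Z_1(G)$) lies in $I^\perp$, and for any two parallel edges $p,p'$ the $2$-cycle $\bar e_p+\bar e_{p'}$ is such a class; triviality of $\sigma_g$ on $A_P$ forces $\bar e_{\sigma_g(p)}+\bar e_{\sigma_g(p')}\equiv\bar e_p+\bar e_{p'}\pmod I$. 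Feeding in the integrality constraint that each $\bar\beta_a$ is isotropic (equivalently $\beta_a^2\in2\ZZ$), which eliminates the small degenerate configurations, these relations propagate through $G$ and force $\sigma_g$ to fix every intersection node as well, so $\sigma_g=\Id$.

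The main obstacle is this last step: organizing the cycle-class relations into a clean, configuration-independent argument valid for all nodal types at once, rather than a case analysis over the many multigraphs $G$. The key leverage is the B\'ezout rigidity of $G$ together with the isotropy of the $\bar\beta_a$, which jointly rule out the only potential exceptions (such as a lone pair of parallel edges). As an alternative that sidesteps part of the combinatorics, one could extend $g$ to $\hat g\in\Or(\Lambda,H,\Delta)$ by $\Id$ on $Q=P^\perp$ (possible precisely because $g$ is trivial on $A_P\cong A_Q$), and then argue geometrically on a generic $X_F$ with $\Pic(X_F)=P$ (Corollary \ref{cor: generic Picard}) via the global Torelli theorem, as in the proof of Theorem \ref{theorem: global torelli}; I expect the combinatorial route to be the more self-contained of the two.
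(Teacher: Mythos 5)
Your reduction is sound and matches the paper's strategy: you invoke criterion (iii) of Proposition \ref{prop: orbifold criterion}, observe that an element of $\Or(P,H,\Delta)$ is determined by the induced permutation $\sigma_g$ of the nodes, and must show that triviality on $A_P$ forces $\sigma_g=\Id$. Your translation into the overlattice picture ($I=P/(\langle H\rangle\oplus L)$ generated by the component classes, $A_P\cong I^\perp/I$) is correct, and your treatment of self-nodes is complete, since elements of $I$ have no coordinates on self-node vectors. But the proof has a genuine gap exactly where you flag "the main obstacle": the intersection nodes. Your toolkit as described (parallel-edge classes and the cycle space of the incidence multigraph) pins down $\sigma_g$ only on parallel classes with at least three edges; the problematic configurations are precisely those where two components meet in one or two points --- two lines (a single node) and a line plus a conic (a swappable pair) --- and for these you offer only the assertion that the relations "propagate through $G$." Moreover, the ingredient you name as the key leverage, isotropy of the $\bar\beta_a$, carries no information: it is automatic from evenness of $P$. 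What is actually needed is a quantitative lower bound on the weight of nonzero elements of $I$ together with additional classes of $I^\perp$ (including ones involving $\bar H$, or triangles in $G$ threaded through already-fixed edges) and a real analysis of the low-degree configurations.

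This is exactly what the paper supplies. Lemma \ref{lem: 0 or >=5} is the weight bound (via B\'ezout, any $v\in P$ has either all-integer coefficients on the $r_i$ or at least five half-integer ones), and Lemma \ref{lem: orbi str of reducible types} then runs a case analysis on $|J|=|C\cap C'|$ using explicit dual-lattice elements: ${1\over2}(H+r)$ when two lines meet in one node, ${1\over2}(r_i+r_j)$ when no swap occurs, and ${1\over2}(H+r_i+r_k)$ (with $r_k$ a node of the conic on an odd-degree component) for the line--conic swap case. Each of these produces, from triviality on $A_P$, an element of $P$ with at most four half-integer coefficients, contradicting Lemma \ref{lem: 0 or >=5}. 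Your sketch could likely be completed along similar lines (the weight bound shows even-degree proper sub-unions contribute weight at least $8$, and triangle classes then propagate fixedness through $G$), but that completion is the substance of the proof, not a routine verification. Your alternative Torelli route has its own gap: after extending $g$ by the identity on $Q$ and producing $\Phi\in\Aut(X_F,H_F)$ for generic $F$, you would still need to know that the generic stabilizer $\Stab_{Z(F)}$ in $\PGL(3)$ is trivial for every nodal type, which is not established in the paper and is not obviously easier than the lattice argument.
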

The proposition will be concluded from the following Lemma \ref{lem: orbi str of irred nodal types} and Lemma \ref{lem: orbi str of reducible types}.
\begin{lem}
\label{lem: orbi str of irred nodal types}
    For every irreducible nodal singular type, the orbifold structure is preserved.
\end{lem}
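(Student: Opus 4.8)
By the equivalence of (i) and (iii) in Proposition \ref{prop: orbifold criterion}, it suffices to prove that $\Or(P,H,\Delta)$ acts faithfully on the discriminant group $A_P$ (I suppress the base point $F_0$ from the notation). The plan is to make the lattice $P$, the base $\Delta$, and the group $\Or(P,H,\Delta)$ completely explicit for an irreducible nodal type, after which faithfulness reduces to a one-line check. First I would record that for $n$ nodes the root lattice is $L=A_1^{\oplus n}$, with $\Delta=\{e_1,\dots,e_n\}$ a set of $n$ mutually orthogonal $(-2)$-classes, and that since only $A_1$-singularities occur we have $L^\iota=L$ by Corollary \ref{cor: if only A_1,D_2n,E_7,E_8}.

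The key structural input is that $P$ collapses to an orthogonal direct sum. With $l'=1$ the saturation computation of Proposition \ref{prop: saturation of M_F} gives $M=\langle H\rangle\oplus L$, and since $M=P$ for all nodal types (see \cite{yu2023moduli}) we obtain
\[
P=\langle H\rangle\oplus L=\langle H\rangle\perp\langle e_1\rangle\perp\cdots\perp\langle e_n\rangle,\qquad H^2=2,\ e_i^2=-2.
\]
(Equivalently, one can argue directly: $\iota$ fixes $H$ and $L$ pointwise and $P/(\langle H\rangle\oplus L)$ is $2$-torsion, so $\iota|_P=\id$; then Corollary \ref{cor: Lambda_F^iota = P_F^iota} and Lemma \ref{lemma: double cover manifold} with $l=1$ give $P=P^\iota=\Lambda_F^\iota=H^2(S_F,\ZZ)(2)=\langle H\rangle\oplus L$.) Next I would identify $\Or(P,H,\Delta)$: any $g$ fixes $H$, and since $L=H^\perp\cap P$, it preserves $L$ and permutes $\Delta$, hence splits as $\id_{\langle H\rangle}\oplus\sigma$ with $\sigma\in S_n$; conversely every permutation of the $e_i$ fixing $H$ is an isometry, so $\Or(P,H,\Delta)\cong S_n$. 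Finally, on $A_P=(\ZZ/2)\,\overline{H/2}\oplus\bigoplus_{i=1}^n(\ZZ/2)\,\overline{e_i/2}$ a permutation $\sigma$ fixes $\overline{H/2}$ and sends $\overline{e_i/2}\mapsto\overline{e_{\sigma(i)}/2}$; if $\sigma\neq\id$ it moves some generator of this $(\ZZ/2)$-vector space to a distinct generator, hence acts nontrivially on $A_P$. Thus $S_n\hookrightarrow\Or(A_P)$ and faithfulness follows, so Proposition \ref{prop: orbifold criterion} yields the claim.

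I do not expect a serious obstacle: all the content is that for purely nodal irreducible types the generic Picard lattice degenerates to $\langle H\rangle\oplus A_1^{\oplus n}$, whose $H$- and $\Delta$-preserving symmetry group is simply the symmetric group permuting the nodes, acting visibly faithfully on the $2$-elementary discriminant. The one point that genuinely requires care is the identity $P=\langle H\rangle\oplus L$, rather than a strictly larger even overlattice: already $\langle H\rangle\oplus A_1$ carries the isotropic vector $\tfrac12(H+e_1)$ in its discriminant form, so the splitting of $P$ is not formal and must be extracted from the geometry, either through $M=P$ or through the equality $\Lambda_F^\iota=H^2(S_F,\ZZ)(2)$ when $l=1$.
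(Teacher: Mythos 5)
Your proposal is correct and takes essentially the same route as the paper: reduce via criterion (iii) of Proposition \ref{prop: orbifold criterion} to faithfulness of $\Or(P,H,\Delta)$ on $A_P$, identify $\Or(P,H,\Delta)\cong\mathfrak{S}_n$ as the permutation group of the nodes, and observe that it permutes the $n$ generators $\overline{e_i/2}$ of $A_P\cong(\ZZ/2)^{n+1}$ faithfully. The only difference is that you spell out the justification of $P=\langle H\rangle\oplus L$ (via Proposition \ref{prop: saturation of M_F} with $l'=1$, or via the $\iota$-invariance argument), a fact the paper uses implicitly in asserting $A_P\cong\ZZ/2\times(\ZZ/2)^m$; this added care is warranted and consistent with the paper's own remark in the proof of Lemma \ref{lem: 0 or >=5}.
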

\begin{proof}
    Let $r_1,\cdots,r_m$ be the base of $L=A_1^{\oplus m}$.
    We have $\Or(P,H,\Delta)\cong\Aut(\Delta)\cong\mathfrak{S}_m$ and $A_P\cong\ZZ/2\times(\ZZ/2)^m$.
    Moreover, $\Or(P,H,\Delta)$ acts on $A_P$ by permuting the $m$ generators of the later $m$ $\ZZ/2$-factors.
    This action is faithful.
\end{proof}

\begin{lem}
\label{lem: 0 or >=5}
    Fix an arbitrarily nodal singular type.
    Let $r_1,\cdots,r_m$ be the base of $L=A_1^{\oplus m}$.
    For any $v\in P$ such that $v=\mu H+\sum\limits_i \lambda_i r_i$, either all $\lambda_i$ are integers, or there exist at least five of $\lambda_i$ belonging to ${1\over2}\ZZ\bs\ZZ$.  
\end{lem}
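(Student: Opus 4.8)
The plan is to read the statement as a lower bound on the weights of the \emph{glue code} $C\coloneqq P/(\langle H\rangle\oplus L)$. Because $H$ is primitive, $C$ embeds into $A_L\cong(\ZZ/2)^m$ (as noted in \S\ref{section: explicit description of lattices}); hence for $v=\mu H+\sum_i\lambda_i r_i\in P$ we have $\mu\in\ZZ$ and $\lambda_i\in\tfrac12\ZZ$, and the image of $v$ in $A_L$ is the indicator vector of $S\coloneqq\{i:\lambda_i\in\tfrac12\ZZ\setminus\ZZ\}$, whose cardinality is exactly the number of half-integer coefficients. First I would reduce to a single coset representative: subtracting the integral element $\mu H+\sum_{i\notin S}\lambda_i r_i+\sum_{i\in S}(\lambda_i-\tfrac12)r_i\in\langle H\rangle\oplus L\subseteq P$ from $v$ leaves
\[
w=\tfrac12\sum_{i\in S}r_i\in P .
\]
Thus it suffices to show that such a $w$ lies in $P$ only for $|S|=0$ or $|S|\ge 5$.

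The easy cases are handled by parity. As the $r_i$ are pairwise orthogonal with $r_i^2=-2$, we get $w^2=-\tfrac12|S|$. Since $P$ is an even lattice, $w^2\in2\ZZ$, which forces $|S|\equiv0\pmod 4$; this already rules out $|S|\in\{1,2,3\}$, and shows that any nonzero admissible $|S|$ is at least $4$. The whole lemma therefore comes down to excluding the single value $|S|=4$.

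The main point is this exclusion, and it is where the geometry of the period map enters. If $|S|=4$, then $w$ satisfies $w^2=-2$ and $w\cdot H=0$, i.e. $w$ is a root of $H^\perp$ lying in $P$, while $w\notin L$ because its $r_i$-coordinates are genuinely half-integral. Now pick $F\in\calV_T$ generic, so that $\Pic(X_F)=P$ by Corollary \ref{cor: generic Picard}; writing $\omega$ for its period, which lies in $\DD_T-\calH_T$ by Theorem \ref{thm: open}, the set $\{x\in H^\perp\mid x\perp\omega\}=H^\perp\cap P$ has $L$ as its full root lattice (this is exactly the input from Urabe used in the proof of Lemma \ref{lem: X_omega determines Z(F)}). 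Consequently every root of $H^\perp\cap P$ already lies in $L$, contradicting $w\notin L$. Equivalently, one may argue directly on $X_F$: by Riemann--Roch $\pm w$ is effective, and being orthogonal to the nef class $H$ it is supported on the curves contracted by the degree-two map $X_F\to\PP^2$, which for a nodal type are precisely the exceptional $(-2)$-curves spanning $L$; hence $w\in L$, a contradiction. Either way, $|S|=4$ is excluded.

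Combining the three steps gives $|S|\in\{0\}\cup\{n\ge 5\}$ (in fact $|S|\in4\ZZ$), which is the claim. The only genuine difficulty is the borderline case $|S|=4$, since parity alone permits it; everything else is a congruence computation together with the bookkeeping of coset representatives.
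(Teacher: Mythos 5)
The gap is at your very first step: you assert that, because $P/(\langle H\rangle\oplus L)$ embeds into $A_L$, the $H$-coefficient $\mu$ of any $v\in P$ is an integer. That is a misreading of the remark in \S\ref{section: explicit description of lattices}. The remark says the glue group is \emph{isomorphic to} a subgroup of $A_L$, i.e.\ its projection to the $A_L$-factor of $A_{\langle H\rangle}\oplus A_L$ is injective (an element with trivial $A_L$-component would force $\mu H\in P$ with $\mu\in\tfrac12\ZZ\setminus\ZZ$, contradicting primitivity of $H$); it does \emph{not} say the glue group lies inside $0\oplus A_L$. In fact $\mu$ is genuinely half-integral in exactly the cases the lemma is about: for the nodal type ``quintic plus line'', $P$ is generated by $\langle H\rangle\oplus L$ and $\tfrac12(H-e_1-\cdots-e_5)$, and in general every irreducible component of odd degree $d_i$ contributes a class $\tfrac12\bigl(d_iH+\sum_{j\in I_i}r_j\bigr)\in P$. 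So your reduction to $w=\tfrac12\sum_{i\in S}r_i$ silently discards the nontrivial half of the cosets.

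Consequently your parity computation covers only the cosets with integral $\mu$. When $\mu\in\tfrac12\ZZ\setminus\ZZ$ the representative is $w=\tfrac12H+\tfrac12\sum_{i\in S}r_i$, whose square is $\tfrac{1-|S|}{2}$; evenness of $P$ then forces $|S|\equiv1\pmod 4$, so the borderline case left open by parity is $|S|=1$, not $|S|=4$, and your root argument cannot exclude it (there $w^2=0$ and $w\cdot H=1$, so $w$ is not a root at all). It can be excluded, but by a different mechanism: an isotropic class $u\in P$ with $u\cdot H=1$ contradicts the fact that $H$ induces a degree-two morphism to $\PP^2$, which is precisely Lemma \ref{lem: no calH'_T} (via Urabe's Proposition 1.7). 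With that case added, your strategy does close up, and it would then be a genuinely different argument from the paper's: the paper instead invokes $P=M$ for nodal types (\cite[Proposition 3.4]{yu2023moduli}) to write coset representatives as sums of component classes and applies B\'ezout's theorem to produce at least five intersection nodes. As written, however, your proof rests on a false claim and omits the case in which, for instance, the quintic-plus-line class lives, so it is incomplete.
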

\begin{proof}
    For any irreducible nodal singular type, the statement is followed by $P=\langle H\rangle\oplus L$.
    
    Consider a reducible nodal singular type. 
    Denote by $C_1,\cdots,C_l$ the irreducible components of a sextic curve of the given singular type.
    In reducible cases, the statement is equivalent to saying that for any $C_{i_1},\cdots,C_{i_k}$ with $1\le k<l$, $C_{i_1}+\cdots+C_{i_k}$ has at least five coefficients relative to $r_1,\cdots,r_m$ belonging to ${1\over2}\ZZ\bs\ZZ$. 

    Each $C_i$ can be expressed as 
    \begin{equation*}
        C_i = {1\over2}(d_i H + \sum_{j\in I_i}r_j),
    \end{equation*}
    where $d_i=\deg C_i$, $I_i$ corresponds to the set of nodes arising from the intersection of $C_i$ with the union of other irreducible components.
    It follows that $C_{i_1}+\cdots+C_{i_k}$ satisfies 
    \begin{equation*}
        C_{i_1}+\cdots+C_{i_k} \equiv {1\over2}(\sum_{n=1}^k d_{i_n})H + {1\over2}\sum_{j\in I_{i_1,\cdots,i_k}} r_j \mod \langle H\rangle\oplus L,
    \end{equation*}
    where $I_{i_1,\cdots,i_k}$ corresponds to the set of nodes arising from the intersection of $C_{i_1}\cup\cdots\cup C_{i_k}$ with the union of other irreducible components.
    By B\'ezout theorem, the cardinality of $I_{i_1,\cdots,i_k}$ is at least five, thus the lemma follows.
\end{proof}

\begin{lem}
\label{lem: orbi str of reducible types}
    For every reducible nodal singular type, the orbifold structure is preserved.
\end{lem}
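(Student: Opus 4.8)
The plan is to verify condition (iii) of Proposition~\ref{prop: orbifold criterion}, namely that $\Or(P,H,\Delta)$ acts faithfully on $A_P$. Throughout put $P=P_{F_0}$, $H=H_{F_0}$, and let $\Delta=\{r_1,\dots,r_m\}$ be the base of $L=A_1^{\oplus m}$; for a reducible nodal type one has $M=P$, so $P=\langle\,\langle H\rangle\oplus L,\,C_1,\dots,C_l\rangle$ with $C_i=\frac12(d_iH+\sum_{j\in I_i}r_j)$ the classes of the branch components, $d_i=\deg C_i$, and $I_i$ the set of nodes $C_i$ shares with the other components. As in the proof of Lemma~\ref{lem: orbi str of irred nodal types}, every $g\in\Or(P,H,\Delta)$ fixes $H$ and permutes $\Delta$, hence is determined by a permutation $\sigma$ of $\{r_1,\dots,r_m\}$; faithfulness therefore amounts to showing that no nontrivial $\sigma$ acts trivially on $A_P$.

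Suppose $\sigma$ acts trivially on $A_P=P^\vee/P$. For $w\in P^\vee$ write $w=\mu H+\sum_i\lambda_i r_i$ with $\mu,\lambda_i\in\frac12\ZZ$, and set $S=\{i:\lambda_i\in\tfrac12\ZZ\setminus\ZZ\}$. Triviality gives $g(w)-w=\sum_i(\lambda_{\sigma^{-1}(i)}-\lambda_i)r_i\in P$, and the number of half-integer coefficients of this vector is exactly $|S\,\triangle\,\sigma(S)|$. By Lemma~\ref{lem: 0 or >=5}, for every support $S$ realized by some $w\in P^\vee$ we must have $|S\,\triangle\,\sigma(S)|=0$ or $\ge 5$. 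The admissible $S$ are precisely the projections to $A_L=(\ZZ/2)^m$ of the annihilator $G^\perp$ of the glue group $G=P/(\langle H\rangle\oplus L)$; concretely $S$ is admissible iff every component meets $S$ in an even number of its shared nodes (the parity condition also constraining the $H$-coordinate). The same Lemma, applied to sub-unions of components, shows via B\'ezout that every nonzero class in $\mathrm{pr}_L(G)$ has weight $\ge 5$.

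I would then force $\sigma=\id$ using low-weight admissible supports. First, for any two nodes $j_1,j_2$ lying on one common pair of components (both in $I_a\cap I_b$), the set $S=\{j_1,j_2\}$ is admissible and satisfies $|S\,\triangle\,\sigma(S)|\le 4$; the dichotomy then forces it to vanish, so $\sigma$ preserves $\{j_1,j_2\}$. Running this over all such pairs shows $\sigma$ restricts to the identity on each intersection set $I_a\cap I_b$ with $|I_a\cap I_b|\ge 3$, and to at worst a transposition when $|I_a\cap I_b|=2$ (a line meeting a conic, by B\'ezout). To eliminate the remaining transpositions and the permutations of the lone nodes with $d_ad_b=1$ (two lines), I would use the weight-four ``rectangle'' supports $\{\,\ell\cap\ell'',\ \ell\cap\ell''',\ \ell'\cap\ell'',\ \ell'\cap\ell'''\,\}$, which are admissible and whose symmetric difference under a line-transposition has weight exactly $4$, again contradicting the $0$-or-$\ge 5$ dichotomy. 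Combining these, $\sigma$ fixes every node, i.e. $\sigma=\id$, which establishes (iii); together with Lemma~\ref{lem: orbi str of irred nodal types} this yields Proposition~\ref{prop: orbi str of nodal types}.

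The hard part will be this last step: producing, for every node actually moved by a nontrivial $\sigma$, an admissible support whose symmetric difference under $\sigma$ has weight between $1$ and $4$, despite the parity constraint cutting out $\mathrm{pr}_L(G^\perp)$. Indeed, weight-two admissible supports exist only within a single $I_a\cap I_b$, so lone nodes and multi-line permutations force the weight-four constructions, which occasionally require fixed auxiliary lines that a long cycle of components need not provide. Since $\deg Z=6$ bounds the number of components by six and B\'ezout rigidly controls how many nodes each pair of components shares, this reduces to a finite verification over the reducible nodal configurations, and the threshold $5$ in Lemma~\ref{lem: 0 or >=5} is exactly calibrated so that no node-moving permutation can act trivially on $A_P$.
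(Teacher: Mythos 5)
Your framework is the one the paper uses: criterion (iii) of Proposition \ref{prop: orbifold criterion}, reduction to a permutation $\sigma$ of $\Delta$ acting trivially on $A_P$, and detection of nontriviality by applying the $0$-or-$\geq 5$ dichotomy of Lemma \ref{lem: 0 or >=5} to $\sigma(w)-w\in P$ for $w\in P^\vee$ of small support; your pair argument inside $I_a\cap I_b$ is exactly the paper's $|J|\geq 2$ step. The genuine gap is the step you yourself flag as ``the hard part'': it is not merely unfinished, it cannot be completed by the method you describe. Take six generic lines $\ell_1,\dots,\ell_6$ (so every node $r_{ij}=\ell_i\cap\ell_j$ is a ``lone node''), and let $\sigma\in\Or(P,H,\Delta)$ be induced by the line permutation $\tau=(123)(456)$. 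The $\sigma$-orbits on the $15$ nodes are the two triangles $\{r_{12},r_{13},r_{23}\}$, $\{r_{45},r_{46},r_{56}\}$ and the three matchings $\{r_{14},r_{25},r_{36}\}$, $\{r_{15},r_{26},r_{34}\}$, $\{r_{16},r_{24},r_{35}\}$, each $\sigma$-invariant. Admissible supports are the even subgraphs of $K_6$ (integral $H$-part) or a perfect matching plus an even subgraph (half-integral $H$-part), so $S\triangle\sigma(S)=(1+\sigma)(S)$ always lies in $(1+\sigma)$ of the cycle space; its weight in each $3$-orbit is even, and there is no $4$-cycle of $K_6$ with even weight in every orbit (two edges of a triangle-orbit share a vertex; two edges from each of two matching-orbits lie inside a hexagon). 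Hence \emph{every} admissible $S$, of any weight --- your rectangles, triangles and matchings included --- satisfies $|S\triangle\sigma(S)|=0$ or $\geq 6$, and Lemma \ref{lem: 0 or >=5} never produces a contradiction. No ``finite verification'' of the kind you propose can rule out this $\sigma$, and your closing claim that the threshold $5$ is ``exactly calibrated'' so that no node-moving permutation acts trivially on $A_P$ is false.

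The lemma is still true in this configuration, but one needs more than the weight threshold: one must know which \emph{large}-support vectors actually lie in $P$. For instance $w=\tfrac12(H+r_{12}+r_{34}+r_{56})\in P^\vee$ has $\sigma(w)-w$ of support $6$ with zero $H$-part, whereas the nonzero glue vectors of $P=M$ over $\langle H\rangle\oplus L$ with zero $H$-part are the classes $\tfrac12\sum_{a\in A}\beta_a$ with $|A|$ even, whose supports are the edge-boundaries $\partial A$ of size $8$; so $\sigma(w)-w\notin P$ and $\sigma$ is nontrivial on $A_P$. This explicit description of the glue group, not the dichotomy, is the missing ingredient. Note also that the paper treats the lone-node and line--conic cases by a different device, the classes $\tfrac12(H+r)$ and $\tfrac12(H+r_i+r_k)$ with nonzero $H$-component (odd-type supports); those classes lie in $P^\vee$ only when all remaining components have even degree, so the many-line configurations are precisely where extra care is required, and your proposal does not supply it.
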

\begin{proof}
    Let $g\in\Or(P,H,\Delta)$ such that $g^*$ acts on $A_P$ trivially.
    By Proposition \ref{prop: orbifold criterion}, it is enough to show $g=\Id$, namely, $g$ preserves all $r_i, 1\le i\le m$. 
    For each $r_i$ arising from a self-intersection node of an irreducible component, $r_i$ is preserved for the same reason as in Lemma \ref{lem: orbi str of irred nodal types}.
    Thus it is enough to show that each $r_i$ arising from an intersection of two irreducible components is preserved.
    Let $C,C'$ be two different irreducible components.
    Denote by $J\coloneqq C\cap C'$.
    
    If $|J|=1$, then both of $C$ and $C'$ must be two lines.
    Denote by $r$ the associated exceptional curve.
    Consider ${1\over2}(H+r)\in P\otimes\QQ$, which is actually an element in $P^\vee$.
    Since $g^*=\Id$ on $A_P$, we have $g^*[{1\over2}(H+r)]=[{1\over2}(H+r)]=[{1\over2}(H+g r)]$ in $A_P$, then $[{1\over2}(r-g r)]=0$ in $A_P$.
    If $g^*r\ne r$, then ${1\over2}(r-g r)\ne 0$ lies in $P$, which contradicts to Lemma \ref{lem: 0 or >=5}.

    Consider the case $|J|\ge 2$.
    We first assume that for any $p_i,p_j\in J$, the associated exceptional curves $r_i, r_j$ not form a $g$-orbit in $P$.
    Consider ${1\over2}(r_i+r_j)\in P^\vee$.
    We have $g^*[{1\over2}(r_j+r_j)]=[{1\over2}(r_j+r_j)]=[{1\over2}(g r_i+g r_j)]$ in $A_P$.
    If $g r_i\ne r_i$, then ${1\over2}(r_i+r_j-g r_i-g r_j)\ne 0$ belongs to $P$, which contradicts to Lemma \ref{lem: 0 or >=5}.

    Otherwise, there exist $p_i,p_j\in J$ such that $g r_i=r_j, g r_j=r_i$.
    If $|J|\ge 3$, then there exists a $p_k\in J\bs\{p_i,p_j\}$.
    Consider ${1\over2}(r_i+r_k)\in P^\vee$.
    Similarly, we have ${1\over2}(r_i-r_j+r_k-g r_k)\ne 0$ belonging to $P$. Contradiction.

    Otherwise, $|J|=2$, then $C,C'$ must be a line and a conic.
    Then there exist only three possible cases: one conic plus four lines, two conics plus two lines and one cubic plus one conic plus one line.
    Note that the third case actually contains more than one singular types, since the cubic could be smooth or nodal. 
    But this is irrelevant to our discussion.

    We can assume that $C$ is a line and $C'$ is a conic.
    Let $C\cap C'=\{p_i,p_j\}$ and let $p_k$ be a node arising from the intersection of $C'$ with an irreducible component of odd degree (which always exists).
    It is direct to verify that ${1\over2}(H+r_i+r_k)\in P^\vee$.
    Then we conclude that ${1\over2}(r_i-r_j+r_k-g^*r_k)\ne 0$ belongs to $P$.
    Contradiction.
\end{proof}

\begin{rmk}
    It is interesting to know whether the orbifold structures are preserved for other singular types. This question is still open.
\end{rmk}


\section{Examples and Applications}
\label{sec: examples and applications}
The moduli and period map for nodal singular types have been well studied, see \cite{yu2023moduli}. 
In this section, we describe some further examples as applications. 

\subsection{A Quintic Curve with a Line}
We consider sextic curves that split into a smooth quintic curve and a line.
Let $V$ denote a complex vector space of dimension $3$.

\subsubsection{Five Nodes}
Suppose there is no tangent point. Denote the singular type by $T_0$. This case is first studied by Laza \cite{laza2009deformations}, and is related to the study of the minimal-elliptic surface singularity $N_{16}$.
This case is also included in \cite{yu2023moduli}.

In this case, $\calV_{T_0}$ consists of pairs $(F_5,F_1)\in \Sym^5 V^*\times V^*$ such that $Z(F_5)$ and $Z(F_1)$ have five different intersection points.
It is direct $\dim \calM_{T_0}=14$, which coincides with the expected dimension. The normalization $\widehat{\PP\calV}_{T_0}$ of $\PP\overline{\calV}_{T_0}$ is given by $\PP(\Sym^5 V^*) \times \PP(V^*)$, see \cite[\S 4.2]{yu2023moduli}.

For any fixed $F\in\calV_{T_0}$, we have $L_F\cong A_1^{\oplus 5}$.
Denote $\Delta_F$ by $\{ e_1,\cdots,e_5 \}$.
Corollary \ref{cor: if only A_1,D_2n,E_7,E_8} implies $P_F=P_F^{\iota}$. By Proposition \ref{prop: saturation of M_F}, we have:
\begin{prop}
    The lattice $P_F$ is generated by $\langle H\rangle\oplus L_F$ and ${H-e_1-\cdots-e_5\over2}$.
\end{prop}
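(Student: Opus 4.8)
The plan is to apply Proposition \ref{prop: saturation of M_F} directly to this explicit configuration. In the setting of a smooth quintic $Z(F_5)$ meeting a line $Z(F_1)$ transversally in five nodes, the sextic curve $Z(F)$ has exactly two irreducible components, so $l'=2$. Since all singularities are of type $A_1$, Corollary \ref{cor: if only A_1,D_2n,E_7,E_8} gives $L_F^\iota = L_F$ and hence $P_F^\iota = P_F$ by Corollary \ref{cor: Lambda_F^iota = P_F^iota} (each $A_1$ has $m_R=0$ and $r_R^-=0$, so there is no anti-invariant contribution). Consequently $M=M^\iota=P^\iota=P_F$, and it suffices to identify the generator coming from $\beta_1$ (or equivalently $\beta_2$) produced by Proposition \ref{prop: saturation of M_F}.

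First I would compute the class $\beta_i$ of $\pi^{-1}(\widetilde{D}_i)$ for the two components. Using the expression from Lemma \ref{lem: 0 or >=5}, each component $C_i$ of the branch curve satisfies $C_i = \tfrac12(d_i H + \sum_{j\in I_i} e_j)$ modulo $\langle H\rangle\oplus L_F$, where $d_i=\deg C_i$ and $I_i$ indexes the nodes lying on $C_i$. For the line ($d=1$) all five nodes lie on it, giving the class $\tfrac12(H - e_1 - \cdots - e_5)$ up to sign and up to elements of $\langle H\rangle\oplus L_F$ (the minus signs absorbed into $L_F$); for the quintic ($d=5$) the same five nodes give $\tfrac12(5H - e_1-\cdots-e_5)$, which differs from the line's class by $2H\in\langle H\rangle$, so the two classes coincide in $P_F/(\langle H\rangle\oplus L_F)$.

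By Proposition \ref{prop: saturation of M_F}, $M/(\langle H\rangle\oplus L_F)\cong(\ZZ/2)^{l'-1}=\ZZ/2$, generated by the class of $\beta_1$. Thus $P_F=M$ is generated over $\langle H\rangle\oplus L_F$ by the single element $\tfrac{H-e_1-\cdots-e_5}{2}$, which is the asserted conclusion. The one point requiring care is to verify that this element genuinely lies in $H^2(X_F,\ZZ)$ and not merely in the dual lattice: this is exactly the content of $\beta_i$ being an integral Chern class of the honest curve $\pi^{-1}(\widetilde{D}_i)$, together with the reduction computation above, so no separate integrality argument is needed beyond invoking Proposition \ref{prop: saturation of M_F}.

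The main obstacle, such as it is, is purely bookkeeping: confirming that the five exceptional classes $e_1,\dots,e_5$ appearing in the reduction of $\beta_i$ are precisely the full base $\Delta_F$ (so that no node is omitted or double-counted), which follows from B\'ezout since a line and a smooth quintic meet in exactly five points, all of which are the nodes of $Z(F)$. Once the index set $I_i$ is correctly identified as all of $\{1,\dots,5\}$, the result is immediate.
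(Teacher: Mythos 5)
Your proposal is correct and follows essentially the same route as the paper: use the fact that all singularities are $A_1$ (so $\iota$ fixes $H$ and $L_F$ pointwise, giving $P_F=P_F^{\iota}$) together with Proposition \ref{prop: saturation of M_F} to get $P_F=M^{\iota}=M$, generated over $\langle H\rangle\oplus L_F$ by the class $\beta_1=\tfrac{1}{2}(H-e_1-\cdots-e_5)$ of the preimage of the line. The only nitpick is that $P_F=P_F^{\iota}$ follows from triviality of the $\iota$-action on $P_F\otimes\QQ$ (i.e.\ from Corollary \ref{cor: if only A_1,D_2n,E_7,E_8}, as the paper cites), not from Corollary \ref{cor: Lambda_F^iota = P_F^iota}, but your parenthetical reasoning shows you have the right argument.
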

\begin{rmk}
    For a generic $F\in\calM_{T_0}$, the K3 surface $X_F$ admits an elliptic fibration without sections (it admits a bi-section). This fibration has $1$ type $I_0^*$ fiber and $18$ nodal fibers. 
    The lattice $P_F$ is isomorphic to $U(2)\oplus D_4$. 
    The Picard lattice of the associated Jacobian elliptic K3 surface is $U\oplus D_4$, and the Mordell--Weil group is trivial. This example also appears in \cite[Table 1.1, No. 6]{belcastro2002picard} and \cite[Table 2]{clingher2024neron-severi}.
\end{rmk}
Recall that $Q_F\coloneqq P_F^\perp$, and $Q_F\cong U(2)\oplus U\oplus D_4\oplus E_8$.
The associated period domain is $\DD_{T_0}\coloneqq\DD(Q_F)$, which is a type IV domain of dimension $14$. 
The associated arithmetic group $\Gamma_{T_0}$ has the following expression
\begin{equation*}
    \Gamma_{T_0} = \{ g\in\Or(Q_F) \,|\, g \textup{ acts on } A_{Q_F} \textup{ through the } \mathfrak{S}_5\textup{-action on } A_{P_F}  \},
\end{equation*}
where $\mathfrak{S}_5$-action permutes $e_1,\cdots,e_5$.


The hyperplane arrangement $\calH_{T_0}^*$ is empty. Thus $\Prd_{T_0}$ extends to an isomorphism $\calM_{T_0}^{\mathrm{ADE}}\cong \Gamma_{T_0}\bs\DD_{T_0}$, see \cite[Theorem 4.1]{laza2009deformations} and \cite[Proposition 3.7]{yu2023moduli}\footnote{There is a small typo in \cite[Proposition 3.7]{yu2023moduli}: "$\Sigma_T$" should be revised as "$\overline{\Sigma}_T$", the closure of $\Sigma_T$ in the GIT compactification. This criterion holds for all ADE singular types.}.
The Looijenga compactification with respect to the empty hyperplane arrangement is exactly the Baily--Borel compactification.
Hence the period map $\Prd_{T_0}$ extends to an isomorphism $\overline{\calM}_{T_0}\cong \overline{\Gamma_{T_0}\bs\DD_{T_0}}^{\mathrm{bb}}$, also see \cite[Theorem 4.2]{laza2009deformations} and \cite[Proposition 4.3]{yu2023moduli}.

By Proposition \ref{prop: orbi str of nodal types}, the period map $\Prd_{T_0}$ induces an isomorphism of $\calM_{T_0}$ and $P\Gamma_{T_0}\bs(\DD_{T_0}-\calH_{T_0})$ as orbifolds.

\subsubsection{One Tacnode and Three Nodes}
Suppose that there is one tacnode and three nodes. Denote the singular type by $T_1$.
Let $\calV_{5}$ consist of $F_5\in\Sym^5 V^*$ such that $Z(F_5)$ is a smooth quintic curve.
In this case, $\PP\calV_{T_1}$ is a Zariski open subset of the universal family over $\calU_{5,1}$.

We construct $\widehat{\PP\calV}_{T_1}$ as follows. Define $G_1$ as $\{ 
(L,x)\in\PP(V^*)\times\PP(V) \,|\, x\in Z(L) \}$, which is a smooth $\PP^1$-bundle on $\PP(V^*)$. Define $G_{T_1}$ as 
\begin{equation*}
    \{ (L,x,F)\in G_1\times\PP(\Sym^5 V^*) \,|\, x\in Z(F),\, \mult_x(Z(F)\cdot Z(L))\ge 2 \}.
\end{equation*}
The space $G_{T_1}$ is a smooth $\PP^{18}$-bundle on $G_1$.
\begin{prop}
\label{prop: normalization for quintic plus 1 tangent}
    The normalization space $\widehat{\PP\calV}_{T_1}$ is isomorphic to $G_{T_1}$.
\end{prop}
\begin{proof}
    Let $\calU$ be the subspace of $\PP(V^*)\times\PP(V)\times\PP(\Sym^5 V^*)$ defined by 
    \begin{equation*}
        \{ (L,x,F) \,|\, x\in Z(L)\cap Z(F) \}.
    \end{equation*}
    The natural morphism $G_{T_1}\to \PP(\Sym^6 V^*)$ factors through the following morphisms
    \begin{equation*}
        G_{T_1} \to \calU \to \PP(V^*)\times\PP(\Sym^5 V^*) \to \PP(\Sym^6 V^*).
    \end{equation*}
    The first map is a natural inclusion, the second map is finite surjective and of degree $5$, and the last map is finite and generically injective. Notice that $G_{T_1}\to \PP(\Sym^6 V^*)$ is generically injective. Therefore, $G_{T_1}\to \PP(\Sym^6 V^*)$ is finite and generically injective.
    Its image contains $\PP\calV_{T_1}$ as a Zariski open subset, and thus equals $\PP\overline{\calV}_{T_1}$. Therefore, $G_{T_1}$ is the normalization of $\PP\overline{\calV}_{T_1}$.
\end{proof}

For any fixed $F\in\calV_{T_1}$, we have $L_F\cong A_3\oplus A_1^{\oplus 3}$.
Denote by $\{\epsilon_1,\epsilon_2,\epsilon_3\}$ the effective base of $A_3$ and by $e_1,e_2,e_3$ the effective bases of the three copies of $A_1$.
Then $\Delta_F$ is given by $\{ \epsilon_1,\epsilon_2,\epsilon_3;e_1;e_2;e_3 \}$.
\begin{lem}
\label{lem: 1 A_3 + 3 A_1: L_F^-iota = P_F^-iota}
    We have $L_F^{-\iota} = P_F^{-\iota}$.
\end{lem}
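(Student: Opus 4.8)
The plan is to determine the rank-one lattice $P_F^{-\iota}$ by trapping it between $L_F^{-\iota}$ and $(L_F^\vee)^{-\iota}$, and then to eliminate the larger of the two candidates using the evenness of $\Lambda_F$. First I would make the anti-invariant parts explicit. By Proposition \ref{prop: iota-action on resolution graph/base} the involution acts trivially on each $A_1$ summand, so the entire anti-invariant part is contributed by the $A_3$ factor coming from the tacnode. Writing $\alpha_1,\alpha_2,\alpha_3$ for the simple roots of this $A_3$, with $\iota$ exchanging $\alpha_1\leftrightarrow\alpha_3$ and fixing $\alpha_2$, one obtains $L_F^{-\iota}=(A_3)^{-\iota}=\ZZ(\alpha_1-\alpha_3)$, and dually $(L_F^\vee)^{-\iota}=\ZZ(\varpi_1-\varpi_3)$, where $\varpi_i$ are the fundamental weights. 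The identity $\alpha_1-\alpha_3=2(\varpi_1-\varpi_3)$ then exhibits $L_F^{-\iota}$ as an index-two sublattice of $(L_F^\vee)^{-\iota}$ (consistent with $A_{(A_3)^{-\iota}}\cong\ZZ/4$ recorded in Table \ref{tab: r_R^-}).

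Next I would establish the sandwich $L_F^{-\iota}\subseteq P_F^{-\iota}\subseteq (L_F^\vee)^{-\iota}$. The left inclusion is immediate from $L_F\subseteq P_F$. For the right inclusion, since $P_F$ is even it is integral, so $P_F\subseteq P_F^\vee\subseteq (\langle H_F\rangle\oplus L_F)^\vee=\langle H_F\rangle^\vee\oplus L_F^\vee$. Writing an anti-invariant class $x\in P_F$ as $x=\tfrac{a}{2}H_F+\ell$ with $\ell\in L_F^\vee$, and using that $\iota$ fixes $H_F$ while $\langle H_F\rangle_\QQ\perp L_{F,\QQ}$, the equation $\iota(x)=-x$ forces $a=0$ and $\ell+\iota(\ell)=0$, i.e. $x=\ell\in(L_F^\vee)^{-\iota}$. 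Consequently $P_F^{-\iota}$ equals either $L_F^{-\iota}$ or all of $(L_F^\vee)^{-\iota}$, these being the only two lattices between the two ends of the sandwich.

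The decisive step, and the one genuine idea, is to rule out the larger possibility by a parity obstruction. A direct computation with the inverse Cartan matrix of $A_3$ gives that $(\varpi_1-\varpi_3)^2$ is odd (equal to $-1$ when $A_3$ is realized inside $\Lambda_F$ as the negative-definite span of the $(-2)$-curves). Since $\Lambda_F=H^2(X_F,\ZZ)$ is an even lattice, no class of odd self-intersection can belong to it, so $\varpi_1-\varpi_3\notin\Lambda_F\supseteq P_F$. Hence $P_F^{-\iota}\neq(L_F^\vee)^{-\iota}$, leaving $P_F^{-\iota}=L_F^{-\iota}$. I expect this parity argument to be the crux; the rest is routine root-lattice bookkeeping, and the whole argument is uniform over $\calV_{T_1}$, since it uses only the isomorphism type $L_F\cong A_3\oplus A_1^{\oplus3}$, the $\iota$-action of Proposition \ref{prop: iota-action on resolution graph/base}, and the evenness of $\Lambda_F$.
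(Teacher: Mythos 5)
Your proof is correct and follows essentially the same route as the paper: identify $L_F^{-\iota}$ as the rank-one lattice $\ZZ(\epsilon_1-\epsilon_3)$ of square $-4$ coming from the $A_3$ factor, observe that the only possible proper enlargement inside $P_F$ would contain $\tfrac{1}{2}(\epsilon_1-\epsilon_3)$ of square $-1$, and rule it out by evenness. The paper's proof is just a terser version of this parity argument; your dual-lattice sandwich $L_F^{-\iota}\subseteq P_F^{-\iota}\subseteq (L_F^\vee)^{-\iota}$ spells out the step the paper leaves implicit.
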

\begin{proof}
    By definition, $L_F^{-\iota}$ is generated by $\epsilon_1-\epsilon_3$, where $(\epsilon_1-\epsilon_3)^2=-4$.
    Since $({\epsilon_1-\epsilon_3\over2})^2=-1$ and $P_F$ is even, $L_F^{-\iota}$ does not admit any further finite-index extension in $P_F$, namely $L_F^{-\iota} = P_F^{-\iota}$.
\end{proof}
Recall that $M_F$ denotes the lattice generated by $\langle H_F\rangle\oplus L_F$ and $u\coloneqq{H-(2\epsilon_2+\epsilon_1+\epsilon_3)-e_1-e_2-e_3\over2}$.
\begin{prop}
\label{prop: 1 A_3 + 3 A_1: P=M}
    We have $P_F=M_F$. 
\end{prop}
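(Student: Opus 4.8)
The plan is to squeeze both $M_F$ and $P_F$ between the common sublattice $P_F^\iota \oplus L_F^{-\iota}$ and $P_F$, and to show that both extensions have index exactly $2$. First I would record the relevant rank and discriminant data for $L_F = A_3 \oplus A_1^{\oplus 3}$. Writing $A_3 = A_{2\cdot 2-1}$, Lemma \ref{lem: anti-inv} gives $r_{A_3}^- = 1$, while $r_{A_1}^- = 0$, so the total contribution is $r^- = r_{A_3}^- + 3\,r_{A_1}^- = 1$. In particular $\rank L_F^{-\iota} = r^- = 1$, whereas $\rank P_F^\iota = \rank P_F - \rank L_F^{-\iota} = 7 - 1 = 6$.

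Next I would invoke Lemma \ref{lem: 1 A_3 + 3 A_1: L_F^-iota = P_F^-iota}, which gives $P_F^{-\iota} = L_F^{-\iota}$, so that the orthogonal sublattice $P_F^\iota \oplus L_F^{-\iota}$ is exactly $P_F^\iota \oplus P_F^{-\iota}$, a finite-index sublattice of $P_F$ of full rank $7$. The heart of the argument is then two index computations over this common sublattice. On one side, Proposition \ref{prop: M_F = Z_2-saturation} yields $\frac{M_F}{P_F^\iota \oplus L_F^{-\iota}} \cong (\ZZ/2)^{r^-} = \ZZ/2$. On the other side, applying Lemma \ref{lem: saturation induced by involution, inequality and ADE case} to the even lattice $P_F$ equipped with the involution $\iota$ bounds $\bigl|\frac{P_F}{P_F^\iota \oplus P_F^{-\iota}}\bigr| \le 2^{\min\{\rank P_F^\iota,\, \rank P_F^{-\iota}\}} = 2^{1} = 2$.

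Finally, since $M_F \subseteq P_F$ and both lattices contain $P_F^\iota \oplus P_F^{-\iota}$, comparing indices gives $2 = [M_F : P_F^\iota \oplus P_F^{-\iota}] \le [P_F : P_F^\iota \oplus P_F^{-\iota}] \le 2$, so these indices coincide, forcing $M_F = P_F$. I do not expect a serious obstacle here; the only points requiring care are to apply the index bound of Lemma \ref{lem: saturation induced by involution, inequality and ADE case} to $P_F$ itself rather than to the root lattice $L_F$, and to use the identification $P_F^{-\iota} = L_F^{-\iota}$ from Lemma \ref{lem: 1 A_3 + 3 A_1: L_F^-iota = P_F^-iota} so that this bound lines up with the saturation index $2^{r^-}$ supplied by Proposition \ref{prop: M_F = Z_2-saturation}.
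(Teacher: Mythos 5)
Your proof is correct, and it differs from the paper's in a meaningful way, although both are sandwich arguments over the full-rank sublattice $P_F^\iota\oplus P_F^{-\iota}$, which equals $P_F^\iota\oplus L_F^{-\iota}$ by Lemma \ref{lem: 1 A_3 + 3 A_1: L_F^-iota = P_F^-iota} (the lemma both you and the paper rely on). The difference is in how the upper bound $[P_F:P_F^\iota\oplus P_F^{-\iota}]\le 2$ is obtained. The paper first identifies $M_F\cong U(2)\oplus D_5$ explicitly, computes $A_{M_F^\iota}\cong(\ZZ/2)^4$ and $A_{M_F^{-\iota}}\cong\ZZ/4$, and then embeds $P_F/(M_F^\iota\oplus M_F^{-\iota})$ into both discriminant groups (using primitivity of the invariant and anti-invariant parts in $P_F$), so that this quotient is simultaneously $2$-elementary and cyclic of order dividing $4$, hence at most $\ZZ/2$. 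You instead apply the rank inequality of Lemma \ref{lem: saturation induced by involution, inequality and ADE case} directly to the even lattice $P_F$ with its involution, which yields the bound $2^{\min\{\rank P_F^\iota,\,\rank P_F^{-\iota}\}}=2^1=2$ using only $\rank P_F^{-\iota}=1$ (from Lemma \ref{lem: anti-inv}: $r^-_{A_3}=1$, $r^-_{A_1}=0$); and you obtain the exact index $2$ on the $M_F$ side by citing Proposition \ref{prop: M_F = Z_2-saturation} rather than by direct computation. Your route is shorter, avoids the explicit lattice identification and all discriminant computations, and makes the general mechanism transparent: whenever $L_F^{-\iota}=P_F^{-\iota}$ and $r^-\le\rank P_F^\iota$, the index $2^{r^-}$ supplied by Proposition \ref{prop: M_F = Z_2-saturation} already saturates the bound of Lemma \ref{lem: saturation induced by involution, inequality and ADE case}, forcing $P_F=M_F$ (this is also consistent with why the quartic-plus-bitangents example resists the same argument: there one only knows $M_F^{-\iota}=P_F^{-\iota}$, not $L_F^{-\iota}=P_F^{-\iota}$). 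What the paper's computation buys in exchange is the explicit isomorphism type $M_F\cong U(2)\oplus D_5$ and its discriminant group, which the paper needs anyway immediately afterwards to write down $Q_F\cong U\oplus U(2)\oplus A_3\oplus E_8$ and the arithmetic group $\Gamma_{T_1}$.
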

\begin{proof} 
    It is direct to check that there exists a decomposition of $M_F$ as following
    \begin{equation*}
        M_F = \langle H_F-e_2,H_F-e_3 \rangle \oplus \langle e_1,u,\epsilon_2,\epsilon_1,\epsilon_3 \rangle \cong U(2)\oplus D_5.
    \end{equation*}
    Thus we have $A_{M_F}\cong (\ZZ/2)^2\times\ZZ/4$.
    By Lemma \ref{lem: 1 A_3 + 3 A_1: L_F^-iota = P_F^-iota} and Proposition \ref{prop: saturation of M_F}, the quotient $P_F\over{M_F^\iota\oplus M_F^{-\iota}}$ is isomorphic to a subgroup of $A_{M_F^\iota}$ and also a subgroup of $A_{M_F^{-\iota}}$.
    Since $A_{M_F^\iota}\cong(\ZZ/2)^4$ and $A_{M_F^{-\iota}}\cong\ZZ/4$, $P_F\over{M_F^\iota\oplus M_F^{-\iota}}$ is a subgroup of $\ZZ/2$.
    Moreover, we have ${M_F\over{M_F^\iota\oplus M_F^{-\iota}}}\cong\ZZ/2$.
    By definition, $M_F\subset P_F$, thus $M_F=P_F$.
\end{proof}




\begin{rmk}
    For a generic $F\in\calM_{T_1}$, the K3 surface $X_F$ admits an elliptic fibration $\pi_1$ with sections. This fibration has a type $I_0^*$ fiber and $18$ nodal fibers.
    The Mordell--Weil group of $\pi_1$ is of rank $1$ and torsion-free.
\end{rmk}
\begin{rmk}
    For a generic $F\in\calM_{T_1}$, $X_F$ also admits an elliptic fibration $\pi_2$ without sections (it admits a bi-section). This fibration has a type $I_1^*$ fiber and $17$ nodal fibers.
    The Jacobian elliptic surface of $\pi_2$ has Picard lattice isomorphic to $U\oplus D_5$. The Mordell--Weil group is trivial.
\end{rmk}

We have $Q_F\cong U\oplus U(2)\oplus A_3\oplus E_8$.
The period domain $\DD_{T_1}$ is a type IV domain of dimension $13$.
The arithmetic group $\Gamma_{T_1}$ can be expressed as
\begin{equation*}
    \Gamma_{T_1} = \{ g\in\Or(Q_F) \,|\, g \textup{ acts on } A_{Q_F} \textup{ through the } \mathfrak{S}_2\times\mathfrak{S}_3\textup{-action on } A_{P_F} \},
\end{equation*}
where $\mathfrak{S}_2$-action (respectively, $\mathfrak{S}_3$-action) permutes $\epsilon_1,\epsilon_3$ (respectively, $e_1,e_2,e_3$).
By \cite[Proposition 3.7]{yu2023moduli}, the hyperplane arrangement $\calH_{T_1}^*$ is empty.
The period map $\Prd_{T_1}$ extends to an isomorphism $\calM_{T_1}^{\mathrm{ADE}}\cong\Gamma_{T_1}\bs\DD_{T_1}$ and further an isomorphism $\overline{\calM}_{T_1}\cong \overline{\Gamma_{T_1}\bs\DD_{T_1}}^{\mathrm{bb}}$.



\subsubsection{Two Tacnodes and One Node}

Suppose there are two tangent points. Denote the singular type by $T=T_2$.
Due to a classical result by Schubert \cite{schubert1879kalkul}, a generic smooth degree $d$ plane curve has $d(d-2)(d^2-9)\over2$ bitangents.
Thus a generic smooth plane quintic curve has $120$ bitangents.
Denote by $\calM_5$ the moduli space of smooth quintic curves.
The natural projection $\calM_{T_2}\to \calM_5$ is a finite morphism of degree $120$.

We construct $\widehat{\PP\calV}_{T_2}$ as follows.
Define $\widetilde{G}_1$ as $\{ (L,x,y)\in\PP(V^*)\times\PP(V)\times\PP(V) \,|\, x,y\in Z(L) \}$, which is a smooth $\PP^1\times\PP^1$-bundle on $\PP(V^*)$.
The total space $\widetilde{G}_1$ admits an (fiberwise) involution $\tau$ defined by switching $x$ and $y$.
Define $G_1\coloneqq \widetilde{G}_1/\tau$. Since the fixed locus of $\tau$ has codimension $1$ in $\widetilde{G}_1$, $G_1$ is smooth. In fact, it is a $\PP^2$-bundle on $\PP(V^*)$. Denote by $[L,x,y]$ the image of $(L,x,y)\in\widetilde{G}_1$ in $G_1$.
Define $G_{T_2}$ as 
\begin{equation*}
    \{ ([L,x,y],F)\in G_1\times\PP(\Sym^5 V^*) \,|\, x,y\in Z(F),\, Z(F)\textup{ satisfies }(\star) \},
\end{equation*}
where the condition $(\star)$ means: when $x\ne y$, $\mult_x (Z(F)\cdot Z(L))\ge 2$ and $\mult_y (Z(F)\cdot Z(L))\ge 2$; when $x=y$, $\mult_x(Z(F)\cdot Z(L))\ge 4$. 
It is a smooth $\PP^{16}$-bundle on $G_1$. Similarly to Proposition \ref{prop: normalization for quintic plus 1 tangent}, we can describe the normalization of $\PP\overline{\calV}_{T_2}$ as follows.
\begin{prop}
    The normalization space $\widehat{\PP\calV}_{T_2}$ is isomorphic to $G_{T_2}$.
\end{prop}

For any fixed $F\in\calM_{T_2}$, we have $L_F\cong A_3^{\oplus 2}\oplus A_1$.
Denote by $(\epsilon_i)_i$ and $(\delta_i)_i$ the effective bases of two copies of $A_3$ respectively, and by $e$ the effective base of $A_1$.
Then $\Delta_F$ is given by $\{ \epsilon_1,\epsilon_2,\epsilon_3;\delta_1,\delta_2,\delta_3;e \}$.
\begin{lem}
\label{lem: 2 A_3 + 1 A_1: L^-iota = P^-iota}
    We have $L_F^{-\iota}=P_F^{-\iota}$.
\end{lem}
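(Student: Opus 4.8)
The plan is to mirror the rank-one computation used for $T_1$ in Lemma \ref{lem: 1 A_3 + 3 A_1: L_F^-iota = P_F^-iota}, while confronting the genuinely new feature of the present case: here $L_F^{-\iota}$ has rank two, and one of the candidate overlattice vectors will turn out to have even square, so evenness of $P_F$ alone cannot finish the job. First I would pin down $L_F^{-\iota}$ explicitly. By Proposition \ref{prop: iota-action on resolution graph/base} the involution $\iota$ acts on each $A_3$-summand by the nontrivial diagram involution, swapping $\epsilon_1\leftrightarrow\epsilon_3$ (resp. $\delta_1\leftrightarrow\delta_3$) and fixing $\epsilon_2$ (resp. $\delta_2$), and acts trivially on the $A_1$-summand $\langle e\rangle$. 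Hence $L_F^{-\iota}=\langle a,b\rangle$ with $a\coloneqq\epsilon_1-\epsilon_3$ and $b\coloneqq\delta_1-\delta_3$, and a direct computation gives $a^2=b^2=-4$ and $a\cdot b=0$.

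Next I would locate the possible overlattice. Any $v\in P_F^{-\iota}$ is orthogonal to the $\iota$-invariant class $H_F$, so $v\cdot H_F=0$ and therefore $v\in\langle a,b\rangle_{\QQ}$; writing $v=\alpha a+\gamma b$ and pairing with $\epsilon_1$ and $\delta_1$ forces $-2\alpha,-2\gamma\in\ZZ$. Thus $P_F^{-\iota}/L_F^{-\iota}$ is $2$-elementary, and the only nonzero candidate classes are represented by $\tfrac{a}{2}$, $\tfrac{b}{2}$ and $\tfrac{a+b}{2}$. The vectors $\tfrac{a}{2}$ and $\tfrac{b}{2}$ have square $-1$ and are excluded immediately, since $\Lambda_F$ (hence $P_F$) is even, exactly as in the $T_1$ argument.

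The hard part is excluding the diagonal class $\tfrac{a+b}{2}$, whose square is $-2$ and which therefore survives the evenness test. Here I would argue by contradiction using the maximality of $M_F$. Suppose $\tfrac{a+b}{2}\in P_F$. It is anti-invariant and satisfies $\tfrac{a+b}{2}\cdot H_F=0$, whereas every vector of $M_F$ lying outside $\langle H_F\rangle\oplus L_F$ has half-integral $H_F$-coordinate (coming from the glue vector $\beta$ attached to the line, which has $\beta\cdot H_F=1$); hence $\tfrac{a+b}{2}\notin M_F$. Since $2\cdot\tfrac{a+b}{2}=a+b\in L_F^{-\iota}$, the lattice $M_F'\coloneqq\langle M_F,\tfrac{a+b}{2}\rangle$ would then be a $\ZZ/2$-saturation of $P_F^{\iota}\oplus L_F^{-\iota}$ strictly larger than $M_F$, contradicting Proposition \ref{prop: M_F = Z_2-saturation}, which identifies $M_F$ as the biggest such saturation. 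This rules out $\tfrac{a+b}{2}$ and yields $P_F^{-\iota}=L_F^{-\iota}$.

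I expect this last step to be the only real obstacle; everything else is the same bookkeeping as in the $T_1$ case. One subtlety worth flagging is that the discriminant form of $L_F$ alone does not settle the question: the class $\tfrac{a+b}{2}$ is isotropic in $A_{L_F}$, so it is abstractly a legitimate glue vector and can be excluded only by genuine input about the actual gluing of $P_F$ over $\langle H_F\rangle\oplus L_F$, which is precisely what the maximality statement in Proposition \ref{prop: M_F = Z_2-saturation} provides.
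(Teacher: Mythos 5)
Your proof is correct, but the key step is handled by a genuinely different argument than the paper's. Both proofs agree up to the reduction to glue-vector candidates (your reduction via integrality of pairings with $\epsilon_1,\delta_1$ to the three classes $\tfrac{a}{2},\tfrac{b}{2},\tfrac{a+b}{2}$ is a cleaner bookkeeping than the paper's enumeration of denominator-$4$ classes in $A_{L_F^{-\iota}}\cong(\ZZ/4)^2$ with even square, but the content is the same), and both kill $\tfrac{a}{2},\tfrac{b}{2}$ by evenness. For the surviving diagonal class $\tfrac{a+b}{2}$, the paper disposes of it in one line: it is a root orthogonal to $H_F$ that does not lie in $L_F$, and such a root cannot lie in $P_F$ --- this is exactly the genericity property defining the singular type (a period avoiding $\calH_T$ admits no roots in $H^\perp$ outside $L$; equivalently, every root of $\Pic(X_F)$ orthogonal to $H_F$ is the class of a contracted curve). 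You instead argue lattice-theoretically: after checking $\tfrac{a+b}{2}\notin M_F$ via the $H_F$-coordinate, you observe that $\bigl\langle M_F,\tfrac{a+b}{2}\bigr\rangle$ would be a $\ZZ/2$-saturation of $P_F^\iota\oplus L_F^{-\iota}$ of index $2^{r^-+1}$, contradicting the maximality in Proposition \ref{prop: M_F = Z_2-saturation}. This is valid: that proposition is proved in \S\ref{section: explicit description of lattices} independently of the present lemma (so there is no circularity, even though the paper later uses this lemma to prove $P_F=M_F$ for this type), and its proof does establish the order bound $2^{r^-}$ on any $\ZZ/2$-saturation via the embedding $P_F/(P_F^\iota\oplus L_F^{-\iota})\hookrightarrow A_{L_F^{-\iota}}$. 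The trade-off: the paper's argument is shorter, works for any root class at once, and needs no knowledge of $M_F$; yours is purely arithmetic given \S\ref{section: explicit description of lattices}, at the cost of invoking the explicit glue vector $u$ and the maximality proposition. One small imprecision: orthogonality to $H_F$ alone does not place $v$ in $\langle a,b\rangle_\QQ$ (it would allow $\iota$-invariant $L_F$-components); what you actually need, and have, is anti-invariance of $v$, which kills both the $H_F$-component and the invariant part of the $L_F$-component.
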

\begin{proof}
    By definition, $L_F^{-\iota}$ is generated by $\epsilon_1-\epsilon_3$ and $\delta_1-\delta_3$.
    The only possible value for $(a,b)$ $(0\le a,b\le 3)$ that results in $a(\epsilon_1-\epsilon_3)+b(\delta_1-\delta_3)\over4$ having an even self-intersection number is $(a,b)=(2,2)$.
    However, $(\epsilon_1-\epsilon_3)+(\delta_1-\delta_3)\over2$ is a root that is orthogonal to $H_F$, so it does not belong to $P_F$. 
    Thus $L_F^{-\iota}$ does not admit any further finite-index extension in $P_F$, namely $L_F^{-\iota} = P_F^{-\iota}$.
\end{proof}
In this case, $M_F$ is generated by $\langle H_F\rangle\oplus L_F$ and $u\coloneqq {H_F-(2\epsilon_2+\epsilon_1+\epsilon_3)-(2\delta_2+\delta_1+\delta_3)-e\over2}$.
\begin{prop}
    We have $M_F \cong \langle -4 \rangle\oplus U\oplus D_5$.
\end{prop}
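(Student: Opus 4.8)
The plan is to mirror the proof of Proposition \ref{prop: 1 A_3 + 3 A_1: P=M} (the $1\,A_3+3\,A_1$ case) by exhibiting an explicit orthogonal decomposition of $M_F$ and recognizing each summand as a standard lattice. First I would record that $M_F$ has rank $8$ and signature $(1,7)$: indeed $H_F^2=2$ supplies the unique positive direction while $L_F\cong A_3^{\oplus2}\oplus A_1$ is negative definite of rank $7$, and adjoining $u$ does not raise the rank since $2u=H_F-(2\epsilon_2+\epsilon_1+\epsilon_3)-(2\delta_2+\delta_1+\delta_3)-e$. This signature already matches $\langle-4\rangle\oplus U\oplus D_5$. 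Using the same relation I would replace $H_F$ by $u$ in the generating set, so that $M_F=\langle u,\epsilon_1,\epsilon_2,\epsilon_3,\delta_1,\delta_2,\delta_3,e\rangle$, every generator being a $(-2)$-class, and then compute the pairwise intersection numbers.

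Next I would isolate a $D_5$-summand. A direct computation gives $u\cdot\epsilon_2=u\cdot e=1$ and $u\cdot\epsilon_1=u\cdot\epsilon_3=u\cdot\delta_1=u\cdot\delta_3=0$, so the five classes $\epsilon_1,\epsilon_2,\epsilon_3,u,e$ form a Dynkin diagram of type $D_5$ with $\epsilon_2$ the trivalent node (leaves $\epsilon_1,\epsilon_3$ and arm $u$–$e$); hence $\langle\epsilon_1,\epsilon_2,\epsilon_3,u,e\rangle\cong D_5$. The heart of the argument is to identify its orthogonal complement in $M_F$. I would take the three classes $\delta_1,\delta_3$ and $w\coloneqq H_F-\delta_1-\delta_2-\delta_3$, which are manifestly integral. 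One checks $w^2=H_F^2+(\delta_1+\delta_2+\delta_3)^2=2-2=0$, and that $\delta_1,\delta_3,w$ are each orthogonal to the whole $D_5$ (the only pairing that is not immediately zero is $w\cdot u=H_F\cdot u-\delta_2\cdot u=1-1=0$). Their Gram matrix in the basis $(\delta_1,\delta_3,w)$ is $\left(\begin{smallmatrix}-2&0&1\\0&-2&1\\1&1&0\end{smallmatrix}\right)$, of determinant $4$ and signature $(1,2)$.

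To finish I would split this rank-$3$ lattice as $U\oplus\langle-4\rangle$ explicitly: the pair $w,\ \delta_3+w$ is isotropic with $w\cdot(\delta_3+w)=1$, spanning a copy of $U$, while $z\coloneqq\delta_1-\delta_3-2w$ is orthogonal to both and satisfies $z^2=-4$. Since $w$, $\delta_3+w$, $z$ recover $\delta_1,\delta_3,w$ by integral combinations, they form a $\ZZ$-basis, so $\langle\delta_1,\delta_3,w\rangle\cong U\oplus\langle-4\rangle$. Finally, because $\delta_2=H_F-\delta_1-\delta_3-w$ lies in $\langle D_5,w\rangle$ and $H_F=2u+(2\epsilon_2+\epsilon_1+\epsilon_3)+(2\delta_2+\delta_1+\delta_3)+e$ lies in $\langle u,L_F\rangle$, the $D_5$-basis together with $\delta_1,\delta_3,w$ generates all of $M_F$; as these two blocks are mutually orthogonal and their ranks add to $8=\rank M_F$, the sum is internal and orthogonal, yielding $M_F\cong\langle-4\rangle\oplus U\oplus D_5$.

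The step I expect to require the most care is the integrality of the splitting. Since $D_5$ is not unimodular (its discriminant group is $\ZZ/4$), the orthogonal projection of a class off $D_5$ is a priori only rational, so the key point is that the specific representative $w=H_F-\delta_1-\delta_2-\delta_3$ is genuinely integral and that $\{\epsilon_1,\epsilon_2,\epsilon_3,u,e\}\cup\{\delta_1,\delta_3,w\}$ generates $M_F$ over $\ZZ$ rather than a proper finite-index sublattice. It is precisely this verification that upgrades a decomposition valid over $\QQ$ to the claimed isometry of integral lattices; the rest is the routine intersection-number bookkeeping sketched above.
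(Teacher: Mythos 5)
Your proof is correct, but it takes a genuinely different route from the paper's. The paper computes the discriminant group $A_{M_F}\cong(\ZZ/4)^2$ together with its discriminant form (Conway--Sloane symbol $4^{-2}_{2}$) and then invokes Nikulin's uniqueness theorem \cite[Cor 1.13.3]{nikulin1979integer} for lattices of signature $(1,7)$ with that discriminant form, finally checking that $\langle -4\rangle\oplus U\oplus D_5$ realizes the same form. You instead exhibit the isometry explicitly: the relation $2u=H_F-(2\epsilon_2+\epsilon_1+\epsilon_3)-(2\delta_2+\delta_1+\delta_3)-e$ lets you trade $H_F$ for $u$ in the generating set; your chain $e$--$u$--$\epsilon_2$ with leaves $\epsilon_1,\epsilon_3$ is indeed a copy of $D_5$ spanned by $(-2)$-classes; and $\delta_1,\delta_3,w=H_F-\delta_1-\delta_2-\delta_3$ span an orthogonal complement isometric to $U\oplus\langle-4\rangle$ via the basis $w$, $\delta_3+w$, $z=\delta_1-\delta_3-2w$ (all the intersection numbers you quote check out, e.g.\ $u^2=-2$, $u\cdot\epsilon_2=u\cdot e=u\cdot\delta_2=1$, $w^2=0$, $w\cdot u=0$). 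Your argument is more elementary and constructive: it avoids discriminant-form calculus and the appeal to Nikulin's classification, and it produces an explicit integral basis realizing the decomposition, whereas the paper's method is shorter once that machinery is granted and matches the pattern used in its other examples. One presentational caveat: your final generation step, as literally worded, is circular ($\delta_2$ is expressed through $H_F$, while $H_F$ is expressed through $\delta_2$); the circle is broken by substituting one relation into the other, which yields $\delta_2=w-2u-2\epsilon_2-\epsilon_1-\epsilon_3-e$, hence $\delta_2$ and then $H_F=\delta_2+\delta_1+\delta_3+w$ lie in the span of your eight classes --- so this is a flaw of wording only, not of the mathematics.
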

\begin{proof}
    It is easy to check $A_{M_F}\cong(\ZZ/4)^2$, which is generated by ${H\over2}+{3\epsilon_1+2\epsilon_2+\epsilon_3\over4}$ and ${e\over2}+{3\delta_1+2\delta_2+\delta_3\over4}$.
    By \cite[Chapter 15, \S 7.4]{conway1999sphere}, we have $A_{M_F}\cong 4^{-2}_{2}$ (Conway--Sloane symbol).
    By \cite[Cor 1.13.3]{nikulin1979integer}, there exists a unique $N$ (up to isomorphism) of signature $(1,7)$ such that $A_N\cong 4^{-2}_{2}$.
    It is direct to verify that the discriminant form of $\langle -4 \rangle\oplus U\oplus D_5$ is isomorphic to $A_N\cong 4^{-2}_{2}$.
\end{proof}
\begin{prop}
    We have $P_F=M_F$. 
\end{prop}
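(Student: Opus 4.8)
The plan is to prove the reverse inclusion $P_F\subseteq M_F$; since $M_F\subseteq P_F$ holds by construction, it suffices to show the index $[P_F:M_F]$ equals $1$. I will organize everything around the involution $\iota$, exactly as in the proof of Proposition \ref{prop: 1 A_3 + 3 A_1: P=M}. First I record that the extra generator $u={H_F-(2\epsilon_2+\epsilon_1+\epsilon_3)-(2\delta_2+\delta_1+\delta_3)-e\over2}$ is $\iota$-invariant: recall that $\iota$ swaps $\epsilon_1\leftrightarrow\epsilon_3$ and $\delta_1\leftrightarrow\delta_3$ while fixing $\epsilon_2,\delta_2,e$ and $H_F$. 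Hence adjoining $u$ does not enlarge the anti-invariant part, so $M_F^{-\iota}=L_F^{-\iota}$, which equals $P_F^{-\iota}$ by Lemma \ref{lem: 2 A_3 + 1 A_1: L^-iota = P^-iota}; and $M_F^{\iota}=P_F^{\iota}$ by Proposition \ref{prop: saturation of M_F}. Consequently $M_F^{\iota}\oplus M_F^{-\iota}=P_F^{\iota}\oplus P_F^{-\iota}$, so $[P_F:M_F]$ is precisely the index of the subgroup ${M_F\over M_F^{\iota}\oplus M_F^{-\iota}}$ inside ${P_F\over P_F^{\iota}\oplus P_F^{-\iota}}$.

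Next I compute the smaller quotient and bound the larger one against this common intermediate lattice. On the one hand, Proposition \ref{prop: M_F = Z_2-saturation} gives ${M_F\over P_F^{\iota}\oplus L_F^{-\iota}}\cong(\ZZ/2)^{r^-}$, where $r^-=2\,r_{A_3}^-=2$ by Table \ref{tab: r_R^-} (writing $A_3=A_{2n-1}$ with $n=2$, so $r_{A_3}^-=n-1=1$, while the $A_1$ summand contributes nothing). Thus ${M_F\over M_F^{\iota}\oplus M_F^{-\iota}}\cong(\ZZ/2)^2$ has order $4$. On the other hand, $P_F^{\iota}$ and $P_F^{-\iota}$ are mutually orthogonal primitive sublattices spanning $P_F\otimes\QQ$, so by Lemma \ref{lem: saturation induced by involution, inequality and ADE case} the quotient ${P_F\over P_F^{\iota}\oplus P_F^{-\iota}}$ is $2$-elementary, and by lattice theory it embeds into $A_{P_F^{-\iota}}=A_{L_F^{-\iota}}$. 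Since $L_F^{-\iota}=\langle\epsilon_1-\epsilon_3\rangle\oplus\langle\delta_1-\delta_3\rangle\cong\langle-4\rangle^{\oplus2}$ has $A_{L_F^{-\iota}}\cong(\ZZ/4)^2$, a $2$-elementary subgroup must land in the $2$-torsion $(\ZZ/2)^2$, so ${P_F\over P_F^{\iota}\oplus P_F^{-\iota}}$ has order at most $4$.

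Combining the two estimates, the order-$4$ subgroup ${M_F\over M_F^{\iota}\oplus M_F^{-\iota}}$ exhausts the order-at-most-$4$ group ${P_F\over P_F^{\iota}\oplus P_F^{-\iota}}$, forcing equality and hence $[P_F:M_F]=1$, i.e. $P_F=M_F$. The only genuinely delicate point is the sandwiching itself: one must be certain that the lower bound from Proposition \ref{prop: M_F = Z_2-saturation} and the upper bound coming from $A_{L_F^{-\iota}}$ are both measured against the \emph{same} intermediate lattice $P_F^{\iota}\oplus P_F^{-\iota}$, which is exactly what the identifications $M_F^{\iota}=P_F^{\iota}$ and $M_F^{-\iota}=P_F^{-\iota}$ guarantee. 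Everything else is the routine discriminant bookkeeping already carried out for the earlier singular types.
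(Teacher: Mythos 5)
Your proof is correct and takes essentially the same route as the paper's: the paper's own proof simply says to run the argument of Proposition \ref{prop: 1 A_3 + 3 A_1: P=M} in parallel, i.e.\ identify $M_F^{\iota}=P_F^{\iota}$ and $M_F^{-\iota}=P_F^{-\iota}$, observe that the glue group of $P_F$ over $P_F^{\iota}\oplus P_F^{-\iota}$ is $2$-elementary and embeds into $A_{M_F^{-\iota}}\cong(\ZZ/4)^2$, hence has order at most $4$, and compare this with the order-$4$ subgroup coming from $M_F$. Your bookkeeping (the value $r^-=2$ from Table \ref{tab: r_R^-}, the two discriminant groups, the sandwich) agrees with the paper's.

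One justification deserves repair, although the claim it supports is true: the principle that adjoining an $\iota$-invariant generator cannot enlarge the anti-invariant part is false in general. For instance, let $N=2\ZZ\oplus 2\ZZ$ with the swap involution $\iota(x,y)=(y,x)$ and $u=(1,1)$; then $(1,-1)=(2,0)-u$ is anti-invariant and lies in $N+\ZZ u$, whereas $N^{-\iota}=\ZZ(2,-2)$. In your setting the needed equality $M_F^{-\iota}=L_F^{-\iota}=P_F^{-\iota}$ follows instead from the inclusions $L_F^{-\iota}\subseteq M_F^{-\iota}\subseteq P_F^{-\iota}$ (the latter because $M_F\subseteq P_F$) combined with Lemma \ref{lem: 2 A_3 + 1 A_1: L^-iota = P^-iota}, which collapses the whole chain to equalities. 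With that one-line substitution your argument is complete and coincides with the paper's.
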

\begin{proof}
    It is clear that $A_{M_F^\iota}\cong(\ZZ/2)^4$. 
    By Lemma \ref{lem: 2 A_3 + 1 A_1: L^-iota = P^-iota}, $A_{M_F^{-\iota}}\cong(\ZZ/4)^2$.
    By a parallel argument in the proof of Proposition \ref{prop: 1 A_3 + 3 A_1: P=M}, we can conclude $P_F=M_F$.
\end{proof}

\begin{rmk}
    For a generic $F\in\calV_T$, the K3 surface $X_F$ admits an elliptic fibration $\pi_1$ with sections.
    There exists a type $I_1^*$ fiber and $17$ nodal fibers.
    The Mordell--Weil group is of rank one and torsion-free.
\end{rmk}

We have $Q_F\cong \langle 4\rangle\oplus A_3\oplus U\oplus E_8$.
The arithmetic group $\Gamma_{T_2}$ can be described as
\begin{equation*}
    \Gamma_{T_2} = \{ g\in\Or(Q_F) \,|\, g \textup{ acts on } A_{Q_F} \textup{ through the } (\mathfrak{S}_2)^2\times\mathfrak{S}_2\textup{-action on } A_{P_F} \},
\end{equation*}
where the first two $\mathfrak{S}_2$-actions permute $\epsilon_1,\epsilon_3$ and $\delta_1,\delta_3$ respectively, and the last $\mathfrak{S}_2$-action permutes the two sets $\{\epsilon_i\}$ and $\{\delta_i\}$. 
Analogously to the previous cases, the period map $\Prd_{T_2}$ extends first to an isomorphism $\calM_{T_2}^{\mathrm{ADE}} \cong \Gamma_{T_2} \bs \DD_{T_2}$ and then further to $\overline{\calM}_{T_2} \cong \overline{\Gamma_{T_2} \bs \DD_{T_2}}^{\mathrm{bb}}$. 


\subsection{A Quartic Curve with Two Bitangents}
The nodal case of quartic curves with two lines is discussed in \cite{gallardo2018compactifications}.
We consider the singular type $T$ when the two lines are both bitangents.
Each smooth quartic curve admits $28$ bitangents.
Let $\calM_4$ denote the moduli space of quartic curves.
The natural projection $\calM_T\to \calM_4$ is a finite morphism of degree $\binom{28}{2}$.


For any fixed $F\in\calV_{T}$, we have $L_F\cong A_1\oplus A_3^{\oplus 4}$.
Denote by $\Delta\coloneqq \{ e;\alpha_1,\alpha_2,\alpha_3;\beta_1,\beta_2,\beta_3;\\ \epsilon_1,\epsilon_2,\epsilon_3;\delta_1,\delta_2,\delta_3 \}$.
In this case, $M_F$ is generated by $\langle H\rangle \oplus L$, $u\coloneqq{H-e-(2\alpha_2+\alpha_1+\alpha_3)-(2\beta_2+\beta_1+\beta_3)\over2}$ and $v\coloneqq{H-e-(2\epsilon_2+\epsilon_1+\epsilon_3)-(2\delta_2+\delta_1+\delta_3)\over2}$. 
\begin{lem}
    We have $M_F^{-\iota}=P_F^{-\iota}$.
\end{lem}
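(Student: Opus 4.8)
The plan is to first pin down $L_F^{-\iota}$ explicitly and then isolate the extra gluing that makes this case genuinely different from the previous two lemmas. Since $\iota$ acts on each $A_3$-block by the nontrivial diagram involution (Proposition \ref{prop: iota-action on resolution graph/base}) and fixes $e$, the anti-invariant lattice $L_F^{-\iota}$ is spanned by the four mutually orthogonal classes $a_1:=\alpha_1-\alpha_3$, $a_2:=\beta_1-\beta_3$, $a_3:=\epsilon_1-\epsilon_3$, $a_4:=\delta_1-\delta_3$, each of square $-4$, so $L_F^{-\iota}\cong\langle-4\rangle^{\oplus4}$. Unlike in Lemma \ref{lem: 1 A_3 + 3 A_1: L_F^-iota = P_F^-iota} and Lemma \ref{lem: 2 A_3 + 1 A_1: L^-iota = P^-iota}, here $L_F^{-\iota}$ is \emph{not} already saturated in $M_F$; the first thing I would establish is that $M_F^{-\iota}$ strictly contains $L_F^{-\iota}$.

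The key computation is that $m_0:=\tfrac12(a_1+a_2+a_3+a_4)$ lies in $M_F$. Expanding $u+v$, one checks the identity
\[
m_0=(u+v)+(\alpha_1+\alpha_2+\beta_1+\beta_2+\epsilon_1+\epsilon_2+\delta_1+\delta_2)-H+e,
\]
exhibiting $m_0$ as an integral combination of the generators of $M_F$; it is manifestly $\iota$-anti-invariant. A short coset analysis (reducing the $u$- and $v$-coefficients modulo $2$ and matching the coefficient of $H$) shows these are the only new anti-invariant classes, so $M_F^{-\iota}=L_F^{-\iota}+\ZZ m_0$ with index two over $L_F^{-\iota}$. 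In the basis $a_1,a_2,a_3,m_0$ the Gram matrix identifies $M_F^{-\iota}\cong D_4(-2)$.

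It then remains to show $M_F^{-\iota}=P_F^{-\iota}$, i.e. that $M_F^{-\iota}$ admits no further saturation inside $P_F$; this is the crux and the main obstacle. I would argue in two steps. First, $P_F^{-\iota}$ is root-free: any $r\in P_F^{-\iota}$ with $r^2=-2$ is orthogonal to $H$, it cannot lie in $L_F$ (whose anti-invariant part $\langle-4\rangle^{\oplus4}$ has no roots), and a root orthogonal to $H$ outside $L_F$ would force the period of a generic $X_F$ onto $\calH_T$, contradicting $\Pic(X_F)=P_F$ (Corollary \ref{cor: generic Picard}) together with the definition of $\calH_T$. Second, $P_F^{-\iota}$ is an even overlattice of $M_F^{-\iota}\cong D_4(-2)$ with the same rational span, and I would classify such overlattices via the glue group: inspecting the discriminant form of $D_4(-2)$, every isotropic order-two glue vector is represented by a class of square $-2$ (of the type $\tfrac12(a_i+a_j)$), so every proper even enlargement of $D_4(-2)$ acquires a root. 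Combining the two steps rules out any proper enlargement and yields $M_F^{-\iota}=P_F^{-\iota}$. The delicate point I expect to spend most care on is precisely this overlattice classification, namely verifying that no root-free even enlargement of $D_4(-2)$ exists, which reduces to checking that all isotropic two-torsion elements of its discriminant group have square $-2$.
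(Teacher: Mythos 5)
Your proof is correct and takes essentially the same route as the paper: both arguments classify the possible glue vectors of $P_F^{-\iota}$ by a discriminant-form computation and exclude the nontrivial candidates using the fact that a $(-2)$-vector orthogonal to $H_F$ lying outside $L_F$ cannot belong to $P_F$ (the period would land on $\calH_T$). The only difference is organizational: the paper glues directly over $L_F^{-\iota}\cong\langle-4\rangle^{\oplus4}$, where the nonzero isotropic classes are the pair classes $\tfrac12(a_i+a_j)$ (roots, excluded) and the class of $\tfrac12(a_1+a_2+a_3+a_4)\in M_F^{-\iota}$, whereas you first identify $M_F^{-\iota}\cong D_4(-2)$ and then glue over it --- the content is identical.
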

\begin{proof}
    Let $w\coloneqq{a(\alpha_1-\alpha_3)+b(\beta_1-\beta_3)+c(\epsilon_1-\epsilon_3)+d(\delta_1-\delta_3)\over4}$ be an element in $(A_3^{\oplus4})^\vee$, where $0\le a,b,c,d\le 3$.
    If $[w]$ is an isotropic element in $A_{A_3^{\oplus4}}$, then we have $8\,|\,a^2+b^2+c^2+d^2$. 
    Then either $(a,b,c,d)$ has two entries equal to $2$ and two to $0$, or $(a,b,c,d)$ is equal to $(2,2,2,2)$. 
    In the former case, all the entries correspond to roots orthogonal to $H_F$, none of which lies in $P_F$.
    The element corresponding to $(2,2,2,2)$ is contained in $M_F^{-\iota}$. 
\end{proof}
\begin{ques}
    Do we have $P_F=M_F$?
\end{ques}
\begin{rmk}
    Since $A_{M_F^{\iota}}\cong(\ZZ/2)^6$ and $A_{M_F^{-\iota}}\cong(\ZZ/4)^2\times(\ZZ/2)^2$, there may exist a finite-index extension $M_F^{\iota}\oplus M_F^{-\iota}\hookrightarrow P$ such that ${P\over M_F^{\iota}\oplus M_F^{-\iota}}\cong(\ZZ/2)^4$.
    Notice that ${M_F\over M_F^{\iota}\oplus M_F^{-\iota}}\cong(\ZZ/2)^3$.
    There may not be an easy way to decide whether $P_F=M_F$.
\end{rmk}

\begin{rmk}
    For a generic $F\in\calM_T$, $X_F$ admits an elliptic fibration $\pi_1$ with sections.
    There exists a type $I_1^*$ fiber, $2$ type $I_4$ fibers and $15$ nodal fibers.
\end{rmk}
\begin{rmk}
    For a generic $F\in\calM_T$, $X_F$ admits another elliptic fibration without sections (it admits a bi-section).
    There exist $2$ type $I_2^*$ fibers and $10$ nodal fibers.
\end{rmk}

The arithmetic group $\Gamma_{T}$ can be characterized as
\begin{equation*}
    \Gamma_{T} = \{ g\in\Or(Q_F) \,|\, g \textup{ acts on } A_{Q_F} \textup{ through the } (\mathfrak{S}_2)^4\times\mathfrak{S}_4\textup{-action on } A_{P_F} \},
\end{equation*}
where the four $\mathfrak{S}_2$-actions permutes $\alpha_1,\alpha_3$; $\beta_1,\beta_3$; $\epsilon_1,\epsilon_3$ and $\delta_1,\delta_3$ respectively, and the $\mathfrak{S}_4$-action permutes the four sets $\{\alpha_i\}$, $\{\beta_i\}$, $\{\epsilon_i\}$ and $\{\delta_i\}$.
Following the same argument, $\Prd_{T}$ extends to an isomorphism $\calM_{T}^{\mathrm{ADE}} \cong \Gamma_{T} \bs \DD_{T}$ and further to $\overline{\calM}_{T} \cong \overline{\Gamma_{T} \bs \DD_{T}}^{\mathrm{bb}}$.

The moduli space of quartic curves admits an occult period map to an arithmetic ball quotient dimension $6$, see \cite{kondo2011moduli}.
    A smooth quartic curve is a Riemann surface of genus three.
    Its Jacobian is a principal polarized abelian variety of dimension three. This leads to a birational equivalence between the moduli space of quartic curves and an arithmetic quotient of the Siegel upper half-space $\mathcal{S}_3$ of degree $3$.
    It would be interesting to study more relations among the three arithmetic quotients of different types and their compactifications.




\subsection{Zariski Pairs}
\label{subsec: Zariski pair}
These examples had been discussed in \cite[\S 3]{yang2012discriminantal}.
We present a construction of them from a more geometric viewpoint.



A pair of reduced plane curves of the same degree, which have the same combinatorial type of singularities but are not equisingular deformation equivalent, is usually called \emph{Zariski pairs}.
Zariski showed that the space of irreducible sextic curves with six cusps as the only singularities form at least two irreducible components, which provided the first example, see \cite{zariski1931irregularity}. 
Degtyarev showed that they actually form exactly two irreducible components, see \cite[Theorem 5.3.2]{degtyarev2008deformations}.
Both of the two irreducible components are smooth of the same expected dimension. 

Denote by $T_1,T_2$ the two corresponding singular types, where $T_2$ corresponds to sextic curves whose six cusps lie on a conic.

Let $C$ be a sextic of $T_2$-type.
Denote by $D$ for the conic containing the six cusps $p_1,\cdots,p_6$ of $C$.
By B\'ezout theorem, $D\cap C = \{ p_1,\cdots,p_6 \}$ and the intersection multiplicity of each $p_i$ is two.
Let $f\colon S\to \PP^2$ be the blowup of $\{ p_1,\cdots,p_6 \}$ on $\PP^2$, and denote by $E_1,\cdots,E_6$ the exceptional curves.
We still denote by $C$ (respectively, $D$) the strict transform of $C$ (respectively, $D$).
Then $C$ and $D$ are disjoint in $S$.
For each $1\le i\le 6$, we have $E_i^2=-1$ and $E_i\cdot D=1$.

Let $\pi\colon X\to S$ be the double cover branched over $C$.
Then $\pi^{-1}(D)$ is a disjoint union of $D_1$ and $D_2$, where $D_1,D_2$ are both isomorphic to $D$.
Moreover, we have $\pi^* E_i = \widetilde{E}_i + \widetilde{F}_i$ and $\pi^* C = 2 \widetilde{C}$, where each $(\widetilde{E}_i,\widetilde{F}_i)$ forms a $A_2$-type Dynkin diagram, $\widetilde{C}$ is reduced and $\widetilde{E}_i\cdot \widetilde{C} = \widetilde{F}_i\cdot \widetilde{C} = 1$.
Let $H$ be the pullback of the hyperplane class on $\PP^2$.
We have $H^2=2$ and $H\cdot D_i=2$.
It is direct to verify that 
\begin{equation*}
    D_1 = H - \frac{1}{3}\sum_{i=1}^6 (2 \widetilde{E}_i + \widetilde{F}_i),\quad D_2 = H - \frac{1}{3}\sum_{i=1}^6 (\widetilde{E}_i + 2 \widetilde{F}_i).
\end{equation*}
It is clear that neither $D_1$ nor $D_2$ lies in $\left\langle H\right\rangle \oplus L$.

Due to the classification by Zariski and Degtyarev, we have the following:
\begin{prop}
\label{proposition: picard zariski pair}
\begin{enumerate}[(i)]
    \item For the $T_1$-type, the generic Picard group $P_{T_1}$ of the associated K3 surfaces is generated by $H$ and $L$.
    \item For the $T_2$-type, the generic Picard group $P_{T_2}$ of the associated K3 surfaces is generated by $H$, $L$ and $D_1$.
\end{enumerate}
\end{prop}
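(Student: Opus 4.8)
The plan is to reduce both parts to computing the saturation of $\langle H\rangle\oplus L$ inside $H^2(X_F,\ZZ)$, where $L=A_2^{\oplus 6}$, and then to pin down that saturation by Riemann--Roch together with the geometry of conics through the six cusps. First, a dimension count fixes the rank: the six--cuspidal locus has expected dimension $\binom{8}{2}-1-12=15$ and $\dim\PGL(3)=8$, so $\dim\calM_{T_i}=7$; by Theorem \ref{thm: open} the domain $\DD_{T_i}$ then has dimension $20-\rank P_{T_i}=7$, forcing $\rank P_{T_i}=13$ in both cases. By Corollary \ref{cor: generic Picard} each $P_{T_i}$ is the primitive hull of $\langle H\rangle\oplus L$, which already has rank $13$; hence $P_{T_i}$ is a finite even overlattice, classified by an isotropic subgroup $G\subseteq A_{\langle H\rangle\oplus L}\cong\ZZ/2\oplus(\ZZ/3)^6$. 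The $\ZZ/2$--summand is generated by $H/2$ with $q(H/2)=1/2$, hence anisotropic, so $G$ lives in the $3$--part. Writing $g_i=\tfrac13(2\widetilde E_i+\widetilde F_i)$ for the standard generator of the $i$--th discriminant $\ZZ/3$, one has $q(g_i)\equiv-\tfrac23\pmod{2\ZZ}$, and a direct count shows that $(a_1,\dots,a_6)$ is isotropic precisely when the number $k$ of nonzero entries is divisible by $3$.

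The key input, valid for the generic surface of either type, is the observation that any $u\in\Pic(X_F)$ with $u\cdot H=0$ and $u^2=-2$ already lies in $L$. Indeed, Riemann--Roch makes $u$ or $-u$ effective; since $H$ is nef and $u\cdot H=0$, every irreducible component $C$ of the effective representative satisfies $C\cdot H=0$ and is therefore contracted by the degree--two morphism $\pi\colon X_F\to\PP^2$ of Lemma \ref{lem: X_omega determines Z(F)}, so $C$ is one of the exceptional $(-2)$--curves resolving the cusps. These form the base of $L$, whence $u\in L$. This at once excludes every $k=3$ class from $G$: the minimal representative of such a class is $\sum_{i\in S}g_i$ with $|S|=3$, which is orthogonal to $H$ and has square $3\cdot(-\tfrac23)=-2$, yet is not in $L$, contradicting the observation.

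It remains to treat the $k=6$ classes, which I claim correspond exactly to conics through the cusps. For a class with all six entries nonzero, the fundamental--weight representative $\lambda$ has $\lambda^2=6\cdot(-\tfrac23)=-4$, so $v\coloneqq H+\lambda$ satisfies $v\cdot H=2$ and $v^2=-2$; as $-v\cdot H<0$, Riemann--Roch forces $v$ to be effective, and its image under $\pi$---which cannot be a line, since no line meets all six cusps (by B\'ezout)---is a conic through the six cusps. For the $T_1$--type the cusps do not lie on a conic, so no $k=6$ class can occur; combined with the exclusion of $k=3$ this yields $G=0$ and hence $P_{T_1}=\langle H\rangle\oplus L$. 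For the $T_2$--type the conic $D$ supplies $D_1=H-\sum_i g_i$ with $D_1^2=-2$ and residue $(2,\dots,2)$, so $\langle[D_1]\rangle\subseteq G$; were $G$ strictly larger it would contain an isotropic $\xi$ with exactly three entries equal to $1$ and three equal to $2$, but then $\xi-(2,\dots,2)$ is a $k=3$ class in $G$, which is impossible. Thus $G=\langle[D_1]\rangle$ and $P_{T_2}=\langle H,L,D_1\rangle$.

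I expect the only real subtlety to be the effectivity step in the previous paragraph: it is essential that each $k=6$ residue be realized by an \emph{effective} conic class, not merely an integral one, and this is exactly what nefness of $H$ together with Riemann--Roch delivers. The role of the Zariski--Degtyarev classification is then the clean geometric dichotomy it provides---that the $T_1$--component is genuinely free of the conic while $T_2$ is characterized by the (unique) conic through the cusps---which is precisely what translates into $G=0$ versus $G=\langle[D_1]\rangle$ and thereby separates the two primitive hulls.
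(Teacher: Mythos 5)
Your overall strategy is sound and, in fact, fills in details that the paper itself delegates to the Zariski--Degtyarev classification: the paper only constructs $D_1,D_2$, observes they are not in $\langle H\rangle\oplus L$, and then cites the classification (and the computation in Yang's paper) for the proposition, whereas you give an explicit discriminant-form computation. Your reduction to an isotropic subgroup $G\subseteq(\ZZ/3)^6$, the anisotropy of the $\ZZ/2$-part, the criterion $3\mid k$, and the exclusion of $k=3$ classes via the fact that every root of $\Pic(X_F)$ orthogonal to $H$ lies in $L$ (equivalently, the period avoids $\calH_T$) are all correct, as is the group-theoretic argument that for $T_2$ any enlargement of $\langle[D_1]\rangle$ would produce a forbidden $k=3$ class.

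There is, however, a genuine gap in the $k=6$ step. From $v=H+\lambda$ effective you conclude that ``its image under $\pi$ --- which cannot be a line, since no line meets all six cusps --- is a conic through the six cusps.'' But the member of $|v|$ you produce need not be irreducible modulo exceptional curves: its non-exceptional part could be $C_1+C_2$ with $C_i\cdot H=1$, mapping to \emph{two distinct lines}, each passing through three of the cusps, together covering all six (or it could be $2C$ with $C$ mapping to a single line, which your B\'ezout remark does handle in spirit). The two-line configuration is not vacuous: torus sextics $f_2^3+f_3^2=0$ with $f_2$ a product of two lines have exactly this cusp configuration, so it cannot be dismissed on abstract grounds. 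To close the gap you must either (a) note that a line pair through the six cusps is still a (degenerate) conic through them, and invoke that the Zariski--Degtyarev dichotomy excludes \emph{all} conics, degenerate or not, for the $T_1$-component --- so this configuration also lands in $T_2$; or (b) first show that the generic $T_1$-curve has no three collinear cusps: a line $\ell$ through three cusps meets the sextic only at those cusps, so its preimage splits into two components $\ell^{\pm}$ with $\ell^{\pm}\cdot H=1$, whose class would lie in $\Pic(X_F)=P_{T_1}\subset(\langle H\rangle\oplus L)\otimes\QQ$ with residue having nonzero $\ZZ/2$-component, contradicting the anisotropy of that component which you already established. With either patch the argument is complete; as written, the step ``effective class of $H$-degree two $\Rightarrow$ irreducible conic through the six cusps'' does not follow.
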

The singular type $T_2$ provides an example for $P\ne M$.

We have $Q_{T_1}=P_{T_1}^\perp\cong \langle -2\rangle\oplus U(3)^{\oplus 2}\oplus A_2^{\oplus 2}$, $Q_{T_2}=P_{T_2}^\perp\cong \langle -2\rangle\oplus U\oplus U(3)\oplus A_2^{\oplus 2}$.
Both of the period domains $\DD_{T_1}$ and $\DD_{T_2}$ are type IV domains of dimension $7$.
The arithmetic group $\Gamma_{T_1}$ can be written in the form of
\begin{equation*}
    \Gamma_{T_1} = \{ g\in\Or(Q_{T_1}) \,|\, g \textup{ acts on } A_{Q_{T_1}} \textup{ through the } (\mathfrak{S}_2)^6\times\mathfrak{S}_6\textup{-action on } A_{P_{T_1}} \}.
\end{equation*}
Similar for the arithmetic group $\Gamma_{T_2}$,
\begin{equation*}
    \Gamma_{T_2} = \{ g\in\Or(Q_{T_2}) \,|\, g \textup{ acts on } A_{Q_{T_2}} \textup{ through the } (\mathfrak{S}_2)^4\times\mathfrak{S}_4\textup{-action on } A_{P_{T_2}} \}.
\end{equation*}
For $i=1,2$, the period map $\Prd_{T_i}$ extends to an isomorphism $\calM_{T_i}^{\mathrm{ADE}} \cong \Gamma_{T_i} \bs \DD_{T_i}$ and further to $\overline{\calM}_{T_i} \cong \overline{\Gamma_{T_i} \bs \DD_{T_i}}^{\mathrm{bb}}$.

\bibliography{ref}
\bibliographystyle{alpha}
\Addresses

\end{document}